\documentclass[a4paper,10pt,reqno]{amsart}
\usepackage{a4wide,color,array}
\usepackage{cite}
\usepackage{hyperref}
\usepackage{amsmath,amssymb,amsthm,color}
\hypersetup{linktocpage,colorlinks,linkcolor={red},citecolor={blue}}
\usepackage[english]{babel}
\usepackage[T1]{fontenc}
\usepackage[utf8]{inputenc}
\usepackage{dsfont}
\usepackage{enumitem}
\usepackage[a4paper,left=2.6cm,right=2.6cm,top=3cm,bottom=3.5cm]{geometry}%
\usepackage{marginnote}%

\usepackage{float}
\usepackage{subcaption} 
\usepackage{accents}

\usepackage{xspace}
\usepackage{bm}				
\usepackage{bbm,upgreek}		
\usepackage[e]{esvect}		
\usepackage{esint}			
\usepackage{etoolbox}			
\usepackage{calc}				
\usepackage{eso-pic}			
\usepackage{datetime}			

\usepackage{lmodern}
\numberwithin{equation}{section}

\usepackage{enumitem}
\setlist{nosep}
\setlist{noitemsep}

\newtheorem{theorem}{Theorem}%

\newtheorem{proposition}{Proposition}[section]
\newtheorem{lemma}[proposition]{Lemma}%
\newtheorem{conjecture}{Conjecture}%
\newtheorem{remark}{Remark}[section]%

\newtheorem{assumption}[remark]{Assumption}
\newtheorem{definition}[proposition]{Definition}%

\numberwithin{equation}{section}%

\newcommand{\dR}{\mathbb{R}}%

\newcommand{\E}{\mathbb{E}}

\newcommand{\ve}{\varepsilon}

\newcommand{\dE}{\mathbb{E}}%
\newcommand{\Var}{\mathrm{Var}}%
\newcommand{\Cov}{\mathrm{Cov} }%

\newcommand{\supp}{\mathrm{supp}}

\newcommand{\mc}{\mathcal}

\newcommand{\Id}{\mathrm{Id}}
\newcommand{\diag}{\mathrm{diag}}
\newcommand{\dd}{\mathrm{d}}

\newcommand{\TAP}{\mathrm{TAP}}
\newcommand{\GOE}{\mathrm{GOE}}
\newcommand{\tr}{\mathrm{Tr}}

\newcommand{\Span}
{\mathrm{Span}}

\newcommand{\eq}{\mathrm{eq}}

\def\indic{\mathbf{1}}

\newcommand{\Good}{\mathrm{Good}}
\newcommand{\SUSY}{\mathrm{SUSY}}
\newcommand{\Test}{\mathrm{Test}}
\newcommand{\Law}{\mathrm{Law}}
\newcommand{\Lawmatch}{\mathrm{LawMatch}}

\newcommand{\Prefix}{\mathrm{Prefix}}
\newcommand{\Skel}{\mathrm{Skel}}
\newcommand{\dist}{\mathrm{dist}}
\newcommand{\bfm}{\mathbf{m}}
\newcommand{\Pari}{\mathcal{P}\mathrm{arisi}}

\newcommand{\Lip}{\mathrm{Lip}}

\newcommand{\Ppar}{\mathcal{P}}
\newcommand{\Cpx}{\mathcal{C}}
\usepackage{graphicx}

\title[The Legendre structure of the TAP complexity for the Ising spin glass]{The Legendre structure of the TAP complexity\\ for the Ising spin glass}

\author{Jeanne Boursier}
\address{Department of Mathematics, Columbia University, New York, NY, USA}
\email{jb4893@columbia.edu}

\begin{document}

	\begin{abstract}
		We study the complexity of the Thouless--Anderson--Palmer (TAP) free energy
		for Ising spin glasses with a general mixed $p$-spin covariance, working with the generalized TAP functional of Chen, Panchenko, and Subag. We formulate three conjectures about the complexity (i.e. number of critical points). First, the annealed complexity is given by the Legendre transform of a variational functional constructed from the Parisi formula subject to a constraint on the overlap mass at zero, thereby establishing a precise link between the enumeration of TAP states and the large-deviation rate function of the partition function. Second, the quenched complexity is governed by the Legendre transform of a closely related functional in which the mass up to—but not including—the supremum of the support is constrained. Third, TAP states at any non-equilibrium free-energy level are organized into an ultrametric hierarchy, with ancestor states at other levels appearing only in subexponential number. 
		
		Using a Kac--Rice computation combined with a supersymmetric ansatz, we establish a lower bound on the annealed complexity that matches the prediction of the first conjecture. We further extend the analysis to a conditional setting in which a hierarchical ``skeleton'' of ancestors is prescribed, providing additional evidence in support of the second and third conjectures.
	\end{abstract}
	\maketitle
	
	\setcounter{tocdepth}{1}
	\tableofcontents

	\section{Introduction}
	
	\subsection{Setting}\label{sub:setting}
	
	Mean-field spin glasses are a paradigmatic source of rugged random landscapes: the interplay of disorder and high dimensionality produces an energy function with a complex hierarchy of valleys, saddles, and barriers whose geometry encodes the thermodynamic and algorithmic behavior of the model.
	
	Let $\xi:[-1,1]\to\dR$ be a covariance structure function satisfying Assumption~\ref{ass:xi} below. Given $\xi$, we consider the centered Gaussian process $\{H(\bfm)\}_{\bfm\in[-1,1]^N}$ on the full cube $[-1,1]^N$, defined by the covariance relation
	\begin{equation}\label{eq:Ham full}
		\dE[H(\bfm)H(\bfm')]=N\xi\!\left(\frac{1}{N}\langle \bfm,\bfm'\rangle\right), \quad \bfm,\bfm'\in [-1,1]^N.
	\end{equation}
	We refer to $H$ as the \emph{Hamiltonian} of the model. The classical Ising spin glass is defined by restricting $H$ to the Boolean cube $\{-1,1\}^N$: for spin configurations $\sigma,\tau\in\{-1,1\}^N$,
	\begin{equation}\label{eq:Ham}
		\Cov[H(\sigma),H(\tau)]=N\xi\!\left(\frac{1}{N}\langle \sigma,\tau\rangle\right).
	\end{equation}
	The extension of $H$ to the full cube $[-1,1]^N$ will be essential for the TAP analysis developed below, since magnetization vectors---the natural coarse-grained description of the Gibbs measure---take values in $[-1,1]^N$ rather than $\{-1,1\}^N$.
	
	The Parisi formula, proved in \cite{talagrand2006parisi,panchenko2014parisi}, identifies the asymptotic free energy of the Ising model as
	\begin{equation}\label{eq:Parisi formula}
		\lim_{N\to \infty}\dE\!\left[\frac{1}{N} \log \sum_{\sigma\in \{-1,1\}^N} e^{-H(\sigma)}\right]=\inf_{\zeta\in \mc{P}([0,1])}\Pari(\zeta),
	\end{equation}
	where the \emph{Parisi functional} $\Pari:\mc{P}([0,1])\to\dR$ is defined by
	\begin{equation}\label{def:Par}
		\Pari(\zeta)
		:=
		\Phi_\zeta(0,0)
		-\frac{1}{2}\int_0^1 t\,\xi''(t)\,\zeta([0,t])\,\dd t,
	\end{equation}
	and, for each $\zeta\in \mc{P}([0,1])$, $\Phi_\zeta$ solves the backward Parisi PDE
	\begin{equation}\label{eq:PPDE}
		\begin{cases}
			\partial_t \Phi_\zeta(t,x)
			=-\dfrac{\xi''(t)}{2}\Bigl[
			\partial_{xx}\Phi_\zeta(t,x)
			+\zeta\bigl([0,t]\bigr)\bigl(\partial_x\Phi_\zeta(t,x)\bigr)^2
			\Bigr],\\[1ex]
			\Phi_\zeta(1,x)=\log(2\cosh(x)).
		\end{cases}
	\end{equation}
	
	The Parisi formula encodes a rich geometric structure: the Gibbs measure organizes into an ultrametric tree whose branching levels are determined by the support of the minimizing measure $\zeta$, while $\zeta([0,t])$ plays the role of an effective temperature profile along the hierarchy (see Figure~\ref{fig:Parisi tree}). Recall that $\zeta$ also governs the asymptotic law of the overlap $\frac{1}{N}\langle \sigma^{(1)},\sigma^{(2)}\rangle$ between two independent replicas sampled from the Gibbs measure with the same disorder realization.
	
	\medskip
	
	Rather than tracking individual spin configurations, it is natural to pass to a coarser description in terms of the \emph{magnetization vector} $\bfm=(m_1,\ldots,m_N)\in [-1,1]^N$. Informally, a point $\bfm$ qualifies as a magnetization if it arises, with probability that does not decay exponentially in $N$, as the barycenter of a large collection of replicas (see Figure~\ref{fig:Parisi tree}).

	\begin{figure}[H]
		\centering
		\fbox{\includegraphics[width=0.4\textwidth]{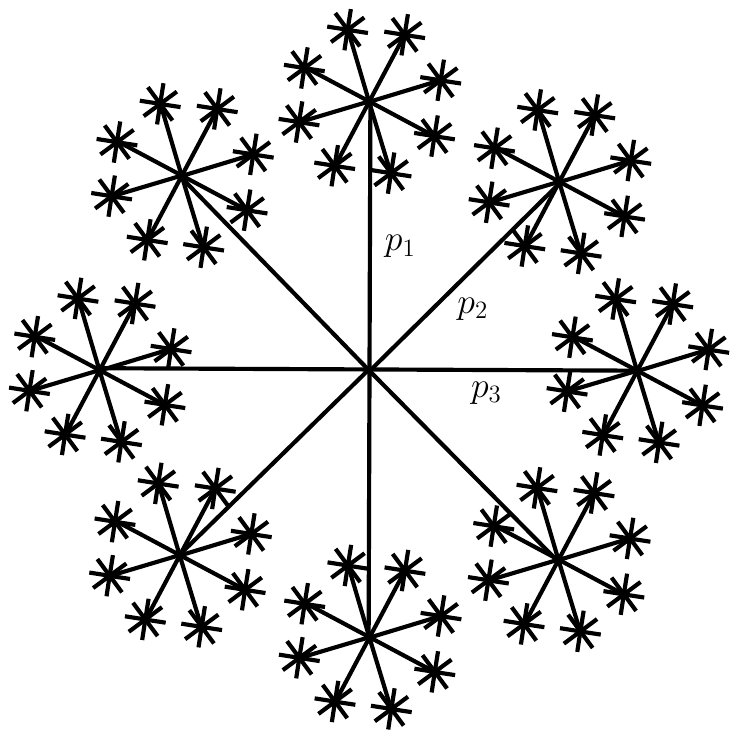}} 
		\caption{Schematic tree of magnetizations. The root (center) is $0\in\dR^N$. A magnetization is represented by a node of the tree (an internal branching point or a leaf). A pure state corresponds to a leaf.}\label{fig:Parisi tree}\end{figure}

	Individual configurations sit in pure states within bands orthogonal to the corresponding magnetizations. In the two-level hierarchy depicted in Figure~\ref{fig:Parisi tree}, the overlap between any two magnetizations---and hence between the corresponding replicas---takes only the values $0$, $q_1$, and $q_2$, where $q_1$ and $q_2$ are the squared distances (scaled) from the origin to the first- and second-level branching points, respectively. In this example, the Parisi optimizer $\zeta$ in~\eqref{eq:Parisi formula} has support $\{0,q_1,q_2\}$. The weights $p_i$, giving the probability of landing in subtree $i$, are distributed as the cluster frequencies of a Chinese restaurant process, i.e.\ according to a Poisson--Dirichlet law $\mathrm{PD}(\alpha,0)$ with $\alpha=\zeta(\{0\})$, and the same construction repeats recursively within each subtree. In general, the full hierarchy is described by a fragmentation of Chinese restaurant processes, in which the magnetizations play the role of tables and the replicas play the role of customers.

	\medskip
	
	A central question now presents itself: \emph{how can the magnetizations be characterized?} It turns out that they are precisely the approximate local maximizers of a random function on~$[-1,1]^N$ known as the TAP free energy. But what, exactly, is this functional, and where does it come from? A rigorous definition was given by Chen, Panchenko, and Subag~\cite{chen2018generalized}, who introduced $F_{\TAP}(\bfm)$ as the free-energy cost of constraining a large number of replicas to have barycenter~$\bfm$. With this interpretation, it becomes essentially transparent that magnetizations must be critical points of~$F_{\TAP}$ and that their TAP free-energy values concentrate at the equilibrium level. We refer to the critical points of~$F_{\TAP}$ as \emph{TAP states}; the magnetizations are then the distinguished TAP states whose free energy equals the equilibrium free energy. Carrying out a Parisi-type variational principle in the band orthogonal to~$\bfm$, Chen, Panchenko, and Subag arrive at the following explicit definition.

	\begin{definition}[TAP free energy]\label{def:zetam}
		The TAP free energy is the random function $F_{\TAP}: [-1,1]^N \to \mathbb{R}$ defined by
		\begin{equation}\label{def:FTAP}
			F_{\TAP}(\bfm) =  \inf_{\zeta\in \mc{P}([q,1]) } F_{\TAP, \zeta}(\bfm)
		\end{equation}
		where 
		\begin{equation}
			F_{\TAP,\zeta}(\bfm) = H(\bfm)-\sum_{i=1}^N \Phi_\zeta^*(q_\bfm,\cdot)(m_i)-\frac{1}{2}N\int_{q_\bfm}^1 t\xi''(t)\zeta([0,t])\dd t \, ,
		\end{equation}
		where $q_\bfm:=\frac{1}{N}\Vert \bfm\Vert^2$, $\Phi_\zeta^*(q,\cdot)$ denotes the Legendre transform of $\Phi_\zeta(q,\cdot)$, as defined in \eqref{eq:PPDE}, and $\zeta([0,t])$ stands with an abuse of notation for $\zeta([q,t])$. We denote by $\zeta_\bfm\in \mc{P}([q_\bfm,1])$ the minimizer (unique by \cite{chen2018generalized}) achieving the infimum above.
	\end{definition}

	Intuitively, $\zeta_\bfm$ corresponds to the law of the overlap between two replicas sampled in the band orthogonal to~$\bfm$, which explains why it is supported on $[q,1]$. The TAP free energy originally proposed by physicists~\cite{ThoulessAndersonPalmer1977} corresponds to the replica-symmetric case, where the minimizer $\zeta_\bfm$ is a Dirac mass at $q=\frac{1}{N}\Vert \bfm\Vert^2$. The optimality of $\delta_q$ imposes additional conditions on $\bfm$ that were not exploited in the original physics literature, rendering the system studied there underdetermined.
	
	\medskip
	
	It is convenient to express the TAP free energy in macroscopic form, parametrized by the empirical measure of the magnetization coordinates rather than the magnetization vector itself. For a probability measure $\mu\in\mc{P}([-1,1])$ and $\zeta\in\mc{P}([q,1])$, define
	\begin{equation}\label{def:TAPmuzeta}
		\TAP(\mu,\zeta):=-\int_{-1}^1 \Phi_\zeta^*(q,m)\,\dd \mu(m)-\frac{1}{2}\int_q^1 t\,\xi''(t)\,\zeta([0,t])\,\dd t,\qquad q:=\int m^2\, \dd \mu(m),
	\end{equation}
	and
	\begin{equation}\label{def:TAPmu}
		\TAP(\mu):=\inf_{\zeta\in \mc{P}([q,1])}\, \TAP(\mu,\zeta).
	\end{equation}
	When $\mu=\mu_N^{(\bfm)}:=\frac{1}{N}\sum_{i=1}^N\delta_{m_i}$ is the empirical measure of a magnetization vector $\bfm$, one recovers the deterministic part of~\eqref{def:FTAP}. In particular, the Parisi functional is the TAP free energy of the trivial state $\bfm=\mathbf{0}$:
	\begin{equation}\label{eq:Pari=TAP0}
		\Pari(\zeta)=\TAP(\delta_0,\zeta).
	\end{equation}
	Indeed, since $x \mapsto \Phi_{\zeta}(0,x)$ is even and convex (hence minimized at $0$), we have
	\[
	\Phi_{\zeta}^*(0,0) = -\Phi_{\zeta}(0,0),
	\]
	yielding \eqref{eq:Pari=TAP0}.
	
		\subsection{Main results}\label{sub:main result}
	
	We now state our principal results. The first concerns the \emph{annealed} complexity of TAP critical points — the exponential growth rate of the expected number of such points at a prescribed free-energy level. The second extends the computation to a \emph{conditional annealed} setting by introducing a hierarchical skeleton of ancestor states, which constitutes a key step toward the quenched complexity.

	Clearly, the problem makes sense only if there does indeed exist a Gaussian process such that \eqref{eq:Ham} holds. Thus, we work under the following assumption:
	\begin{assumption}[Covariance function]\label{ass:xi}
		The structure function $\xi:[-1,1]\to\dR$ is $C^\infty$, strictly convex on $[-1,1]$, and satisfies $\xi(0)=\xi'(0)=0$.
		Moreover, for every $N\ge1$, the kernel
		\[
		K_N(\bfm,\bfm') \;:=\; N\,\xi\!\left(\frac{\langle \bfm,\bfm'\rangle}{N}\right),
		\qquad \bfm,\bfm'\in[-1,1]^N,
		\]
		is positive semidefinite, so that a centered Gaussian process
		$\{H(\bfm)\}_{\bfm\in[-1,1]^N}$ with covariance
		\[
		\dE[H(\bfm)H(\bfm')]=K_N(\bfm,\bfm')
		\]
		exists.
	\end{assumption}
	\begin{remark}
		By a classical theorem of Schur and Schoenberg \cite{schoenberg1942}, a sufficient condition for Assumption~\ref{ass:xi} to be met is that $\xi$ admits a power-series expansion
		\[
		\xi(t)=\sum_{p\ge 1} a_p\, t^p
		\]
		with non-negative coefficients $a_p\ge 0$. To further ensure strict convexity, we keep only even coefficients, which yields the mixed $p$-spin form
		\[
		\xi(t)=\sum_{\substack{p\ge 2 \\ p\ \text{even}}}\beta_p^2\, t^p.
		\]
	\end{remark}

	\subsubsection{Annealed complexity}

	For $f\in\dR$ and $\ve>0$, let $\mc{N}_\ve(f)$
	denote the number of critical points $\bfm\in[-1,1]^N$ of $F_{\TAP}$
	satisfying $F_{\TAP}(\bfm)\in N(f-\ve,f+\ve)$.

	Our first main result provides a rigorous lower bound on the annealed complexity. Before stating it, we introduce a useful definition.

	\begin{definition}[Probability measures with an $(n+1)$-atom prefix]%
		\label{def:nprefix}
		Fix $n\geq 1$ and set $q_0=0$. Given a strictly increasing sequence
		$(q_i)_{1\leq i\leq n}$ in $(0,1)$ and a strictly increasing sequence of
		weights $(u_i)_{1\leq i\leq n}$ in $(0,1)$, we write
		$\Prefix_{n+1}\!\bigl((u_i)_{1\leq i\leq n},(q_i)_{1\leq i\leq n}\bigr)$
		for the collection of probability measures $\zeta$ on $[0,1]$ satisfying
		\begin{equation*}
			\supp(\zeta)\cap[0,q_n]=\{q_0,q_1,\ldots,q_n\}
			\qquad\text{and}\qquad
			\zeta([0,q_{i-1}])=u_i \;\text{ for all }\; i\in\{1,\ldots,n\}.
		\end{equation*}
		We say that such $\zeta$ has an \emph{$(n+1)$-atom prefix} $\{0,q_1,\ldots,q_n\}$.
	\end{definition}

	\begin{theorem}[Lower bound on the annealed complexity]\label{theorem:annealed}
		Let $f\in\dR$, $u\in(0,1)$, $q\in(0,1)$, and
		$\zeta\in\Prefix_2(u,q)$.
		Assume:
		\begin{enumerate}
			\item[\emph{(i)}]
			\emph{(Tail optimality.)}
			$\zeta$ minimizes $\Pari$ over $\Prefix_2(u,q)$.
			\item[\emph{(ii)}]
			\emph{(Stationarity.)}
			$(u,q)$ is a critical point of
			$(u',q')\mapsto u'\!\left(\inf_{\zeta'\in\Prefix_2(u',q')}\Pari(\zeta')-f\right)$.
		\end{enumerate}
		Let $\ve\in(0,1)$ and let $\mc{N}_\ve(f,\zeta)$ be the number of
		critical points of $F_{\TAP,\zeta}$ with
		$\frac{1}{N}F_{\TAP,\zeta}(\bfm)\in(f-\ve,f+\ve)$.
		Then
		\begin{equation}\label{eq:th annealed}
			\lim_{\ve\to 0}\lim_{N\to\infty}
			\frac{1}{N}\log\dE\bigl[\mc{N}_\ve(f,\zeta)\bigr]
			\;\geq\;
			\zeta(\{0\})\,\bigl(\Pari(\zeta)-f\bigr).
		\end{equation}
	\end{theorem}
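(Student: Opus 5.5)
We apply the Kac--Rice formula to the smooth random function $F_{\TAP,\zeta}$ on the open cube $(-1,1)^N$, with $\zeta\in\Prefix_2(u,q)$ held fixed. To obtain a lower bound it suffices to count critical points in a small region. We restrict to magnetizations whose empirical measure $\mu_N^{(\bfm)}$ lies in a small Wasserstein ball around a well-chosen $\mu^\star$ with $\int m^2\,\dd\mu^\star=q$. The radial shift in $q_\bfm$ only enters through the deterministic part and is continuous. By the Kac--Rice formula,
\[
\dE\bigl[\mc N_\ve(f,\zeta)\bigr]\ \ge\ \int_{\mathcal B}\dE\Bigl[\bigl|\det\nabla^2F_{\TAP,\zeta}(\bfm)\bigr|\,\indic\{\tfrac1N F_{\TAP,\zeta}\in(f\pm\ve)\}\,\Bigm|\,\nabla F_{\TAP,\zeta}(\bfm)=0\Bigr]\,\varphi_{\nabla F}(0)\,\dd\bfm .
\]
Conditioning on $H(\bfm)$ and $\nabla H(\bfm)$ is explicit because $H$ is Gaussian with covariance $N\xi(\langle\cdot,\cdot\rangle/N)$. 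The gradient condition reads $\nabla H(\bfm)=\bigl(\partial_m\Phi^*_\zeta(q,m_i)\bigr)_i+(\text{radial term})\,\bfm$. The level condition then fixes $H(\bfm)/N$ in terms of $f$ and $\TAP(\mu,\zeta)$. The conditional Hessian is a GOE-type matrix of variance $\xi''(q)/N$, shifted by the diagonal $\diag\bigl(\partial_{mm}\Phi^*_\zeta(q,m_i)\bigr)$ and by a finite-rank perturbation.

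\textbf{Evaluating the exponential rate.} The determinant term is handled by the free additive convolution of the semicircle with the empirical law of the diagonal entries. It contributes $N\int\log|\lambda|\,\dd(\sigma_{\xi''(q)}\boxplus\nu)$ up to $o(N)$; this holds by standard concentration of log-determinants, with a lower bound via restricting to a good event. The volume element together with the Gaussian density of $(H,\nabla H)$ yields an entropy-type variational problem over $\mu$. The resulting rate is $\sup_\mu\{\text{entropy}-\text{Gaussian cost}+\text{log-det}\}$. The supersymmetric ansatz is used to evaluate this: one guesses $\mu^\star$ as the law of $\partial_x\Phi_\zeta(q,X_q)$, where $X$ is the Parisi SDE driven by $\xi''$ with drift weight $u=\zeta([0,q))$ up to time $q$, and started from $0$. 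For this choice the diagonal $\partial_{mm}\Phi^*_\zeta=1/\partial_{xx}\Phi_\zeta$ produces a log-determinant that exactly cancels, via the Stieltjes/subordination identity for $\boxplus$, the Jacobian arising from the change of variables $m\leftrightarrow x$. The remaining Gaussian and entropy terms then reassemble into the Parisi functional evaluated over the two-atom prefix.

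\textbf{Using the hypotheses.} Tail optimality (i) makes the first variation of $\Pari$ vanish in directions that move the tail of $\zeta$ above $q$. This ensures the supersymmetric candidate is stationary for the Kac--Rice functional and that the Hessian is nonsingular on the relevant bulk. Stationarity (ii) in $u$ and $q$ is what matches the level: the $u$-derivative relates $\Pari(\zeta)-f$ to the Lagrange multiplier of the level constraint. The $q$-derivative fixes the radial coefficient so that the Gaussian cost of $H(\bfm)/N$ equals $u(\Pari(\zeta)-f)$ minus the remaining entropy. After cancellations the rate becomes $\zeta(\{0\})(\Pari(\zeta)-f)$, with $\zeta(\{0\})=u$. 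Finally we let $\ve\to0$ after $N\to\infty$, using continuity of the rate in $\mu$.

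\textbf{Main obstacle.} The hardest step is the exact supersymmetric cancellation between the log-determinant of the shifted GOE and the entropy/Jacobian terms. It requires an identity linking $\partial_{xx}\Phi_\zeta$ along the Parisi SDE with the subordination function of $\sigma_{\xi''(q)}\boxplus\nu$. I would first try to verify it by differentiating both sides in $q$, using the Parisi PDE and Itô's formula, and checking equality at $q=0$. A secondary difficulty is justifying the log-determinant asymptotics when the diagonal entries degenerate near $|m|=1$; this can be handled by truncating to $|m_i|\le1-\delta$, which costs only a vanishing amount of the rate.
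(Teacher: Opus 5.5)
Your architecture matches the paper's. You run Kac--Rice on magnetizations whose empirical law is near $\mu^\star=\Law(\partial_x\Phi_\zeta(q,X_q))$, evaluate the log-determinant by free convolution, change variables via $y=\partial_m h_\zeta(q,m)$, and use the hypotheses to make $\mu^\star$ self-consistent. Your Sanov-ball bound is equivalent to the paper's tilted product law $\mu_\ve^{\otimes N}$ plus the law of large numbers. The gap is in the determinant step, which you call the crux, and it is set up incorrectly. The Hessian of the TAP correction is not $\diag(\partial_{mm}\Phi^*_\zeta(q,m_i))$ plus a finite-rank term. Differentiating the radial part $\frac{2m_i}{N}\bigl(\sum_k\partial_q h_\zeta(q,m_k)+N\partial_q\mc U_\zeta(q)\bigr)$ of the gradient gives a full-rank shift $c\,\Id$ with $c=\xi''(q)\int_q^1\zeta([0,t])\,\dd t$, up to the defect $\Delta_\zeta^{\mu}$ of Lemma~\ref{lemma:Delta-bound}, which vanishes at $\mu^\star$. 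With the diagonal you wrote, the subordination point at $0$ cannot be $0$, since that would require $\int\dd\mu/\partial_{mm}h_\zeta=0$. So the log-determinant does not reduce to the Jacobian term. With the shift, $\omega=c$ and the log-determinant equals $\sum_i\log\partial_{mm}h_\zeta(q,m_i)+\frac N2\xi''(q)\bigl(\int_q^1\zeta([0,t])\,\dd t\bigr)^2$. The first term is the Jacobian of $m\mapsto y$. The second must cancel the term $-\frac N2\xi''(q)\bigl(\int_q^1\zeta([0,t])\,\dd t\bigr)^2$ in the Gaussian density, and that cancellation is absent from your bookkeeping.

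The identity behind $\omega=c$ is $\int\dd\mu^\star/\partial_{mm}h_\zeta(q,\cdot)=\dE[\partial_{xx}\Phi_\zeta(q,X_q)]=\int_q^1\zeta([0,t])\,\dd t$. You also need $\xi''(q)\int(\partial_{mm}h_\zeta)^{-2}\,\dd\mu^\star\le 1$, so that this real root lies in $\overline{\Omega}$ and is the subordination value. Differentiating in $q$ will not prove it. By It\^o the two sides have $q$-derivatives $-\zeta([0,q])\xi''(q)\dE[(\partial_{xx}\Phi_\zeta(q,X_q))^2]$ and $-\zeta([0,q])$, which differ unless $\xi''(q)\dE[(\partial_{xx}\Phi_\zeta(q,X_q))^2]=1$. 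The anchor $\partial_{xx}\Phi_\zeta(0,0)=\int_0^1\zeta([0,t])\,\dd t$ is no easier. The missing idea is that the identity comes from optimality and is not structural. Integrate It\^o from $q$ to the terminal time $1$, where $\partial_{xx}\Phi_\zeta(1,\cdot)=1-(\partial_x\Phi_\zeta(1,\cdot))^2$, and use $\dE[(\partial_x\Phi_\zeta(s,X_s))^2]=s$ for $s\in\supp(\zeta)\cap[q,1]$ (Lemma~\ref{lemma:xxphi}). These conditions, and the second-order inequality, are what tail optimality delivers through $\supp(\zeta)\cap[q,1]\subset\mathrm{arg\,min}_{[q,1]}H_\zeta$ (Lemma~\ref{lemma:opt2prefix}).

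The Gaussian density likewise collapses to the product weight $\partial_{mm}h_\zeta\,e^{-u(h_\zeta-m\partial_m h_\zeta)-\frac{1}{2\xi'(q)}(\partial_m h_\zeta)^2}$ only because $\mu^\star$ satisfies three identities, which you should state:
\begin{itemize}
\item $\int m^2\,\dd\mu^\star=q$;
\item the SUSY identity $\int m\,\partial_m h_\zeta\,\dd\mu^\star=\xi'(q)\int_0^1\zeta([0,t])\,\dd t$ (Lemma~\ref{lemma:expectations});
\item the energy identity obtained from stationarity in $u$.
\end{itemize}
Together these make $\bigl((\partial_m h_\zeta-u\xi'(q)\bfm)/\xi'(q),\,u\bigr)$ the exact dual optimizer. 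Finally, for pure $p$-spin the pair $(H,\nabla H)$ is degenerate, so there is no joint density; you need the Euler-identity reduction of Lemma~\ref{lemma:reduction pure} instead.
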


	Theorem~\ref{theorem:annealed} is a first step toward the following conjecture, which asserts that the annealed complexity is governed by the Legendre transform of an explicit function.

	\begin{conjecture}[Annealed complexity formula]\label{conj:annealed}
		Define
		\begin{equation}\label{def:Lambda}
			\Lambda(\theta)=\theta\inf_{\zeta:\zeta(\{0\})=\theta}\Pari(\zeta).
		\end{equation}
		There exists an open subset $D\subset\dR$ containing $[\inf_\zeta \Pari(\zeta),+\infty)$ such that for every $f\in D$,
		\begin{equation*}
			\lim_{\ve\to 0}\lim_{N\to\infty}
			\frac{1}{N}\log\dE\bigl[\mc{N}_\ve(f)\bigr]=-\Lambda^*(f)=\inf_\theta \bigl(\Lambda(\theta)-\theta f\bigr).
		\end{equation*}
	\end{conjecture}

	The right-hand side of~\eqref{eq:th annealed} is precisely the value that the conjectured Legendre transform assigns to an optimizer $\zeta$ with a two-atom prefix. A natural direction for future work is to establish the full conjecture through a purely probabilistic argument — one that bypasses the Kac--Rice machinery entirely — by connecting the counting problem to the large deviation principle for the partition function and thereby making the Legendre duality between complexity and free energy fully explicit.

	The difficulty of upgrading Theorem~\ref{theorem:annealed} to the full conjecture via direct computation reflects a broader phenomenon in spin glass theory: variational formulas, even when available, are notoriously resistant to explicit analysis. An instructive example is the Sherrington--Kirkpatrick model with $\xi(x)=\beta^2 x^2$, for which the left-hand side of~\eqref{eq:Parisi formula} is manifestly convex in $\beta$, yet this convexity remains entirely opaque from the variational expression on the right-hand side.

	\begin{remark}
		Recall from~\eqref{eq:Parisi formula} that the equilibrium free energy is $f_\eq=\inf_\zeta \Pari(\zeta)$. Conjecture~\ref{conj:annealed} implies, in particular, that the complexity is strictly negative for $f>f_\eq$ and vanishes at $f=f_\eq$, in agreement with the physical expectation that the equilibrium level marks the threshold above which an exponential number of metastable states ceases to exist.
	\end{remark}

	\begin{remark}[Extreme-value heuristic]
		Let $f_\eq=\inf_\zeta \Pari(\zeta)$ be the equilibrium free energy. If $\zeta$ denotes the optimizer in the Parisi formula~\eqref{eq:Parisi formula}, then
		\[
		(-\Lambda^*)'(f_\eq)=-\zeta(\{0\}).
		\]
		Assuming Conjecture~\ref{conj:annealed}, this means that $-\zeta(\{0\})$ coincides with the slope of the asymptotic annealed complexity at $f_\eq$, which is exactly what one expects from the following extreme-value heuristics.

		Denote by $\Sigma$ the asymptotic annealed complexity. Recalling that
		\[
		\#\{\alpha:\,F_\TAP(\alpha)\approx Nu\}\approx e^{N\Sigma(u)},
		\]
		a first-order expansion around $f_\eq$ (recalling $\Sigma(f_\eq)=0$) gives for $y\geq 0$,
		\[
		\#\left\{\alpha:\,F_\TAP(\alpha)\ge Nf_\eq-y\right\}\approx e^{-\Sigma'(f_\eq)y}.
		\]
		Hence the limiting process in the shifted variable $Nf_\eq-F_\TAP(\alpha)$ should be a Poisson point process (PPP) on $\dR$ with cumulative intensity
		\[
		\mu((-\infty,y))=e^{-\Sigma'(f_\eq)y},\qquad \mu(\dd y)=-\Sigma'(f_\eq)e^{-\Sigma'(f_\eq)y}\,\dd y.
		\]
		With the change of variables $x=e^{-y}$, the corresponding extremal weights $e^{F_\TAP(\alpha)-Nf_\eq}$ form a PPP on $(0,\infty)$ with intensity
		\[
		\lambda(\dd x)=-\Sigma'(f_\eq)x^{\Sigma'(f_\eq)-1}\,\dd x.
		\]
		By \cite{constructionTala}, the same PPP has parameter $\zeta(\{0\})$, so $\zeta(\{0\})=-\Sigma'(f_\eq)$.
	\end{remark}

	\subsubsection{Toward the quenched complexity}

	We now turn to the quenched complexity, that is, the typical logarithmic count of TAP critical points. We first state a conditional result that computes the complexity with a hierarchical skeleton of ancestor states fixed, and we then formulate two conjectures describing the structure of the quenched complexity.

	The idea is to fix a hierarchical skeleton of ancestor states — each required to be a critical point of $F_{\TAP,\zeta}$ at the equilibrium free-energy level — and to count descendant critical points in the orthogonal band of the deepest ancestor. For convenience, we restrict to the mixed case, but our result also holds in the pure case.

	\begin{theorem}[Lower bound on the conditional annealed complexity]\label{theorem:annealed cond}
		Suppose that $\xi$ has at least two distinct nonzero coefficients $\beta_p^2$ (mixed case). Let $n\ge2$. Let $\bm{u}=(u_1,\ldots,u_{n})$ and $\bm{q}=(q_1,\ldots,q_n)$
		be strictly increasing sequences in $(0,1)$, and let
		$\zeta\in\Prefix_{n+1}(\bm{u};\bm{q})$.
		Using the notation of~\eqref{def:Par}, define the \emph{prefix free en\-ergy}
		\begin{equation*}
			\Ppar^{(n)}(\bm{u};\bm{q})
			\;:=\;
			\inf_{\zeta'\,\in\,\Prefix_{n+1}(\bm{u};\bm{q})}
			\Pari(\zeta'),
		\end{equation*}
		and the \emph{$n$-step complexity functional at level $f$}
		\begin{equation*}
			\Cpx^{(n)}_f(\bm{u};\bm{q})
			\;:=\;
			u_n\,\bigl(\Ppar^{(n)}(\bm{u};\bm{q})-f\bigr).
		\end{equation*}
		
		We assume:
		\begin{enumerate}
			\item[\emph{(i)}]
			\emph{(Tail optimality)} The infimum $\Ppar^{(n)}(\bm{u};\bm{q})$ is
			attained at $\zeta$.
			\item[\emph{(ii)}]
			\emph{(Stationarity)} $(\bm{u};\bm{q})$ is a critical point of
			$\Cpx^{(n)}_f$.
		\end{enumerate}
		Fix $\ve\in(0,1)$ and fix an ancestor skeleton
		$(\bfm^{(1)},\ldots,\bfm^{(n-1)})$
		satisfying the admissibility conditions stated in Section~\ref{section:multiple}
		(Definitions~\ref{def:susyreplicas} and~\ref{def:matchlaw}).
		
		Let $\mc{N}_\ve(f,\zeta,q_n,\bfm^{(n-1)})$ be the number of critical points
		$\bfm$ of $F_{\TAP,\zeta}$ lying in the orthogonal band of $\bfm^{(n-1)}$
		(and satisfying $\tfrac1N F_{\TAP,\zeta}(\bfm)\in(f-\ve,f+\ve)$). Set
		\begin{equation*}
			\Skel_{n-1}=\left(\nabla F_{\TAP,\zeta}(\bfm^{(1)}),F_{\TAP,\zeta}(\bfm^{(1)}),\ldots,\nabla F_{\TAP,\zeta}(\bfm^{(n-1)}), F_{\TAP,\zeta}(\bfm^{(n-1)})\right).
		\end{equation*}
		
		Then
		\begin{multline}\label{eq:hint}
			\lim_{\ve\to0}\lim_{N\to\infty}
			\frac{1}{N}\log\dE\Bigl[
			\mc{N}_\ve(f,\zeta,q_n,\bfm^{(n-1)})
			\;\Big|\;
			\Skel_{n-1}=(0,N\Pari(\zeta),\ldots,0,N\Pari(\zeta))
			\Bigr]
			\geq
			\Cpx_f^{(n)}(\bm{u};\bm{q})
			\\=
			\zeta([0,q_n))\,(\Pari(\zeta)-f).
		\end{multline}
	\end{theorem}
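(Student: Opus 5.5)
The plan is to run a conditional Kac--Rice computation, reusing the structure of the proof of Theorem~\ref{theorem:annealed} but now inside the orthogonal band of the deepest ancestor $\bfm^{(n-1)}$.

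\textbf{Kac--Rice representation.} First write
\[
\dE\bigl[\mc{N}_\ve(f,\zeta,q_n,\bfm^{(n-1)})\,\big|\,\Skel_{n-1}=(0,N\Pari(\zeta),\ldots)\bigr]
\]
as an integral over the band $\{\bfm:\langle \bfm-\bfm^{(n-1)},\bfm^{(n-1)}\rangle\approx 0,\ \tfrac1N\|\bfm\|^2\approx q_n\}$. The integrand is the conditional density of $\nabla F_{\TAP,\zeta}(\bfm)$ at $0$ times the conditional expectation of $|\det \nabla^2 F_{\TAP,\zeta}(\bfm)|$ on the free-energy window. Conditioning on the skeleton is Gaussian conditioning of $(H,\nabla H,\nabla^2 H)$ at $\bfm$ on the values $(H,\nabla H)$ at the ancestors. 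The admissibility conditions (Definitions~\ref{def:susyreplicas} and~\ref{def:matchlaw}) should make the empirical law of the coordinates of $(\bfm^{(1)},\ldots,\bfm^{(n-1)},\bfm)$ match the law of the martingale $\partial_x\Phi_\zeta(q_i,X_{q_i})$ along the Parisi SDE. This reduces the conditional covariances to explicit functions of $\xi$, $\bm q$ and the overlaps $q_i$. Concretely, the conditional variance of $H(\bfm)$ becomes $N(\xi(q_n)-\xi(q_{n-1}))$ plus lower-order terms, and the gradient is projected orthogonally to the ancestors.

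\textbf{Laplace principle and supersymmetric ansatz.} Next, restrict the integration to a neighbourhood of magnetizations whose empirical measure is near the "SUSY" law, namely the law of $\partial_x\Phi_\zeta(q_n,X_{q_n})$ given the ancestor coordinates. On this neighbourhood:
\begin{enumerate}
\item the volume (entropy) term is computed by a large-deviation estimate for the coordinates;
\item the Gaussian density of the conditioned value and gradient contributes a quadratic cost;
\item the determinant is handled as in the proof of Theorem~\ref{theorem:annealed}, with the conditioned Hessian equal to a GOE-type matrix plus a diagonal matrix.
\end{enumerate}
The supersymmetric ansatz, meaning the Bray--Moore-type identity valid at the tail-optimal $\zeta$, should make the determinant factor exactly cancel the gradient density normalization. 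Because this yields a lower bound, we need only exhibit this ansatz contribution rather than optimize over all empirical laws.

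\textbf{Identification with $\Cpx^{(n)}_f$.} Collecting exponents should give an expression of the form $\Psi(\bm u,\bm q,f)$. Here tail optimality (i) lets us differentiate $\Ppar^{(n)}$ by envelope arguments, replacing $\partial_{q_n},\partial_{u_n}$ of $\Ppar^{(n)}$ with explicit integrals against $\zeta$. Stationarity (ii) then fixes the free parameter of the tilt, the Lagrange multiplier for the free-energy constraint, to be $u_n=\zeta([0,q_n))$. This mirrors how $\zeta(\{0\})$ arose in Theorem~\ref{theorem:annealed}. The result is $\Psi=u_n(\Pari(\zeta)-f)$. Finally, let $N\to\infty$ and then $\ve\to0$.

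\textbf{Main obstacle.} I expect the hardest step to be the determinant under conditioning. The conditioned Hessian has rank-$O(n)$ perturbations and a diagonal part depending on ancestor coordinates, and one must show that its log-determinant concentrates and equals the SUSY value. For the lower bound, I would first try free-probability concentration, noting that finite-rank perturbations do not change the exponential order. I would combine this with the mixed-case assumption, which guarantees nondegeneracy of the joint covariance of $(H,\nabla H)$ at distinct overlaps, so that the conditioning is well posed.
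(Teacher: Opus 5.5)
Your architecture matches the paper's: conditional Kac--Rice on the skeleton, the free-convolution determinant, the SUSY multipliers, and a lower bound obtained by restricting to a typical set of a tilted product law. However, the step that actually produces the exponent is misdescribed, and it is then replaced by an unsupported identification. Conditioning on the skeleton does not just project the gradient orthogonally to the ancestors. Since $\Cov(\nabla H(\bfm),\nabla H(\bfm^{(k)}))=\xi'(q_k)\Id$ plus a rank-one term, the conditional law of $\nabla H(\bfm)$ has, up to finite rank, mean $\nabla H(\bfm^{(n-1)})\approx k_\zeta(q_{n-1},\bfm^{(n-1)})$ in all $N$ coordinates and covariance $(\xi'(q_n)-\xi'(q_{n-1}))\Id$. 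This full-rank effect produces the coordinatewise kernel $\exp\bigl(-\frac{(\partial_m h_\zeta(q_n,m_i)-\partial_m h_\zeta(q_{n-1},m_i^{(n-1)}))^2}{2(\xi'(q_n)-\xi'(q_{n-1}))}\bigr)$, which couples each new coordinate to the ancestor's; together with the tilt $e^{u_n\Phi_\zeta(q_n,\cdot)}$ it is the transition law of $X$ from $q_{n-1}$ to $q_n$. Only the finite-rank remainder is absorbed by the multipliers $v_n=\zeta([0,q_n))$, $v_k=-\zeta(\{q_k\})$ and the fields in $\mc D_n$. Inserting these into the dual representation of the Gaussian density only bounds it from above, so the lower bound needs them to solve $\Gamma_n'w=a_n$ exactly (Lemmas~\ref{lemma:compression2} and~\ref{lemma:prob0 replica}). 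This is where \eqref{eq:SUSYassumption} and the free-energy constraints on the ancestors and on $\bfm$ enter.

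Given this structure, the exponent is computed exactly rather than by envelope arguments, through the following cancellations.
\begin{enumerate}
\item The $\xi''(q_n)\bigl(\int_{q_n}^1\zeta([0,t])\,\dd t\bigr)^2$ terms of the determinant and of the density cancel. The determinant value comes from the subordination identity, which rests on $\zeta_\bfm$ minimizing $\TAP(\mu_N^{(\bfm)},\cdot)$, not on tail optimality of $\zeta$.
\item The factor $\prod_i\partial_{mm}h_\zeta(q_n,m_i)$ is the Jacobian of the change of variables $y=\partial_m h_\zeta(q_n,m)$.
\item Lemma~\ref{lemma:law Xt s} evaluates each coordinate integral as $\sqrt{2\pi(\xi'(q_n)-\xi'(q_{n-1}))}\,e^{u_n\Phi_\zeta(q_{n-1},\partial_m h_\zeta(q_{n-1},m_i^{(n-1)}))}$. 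By \eqref{eq:f Legendre} this cancels the ancestor term of the density exactly, leaving $-Nu_n(f-f_{n-1})$, where $f_{n-1}=\Pari(\zeta)$ is the prescribed skeleton level.
\end{enumerate}
Hypotheses (i)--(ii) are used elsewhere. Via Lemma~\ref{lemma:optnprefix} (\eqref{eq:optf}, \eqref{eq:statqkk}, \eqref{eq:suppzzz}), the test functions of Definition~\ref{def:matchlaw}, and Lemma~\ref{lemma:stableTapMin}, they show that the constraint set has probability tending to one under the non-identically distributed product law $\bigotimes_i\pi_\ve(\cdot\mid m_i^{(n-1)})$. That set consists of norm $q_n$, the SUSY overlaps, level $f$, and $\zeta_\bfm\approx\zeta_\star:=\zeta|_{(q_n,1]}+\zeta([0,q_n])\delta_{q_n}$. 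Without these cancellations, nothing forces your $\Psi(\bm{u},\bm{q},f)$ to equal $u_n(\Pari(\zeta)-f)$. By contrast, the determinant you single out as the main obstacle is a routine finite-rank modification of Lemma~\ref{lemma:det asymp}.
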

	
	Note that the conditioning on the skeleton $\Skel_{n-1}$ also appears in \cite{dembo2024disordered} in the spherical setting.

Theorem~\ref{theorem:annealed cond} provides evidence for the following two conjectures, which together describe the structure of the quenched complexity.

	\begin{conjecture}[Quenched complexity formula]\label{conj:quenched formula}
		Define
		\begin{equation}\label{def:tildeLambda}
			\tilde{\Lambda}(\theta)=\theta\inf_{\zeta:\zeta([0,\sup(\supp \zeta)))=\theta}\Pari(\zeta).
		\end{equation}
		Then there exists an open subset $D\subset\dR$ such that for every $f\in D$,
		\begin{equation}\label{eq:quenched conj}
			\lim_{\ve\to 0}\lim_{N\to\infty}
			\frac{1}{N}\dE\bigl[\log\mc{N}_\ve(f)\bigr]=-\tilde{\Lambda}^*(f)=\inf_\theta \bigl(\tilde{\Lambda}(\theta)-\theta f\bigr).
		\end{equation}
	\end{conjecture}

	Note that it has been proved (at least in certain regimes, e.g. for the SK model slightly above criticality) that the Parisi optimizer has a jump at the top of its support; equivalently,
	\[
	\zeta\bigl([0,\sup(\supp \zeta))\bigr)\neq 1.
	\]
	See \cite{auffinger2015properties}.

	\begin{conjecture}[Ultrametric organization of TAP states]\label{conj:quenched full}
		For $f\in D$, let $\theta$ be the optimizer in~\eqref{eq:quenched conj} and $\zeta$ the minimizer
		in~\eqref{def:tildeLambda} for this~$\theta$.
		Then:
		\begin{enumerate}[label=\emph{(\roman*)}]
			\item \emph{(Ultrametric structure.)} The TAP critical points at free-energy level $f$ are organized in an ultrametric tree: the overlap between any two TAP states at level $f$ taken uniformly takes values in the support of $\zeta$.
			\item \emph{(Separation of free-energy levels.)} Whenever $f\neq f_\eq:=\inf_\zeta \Pari(\zeta)$, the ancestor states in this ultrametric tree have TAP free energy $\Pari(\zeta)\neq f$, i.e., the ancestors live at a different free-energy level than their descendants.
			\item \emph{(Subexponential number of ancestors.)} The quenched complexity of the ancestor states is $0$: they exist, but their number grows at most subexponentially in $N$.
		\end{enumerate}
	\end{conjecture}

	\begin{remark}
		Conjecture~\ref{conj:quenched formula} is implicit in the physics literature. In~\cite{annibale2003supersymmetric}, the authors observe that the supersymmetric quenched complexity of the TAP states coincides with the Legendre transform of the static free energy with respect to the largest breaking point of the overlap matrix, and that this identification yields the exact quenched complexity in the SK model as the Legendre transform of the full-RSB static free energy. We refer the reader also to~\cite{kent2023count} for related developments.
	\end{remark}

	Let us explain why Theorem~\ref{theorem:annealed cond} constitutes progress toward Conjectures~\ref{conj:quenched formula} and~\ref{conj:quenched full}. The connection rests on two observations. First, we expect the right-hand side of~\eqref{eq:hint} to coincide with the true quenched complexity of the restricted set of TAP states descending from the skeleton. Second, good ancestors satisfying the conditioning in Theorem~\ref{theorem:annealed cond} should exist with high probability: evaluating~\eqref{eq:hint} at $f=\Pari(\zeta)$ yields a vanishing complexity, which is consistent with the existence of a subexponential (but nonzero) number of such ancestor states. Moreover, this suggests that the quenched complexity is nothing but some annealed conditional complexity.

	\subsection{Literature on the TAP free energy}
	\subsubsection{Physics background}

	A central challenge in the theory of mean-field spin glasses has been to identify the functional governing the law of magnetizations. An early approach was proposed by Thouless, Anderson, and Palmer~\cite{ThoulessAndersonPalmer1977}, who derived a set of self-consistent equations for the local magnetizations of the Sherrington--Kirkpatrick model via diagrammatic expansions.

	The counting of metastable states in mean-field spin glasses was initiated by Bray and Moore~\cite{AJBray,bray1981metastable}, who counted solutions of the original TAP equations~\cite{ThoulessAndersonPalmer1977} for the Sherrington--Kirkpatrick model, later extended to $p$-spin models by Rieger~\cite{rieger1992number}. Cavagna, Giardina, and Parisi~\cite{cavagna1998stationary,cavagna1998quenched} then placed the computation on firmer ground by introducing a systematic Kac--Rice approach for stationary points of any index in the spherical $p$-spin model, while Cavagna, Giardina, Parisi, and M\'ezard~\cite{cavagnaformal} established a formal equivalence between the TAP and thermodynamic methods in the SK model.
	
	The role of the Becchi--Rouet--Stora--Tyutin (BRST) supersymmetry was identified by the Rome group in a series of works~\cite{annibale2003supersymmetric,annibalerole,crisanti2003complexity}. A key finding of Crisanti, Leuzzi, Parisi, and Rizzo is that the BRST-invariant (SUSY) complexity equals the Legendre transform of the disorder-averaged free energy. However, their quenched computation~\cite{crisanti2003quenched,crisanti2004spin} showed that this SUSY solution is unstable at every free energy level except the equilibrium one, so that it does not predict an extensive number of metastable states away from equilibrium. For the Ising $p$-spin model in the 1RSB regime, both a SUSY and a non-SUSY solution coexist and cross at the free energy where the replicon eigenvalue vanishes~\cite{crisanti2005complexity}; related phenomena were observed for spherical models~\cite{annibale2004coexistence}. The mechanism of SUSY breaking was further investigated by Parisi and Rizzo~\cite{parisi2004supersymmetry}, and numerical studies~\cite{cavagna2004numerical} confirmed that TAP stationary points in the SK model organize into minimum--saddle pairs compatible with the SUSY-breaking scenario.
	
	The overall picture is rich but not fully coherent: different approaches lead to partially contradictory conclusions regarding which states should be counted, the role of supersymmetry, and whether the annealed approximation is exact. We stress that all of these computations use the original TAP equations~\cite{ThoulessAndersonPalmer1977}, where the overlap measure $\zeta_\bfm$ is replaced by the Dirac mass $\delta_q$, leaving the system underdetermined. The present paper works with the generalized TAP free energy~\eqref{def:FTAP}, where $\zeta_\bfm$ is fixed by the variational principle, which resolves several of these ambiguities. For the spherical case, Kent-Dobias and Kurchan~\cite{kent2023count} have recently derived a general solution incorporating Parisi's ground-state formula and the full RSB hierarchy.
	
	Finally, we mention the complementary \emph{real replica} method of Monasson~\cite{monasson1995structural}, in which $m$ coupled clones are constrained to the same state and $m\to 1$. The connection between this approach, the tilted Parisi formula of \cite{LDTalagrand}, and the R\'enyi entropy was recently clarified by Javerzat, Bertin, and Ozawa~\cite{javerzat2025renyi}, who showed that the R\'enyi complexity with index $m$ coincides with the $m$-component annealed Franz--Parisi potential~\cite{franz1995recipes}.
	\subsubsection{Rigorous approaches}
	
	The generalized TAP free energy was introduced and rigorously identified by Chen, Panchenko, and Subag~\cite{chen2018generalized}. The key idea is to compute the free energy cost of constraining the barycenter of a large number of replicas to equal a prescribed magnetization $\bfm\in [-1,1]^N$: the resulting partition function reduces to a Parisi-type variational formula evaluated on the band of configurations orthogonal to $\bfm$. This yields a TAP representation of the free energy in which the classical Onsager correction is replaced by a more general term, determined by the Parisi measure restricted to $[\frac{1}{N}\|\bfm\|^2,1]$. The follow-up work~\cite{chen2021generalized} simplified the energy representation of the generalized TAP states — showing in particular that all states at the same distance from the origin share the same energy — and extended the results to zero temperature. In the spherical setting, Subag~\cite{subag2018free} developed a parallel TAP theory by defining free energy landscapes via thin bands around each point inside the sphere, and established a TAP representation for the free energy at any multi-samplable overlap.
	
	For spherical spin glasses, the complexity of critical points was first computed (at the annealed level) by Auffinger, Ben Arous, and \v{C}ern\'{y}~\cite{auffinger2013random}. In the pure $p$-spin spherical model ($p\geq 3$), the quenched complexity coincides with the annealed one: this was established by Subag~\cite{subag2017complexity} via a second moment argument and later strengthened to concentration at arbitrary energy levels by Subag and Zeitouni~\cite{subag2021concentration}. The success of the second moment method in the spherical case relies crucially on the rotational invariance of the model, which forces two independently sampled critical points to be nearly orthogonal. For Ising spin glasses, such orthogonality fails: even in regimes where annealed and quenched complexities are expected to agree, the second moment of the number of critical points is not controlled by the square of its mean, making the passage from annealed to quenched considerably more delicate.
	
	A closely related line of work connects the TAP landscape to planted (or spiked) spin glass models. In such models, the disorder is tilted so that the Gibbs measure concentrates near a planted signal $x\in \{-1,1\}^N$; the resulting model is the Bayesian posterior for a high-dimensional inference problem. Interestingly, this model is always replica symmetric \cite{deshpande2017asymptotic,barbier2019adaptive}. In \cite{fan2021tap}, Fan, Mei and Montanari show that the TAP complexity at the equilibrium free energy for the planted model is \(0\), and it becomes strictly negative above and below that value when the spike is large enough. They also show that the planted configuration and independent replicas from the Gibbs measure fall into the same TAP state. In fact, the TAP magnetization \(\bfm\) matches the posterior mean (the Bayes estimator), and the mean squared error is \(1-q\), where \(q=\frac{1}{N}\|\bfm\|^2\).
	
	\subsection{Perspectives}

	The function $\Lambda$ defined in~\eqref{def:Lambda} is directly linked to
	the large deviations of the free energy. Indeed, as shown
	in~\cite{LDTalagrand} (see also \cite{chen2026onesided}), for every $\theta\in(0,1)$,
	\begin{equation*}
		\lim_{N\to\infty}\frac{1}{N}\log\dE[Z_N^\theta]=\Lambda(\theta),
	\end{equation*}
	where 
	\begin{equation*}
		Z_N=\sum_{\sigma\in \{-1,1\}^N} e^{-H(\sigma)}.
	\end{equation*}
	With a little more work, this implies that the free energy $\frac{1}{N}\log Z_N$ satisfies a large
	deviation principle on $[\inf_\zeta\Pari(\zeta),\infty)$ with rate
	function~$\Lambda^*$.
	
	This provides a conceptual explanation for
	Conjecture~\ref{conj:annealed} in the regime
	$f>\inf_\zeta\Pari(\zeta)$: the probability that the free energy reaches
	an atypical value~$f$ is governed by the existence of at least one TAP
	state at level~$f$, and because the number of such states concentrates
	around its expectation, the large deviation rate is controlled by the
	annealed complexity. In other words, $\Lambda^*$ plays a dual role — it
	is simultaneously the rate function for the free energy and the
	(negative) annealed complexity of TAP states. This duality is the
	rigorous counterpart of Monasson's real replica
	method~\cite{monasson1995structural}. We will develop this connection in
	a forthcoming paper.

	The quenched complexity is not accessible through direct computation, as the second moment method is not expected to yield matching bounds in the Ising setting. A promising approach is to relate the quenched complexity to constrained thermodynamics: one would compute a Franz--Parisi potential~\cite{franz1995recipes} using classical tools and then establish that the barycenters of the constrained Gibbs measure are TAP states.

	\subsection{Outline of the proof and plan of the paper}
	
	\paragraph{\bf{Section \ref{section:base}: annealed complexity and the SUSY ansatz.}}
	We begin with an \emph{annealed} computation of the TAP complexity. We consider an optimizer $\zeta$ of the variational problem of Theorem \ref{theorem:annealed}. We count the critical points $\bfm\in [-1,1]^N$ satisfying
	\begin{equation}\label{eq:condzetam}
		\zeta_\bfm\approx \zeta|_{(q,1]}+\zeta([0,q]) \delta_q\quad \text{and} \quad \frac{1}{N}\Vert \bfm\Vert^2\approx q,
	\end{equation}
	where $\zeta_\bfm$ is the optimal overlap measure in the band orthogonal to $\bfm$ (see Definition \ref{def:zetam}). We emphasize that \eqref{eq:condzetam} is equivalent to $\zeta_\bfm([0,t])=\zeta([0,t])$ for every $t\in [q,1]$.
	
	The annealed complexity of these TAP states at free energy level $f$ can be expressed as an integral over $\bfm$ via the Kac--Rice formula. A direct computation, however, yields a complicated variational formula over probability measures on $[-1,1]$ — representing the empirical distribution of the coordinates of the TAP states — involving many Lagrange multipliers. As observed in the physics literature \cite{crisanti2003complexity,parisi2004supersymmetry,cavagnaformal,annibalerole}, there exists a distinguished local maximum of this macroscopic action at which the Lagrange multipliers cancel one another exactly. These cancellations are driven by a supersymmetry.
	
	More precisely, the annealed complexity can be written as a partition function over bosonic variables (the magnetization and the Lagrange multipliers associated with imposing $\nabla F_{\TAP,\zeta}(\bfm)=0$ and $F_{\TAP,\zeta}(\bfm)=Nf$) and fermionic fields (arising from the Berezin integral representation of the determinant). This action carries a BRST supersymmetry that yields Ward identities. One of these identities gives the fundamental relation
	\begin{equation}\label{eq:Wardcorr}
		\langle \bar{\psi},\psi\rangle=\langle x,\bfm\rangle,
	\end{equation}
	where the fermionic correlator $\langle \bar{\psi},\psi\rangle$ is, up to a multiplicative constant, the subordination function of the Hessian evaluated at $0$, and the bosonic correlator $\langle x,\bfm\rangle$ represents the longitudinal linear response in the direction of $\bfm$.
	
	A crucial distinction from the physics literature must be emphasized here. The SUSY ansatz of \cite{crisanti2003complexity,parisi2004supersymmetry,cavagnaformal,annibalerole} yields the identity \eqref{eq:Wardcorr} but not the value of these correlators, which was determined by an ad hoc choice. This ambiguity arose because the physicists were not working with the correct TAP free energy, leaving the system underdetermined. In our setting, the condition \eqref{eq:condzetam} determines the correlators uniquely, yielding
	\begin{equation*}
		\langle \bar{\psi},\psi\rangle=N\int_q^1 \zeta([0,t])\dd t.
	\end{equation*}
	
	The Ward identity \eqref{eq:Wardcorr} is then used as an ansatz to evaluate the annealed complexity. By the Kac--Rice formula, we need to compute
	\begin{multline*}
		\int_{[-1,1]^N} \dE\bigl[|\det(\nabla^2 F_{\TAP,\zeta}(\bfm))|\bigm| \nabla F_{\TAP,\zeta}(\bfm)=0,\,F_{\TAP,\zeta}(\bfm)=Nf\bigr]\\ \times\varphi_{(\nabla F_{\TAP,\zeta}(\bfm),F_{\TAP,\zeta}(\bfm))}(0,Nf)\,\indic_{\mc{A}}(\bfm)\,\dd \bfm,
	\end{multline*}
	where $\mc{A}$ is the set of $\bfm\in [-1,1]^N$ satisfying \eqref{eq:condzetam} and the SUSY condition $\langle \partial_m h_\zeta(q,\bfm),\bfm\rangle=N\xi'(q)\int_q^1 \zeta([0,t])\dd t$. The density of the Gaussian vector $(\nabla F_{\TAP,\zeta}(\bfm),F_{\TAP,\zeta}(\bfm))$ at $(0,Nf)$ admits the convex-dual representation
	\begin{equation}\label{eq:varphidual}
		\begin{split}
			\log \varphi_{(\nabla F_{\TAP,\zeta}(\bfm),F_{\TAP,\zeta}(\bfm))}(0,Nf)&=-\tfrac12\langle z,\Gamma^{-1}z\rangle\;-\;\tfrac12\log\det(2\pi\Gamma)\\
			&=\frac{1}{2}\inf_{w\in\mathbb{R}^{N+1}}\bigl\{\langle w,\Gamma w\rangle-2\langle w,z\rangle\bigr\}-\tfrac12\log\det(2\pi\Gamma),
		\end{split}
	\end{equation}
	where $\Gamma:=\Cov((\nabla H(\bfm),H(\bfm)))$ and $z\in \dR^{N+1}$ is an explicit vector. Writing $w=(x,v)$ with $x\in \dR^N$ and $v\in \dR$, the field $x$ plays the role of the Lagrange field associated with fixing the gradient to $0$, while $v$ is the Lagrange parameter associated with fixing the free energy to $Nf$. We bound \eqref{eq:varphidual} from above by substituting the SUSY ansatz values $v=\zeta(\{0\})$ and $\langle x,\bfm\rangle=N\int_q^1 \zeta([0,t])\dd t$, obtaining an upper bound on the annealed complexity.
	
	This upper bound corresponds to typical TAP states whose empirical distribution is given by the law of $\partial_x \Phi_\zeta(q,X_q)$, where $(X_t)$ is the Auffinger--Chen SDE. To obtain the matching lower bound, we verify that this saddle point satisfies the required constraints, using (i) properties of the Auffinger--Chen SDE and (ii) the optimality of $\zeta$ in the variational problem.
	
	\paragraph{\bf{Section \ref{section:multiple}: conditional annealed complexity via a hierarchical skeleton.}}
	We extend the method of Section \ref{section:base} to compute a \emph{conditional annealed} complexity. We consider an optimizer $\zeta$ of the variational problem of Theorem \ref{theorem:annealed cond}. We fix an \emph{ancestor skeleton} of states $\bfm^{(1)},\ldots,\bfm^{(n-1)}$ such that $\Vert \bfm^{(i)}\Vert^2=Nq_i$ for every $i\in [n-1]$, and $\langle \bfm^{(i)}-\bfm^{(i-1)},\bfm^{(j)}\rangle=0$ for every $i\in \{2,\ldots,n-1\}$ and $j\in \{1,\ldots,i-1\}$. We further require that $(\bfm^{(1)},\ldots,\bfm^{(n-1)})$ has empirical statistics, tested against a specific family of test functions, consistent with the law of $(\partial_x\Phi_\zeta(q_1,X_{q_1}),\ldots,\partial_x\Phi_\zeta(q_{n-1},X_{q_{n-1}}))$. We then compute the annealed complexity of states $\bfm$ lying in the band orthogonal to $\bfm^{(n-1)}$ at free energy $f$, \emph{conditionally} on the skeleton being critical with prescribed free energy, i.e., conditionally on
	\begin{equation*}
		\nabla F_{\TAP,\zeta}(\bfm^{(i)})=0\quad\text{and}\quad F_{\TAP,\zeta}(\bfm^{(i)})=N\Pari(\zeta), \qquad i=1,\ldots,n-1.
	\end{equation*}
	The resulting SUSY ansatz is verified using the same arguments as in Section \ref{section:base}.

	\subsection{Notation}
	\begin{itemize}
		\item For every random variable $X$ on $\dR$ absolutely continuous with respect to Lebesgue, we denote by $\varphi_X$ the probability density function of $X$. Given another random variable $Y$ on $\dR$ absolutely continuous with respect to Lebesgue, we denote by $\varphi_{X\mid Y}(\cdot \mid y)$ the probability density function of $X$ given $Y=y$.
		\item We let $\mathbb{C}^+=\{z=x+iy:\ y>0\}$. 
		Let $\nu\in\mc{P}(\dR)$ be compactly supported and set
		\[
		a:=\inf\supp\nu,\qquad b:=\sup\supp\nu .
		\]
		We define its Stieltjes transform by
		\begin{equation}\label{def:Gnu}
			G_\nu:\bigl(\mathbb{C}^+\cup(\dR\setminus[a,b])\bigr)\to\mathbb{C},
			\qquad
			z\mapsto \int_{\dR}\frac{\dd\nu(x)}{z-x}.
		\end{equation}
		We then let $G_\nu(a)=\lim_{x\uparrow a}G_\nu(x)$ and $G_\nu(b)=\lim_{x\downarrow b}G_\nu(x)$. One can show that $G_\nu|_{\dR}$ is a bijection from $\dR\setminus [a,b]$ to $(G_\nu(a),G_\nu(b))\setminus \{0\}$.
		\item We denote $[N]$ for $\{1,\ldots,N\}$.
		\item For every probability measure $\zeta$ on $[0,1]$, we let \begin{align} \label{def:mcUzeta}
			\mc{U}_\zeta(q)&=\frac{1}{2}\int_q^1 s\xi''(s)\zeta([0,s])\dd s,\\ \label{def:hqm}
			h_\zeta(q,m)&=\Phi_\zeta^*(q,\cdot)(m),\\ \label{def:kqm}
			k_\zeta(q,m)&=\partial_m h_\zeta(q,m)+m\xi''(q)\int_q^1 \zeta([0,t])\dd t.
		\end{align}
		\item We write $\bfm\in [-1,1]^N$ for the vector $\bfm=(m_1,\ldots,m_N)$.
		\item For $\bfm\in [-1,1]^N$ we denote 
		\begin{equation*}
			q_\bfm=\frac{1}{N}\Vert \bfm\Vert^2.
		\end{equation*}
		\item For $\bfm\in [-1,1]^N$ we denote 
		\begin{equation*}
			\mu_N^{(\bfm)}=\frac{1}{N}\sum_{i=1}^N \delta_{m_i}.
		\end{equation*}
		\item For every $a<b$, we endow $\mc{P}([a,b])$ with the metric
		\begin{equation}\label{def:dist}
			\dist(\zeta,\zeta'):=\int_a^b\big|\zeta([a,s])-\zeta'([a,s])\big|\,\dd s,
		\end{equation}
		which corresponds to the $1$-Wasserstein distance on $\mc{P}([a,b])$ and hence metrizes weak convergence. 
		
		We will sometimes abuse notation when 
		$\zeta \in \mathcal{P}([a,b])$ and $\zeta' \in \mathcal{P}([a',b])$ with 
		$a \geq a'$ by extending $\zeta$ to $[a',b]$ and setting
		\begin{equation*}
			\mathrm{dist}(\zeta, \zeta') 
			:= \int_a^b \bigl|\zeta([a,s]) - \zeta'([a,s])\bigr| \, \mathrm{d}s.
		\end{equation*}
		\item Recall from \eqref{def:TAPmuzeta} and \eqref{def:TAPmu},
		\begin{equation*}
			\TAP(\mu,\zeta)=-\int_{-1}^1 \Phi_\zeta^*(q,m)\dd \mu(m)-\frac{1}{2}\int_q^1 t\xi''(t)\zeta([0,t])\dd t,\quad q:=\int m^2 \dd \mu(m),
		\end{equation*}
		\begin{equation*}
			\TAP(\mu)=\inf_{\zeta\in \mc{P}([q,1])} \TAP(\mu,\zeta).
		\end{equation*}
		\item If $\zeta\in \mc{P}([q,1])$, we often abuse notation and write $\zeta([0,t])$ for $t\in [q,1]$ instead of $\zeta([q,t])$.
		\item Recall from Definition~\ref{def:zetam} the TAP free energy at fixed overlap measure,
		\begin{equation*}
			F_{\TAP,\zeta}(\bfm)=H(\bfm)-\sum_{i=1}^N h_\zeta(q_\bfm,m_i)-N\mc{U}_\zeta(q_\bfm),
		\end{equation*}
		and the optimal overlap measure $\zeta_\bfm\in\mc{P}([q_\bfm,1])$, i.e.\ the unique minimizer in~\eqref{def:FTAP}.
		\item We write $\partial_m h_\zeta(q,m)$ and $\partial_{mm} h_\zeta(q,m)$ for the first and second partial derivatives of $h_\zeta(q,\cdot)$ with respect to~$m$.
		\item For $\bfm\in[-1,1]^N$ and $\zeta\in\mc{P}([0,1])$, we set
		\begin{equation*}
			Z(\bfm):=\bigl(\nabla F_{\TAP,\zeta}(\bfm),\,F_{\TAP,\zeta}(\bfm)\bigr)\in\dR^{N+1}.
		\end{equation*}
	\end{itemize}

	\subsection*{Acknowledgements}
	This work originates in a question posed by G\'erard Ben Arous, to whom we are deeply indebted for many illuminating discussions. An earlier version of this project was carried out jointly with Alice Guionnet, and we thank her for her contributions as well as for numerous further exchanges. We are also grateful to Jean-Christophe Mourrat for many generous discussions.

	This project was partly supported by the ERC Project LDRAM: ERC-2019-ADG Project 884584.

	\section{The base annealed computation}\label{section:base}
	
	In this section, we perform an annealed computation of the TAP free energy on a set of admissible states.

	\subsection{Statement of the annealed complexity result}
	
	In this subsection, we state the main result of Section~\ref{section:base}: the annealed computation of the TAP complexity. We introduce the SUSY constraint set $\SUSY_1(\zeta,\ve)$, which localizes the magnetization vector near a given prefix, and formulate the complexity in terms of the Parisi functional.
	
	Throughout Section~\ref{section:base}, we set
	\[
	\delta_\ve:=\ve^{\frac{1}{2}}.
	\]
	The only properties we use are that $\delta_\ve\downarrow 0$ and
	$\ve/\delta_\ve\to 0$ as $\ve\downarrow 0$.
	
	\begin{definition}\label{def:SUSY1eps}
		Let $\zeta\in\Prefix_2(u,q)$ for some $u,q\in(0,1)$.
		We define $\SUSY_1(\zeta,\ve)$ as the set of $\bfm\in[-1+\delta_\ve,1-\delta_\ve]^N$ such that
		\begin{equation*}
			\left|\frac{1}{N}\|\bfm\|^2-q\right|\leq\ve,
			\qquad
			\left|\frac{1}{N}\langle\partial_m h_\zeta(q,\bfm),\bfm\rangle
			-\xi'(q)\int_0^1\zeta([0,t])\,\dd t\right|\leq \ve,
		\end{equation*}
		and
		\begin{equation*}
			\dist\!\bigl(\zeta_\bfm,\,\zeta|_{(q,1]}+\zeta([0,q])\,\delta_q\bigr)\leq\ve,
		\end{equation*}
		where we recall that $\dist$ is as in \eqref{def:dist}.
	\end{definition}
	
	\begin{proposition}[Annealed complexity]\label{prop:annealed 0}
		Let $f\in\dR$, $u\in(0,1)$, $q\in(0,1)$, and $\zeta\in\Prefix_2(u,q)$. Recall $\Pari$ from \eqref{def:Par}.
		Define the \emph{prefix free energy} 
		\begin{equation*}
			\Ppar(u;q):=\inf_{\zeta'\in\Prefix_2(u,q)}\Pari(\zeta')
		\end{equation*}
		and the \emph{complexity functional at level $f$}
		\begin{equation}\label{def:Cf}
			\Cpx_f(u;q)
			\;:=\;
			u\,\bigl(\Ppar(u;q)-f\bigr).
		\end{equation}
		
		We assume:
		\begin{enumerate}
			\item[\emph{(i)}]
			\emph{(Tail optimality.)} The infimum $\Ppar(u;q)$ is attained at $\zeta$.
			\item[\emph{(ii)}]
			\emph{(Stationarity.)} $(u,q)$ is a critical point of $\Cpx_f$.
		\end{enumerate}
		
		Let $\ve\in(0,1)$ and let $\mc{N}_\ve(f,\zeta,q)$ be the number of critical
		points of $F_{\TAP,\zeta}$ in $\SUSY_1(\zeta,\ve)$ with
		$\frac{1}{N}F_{\TAP,\zeta}(\bfm)\in(f-\ve,f+\ve)$.  Then
		\[
		\lim_{\ve\to 0}\lim_{N\to\infty}
		\frac{1}{N}\log\dE\bigl[\mc{N}_\ve(f,\zeta,q)\bigr]
		\;=\;
		\Cpx_f(u;q)
		\;=\;
		\zeta(\{0\})\,\bigl(\Pari(\zeta)-f\bigr).
		\]
	\end{proposition}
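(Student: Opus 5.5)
We will prove Proposition~\ref{prop:annealed 0} by the Kac--Rice formula restricted to $\SUSY_1(\zeta,\ve)$, followed by a large-deviation (Varadhan-type) evaluation over the empirical measure $\mu_N^{(\bfm)}$. By Kac--Rice,
\[
\dE[\mc{N}_\ve(f,\zeta,q)]=\int_{\SUSY_1(\zeta,\ve)}\int_{N(f-\ve)}^{N(f+\ve)}\dE\bigl[|\det\nabla^2F_{\TAP,\zeta}(\bfm)|\,\big|\,Z(\bfm)=(0,y)\bigr]\varphi_{Z(\bfm)}(0,y)\,\dd y\,\dd\bfm .
\]
We treat the determinant and the density separately. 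Conditionally on $Z(\bfm)=(0,y)$, the Hessian is a GOE-type matrix with variance $\xi''(q)$, plus the diagonal deterministic term $-\diag(\partial_{mm}h_\zeta(q,m_i))$, plus finite-rank perturbations coming from the $q_\bfm$-dependence and the conditioning. Finite-rank terms do not affect $\frac1N\log|\det|$ at exponential scale, once we have large-deviation control of outlier eigenvalues near $0$. So the determinant contributes $\int\log|x|\,\dd(\sigma_{sc,\xi''(q)}\boxplus\nu_\mu)(x)$, where $\nu_\mu$ is the law of $-\partial_{mm}h_\zeta(q,m)$ under $\mu$, and this holds uniformly for $\mu$ in a neighborhood.

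For the density we use the dual representation \eqref{eq:varphidual}. The covariance $\Gamma$ of $(\nabla H(\bfm),H(\bfm))$ is explicit: $\xi'(q)\Id+\xi''(q)\bfm\bfm^{T}/N$ together with the $H$-cross terms $\xi'(q)\bfm$ and $N\xi(q)$. Hence $\Gamma^{-1}$ can be computed in closed form, and $\log\varphi$ becomes a function of $\mu$ and of the two scalars $\frac1N\langle\partial_mh_\zeta,\bfm\rangle$ and $q$. Fixing these scalars is exactly what $\SUSY_1$ does. For the \emph{upper bound} we plug the SUSY values $v=\zeta(\{0\})$ and $\langle x,\bfm\rangle=N\int_q^1\zeta([0,t])\dd t$ into the infimum. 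The Lebesgue integral over $\bfm$ then becomes $\exp\bigl(N\sup_\mu[\text{entropy}+\text{linear functional}]\bigr)$ by Sanov/Varadhan. This sup is a Gibbs variational problem whose optimizer is an exponential tilt. Next we combine the log-determinant with $\log|\partial_{mm}h|$-type terms using the subordination identity (the Ward identity \eqref{eq:Wardcorr}) at spectral parameter $0$. The total should collapse to $u(\TAP\text{-type expression})$, which after Legendre duality $\Phi_\zeta^*$ gives $\zeta(\{0\})(\Pari(\zeta)-f)$. Here we use $\Pari(\zeta)=\Phi_\zeta(0,0)-\mc U_\zeta(0)$ and the Itô/Auffinger--Chen representation $\Phi_\zeta(0,0)=\dE[\ldots X_q]$ to pass between level $0$ and level $q$.

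For the \emph{lower bound} we restrict the $\bfm$-integral to vectors whose empirical measure is close to $\mu^\star:=\Law(\partial_x\Phi_\zeta(q,X_q))$ with $X$ the Auffinger--Chen SDE, and verify three facts. First, $\mu^\star$ satisfies the constraints of $\SUSY_1$: its second moment is $q$ and $\int\partial_mh\cdot m\,\dd\mu^\star=\xi'(q)\int_0^1\zeta([0,t])\dd t$, which follows from Itô calculus together with the criticality of $(u,q)$ in (ii). Second, $\zeta_\bfm$ is near $\zeta|_{(q,1]}+u\delta_q$: this is the Chen--Panchenko--Subag optimality condition for the band Parisi problem, which follows from tail optimality (i) via the Auffinger--Chen characterization. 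Third, the infimum in $w$ is actually attained at the SUSY values, so the upper bound is tight. For this we check that the first-order conditions in $(x,v)$ hold at $\mu^\star$, again from (ii): differentiation in $u$ gives the energy condition and differentiation in $q$ gives the $\langle x,\bfm\rangle$ condition. Continuity of $\mu\mapsto\TAP(\mu,\zeta)$ and of $\zeta_\bfm$ in $\bfm$ (from \cite{chen2018generalized}) justifies the localization. The cutoff $\delta_\ve$ keeps $h_\zeta$ smooth.

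The main obstacle is the determinant. We must show that the lower bound $\frac1N\log|\det|\ge\int\log|x|\,\dd(\ldots)$ holds with conditioning and finite-rank perturbations present, and that the resulting free-convolution log-potential combines with the density term to exactly $u(\Pari(\zeta)-f)$. Our first attempt would be the subordination formula $G_{\sigma\boxplus\nu}(0)=G_\nu(-\xi''(q)G(0))$, identifying $G(0)$ with $\int_q^1\zeta([0,t])\dd t$. We would verify this identification via the Auffinger--Chen SDE identity $\dE[\partial_{xx}\Phi_\zeta(q,X_q)]=\ldots$, which is where the SUSY ansatz becomes a genuine equality.
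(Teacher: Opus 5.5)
Your architecture matches the paper's: Kac--Rice, the dual bound on the Gaussian density at the SUSY test point $v=u$, $\langle x,\bfm\rangle=N\int_q^1\zeta([0,t])\,\dd t$, Hopf--Cole on $[0,q]$, and a lower bound under the tilted single-site law $\Law(\partial_x\Phi_\zeta(q,X_q))$. However, the determinant step, which you rightly call the crux, fails as you set it up.

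Differentiating $\sum_k h_\zeta(q_\bfm,m_k)+N\mc U_\zeta(q_\bfm)$ twice through $q_\bfm=\|\bfm\|^2/N$ does not produce only finite-rank pieces. Besides rank-$O(1)$ terms with $O(1/N)$ entries, it produces the full-rank diagonal term $\delta_{ij}\bigl(\frac2N\sum_k\partial_q h_\zeta(q_\bfm,m_k)+2\,\partial_q\mc U_\zeta(q_\bfm)\bigr)=\delta_{ij}\,\xi''(q_\bfm)\,\frac1N\sum_k\bigl(\partial_{mm}h_\zeta(q_\bfm,m_k)\bigr)^{-1}$. On $\SUSY_1(\zeta,\ve)$ this equals $\xi''(q)\int_q^1\zeta([0,t])\,\dd t$ up to $O(\ve/\delta_\ve)$, by Lemma~\ref{lemma:xxphi} and the defect bound of Lemma~\ref{lemma:Delta-bound}. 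So the deterministic diagonal is $T_\zeta(m_i)=\partial_{mm}h_\zeta(q,m_i)+\xi''(q)\int_q^1\zeta([0,t])\,\dd t$, not $\partial_{mm}h_\zeta(q,m_i)$.

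This shift is exactly what makes $\omega=\xi''(q)\int_q^1\zeta([0,t])\,\dd t$ the subordination point at $0$. Then $T_\zeta-\omega=\partial_{mm}h_\zeta$, and the log-potential collapses to $\frac1N\sum_i\log\partial_{mm}h_\zeta(q,m_i)+\frac12\xi''(q)\bigl(\int_q^1\zeta([0,t])\,\dd t\bigr)^2$. Its last term cancels the $-\frac N2\xi''(q)\bigl(\int_q^1\zeta([0,t])\,\dd t\bigr)^2$ coming from the density. With your $\nu_\mu$ (the law of $-\partial_{mm}h_\zeta$), the identity you can actually prove, $\frac1N\sum_i(\partial_{mm}h_\zeta(q,m_i))^{-1}=\int_q^1\zeta([0,t])\,\dd t$, is inconsistent with $|G(0)|=\int_q^1\zeta([0,t])\,\dd t$ in the subordination equation. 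Indeed it would force $\int(\lambda-c)^{-1}\,\dd\tilde\nu(\lambda)=\int\lambda^{-1}\,\dd\tilde\nu(\lambda)$ with $c=\xi''(q)\int_q^1\zeta([0,t])\,\dd t>0$ below the support of $\tilde\nu$, the law of $\partial_{mm}h_\zeta(q,\cdot)$. So no cancellation occurs and the total does not reduce to $u(\Pari(\zeta)-f)$.

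A second ingredient is missing. Even with the correct diagonal, a real root of $\omega+\xi''(q)G_{T_\zeta\#\mu_N^{(\bfm)}}(\omega)=0$ is the subordination value at $0$ only if it lies in $\overline{\Omega}$, i.e.\ only if $\xi''(q)\frac1N\sum_i(\partial_{mm}h_{\zeta_\bfm}(q_\bfm,m_i))^{-2}\le 1$. The paper gets this from the \emph{second-order} optimality of $\zeta_\bfm$ at $q_\bfm\in\supp(\zeta_\bfm)$, namely \eqref{eq:second order}, using $\partial_{mm}h=1/\partial_{xx}\Phi$ along the Legendre correspondence. This condition puts $0$ to the left of the bulk and makes $\log|x|$ Lipschitz on the support, which is needed to compare $T_\zeta$ with $T_{\zeta_\bfm}$. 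Your plan invokes only the first-order identity for $\dE[\partial_{xx}\Phi_\zeta(q,X_q)]$.

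Both conditions must be used for every $\bfm\in\SUSY_1(\zeta,\ve)$, through the law-matched process started from $\mu_N^{(\bfm)}$ and the minimality of $\zeta_\bfm$, not only at $\mu^\star$. This is because the upper bound integrates over all of $\SUSY_1$.

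Two smaller remarks. First, once the determinant is $\prod_i\partial_{mm}h_\zeta(q,m_i)\,e^{O_\ve(N)}$ the integrand is a product, so Sanov/Varadhan is unnecessary. For the upper bound, drop the constraints, change variables $y=\partial_m h_\zeta(q,m)$, and apply Hopf--Cole. For the lower bound, use the LLN under the truncated product law together with Lemma~\ref{lemma:stableTapMin}. Second, your joint-density Kac--Rice formula needs $(\nabla F_{\TAP,\zeta},F_{\TAP,\zeta})$ to be non-degenerate, which fails in the pure $p$-spin case (the Schur complement $D(q)$ vanishes). There the paper uses Euler's identity (Lemma~\ref{lemma:reduction pure}) to turn the energy constraint into a deterministic indicator and runs Kac--Rice on the gradient alone.
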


	\subsection{Kac--Rice formula}
	
	We now develop the Kac--Rice machinery needed to count critical points of the
	TAP free energy.  The first step is an exact algebraic reduction, special to the
	pure $p$-spin case, that expresses the value of $F_{\TAP,\zeta}$ at a critical
	point in terms of a deterministic remainder.
	
	\begin{lemma}[Pure $p$-spin exact reduction]\label{lemma:reduction pure}
		Assume $\xi(t)=\beta_p^2 t^p$ for some $p\ge 2$.
		For $\bfm\in(-1,1)^N$, let
		\[
		q_\bfm=\frac1N\|\bfm\|^2,\qquad
		\mc S_\zeta(\bfm)=\sum_{i=1}^N h_\zeta(q_\bfm,m_i)+N\mc U_\zeta(q_\bfm),
		\]
		and
		\begin{equation}\label{eq:def-Rex-KR}
			\mc R^{\rm ex}_\zeta(\bfm)
			:=\frac1p\langle \bfm,\nabla \mc S_\zeta(\bfm)\rangle-\mc S_\zeta(\bfm).
		\end{equation}
		Then
		\[
		F_{\TAP,\zeta}(\bfm)
		=
		\frac1p\langle \bfm,\nabla F_{\TAP,\zeta}(\bfm)\rangle
		+\mc R^{\rm ex}_\zeta(\bfm).
		\]
		In particular, if $\nabla F_{\TAP,\zeta}(\bfm)=0$, then
		\[
		F_{\TAP,\zeta}(\bfm)=\mc R^{\rm ex}_\zeta(\bfm).
		\]
	\end{lemma}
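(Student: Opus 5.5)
The plan is to use Euler's identity for homogeneous functions, applied to the Gaussian part $H$. We have $F_{\TAP,\zeta}(\bfm)=H(\bfm)-\mc S_\zeta(\bfm)$ by Definition~\ref{def:zetam} and the notation section. When $\xi(t)=\beta_p^2t^p$, the process $H$ is a homogeneous polynomial of degree $p$ in $\bfm$: it can be realized as
\[
H(\bfm)=\frac{\beta_p}{N^{(p-1)/2}}\sum_{i_1,\dots,i_p}g_{i_1\cdots i_p}\,m_{i_1}\cdots m_{i_p},
\]
with i.i.d.\ standard Gaussians $g$. A direct check shows that its covariance is $N\beta_p^2(\langle\bfm,\bfm'\rangle/N)^p$, so this representation has the law required by \eqref{eq:Ham full}. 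I would then take this version of $H$, noting that the statement is one about the random function only through its law; almost surely it is then also a smooth function on $(-1,1)^N$.

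Given this representation, Euler's identity $\langle\bfm,\nabla H(\bfm)\rangle=pH(\bfm)$ holds pathwise. A more intrinsic route, avoiding any choice of representation, is to consider the process
\[
D(\bfm):=\langle \bfm,\nabla H(\bfm)\rangle-pH(\bfm).
\]
One computes $\dE[D(\bfm)D(\bfm')]$ by differentiating the kernel $N\xi(\langle\bfm,\bfm'\rangle/N)$. The scaling $t\mapsto \xi(\lambda t)=\lambda^p\xi(t)$ makes every term combine to $N(t^2\xi''(t)+t\xi'(t)-2pt\xi'(t)+p^2\xi(t))$ with $t=\langle\bfm,\bfm'\rangle/N$. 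For $\xi=\beta_p^2t^p$ this equals $\beta_p^2t^p\,(p(p-1)+p-2p^2+p^2)=0$. Hence $D\equiv0$ almost surely, simultaneously for all $\bfm$, by continuity of the process on a countable dense set.

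It then remains to write
\[
\frac1p\langle\bfm,\nabla F_{\TAP,\zeta}(\bfm)\rangle=\frac1p\langle\bfm,\nabla H(\bfm)\rangle-\frac1p\langle\bfm,\nabla\mc S_\zeta(\bfm)\rangle=H(\bfm)-\frac1p\langle\bfm,\nabla\mc S_\zeta(\bfm)\rangle.
\]
Subtracting this from $F_{\TAP,\zeta}(\bfm)=H(\bfm)-\mc S_\zeta(\bfm)$ gives exactly $\mc R^{\rm ex}_\zeta(\bfm)$ as defined in \eqref{eq:def-Rex-KR}. The second claim follows immediately by setting $\nabla F_{\TAP,\zeta}(\bfm)=0$.

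The only genuine point, and the main obstacle, is regularity: one needs $\mc S_\zeta$ to be differentiable on $(-1,1)^N$ so that $\nabla\mc S_\zeta$ makes sense. This requires $h_\zeta(q,m)=\Phi^*_\zeta(q,\cdot)(m)$ to be differentiable in both $q$ and $m$ for $|m|<1$, and $\mc U_\zeta$ to be differentiable in $q$. I would handle this with the standard facts that $\Phi_\zeta(q,\cdot)$ is smooth and strictly convex with $\partial_x\Phi_\zeta(q,\cdot)$ a bijection onto $(-1,1)$, so that its Legendre transform is smooth there. The regularity in $q$ follows from the Parisi PDE \eqref{eq:PPDE}, taking $q$ away from atoms of $\zeta$ or using one-sided derivatives where needed. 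The identity itself, however, is purely algebraic once these derivatives exist.
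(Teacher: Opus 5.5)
Your proposal is correct and follows the paper's proof: apply Euler's identity $\langle \bfm,\nabla H(\bfm)\rangle = pH(\bfm)$ to the degree-$p$ homogeneous Hamiltonian, then substitute into $F_{\TAP,\zeta}=H-\mc S_\zeta$. The paper simply asserts that $H$ is a homogeneous polynomial, so your explicit tensor representation, your variance computation showing $\langle\bfm,\nabla H(\bfm)\rangle-pH(\bfm)\equiv 0$, and your remark on the differentiability of $\mc S_\zeta$ are useful added details rather than a different route.
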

	
	\begin{proof}
		Since $\xi(t)=\beta_p^2 t^p$, the Hamiltonian $H$ is a homogeneous polynomial
		of degree~$p$ in $\bfm$.  Euler's identity for homogeneous functions therefore
		gives
		\[
		H(\bfm)=\frac1p\langle \bfm,\nabla H(\bfm)\rangle,
		\qquad \bfm\in(-1,1)^N.
		\]
		Writing $F_{\TAP,\zeta}=H-\mc S_\zeta$ and inserting the identity above, we
		obtain
		\begin{align*}
			F_{\TAP,\zeta}(\bfm)
			&=
			\frac1p\langle \bfm,\nabla H(\bfm)\rangle-\mc S_\zeta(\bfm)\\
			&=
			\frac1p\langle \bfm,\nabla F_{\TAP,\zeta}(\bfm)\rangle
			+\frac1p\langle \bfm,\nabla \mc S_\zeta(\bfm)\rangle-\mc S_\zeta(\bfm)\\
			&=
			\frac1p\langle \bfm,\nabla F_{\TAP,\zeta}(\bfm)\rangle
			+\mc R^{\rm ex}_\zeta(\bfm),
		\end{align*}
		where the last step uses the definition~\eqref{eq:def-Rex-KR}.
		The second assertion follows by setting
		$\nabla F_{\TAP,\zeta}(\bfm)=0$.
	\end{proof}
	
	With this reduction in hand, we turn to the Kac--Rice formulas themselves.
	In the mixed case, both the gradient and the free-energy value appear as
	conditioning variables in the density, whereas in the pure case
	Lemma~\ref{lemma:reduction pure} eliminates the need to condition on the
	free-energy value, replacing it by a deterministic indicator.
	
	\begin{lemma}[Kac--Rice: mixed and pure cases]\label{lemma:KacRice}
		Let $\zeta$ be a probability measure on $[0,1]$ with a $2$-atom prefix
		$\{0,q\}$ for some $q\in(0,1)$, and let $\ve\in(0,q/2)$.
		Let $\mc N_\ve(f,\zeta,q)$ be the number of critical points $\bfm$ of
		$F_{\TAP,\zeta}$ such that
		\[
		\bfm\in \SUSY_1(\zeta,\ve)
		\qquad\text{and}\qquad
		\frac1N F_{\TAP,\zeta}(\bfm)\in(f-\ve,f+\ve).
		\]
		For $\bfm\in(-1,1)^N$, write $q_\bfm:=\frac1N\|\bfm\|^2$.
		
		\begin{enumerate}[label=(\roman*)]
			\item \textup{(Mixed case.)}
			Assume that for every $\bfm\in(-1,1)^N$ with
			$q_\bfm\in(q-\ve,q+\ve)$, the Gaussian vector
			\[
			\bigl(\nabla F_{\TAP,\zeta}(\bfm),\,F_{\TAP,\zeta}(\bfm)\bigr)
			\]
			is non-degenerate.  In particular, this holds for a genuine mixed
			$p$-spin Hamiltonian, i.e.\ when
			\[
			\xi(t)=\sum_{p\ge 2}\beta_p^2 t^p
			\]
			has at least two non-zero coefficients.
			Then
			\begin{multline}\label{eq:KacRice}
				\dE[\mc N_\ve(f,\zeta,q)]
				=N\int_{[-1,1]^N}\int_{(f-\ve,f+\ve)}
				\dE\!\left[
				|\det(\nabla^2 F_{\TAP,\zeta}(\bfm))|
				\,\middle|\,
				\nabla F_{\TAP,\zeta}(\bfm)=0,\,
				F_{\TAP,\zeta}(\bfm)=Nf'
				\right]
				\\
				\times
				\varphi_{(\nabla F_{\TAP,\zeta}(\bfm),\,F_{\TAP,\zeta}(\bfm))}(0,Nf')\,
				\indic_{\SUSY_1(\zeta,\ve)}(\bfm)\,
				\dd\bfm\,\dd f'.
			\end{multline}
			
			\item \textup{(Pure case.)}
			Assume that $\xi(t)=\beta_p^2 t^p$ is pure $p$-spin for some $p\ge2$.
			Recalling the notation~\eqref{eq:def-Rex-KR},
			\begin{multline}\label{eq:KacRice-pure}
				\dE[\mc N_\ve(f,\zeta,q)]
				=
				\int_{[-1,1]^N}
				\dE\!\left[
				|\det(\nabla^2 F_{\TAP,\zeta}(\bfm))|
				\,\middle|\,
				\nabla F_{\TAP,\zeta}(\bfm)=0
				\right]
				\varphi_{\nabla F_{\TAP,\zeta}(\bfm)}(0)
				\\
				\times
				\indic_{\SUSY_1(\zeta,\ve)}(\bfm)\,
				\indic_{(f-\ve,f+\ve)}
				\!\left(\frac1N\mc R^{\rm ex}_\zeta(\bfm)\right)
				\dd\bfm.
			\end{multline}
		\end{enumerate}
	\end{lemma}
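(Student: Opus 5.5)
The plan is to deduce both parts from the classical Kac--Rice formula for smooth Gaussian fields, in the form of Adler--Taylor or Azaïs--Wschebor. The mixed case uses an augmented field, and the pure case reduces to a field without the free-energy coordinate via Lemma~\ref{lemma:reduction pure}.

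\textbf{Mixed case.} We apply Kac--Rice on the open set $U:=(-1,1)^N\cap\{q_\bfm\in(q-\ve,q+\ve)\}$. This set contains $\SUSY_1(\zeta,\ve)$, since $\SUSY_1(\zeta,\ve)\subset[-1+\delta_\ve,1-\delta_\ve]^N$ and $|q_\bfm-q|\le\ve$ there. The boundary case of the latter inequality is Lebesgue-null for the Gaussian count and can be handled by slightly enlarging $\ve$. The deterministic part $\mc S_\zeta(\bfm)=\sum_i h_\zeta(q_\bfm,m_i)+N\mc U_\zeta(q_\bfm)$ is smooth on $U$, because $h_\zeta(q,\cdot)=\Phi_\zeta^*(q,\cdot)$ is smooth on $(-1,1)$ and smooth in $q$ by the regularity of the Parisi PDE. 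Hence $F_{\TAP,\zeta}=H-\mc S_\zeta$ is a $C^2$ (indeed $C^\infty$) Gaussian field whose mean is smooth and whose covariance is that of $H$.

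We count zeros of $\nabla F_{\TAP,\zeta}$ weighted by the function $g(\bfm)=\indic_{\SUSY_1(\zeta,\ve)}(\bfm)\indic_{(f-\ve,f+\ve)}(F_{\TAP,\zeta}(\bfm)/N)$. The weighted Kac--Rice formula, with a mark depending on the field value, gives
\[
\dE\Bigl[\sum_{\nabla F=0} g\Bigr]=\int_U \dE\bigl[|\det\nabla^2F|\,\indic_{(f-\ve,f+\ve)}(F/N)\bigm|\nabla F=0\bigr]\varphi_{\nabla F(\bfm)}(0)\,\indic_{\SUSY_1}(\bfm)\,\dd\bfm .
\]
Disintegrating over the value $F=Nf'$ and using the joint density of $(\nabla F,F)$ yields \eqref{eq:KacRice}. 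The factor $N$ is the Jacobian of the substitution $F=Nf'$. The hypotheses to verify are the following.

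\begin{enumerate}
\item Non-degeneracy of $(\nabla F_{\TAP,\zeta}(\bfm),F_{\TAP,\zeta}(\bfm))$. This is assumed in the statement. To justify the parenthetical claim, we compute the covariance of $(\nabla H(\bfm),H(\bfm))$ explicitly. The gradient has covariance $\xi'(q)\Id+\xi''(q)\bfm\bfm^\top/N$, and the cross term is $\xi'(q)\bfm$. By a Schur complement, nondegeneracy reduces to the scalar condition $\xi(q)-\xi'(q)^2 q/(\xi'(q)+q\xi''(q))>0$. For $\xi=\beta_p^2t^p$ this equals $0$, which is exactly Euler's identity. When two distinct powers are present, it is strictly positive by Cauchy--Schwarz applied to the sequences $(\beta_p^2q^p)$ and $(p)$.
\item Almost-sure nondegeneracy of critical points, i.e.\ no critical point with a singular Hessian. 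This follows from Bulinskaya's lemma, using the $C^2$ regularity and the bounded density of $\nabla F$.
\end{enumerate}

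\textbf{Pure case.} By Lemma~\ref{lemma:reduction pure}, at any critical point we have $F_{\TAP,\zeta}(\bfm)=\mc R^{\rm ex}_\zeta(\bfm)$, a deterministic function. The weight $g$ on the zero set of $\nabla F$ therefore equals the deterministic function $\indic_{\SUSY_1}(\bfm)\indic_{(f-\ve,f+\ve)}(\mc R^{\rm ex}_\zeta(\bfm)/N)$. The standard weighted Kac--Rice formula for the field $\nabla F_{\TAP,\zeta}$ then gives \eqref{eq:KacRice-pure} directly. The only input needed is nondegeneracy of $\nabla H(\bfm)$, whose covariance $\xi'(q)\Id+\xi''(q)\bfm\bfm^\top/N$ is positive definite for $q>0$.

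\textbf{Main obstacle.} The main obstacle is the regularity and nondegeneracy bookkeeping. This means checking the smoothness of $h_\zeta(q,m)$ jointly in $(q,m)$, which follows from smoothness of $\Phi_\zeta$ and strict convexity of $\Phi_\zeta(q,\cdot)$, so that its Legendre dual is smooth on $(-1,1)$. It also means verifying the Schur-complement positivity in the mixed case, which is what distinguishes the two cases.
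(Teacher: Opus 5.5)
Your proposal is correct and follows the paper's proof essentially step by step: Kac--Rice for the zeros of $\nabla F_{\TAP,\zeta}$ on the shell $\{|q_\bfm-q|<\ve\}$ with the free-energy value as a mark (disintegrated over $F_{\TAP,\zeta}=Nf'$, which produces the factor $N$), non-degeneracy via a Schur complement equivalent to the paper's $2\times 2$ determinant $\tfrac12\sum_{p,\ell}\beta_p^2\beta_\ell^2(p-\ell)^2r^{p+\ell}>0$, and the pure case via the deterministic weight from Lemma~\ref{lemma:reduction pure}; your remark about the closed constraint $|q_\bfm-q|\le\ve$ is a point the paper glosses over. One caveat applies equally to the paper's assertion that $F_{\TAP,\zeta}$ is $C^2$ on $U$: $h_\zeta(q,m)$ is not smooth in $q$ at atoms of $\zeta$ (e.g.\ at $q$ itself when $\zeta=u\delta_0+(1-u)\delta_q$), and although the jumps of $\partial_q h_\zeta$ and $\mc U_\zeta'$ cancel in $\nabla F_{\TAP,\zeta}$ because $\frac1N\|\bfm\|^2=q_\bfm$, the Hessian jumps across $\{q_\bfm=q\}$ by a multiple of $\bfm\bfm^\top/N$ proportional to $\zeta(\{q\})\bigl(1-\xi''(q)\frac1N\sum_j\partial_{mm}h_\zeta(q,m_j)^{-2}\bigr)$, so your ``$C^\infty$'' claim is false as stated and Kac--Rice should be applied away from these Lebesgue-null spheres, with a separate Bulinskaya-type check that almost surely no critical point lies on them.
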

	
	\begin{proof}
		\noindent\textbf{Step 1: reduction to the interior.}
		Set
		\[
		U:=(-1,1)^N,
		\qquad
		U_{q,\ve}:=\left\{\bfm\in U:\left|\frac1N\|\bfm\|^2-q\right|<\ve\right\}.
		\]
		Since $\ve<q/2$, every $\bfm\in U_{q,\ve}$ satisfies
		$q_\bfm>q/2>0$.  By definition of $\SUSY_1(\zeta,\ve)$,
		every critical point counted by $\mc N_\ve(f,\zeta,q)$ belongs to $U_{q,\ve}$.
		
		We first verify that no critical points lie on the boundary
		$\partial([-1,1]^N)$.  Recall that
		\[
		F_{\TAP,\zeta}(\bfm)
		=
		H(\bfm)-\sum_{i=1}^N \Phi_\zeta^*(q_\bfm,m_i)-N\mc U_\zeta(q_\bfm),
		\qquad
		q_\bfm=\frac1N\|\bfm\|^2.
		\]
		Fix $r\in[0,1]$.  Since $x\mapsto \Phi_\zeta(r,x)$ is strictly convex,
		the map $x\mapsto \partial_x\Phi_\zeta(r,x)$ is strictly increasing with
		range $(-1,1)$, and therefore
		\[
		\partial_m\Phi_\zeta^*(r,m)
		=
		(\partial_x\Phi_\zeta(r,\cdot))^{-1}(m),
		\qquad m\in(-1,1).
		\]
		In particular,
		\[
		\lim_{m\uparrow 1}\partial_m\Phi_\zeta^*(r,m)=+\infty,
		\qquad
		\lim_{m\downarrow -1}\partial_m\Phi_\zeta^*(r,m)=-\infty.
		\]
		Hence $F_{\TAP,\zeta}$ is $C^2$ on~$U$, while if some coordinate
		$m_i=\pm1$, then the partial derivative $\partial_iF_{\TAP,\zeta}(\bfm)$
		diverges because of the term $\partial_m\Phi_\zeta^*(q_\bfm,m_i)$.
		Thus there are no critical points on $\partial([-1,1]^N)$, and the critical
		points of $F_{\TAP,\zeta}$ in $[-1,1]^N$ coincide with those in~$U$.
		
		\medskip\noindent\textbf{Step 2: mixed case.}
		Assume~\textup{(i)}.  Since the TAP correction is deterministic, the covariance
		of
		$\bigl(\nabla F_{\TAP,\zeta}(\bfm),F_{\TAP,\zeta}(\bfm)\bigr)$
		coincides with that of $\bigl(\nabla H(\bfm),H(\bfm)\bigr)$.
		For $\bfm\in U_{q,\ve}$ and $r:=q_\bfm$, the covariance identities read
		\[
		\Cov(\partial_iH(\bfm),\partial_jH(\bfm))
		=
		\xi'(r)\delta_{ij}+\frac{\xi''(r)}{N}m_i m_j,
		\]
		\[
		\Cov(\partial_iH(\bfm),H(\bfm))
		=
		\xi'(r)m_i,
		\qquad
		\Var(H(\bfm))=N\xi(r).
		\]
		Hence for every $(x,v)\in\dR^N\times\dR$,
		\begin{equation}\label{eq:var-mixed-KR}
			\Var\!\bigl(\langle x,\nabla H(\bfm)\rangle+vH(\bfm)\bigr)
			=
			\xi'(r)\|x\|^2+\frac{\xi''(r)}{N}\langle \bfm,x\rangle^2
			+2v\xi'(r)\langle \bfm,x\rangle
			+N\xi(r)v^2.
		\end{equation}
		If $\xi(t)=\sum_{p\ge2}\beta_p^2 t^p$ has at least two non-zero coefficients,
		write $x=x_\perp+a\bfm$ with $\langle x_\perp,\bfm\rangle=0$.  Then
		\[
		\eqref{eq:var-mixed-KR}
		=
		\xi'(r)\|x_\perp\|^2
		+N\begin{pmatrix}a&v\end{pmatrix}
		\begin{pmatrix}
			r(\xi'(r)+r\xi''(r)) & r\xi'(r)\\
			r\xi'(r) & \xi(r)
		\end{pmatrix}
		\begin{pmatrix}a\\ v\end{pmatrix}.
		\]
		The determinant of the $2\times2$ matrix equals
		\begin{align*}
			&\det\begin{pmatrix}
				r(\xi'(r)+r\xi''(r)) & r\xi'(r)\\
				r\xi'(r) & \xi(r)
			\end{pmatrix} \\
			&=
			\left(\sum_{p\ge2}\beta_p^2 r^p\right)
			\left(\sum_{p\ge2}p^2\beta_p^2 r^p\right)
			-
			\left(\sum_{p\ge2}p\beta_p^2 r^p\right)^2
			=
			\frac12\sum_{p,\ell\ge2}\beta_p^2\beta_\ell^2(p-\ell)^2r^{p+\ell}>0,
		\end{align*}
		so the quadratic form is positive for every $(x,v)\neq(0,0)$.
		The non-degeneracy hypothesis in~\textup{(i)} is therefore satisfied in the
		mixed $p$-spin case.
		
		We now apply the Kac--Rice formula (e.g.\ \cite{adlertaylor},
		Corollaries~11.3.2 and~11.3.5) to the $(N+1)$-dimensional Gaussian field
		\[
		\bfm\mapsto \bigl(\nabla F_{\TAP,\zeta}(\bfm),F_{\TAP,\zeta}(\bfm)\bigr)
		\]
		on the open set $U_{q,\ve}$, restricted to the Borel set
		$\SUSY_1(\zeta,\ve)\cap U_{q,\ve}$ and the level window
		$F_{\TAP,\zeta}(\bfm)\in(N(f-\ve),N(f+\ve))$.  This gives
		\begin{multline*}
			\dE[\mc N_\ve(f,\zeta,q)]
			=
			\int_{U_{q,\ve}}
			\int_{(N(f-\ve),N(f+\ve))}
			\dE\!\left[
			|\det(\nabla^2 F_{\TAP,\zeta}(\bfm))|
			\,\middle|\,
			\nabla F_{\TAP,\zeta}(\bfm)=0,\,
			F_{\TAP,\zeta}(\bfm)=t
			\right]
			\\
			\times
			\varphi_{(\nabla F_{\TAP,\zeta}(\bfm),\,F_{\TAP,\zeta}(\bfm))}(0,t)\,
			\indic_{\SUSY_1(\zeta,\ve)}(\bfm)\,
			\dd\bfm\,\dd t.
		\end{multline*}
		The change of variables $t=Nf'$ yields $\dd t=N\,\dd f'$ and therefore
		\begin{multline*}
			\dE[\mc N_\ve(f,\zeta,q)]
			=
			N\int_{U_{q,\ve}}
			\int_{(f-\ve,f+\ve)}
			\dE\!\left[
			|\det(\nabla^2 F_{\TAP,\zeta}(\bfm))|
			\,\middle|\,
			\nabla F_{\TAP,\zeta}(\bfm)=0,\,
			F_{\TAP,\zeta}(\bfm)=Nf'
			\right]
			\\
			\times
			\varphi_{(\nabla F_{\TAP,\zeta}(\bfm),F_{\TAP,\zeta}(\bfm))}(0,Nf')\,
			\indic_{\SUSY_1(\zeta,\ve)}(\bfm)\,
			\dd\bfm\,\dd f'.
		\end{multline*}
		Finally, since $\indic_{\SUSY_1(\zeta,\ve)}(\bfm)=0$ outside $U_{q,\ve}$ and
		$[-1,1]^N\setminus U$ has Lebesgue measure zero, we may extend the outer
		integral to $[-1,1]^N$, which gives~\eqref{eq:KacRice}.
		
		\medskip\noindent\textbf{Step 3: pure case.}
		Assume now~\textup{(ii)} and set $Y(\bfm):=\nabla F_{\TAP,\zeta}(\bfm)$ for
		$\bfm\in U$.  By Lemma~\ref{lemma:reduction pure},
		\begin{equation}\label{eq:pure-critical-identity-KR}
			Y(\bfm)=0
			\quad\Longrightarrow\quad
			F_{\TAP,\zeta}(\bfm)=\mc R^{\rm ex}_\zeta(\bfm).
		\end{equation}
		Therefore
		\[
		\mc N_\ve(f,\zeta,q)
		=
		\#\left\{
		\bfm\in U_{q,\ve}:
		Y(\bfm)=0,\,
		\bfm\in\SUSY_1(\zeta,\ve),\,
		\frac1N\mc R^{\rm ex}_\zeta(\bfm)\in(f-\ve,f+\ve)
		\right\}.
		\]
		
		For each fixed $\bfm\in U_{q,\ve}$, the vector $Y(\bfm)$ is Gaussian with
		covariance
		\[
		\Cov(Y(\bfm))
		=
		\Cov(\nabla H(\bfm))
		=
		\xi'(q_\bfm)\Id+\frac{\xi''(q_\bfm)}{N}\bfm\bfm^\top.
		\]
		Since $q_\bfm>q/2>0$ and $\xi'(0)=0<\xi'(q_\bfm)$ by strict convexity
		of~$\xi$, this covariance matrix is positive definite.  Hence $Y(\bfm)$ is
		non-degenerate for every $\bfm\in U_{q,\ve}$.
		
		We may therefore apply the Kac--Rice formula to the $N$-dimensional Gaussian
		field~$Y$ on the Borel set
		\[
		A_{\zeta,\ve,f}
		:=
		\left\{
		\bfm\in U_{q,\ve}:
		\bfm\in \SUSY_1(\zeta,\ve),\,
		\frac1N\mc R^{\rm ex}_\zeta(\bfm)\in(f-\ve,f+\ve)
		\right\}.
		\]
		Since $\nabla Y(\bfm)=\nabla^2 F_{\TAP,\zeta}(\bfm)$, this yields
		\begin{multline*}
			\dE[\mc N_\ve(f,\zeta,q)]
			=
			\int_{U_{q,\ve}}
			\dE\!\left[
			|\det(\nabla^2 F_{\TAP,\zeta}(\bfm))|
			\,\middle|\,
			\nabla F_{\TAP,\zeta}(\bfm)=0
			\right]
			\varphi_{\nabla F_{\TAP,\zeta}(\bfm)}(0)
			\\
			\times
			\indic_{\SUSY_1(\zeta,\ve)}(\bfm)\,
			\indic_{(f-\ve,f+\ve)}
			\!\left(\frac1N\mc R^{\rm ex}_\zeta(\bfm)\right)
			\dd\bfm.
		\end{multline*}
		As before, $\indic_{\SUSY_1(\zeta,\ve)}$ vanishes outside $U_{q,\ve}$ and
		$[-1,1]^N\setminus U$ has Lebesgue measure zero, so we may extend the integral
		to $[-1,1]^N$, giving~\eqref{eq:KacRice-pure}.
	\end{proof}

	\subsection{Reduction from the Ward identities}
	
	In the Kac--Rice formula \eqref{eq:KacRice}, we must evaluate the joint density of 
	\((\nabla F_{\TAP,\zeta}(\bfm),\,F_{\TAP,\zeta}(\bfm))\) at \((0,Nf)\). Leveraging the SUSY
	heuristics from Section \ref{sub:SUSY}, we derive a particularly simple representation in the next lemma.

	\begin{lemma}\label{lemma:prob0 non deg}
		Assume that $\xi$ is a genuine mixture, i.e.\ $\xi(t)=\sum_p \beta_p^2 t^p$ with
		at least two nonzero coefficients.  In particular, the quantity
		\begin{equation}\label{eq:Dq def}
			D(q):=\xi(q)\bigl(\xi'(q)+q\,\xi''(q)\bigr)-q\,\xi'(q)^2
			=\tfrac12\sum_{a,b}\beta_a^2\beta_b^2(a-b)^2\,q^{a+b-1}
		\end{equation}
		is strictly positive on $(0,1)$.
		Let $\ve>0$, $u\in(0,1)$, $q\in(0,1)$ and $f\in\dR$.
		Let $\zeta\in\Prefix_2(u,q)$ where $\Prefix_2$ is as in
		Definition~\ref{def:nprefix}.
		
		Let $\delta\in[\ve,1)$ and $\bfm\in[-1+\delta,1-\delta]^N$ and denote
		$Z(\bfm):=\bigl(\nabla F_{\TAP,\zeta}(\bfm),\,F_{\TAP,\zeta}(\bfm)\bigr)$. Suppose in addition that $q_\bfm\in[q-\ve,q+\ve]$ and
		$\dist\!\bigl(\zeta_\bfm,\,\zeta|_{(q,1]}+\zeta([0,q])\,\delta_q\bigr)\leq \ve$,
		where both measures are viewed as elements of $\mc{P}([0,1])$ and $\dist$
		is the metric~\eqref{def:dist}.
		
		\begin{enumerate}[label=(\roman*)]
			\item \label{eq:prob0 1} The log-density of $Z(\bfm)$ at $(0,Nf)$ satisfies
			\begin{multline}\label{eq:upper bound proba}
				\log \varphi_{Z(\bfm)}(0,Nf)\\ \leq \frac{1}{2}N\xi(q)u^2-\frac{1}{2}N\xi''(q)\left(\int_q^1 \zeta([0,t])\dd t\right)^2 -\frac{1}{2\xi'(q)}\Vert \partial_m h_\zeta(q,\bfm)-u\xi'(q)\bfm\Vert^2\\-\frac{N}{2}\log (2\pi \xi'(q))+\frac{N}{2\xi'(q)q}\left( \frac{1}{N}\langle \partial_m h_\zeta(q,\bfm),\bfm\rangle-\xi'(q)\int_0^1 \zeta([0,t])\dd t \right)^2\\- u \left(\sum_{i=1}^N h_\zeta(q,m_i)+N\mc{U}_\zeta(q)+Nf\right)
				+O\!\left(N\frac{\ve}{\delta}\right)+O(\log N),
			\end{multline}
			where the implicit constant depends only on $\xi$, $q$, and $u$.
			
			\item \label{eq:prob0 2} Suppose in addition that
			\begin{equation}\label{eq:ass1}
				\langle \partial_m h_\zeta(q,\bfm),\bfm\rangle=N\xi'(q)\int_0^1 \zeta([0,t])\dd t +O(N\ve)
			\end{equation}
			and
			\begin{equation}\label{eq:ass2}
				\sum_{i=1}^N (h_\zeta(q,m_i)-\partial_m h_\zeta(q,m_i)m_i)+N\int_0^q t\xi''(t)\zeta([0,t])\dd t+\frac{1}{2}N \int_q^1 t\xi''(t)\zeta([0,t]) \dd t=-Nf+O(N\ve).
			\end{equation}
			Then,
			\begin{multline}\label{eq:lower bound proba}
				\log \varphi_{Z(\bfm)}(0,Nf) \\= \frac{1}{2}N\xi(q)u^2-\frac{1}{2}N\xi''(q)\left(\int_q^1 \zeta([0,t])\dd t\right)^2 -\frac{1}{2\xi'(q)}\Vert \partial_m h_\zeta(q,\bfm)-u\xi'(q)\bfm\Vert^2-\frac{N}{2}\log (2\pi \xi'(q))\\-u \left(\sum_{i=1}^N h_\zeta(q,m_i)+N\mc{U}_\zeta(q)+Nf\right)
				+O\!\left(N\frac{\ve}{\delta}\right)+O(\log N).
			\end{multline}
		\end{enumerate}
		
	\end{lemma}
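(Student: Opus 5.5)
We write the Gaussian log-density through its convex-dual form \eqref{eq:varphidual}. The quantity we need is
\[
\log\varphi_{Z(\bfm)}(0,Nf)=\tfrac12\inf_{w}\{\langle w,\Gamma w\rangle-2\langle w,z\rangle\}-\tfrac12\log\det(2\pi\Gamma),
\]
where $\Gamma=\Cov(\nabla H(\bfm),H(\bfm))$. The vector $z$ is the value that $(\nabla H(\bfm),H(\bfm))$ must take for $Z(\bfm)=(0,Nf)$, namely
\[
z=\bigl(\nabla\mc S_\zeta(\bfm),\,\mc S_\zeta(\bfm)+Nf\bigr),\qquad \mc S_\zeta(\bfm)=\sum_i h_\zeta(q_\bfm,m_i)+N\mc U_\zeta(q_\bfm).
\]

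\textbf{Step 1 (reduction to $q$).} We first replace $q_\bfm$ by $q$ everywhere.

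\begin{itemize}
\item The gradient is $\nabla\mc S_\zeta(\bfm)=\partial_m h_\zeta(q_\bfm,\bfm)+\tfrac{2}{N}\bfm\bigl(\sum_i\partial_q h_\zeta(q_\bfm,m_i)+N\mc U_\zeta'(q_\bfm)\bigr)$.
\item The scalar $\tfrac1N\sum_i\partial_q h_\zeta+\mc U'_\zeta$ vanishes up to $O(\ve/\delta)$. This follows from the optimality characterization of $\zeta_\bfm$ in \cite{chen2018generalized} together with $\dist(\zeta_\bfm,\zeta|_{(q,1]}+\zeta([0,q])\delta_q)\le\ve$.
\item On $[-1+\delta,1-\delta]$ the derivatives $\partial_m h$ and $\partial_q h$ are Lipschitz in $q$ with constants $O(1/\delta)$. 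Hence the replacements cost $O(N\ve/\delta)$ in the exponent.
\item The determinant equals $\det(2\pi\Gamma)=(2\pi\xi'(q))^{N}\cdot e^{O(1)}$, up to a factor polynomial in $N$. This gives the $-\tfrac N2\log(2\pi\xi'(q))+O(\log N)$ term.
\end{itemize}

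\textbf{Step 2 (quadratic bound).} We use the decomposition $x=x_\perp+a\bfm$ from the proof of Lemma~\ref{lemma:KacRice}, in which the quadratic form \eqref{eq:var-mixed-KR} is explicit.

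For part (i), upper-bounding an infimum only requires evaluating it at a chosen $w$. We take $v=-u$ (sign chosen so the $-u(\mc S+Nf)$ term appears). We then optimize the transverse part freely, which yields $-\tfrac{1}{2\xi'(q)}\|(\partial_m h)_\perp\|^2$. For the longitudinal part we fix $a$ at the SUSY value, i.e.\ $\langle x,\bfm\rangle\approx N\int_q^1\zeta([0,t])\dd t$ suitably scaled.

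Completing the square in $x$ against $\partial_m h-u\xi'(q)\bfm$ converts $\|(\partial_m h)_\perp\|^2$ into $\|\partial_m h-u\xi'(q)\bfm\|^2$ minus its projection onto $\bfm$. That projection term is
\[
\frac{N}{2\xi'(q)q}\Bigl(\tfrac1N\langle\partial_m h,\bfm\rangle-u\xi'(q)q\Bigr)^2.
\]
Rewriting $u\xi'(q)q$ together with the longitudinal contribution using $\int_0^1\zeta([0,t])\dd t=uq+\int_q^1\zeta([0,t])\dd t$ should give exactly the displayed square. The remaining pieces are $\tfrac12N\xi(q)u^2$ and the cross term $2v\xi'\langle\bfm,x\rangle$, which together produce the $-\tfrac12N\xi''(q)(\int_q^1\zeta)^2$ term. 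Since we only evaluated at one $w$, this gives the upper bound \eqref{eq:upper bound proba}.

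\textbf{Step 3 (equality in part (ii)).} We must show the chosen $w$ is near-optimal. Restricted to $x_\perp$, the problem is already solved exactly, so only the two-dimensional infimum over $(a,v)$ remains. This $2\times2$ problem has positive determinant $D(q)$, so it has an explicit minimizer. We check that under \eqref{eq:ass1} and \eqref{eq:ass2} the first-order conditions hold at $(a,v)=(\text{SUSY value},-u)$ up to $O(\ve)$:
\begin{itemize}
\item \eqref{eq:ass1} is the condition in $a$;
\item \eqref{eq:ass2} is the condition in $v$, after using Euler-type rewriting of $\mc S_\zeta+Nf$ and the identity $\mc U_\zeta(q)$ expressed via $\int_0^q t\xi''\zeta$.
\end{itemize}
Strong convexity, with constant controlled by $D(q)>0$, then turns an $O(\ve)$ gradient into an $O(N\ve^2)$ error in the value. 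The square term from (i) is $O(N\ve^2)$ by \eqref{eq:ass1} and can be dropped.

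\textbf{Main obstacle.} The delicate point is the algebra in Step 3: verifying that \eqref{eq:ass2} is precisely the $v$-stationarity condition. This relies on the identity $\xi'(q)\int_0^1\zeta=\xi'(q)(uq+\int_q^1\zeta)$ and on integrating by parts $\tfrac12\int_q^1 t\xi''\zeta$ against $\xi(q)u$ and $\xi''(q)$-terms. I would first do this computation with the exact $2\times2$ inverse and only then insert the hypotheses.
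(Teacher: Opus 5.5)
\textbf{Gap in Step 1.} You claim that the scalar $\tfrac1N\sum_i\partial_q h_\zeta(q_\bfm,m_i)+\mc U_\zeta'(q_\bfm)$ vanishes up to $O(\ve/\delta)$. This is false, and everything downstream depends on it.

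Combining $\partial_q\mc U_\zeta(q)=-\tfrac12 q\xi''(q)\zeta([0,q])$, the envelope formula \eqref{eq:dqh} and Lemma~\ref{lemma:xxphi}, the scalar equals $\tfrac12\xi''(q_\bfm)\bigl(\int_{q_\bfm}^1\zeta([0,t])\,\dd t-\Delta\bigr)$, where $\Delta$ is the defect term of \eqref{eq:gradTap-defect}. Optimality of $\zeta_\bfm$, via Lemma~\ref{lemma:Delta-bound}, only gives $\Delta=O(\ve/\delta)$; it does not kill the whole scalar. So the gradient block of the mean is $k_\zeta(q_\bfm,\bfm)+O(\ve/\delta)$, not $\partial_m h_\zeta(q_\bfm,\bfm)$.

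Set $A:=\int_q^1\zeta([0,t])\,\dd t$. You have dropped the longitudinal vector $\xi''(q)A\,\bfm$, which has norm of order $\sqrt N$ and shifts the exponent at order $N$. That vector is exactly the source of $-\tfrac12N\xi''(q)A^2$. On the constraint set $\langle x,\bfm\rangle=NA$, the quadratic term $\tfrac{\xi''(q)}{N}\langle\bfm,x\rangle^2$ contributes $+N\xi''(q)A^2$, and the linear term $-2\xi''(q)A\langle\bfm,x\rangle$ contributes $-2N\xi''(q)A^2$. With your mean, the same test point yields $+\tfrac12N\xi''(q)A^2$ instead.

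Your attribution of this term to $\tfrac12N\xi(q)u^2$ and the cross term $2v\xi'(q)\langle\bfm,x\rangle$ is also incorrect. The former survives unchanged in the final formula. The latter is what completes the square into $\|\partial_mh_\zeta-u\xi'(q)\bfm\|^2$. In your convention $z=(\nabla\mc S_\zeta,\mc S_\zeta+Nf)$, that completion requires $v=+u$, not $v=-u$.

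The same omission breaks Step 3. With your mean, the $a$-stationarity condition at $a=A/q$, $v=u$ reads $\langle\partial_m h_\zeta,\bfm\rangle=N\xi'(q)\int_0^1\zeta([0,t])\,\dd t+Nq\xi''(q)A$. Under \eqref{eq:ass1}, the $a$-gradient of the $2\times2$ problem is therefore $2Nq\xi''(q)A$, of order $N$ rather than $O(N\ve)$, so the test point is not near-optimal.

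Now replace $\nabla\mc S_\zeta(\bfm)$ by $k_\zeta(q_\bfm,\bfm)$ up to a componentwise $O(\ve/\delta)$ error, as in \eqref{eq:gradTap-stability}. The extra linear term $-2aNq\xi''(q)A$ then cancels the $\xi''$ contribution, and the $a$-condition becomes exactly \eqref{eq:ass1}. With this correction, the rest of your plan goes through and is essentially the paper's argument.

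The paper handles (ii) slightly differently. It takes the unconstrained minimizer $x=\xi'(q)^{-1}(\partial_mh_\zeta-u\xi'(q)\bfm)$, $v=u$, and bounds $\langle r,\Gamma^{-1}r\rangle$ for the residual $r=\Gamma w-z$ using the block scaling of $\Gamma^{-1}$. This is equivalent to your strong-convexity argument on the $(a,v)$ block, whose determinant is $qD(q)>0$.
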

	
	\begin{proof}
		Throughout we set $q_\bfm:=\frac{1}{N}\|\bfm\|^2$.
		
		\smallskip\noindent\textbf{Step 1: exact Gaussian representation at $q_\bfm$}
		
		By definition~\eqref{def:FTAP},
		\[F_{\TAP,\zeta}(\bfm)
		=H(\bfm)-\sum_{i=1}^N h_\zeta(q_\bfm,m_i)-N\mc{U}_\zeta(q_\bfm).\]
		For the gradient, the exact computation in the proof of Lemma~\ref{lemma:gradient TAP}
		(see~\eqref{eq:diG reduced} and~\eqref{eq:xx}) gives, for every $i\in[N]$,
		\begin{equation}\label{eq:grad claim}
			\partial_i F_{\TAP,\zeta}(\bfm)
			=\partial_i H(\bfm)-k_\zeta(q_\bfm,m_i)+\Delta_i(\bfm,\zeta),
		\end{equation}
		where $\Delta_i$ is controlled by~\eqref{eq:gradTap-stability}.
		Indeed the comparison measure
		$\zeta|_{(q_\bfm,1]}+\zeta([0,q_\bfm])\delta_{q_\bfm}$ has an atom of size
		$\zeta([0,q_\bfm])\ge u$ at $q_\bfm$, so
		\[
		|\Delta_i|
		\le \frac{C}{\delta}\,
		\dist\!\bigl(\zeta_\bfm,\,\zeta|_{(q_\bfm,1]}+\zeta([0,q_\bfm])\delta_{q_\bfm}\bigr).
		\]
		Moreover, the $\dist$-distance between the measures
		$\zeta|_{(q_\bfm,1]}+\zeta([0,q_\bfm])\delta_{q_\bfm}$ and
		$\zeta|_{(q,1]}+\zeta([0,q])\delta_q$ is at most $C\ve$; together with the
		hypothesis
		$\dist(\zeta_\bfm,\,\zeta|_{(q,1]}+\zeta([0,q])\delta_q)<\ve$ and the triangle
		inequality, this yields
		\[
		|\Delta_i|\le C'\frac{\ve}{\delta}
		\]
		componentwise.
		
		Write $\Delta:=(\Delta_1,\ldots,\Delta_N)$ and define
		\[
		z_\bfm:=\begin{pmatrix}
			k_\zeta(q_\bfm,\bfm)-\Delta\\[3pt]
			Nf+\sum_{i=1}^N h_\zeta(q_\bfm,m_i)+N\mc{U}_\zeta(q_\bfm)
		\end{pmatrix},
		\qquad
		\Gamma_\bfm:=\operatorname{Cov}\bigl(Z(\bfm)\bigr).
		\]
		Then $Z(\bfm)$ is Gaussian with mean $-z_\bfm$ and covariance $\Gamma_\bfm$, so
		\begin{equation}\label{eq:exact log density}
			\log\varphi_{Z(\bfm)}(0,Nf)
			=-\tfrac{1}{2}\langle z_\bfm,\Gamma_\bfm^{-1}\,z_\bfm\rangle
			-\tfrac{1}{2}\log\det(2\pi\Gamma_\bfm).
		\end{equation}
		
		\smallskip\noindent\textbf{Step 2: block structure of $\Gamma_\bfm$}
		
		The covariance $\Gamma_\bfm$ admits the $(N\!+\!1)\times(N\!+\!1)$ block decomposition
		\[
		\Gamma_\bfm=\begin{pmatrix} A_\bfm & b_\bfm \\ b_\bfm^\top & c_\bfm\end{pmatrix},
		\quad
		A_\bfm=\xi'(q_\bfm)I_N+\tfrac{\xi''(q_\bfm)}{N}\bfm\bfm^\top,
		\quad
		b_\bfm=\xi'(q_\bfm)\bfm,
		\quad
		c_\bfm=N\xi(q_\bfm).
		\]
		By the matrix determinant lemma,
		$\det A_\bfm=\xi'(q_\bfm)^{N-1}\bigl(\xi'(q_\bfm)+q_\bfm\xi''(q_\bfm)\bigr)$,
		and the Schur complement
		$S_\bfm:=c_\bfm-b_\bfm^\top A_\bfm^{-1}b_\bfm
		=\frac{N\,D(q_\bfm)}{\xi'(q_\bfm)+q_\bfm\xi''(q_\bfm)}$
		is strictly positive since $\xi$ is a mixture. Hence
		\begin{equation}\label{eq:logdet Gamma}
			\log\det(\Gamma_\bfm)
			=(N\!-\!1)\log\xi'(q_\bfm)+\log\bigl(\xi'(q_\bfm)+q_\bfm\xi''(q_\bfm)\bigr)
			+\log\frac{N\,D(q_\bfm)}{\xi'(q_\bfm)+q_\bfm\xi''(q_\bfm)}.
		\end{equation}
		Since $|q_\bfm-q|\leq\ve$ and all functions of $q$ appearing above are smooth on $(0,1)$,
		\begin{equation}\label{eq:logdet frozen}
			\log\det(\Gamma_\bfm)=N\log\xi'(q)+O_\ve(N)+O(\log N).
		\end{equation}
		For the inverse, the standard block-inversion formula gives
		\begin{equation}\label{eq:Gamma inv blocks}
			\Gamma_\bfm^{-1}=\begin{pmatrix}
				A_\bfm^{-1}+\frac{1}{S_\bfm}A_\bfm^{-1}b_\bfm b_\bfm^\top A_\bfm^{-1}
				& -\frac{1}{S_\bfm}A_\bfm^{-1}b_\bfm \\[4pt]
				-\frac{1}{S_\bfm}b_\bfm^\top A_\bfm^{-1}
				& \frac{1}{S_\bfm}
			\end{pmatrix},
		\end{equation}
		so the gradient--gradient block of $\Gamma_\bfm^{-1}$ is $O(1)$,
		the scalar entry is $O(1/N)$, and the cross entries are $O(1/N)$.
		
		\smallskip\noindent\textbf{Step 3: upper bound} By the convex-dual identity
		\[-\langle z_\bfm,\Gamma_\bfm^{-1}z_\bfm\rangle
		=\inf_{w}\bigl\{\langle w,\Gamma_\bfm w\rangle-2\langle w,z_\bfm\rangle\bigr\},\]
		we bound the right-hand side from above by evaluating at a test point
		$w=(x,u)$ with the constraint
		$\langle\bfm,x\rangle=N\!\int_q^1\zeta([0,t])\dd t$.
		
		Since $k_\zeta(q_\bfm,m)=\partial_m h_\zeta(q_\bfm,m)+m\xi''(q_\bfm)\int_{q_\bfm}^1\zeta([0,t])\dd t$
		and $\int_{q_\bfm}^1\zeta([0,t])\dd t=\int_q^1\zeta([0,t])\dd t+O(\ve)$, the
		$\xi''$-terms in the quadratic form and the linear form cancel up to
		$O(N\ve/\delta)$ on the constraint set.
		The defect $\Delta$ contributes
		$2|\langle x,\Delta\rangle|\le C\frac{\ve}{\delta}\bigl(\|x\|^2+N\bigr)$,
		which is controlled by the positive $\xi'(q_\bfm)\|x\|^2$ term and an additive
		$O(N\ve/\delta)$.
		After these cancellations,
		\begin{multline}\label{eq:majo}
			-\langle z_\bfm,\Gamma_\bfm^{-1}z_\bfm\rangle
			\leq N\xi(q_\bfm)u^2
			-N\xi''(q_\bfm)\!\left(\int_q^1\zeta([0,t])\dd t\right)^{\!2}
			+O\!\left(N\frac{\ve}{\delta}\right)\\
			+\inf_{x:\langle x,\bfm\rangle=N\int_q^1\zeta([0,t])\dd t}
			\Bigl(\xi'(q_\bfm)\|x\|^2
			-2\langle\partial_m h_\zeta(q_\bfm,\bfm)-u\xi'(q_\bfm)\bfm,\,x\rangle\Bigr)\\
			-2u\bigl(\sum_{i=1}^N h_\zeta(q_\bfm,m_i)+N\mc{U}_\zeta(q_\bfm)+Nf\bigr).
		\end{multline}
		Optimising the constrained infimum over $x$ by Lagrange duality
		(using $\|\bfm\|^2=Nq_\bfm$ and
		$u q_\bfm+\int_q^1\zeta([0,t])\dd t=\int_0^1\zeta([0,t])\dd t+O(\ve)$)
		and then optimising the concave quadratic in the multiplier~$\lambda$,
		\begin{multline}\label{eq:quadra}
			\inf_{x:\langle x,\bfm\rangle=N\int_q^1\zeta([0,t])\dd t}
			\Bigl(\xi'(q_\bfm)\|x\|^2
			-2\bigl\langle\partial_m h_\zeta(q_\bfm,\bfm)-u\xi'(q_\bfm)\bfm,\,x\bigr\rangle\Bigr)
			\\
			=-\frac{\|\partial_m h_\zeta(q_\bfm,\bfm)-u\xi'(q_\bfm)\bfm\|^2}{\xi'(q_\bfm)}
			+\frac{N}{\xi'(q_\bfm)q_\bfm}
			\Bigl(\frac{1}{N}\bigl\langle\partial_m h_\zeta(q_\bfm,\bfm),\bfm\bigr\rangle
			-\xi'(q_\bfm)\!\int_0^1\zeta([0,t])\dd t\Bigr)^2
			+O_\ve(N).
		\end{multline}
		
		All quantities in~\eqref{eq:majo}--\eqref{eq:quadra} involve $q_\bfm$ rather than $q$.
		Since $|q_\bfm-q|\leq\ve$ and the functions $h_\zeta(\cdot,m)$, $\mc{U}_\zeta$, $\xi'$,
		$\xi''$, $\xi$ are smooth in their overlap argument, replacing each occurrence of $q_\bfm$ by
		$q$ introduces errors that are $O(\ve/\delta)$ pointwise in $m\in[-1+\delta,1-\delta]$ and $O(N\ve/\delta)$ when summed over $i$. In the positive-square term, the
		cross-contribution from the $O(\ve)$ shift only inflates the upper bound and is therefore
		harmless. Together with~\eqref{eq:logdet frozen} and~\eqref{eq:exact log density},
		this yields~\eqref{eq:upper bound proba} with overall remainder
		$O(N\ve/\delta)+O(\log N)$.
		
		\smallskip\noindent\textbf{Step 4: lower bound}
		
		Suppose that~\eqref{eq:ass1} and~\eqref{eq:ass2} hold.  Set
		\begin{equation*}
			x:=\frac{1}{\xi'(q_\bfm)}\bigl(\partial_m h_\zeta(q_\bfm,\bfm)-u\xi'(q_\bfm)\bfm\bigr),
			\qquad w:=(x,u)\in\dR^{N+1}.
		\end{equation*}
		Since the quadratic $w'\mapsto\frac{1}{2}\langle w',\Gamma_\bfm w'\rangle-\langle w',z_\bfm\rangle$
		is strictly convex, defining $r:=\Gamma_\bfm w-z_\bfm$ we have
		\begin{equation}\label{eq:approx saddle}
			\tfrac{1}{2}\langle w,\Gamma_\bfm w\rangle-\langle w,z_\bfm\rangle
			=-\tfrac{1}{2}\langle z_\bfm,\Gamma_\bfm^{-1}z_\bfm\rangle
			+\tfrac{1}{2}\langle r,\Gamma_\bfm^{-1}r\rangle.
		\end{equation}
		
		For the gradient block of $r$, a direct computation gives
		\[
		\langle \bfm,x\rangle
		=\frac{1}{\xi'(q_\bfm)}\bigl(\langle\partial_m h_\zeta(q_\bfm,\bfm),\bfm\rangle
		-u\xi'(q_\bfm)Nq_\bfm\bigr).
		\]
		By~\eqref{eq:ass1} and
		$\partial_m h_\zeta(q_\bfm,m_i)=\partial_m h_\zeta(q,m_i)+O(\ve)$,
		we get $\langle\bfm,x\rangle=N\int_{q_\bfm}^1\zeta([0,t])\dd t+O(N\ve)$.
		Using the definition of $k_\zeta(q_\bfm,\cdot)$ and the defect bound
		$\|\Delta\|_\infty\le C'\ve/\delta$ from Step~1, this gives
		\begin{equation}\label{eq:res grad}
			[r]_{\mathrm{grad}}=O(\ve/\delta)\text{ componentwise}.
		\end{equation}
		
		For the scalar block of $r$, using
		$\langle\partial_m h_\zeta(q_\bfm,\bfm),\bfm\rangle
		=\sum_i \partial_m h_\zeta(q_\bfm,m_i)m_i$
		together with~\eqref{eq:ass1} and $\partial_m h_\zeta(q_\bfm,\cdot)=\partial_m h_\zeta(q,\cdot)+O(\ve)$,
		\begin{equation*}
			\sum_{i=1}^N h_\zeta(q_\bfm,m_i)
			=\sum_{i=1}^N \bigl(h_\zeta(q_\bfm,m_i)-\partial_m h_\zeta(q_\bfm,m_i)m_i\bigr)
			+N\xi'(q_\bfm)\Bigl(uq_\bfm+\int_{q_\bfm}^1 \zeta([0,t])\dd t\Bigr)+O(N\ve).
		\end{equation*}
		Combined with $u\bigl(\xi(q_\bfm)-q_\bfm\xi'(q_\bfm)\bigr)=-\int_0^{q_\bfm} t\xi''(t)\zeta([0,t])\dd t$
		(which holds since $\zeta([0,t])=u$ for $t\in[0,q)$),
		\begin{multline*}
			-\sum_{i=1}^N h_\zeta(q_\bfm,m_i)+\xi'(q_\bfm)\langle\bfm,x\rangle+Nu\xi(q_\bfm)-N\mc{U}_\zeta(q_\bfm)\\
			=-\sum_{i=1}^N\bigl(h_\zeta(q_\bfm,m_i)-\partial_m h_\zeta(q_\bfm,m_i)m_i\bigr)
			-N\!\int_0^{q_\bfm}\! t\xi''(t)\zeta([0,t])\dd t
			-\frac{N}{2}\!\int_{q_\bfm}^1\! t\xi''(t)\zeta([0,t])\dd t
			+O(N\ve).
		\end{multline*}
		Replacing $q_\bfm$ by $q$ on the right-hand side costs $O(N\ve)$,
		and by~\eqref{eq:ass2} the resulting expression equals $Nf+O(N\ve)$.
		Since $[z_\bfm]_{N+1}=Nf+\sum_i h_\zeta(q_\bfm,m_i)+N\mc{U}_\zeta(q_\bfm)$,
		\begin{equation}\label{eq:res scalar}
			[r]_{N+1}=O(N\ve).
		\end{equation}
		
		By~\eqref{eq:res grad}, \eqref{eq:res scalar}, and the blockwise scaling of
		$\Gamma_\bfm^{-1}$ from~\eqref{eq:Gamma inv blocks},
		\[
		\langle r,\Gamma_\bfm^{-1}r\rangle
		\le C\bigl(\|r_{\mathrm{grad}}\|^2+\tfrac{1}{N}|r_{N+1}|^2\bigr)
		=O\!\left(N\frac{\ve^2}{\delta^2}\right)+O(N\ve^2)
		=O\!\left(N\frac{\ve}{\delta}\right).
		\]
		
		Evaluating $\frac{1}{2}\langle w,\Gamma_\bfm w\rangle-\langle w,z_\bfm\rangle$
		at $w=(x,u)$ by the same algebra as in Step~3 (the constrained optimisation in~\eqref{eq:quadra}
		is now nearly attained since
		$\langle\bfm,x\rangle=N\int_{q_\bfm}^1\zeta([0,t])\dd t+O(N\ve)$),
		and replacing $q_\bfm$ by $q$ at the cost of $O(N\ve/\delta)$,
		\begin{multline*}
			\tfrac{1}{2}\langle w,\Gamma_\bfm w\rangle-\langle w,z_\bfm\rangle
			=\frac{1}{2}N\xi(q)u^2
			-\frac{1}{2}N\xi''(q)\Bigl(\int_q^1 \zeta([0,t])\dd t\Bigr)^{\!2}
			-\frac{1}{2\xi'(q)}\Vert \partial_m h_\zeta(q,\bfm)-u\xi'(q)\bfm \Vert^2\\
			-u \Bigl(\sum_{i=1}^N h_\zeta(q,m_i)+N\mc{U}_\zeta(q)+Nf\Bigr)
			+O\!\left(N\frac{\ve}{\delta}\right),
		\end{multline*}
		where~\eqref{eq:ass1} ensures that the positive-square term
		from~\eqref{eq:quadra} contributes $O(N\ve^2)=O(N\ve/\delta)$.
		Since $\langle r,\Gamma_\bfm^{-1}r\rangle=O(N\ve/\delta)$
		by~\eqref{eq:approx saddle},
		\[
		-\tfrac{1}{2}\langle z_\bfm,\Gamma_\bfm^{-1}z_\bfm\rangle
		=\tfrac{1}{2}\langle w,\Gamma_\bfm w\rangle-\langle w,z_\bfm\rangle
		+O\!\left(N\frac{\ve}{\delta}\right).
		\]
		Together with~\eqref{eq:exact log density} and~\eqref{eq:logdet frozen},
		this gives~\eqref{eq:lower bound proba}.
	\end{proof}
	
	Let us now discuss the degenerate case.

	\begin{lemma}
		\label{lemma:prob0 pure}
		Assume that $\xi$ is pure $p$-spin. Let $\ve>0$. Let $u\in(0,1)$, $q\in(0,1)$, $f\in\dR$, and let $\zeta\in \Prefix_2(u,q)$ where $\Prefix_2$ is as in Definition \ref{def:nprefix}.
		
		Let $\delta\in[\ve,1)$ and $\bfm\in[-1+\delta,1-\delta]^N$. Suppose moreover that $q_\bfm\in (q-\ve,q+\ve)$ and
		$\dist\!\bigl(\zeta_\bfm,\,\zeta|_{(q,1]}+\zeta([0,q])\,\delta_q\bigr)<\ve$.
		\begin{enumerate}[label=(\roman*)]
			\item \label{eq:prob0 pure density} The gradient $\nabla F_{\TAP,\zeta}(\bfm)$ is a non-degenerate Gaussian vector in $\dR^N$. Its log-density at $0$, tilted by
			\[
			u\!\left(\frac1p\langle \bfm,k_\zeta(q,\bfm)\rangle-\sum_{i=1}^N h_\zeta(q,m_i)-N\mc U_\zeta(q)-Nf\right),
			\]
			satisfies
			\begin{multline}\label{eq:pure-prob0-main}
				\log \varphi_{\nabla F_{\TAP,\zeta}(\bfm)}(0)
				+u\!\left(\frac1p\langle \bfm,k_\zeta(q,\bfm)\rangle-\sum_{i=1}^N h_\zeta(q,m_i)-N\mc U_\zeta(q)-Nf\right)
				\\
				=
				\frac{1}{2}N\xi(q)u^2
				-\frac{1}{2}N\xi''(q)\!\left(\int_q^1 \zeta([0,t])\,\dd t\right)^{\!2}
				-\frac{1}{2\xi'(q)}
				\Vert \partial_m h_\zeta(q,\bfm)-u\xi'(q)\bfm\Vert^2
				\\
				-\frac{N}{2}\log(2\pi \xi'(q))
				+\frac{N(p-1)}{2p\,\xi'(q)\,q}
				\!\left(
				\frac{1}{N}\langle \partial_m h_\zeta(q,\bfm),\bfm\rangle
				-\xi'(q)\int_0^1 \zeta([0,t])\,\dd t
				\right)^{\!2}
				\\
				-u\!\left(\sum_{i=1}^N h_\zeta(q,m_i)+N\mc U_\zeta(q)+Nf\right)
				+O\!\left(N\frac{\ve}{\delta}\right).
			\end{multline}
			
			\item \label{eq:prob0 pure simplified} Suppose moreover that
			\begin{equation}\label{eq:pure-ass1}
				\langle \partial_m h_\zeta(q,\bfm),\bfm\rangle
				=
				N\xi'(q)\int_0^1 \zeta([0,t])\,\dd t+O(N\ve)
			\end{equation}
			and
			\begin{equation}\label{eq:pure-ass2}
				\sum_{i=1}^N\bigl(h_\zeta(q,m_i)-\partial_m h_\zeta(q,m_i)m_i\bigr)
				+N\int_0^q t\xi''(t)\zeta([0,t])\,\dd t
				+\frac{N}{2}\int_q^1 t\xi''(t)\zeta([0,t])\,\dd t
				=
				-Nf+O(N\ve).
			\end{equation}
			Then the squared term in \eqref{eq:pure-prob0-main} is $O(N\ve)$, so that 
			\begin{multline}\label{eq:pure-prob0-simplified}
				\log \varphi_{\nabla F_{\TAP,\zeta}(\bfm)}(0)
				+u\!\left(\frac1p\langle \bfm,k_\zeta(q,\bfm)\rangle-\sum_{i=1}^N h_\zeta(q,m_i)-N\mc U_\zeta(q)-Nf\right)
				\\
				=
				\frac{1}{2}N\xi(q)u^2
				-\frac{1}{2}N\xi''(q)\!\left(\int_q^1 \zeta([0,t])\,\dd t\right)^{\!2}
				-\frac{1}{2\xi'(q)}
				\Vert \partial_m h_\zeta(q,\bfm)-u\xi'(q)\bfm\Vert^2
				\\
				-\frac{N}{2}\log(2\pi \xi'(q))
				-u\!\left(\sum_{i=1}^N h_\zeta(q,m_i)+N\mc U_\zeta(q)+Nf\right)
				+O\!\left(N\frac{\ve}{\delta}\right),
			\end{multline}
			and, on the event $\{\nabla F_{\TAP,\zeta}(\bfm)=0\}$,
			\begin{equation}\label{eq:R-equals-f-pure}
				F_{\TAP,\zeta}(\bfm)=Nf+O\!\left(N\frac{\ve}{\delta}\right).
			\end{equation}
		\end{enumerate}
	\end{lemma}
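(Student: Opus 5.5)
The proof will mirror the mixed-case Lemma~\ref{lemma:prob0 non deg}, with one change: in the pure case the joint vector $Z(\bfm)$ is degenerate. Its last coordinate is determined by the gradient via Lemma~\ref{lemma:reduction pure}. So we work only with $\nabla F_{\TAP,\zeta}(\bfm)$ and recover the role of the free-energy multiplier $v=u$ through the tilt appearing in the statement.

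\textbf{Steps 1--2: exact density and covariance.} By \eqref{eq:grad claim}, $\nabla F_{\TAP,\zeta}(\bfm)$ is Gaussian with mean $-(k_\zeta(q_\bfm,\bfm)-\Delta)$, where $\|\Delta\|_\infty\le C\ve/\delta$. Its covariance is $A_\bfm=\xi'(q_\bfm)I+\frac{\xi''(q_\bfm)}{N}\bfm\bfm^\top$, which is positive definite since $\xi'(q_\bfm)>0$ for $q_\bfm>q/2$; this gives the non-degeneracy claim. The log-determinant is $N\log\xi'(q)+O(N\ve)+O(\log N)$ by the matrix determinant lemma, and the exact log-density is $-\frac12\langle z,A_\bfm^{-1}z\rangle-\frac12\log\det(2\pi A_\bfm)$ with $z=k_\zeta-\Delta$. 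We decompose $z$ into its component along $\bfm$ and its orthogonal part. Sherman--Morrison then gives the quadratic form explicitly: $\frac{1}{\xi'}\|z\|^2$ minus a rank-one correction $\frac{\xi''}{\xi'(\xi'+q\xi'')}\frac{\langle z,\bfm\rangle^2}{N}$. In the pure case $\xi'+q\xi''=p\,\xi'$ and $q\xi'=p\xi$, and these identities will produce the coefficient $\frac{p-1}{p}$ in the statement.

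\textbf{Step 3: matching the stated form.} We substitute $k_\zeta=\partial_m h_\zeta+\bfm\,\xi''(q)\int_q^1\zeta([0,t])\dd t$ and add the tilt $u\bigl(\frac1p\langle\bfm,k_\zeta\rangle-\sum_i h_\zeta-N\mc U_\zeta-Nf\bigr)$. The algebra is to complete the square in the scalar $s:=\frac1N\langle\partial_m h_\zeta(q,\bfm),\bfm\rangle$. Expanding $\|\partial_m h_\zeta-u\xi'\bfm\|^2=\|\partial_m h_\zeta\|^2-2u\xi' Ns+u^2\xi'^2Nq$ isolates the orthogonal norm, so everything else is a quadratic polynomial in $s$. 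We verify that its leading coefficient is $\frac{N(p-1)}{2p\xi' q}$ and that it is minimized at $s=\xi'(q)\int_0^1\zeta([0,t])\dd t=\xi'(q)\bigl(uq+\int_q^1\zeta\bigr)$. Two facts make the constants collapse: the relation $u(\xi-q\xi')=-\int_0^q t\xi''\zeta$ used in Lemma~\ref{lemma:prob0 non deg}, and the pure-case identities above. Together they should yield exactly $\frac12N\xi u^2-\frac12N\xi''(\int_q^1\zeta)^2$.

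The defect $\Delta$ and the replacement of $q_\bfm$ by $q$ cost $O(N\ve/\delta)$: the functions are smooth for $m\in[-1+\delta,1-\delta]$, and the cross terms with $\Delta$ are bounded by Cauchy--Schwarz because $\|k_\zeta\|\le C\sqrt N/\delta$. More carefully, we control them by $\|\Delta\|\,\|z\|\le CN\ve/\delta$ after noting that $\|\partial_m h_\zeta\|_\infty$ is bounded on the relevant region, up to a logarithmic factor in $\delta$. I expect this bookkeeping, namely getting the exact constant $\frac{p-1}{p}$ and the cancellation of $u$-dependent terms, to be the main obstacle. It is pure algebra, but it is error-prone.

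\textbf{Step 4: part (ii).} Under \eqref{eq:pure-ass1} the squared term is $O(N\ve^2)$, which gives \eqref{eq:pure-prob0-simplified}. For \eqref{eq:R-equals-f-pure}, on $\{\nabla F_{\TAP,\zeta}=0\}$ Lemma~\ref{lemma:reduction pure} gives $F_{\TAP,\zeta}(\bfm)=\mc R^{\rm ex}_\zeta(\bfm)$. We compute $\nabla\mc S_\zeta$: its $i$-th component is $\partial_mh_\zeta(q_\bfm,m_i)$ plus $\frac{2m_i}{N}$ times $\partial_q$ of $\sum_j h_\zeta+N\mc U_\zeta$. This equals $k_\zeta(q_\bfm,m_i)$ up to the defect $\Delta_i$; this is exactly the content of \eqref{eq:grad claim}, since $\nabla H=\nabla\mc S_\zeta$ at a critical point. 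Therefore
\[\mc R^{\rm ex}_\zeta=\tfrac1p\langle\bfm,k_\zeta\rangle-\textstyle\sum_i h_\zeta-N\mc U_\zeta+O(N\ve/\delta).\]
Using \eqref{eq:pure-ass1}, \eqref{eq:pure-ass2} and the same identities $q\xi'=p\xi$ and $u(\xi-q\xi')=-\int_0^q t\xi''\zeta$, this reduces to $Nf+O(N\ve/\delta)$. Here the step $\frac1p\,q\,\xi''(q)\int_q^1\zeta=\frac{p-1}{p}\,\xi'(q)\int_q^1\zeta$ matches the $\frac12\int_q^1 t\xi''\zeta$ term after integration by parts.
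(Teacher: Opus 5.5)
Your proposal is essentially the paper's proof. It uses the Gaussian log-density of $\nabla F_{\TAP,\zeta}(\bfm)$ with the explicit inverse of $\xi'(q_\bfm)\Id+\frac{\xi''(q_\bfm)}{N}\bfm\bfm^\top$ (the paper reads this off the eigenvalues $\xi'$ on $\bfm^\perp$ and $p\xi'$ on $\dR\bfm$, which is equivalent to your Sherman--Morrison), completes the square in $\frac1N\langle\partial_m h_\zeta,\bfm\rangle$ after adding the tilt, and uses Euler's identity plus the gradient formula for \eqref{eq:R-equals-f-pure}.

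Your closing remark needs a correction. The term $\frac{p-1}{p}N\xi'(q)\int_q^1\zeta([0,t])\,\dd t$ is not matched with $\frac N2\int_q^1 t\xi''(t)\zeta([0,t])\,\dd t$ by integration by parts; the latter comes directly from $N\mc U_\zeta(q)$. Instead, combined with $-\frac{p-1}{p}\langle\partial_m h_\zeta(q,\bfm),\bfm\rangle$ and \eqref{eq:pure-ass1}, it gives $-\frac{p-1}{p}N\xi'(q)uq=Nu\bigl(\xi(q)-q\xi'(q)\bigr)=-N\int_0^q t\xi''(t)\zeta([0,t])\,\dd t$, which is exactly the combination in \eqref{eq:pure-ass2}. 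Part (i) needs only the pure identities $q\xi''=(p-1)\xi'$ and $p\xi=q\xi'$, not this relation.
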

	
	\begin{proof}
		\smallskip\noindent\textbf{Step 1: reduction to the exact remainder.}
		By Lemma~\ref{lemma:reduction pure} and \eqref{eq:def-Rex-KR},
		\[
		F_{\TAP,\zeta}(\bfm)
		=
		\frac1p\langle \bfm,\nabla F_{\TAP,\zeta}(\bfm)\rangle
		+\mc R^{\rm ex}_\zeta(\bfm),
		\qquad
		\mc R^{\rm ex}_\zeta(\bfm)
		=
		\frac1p\langle \bfm,\nabla \mc S_\zeta(\bfm)\rangle
		-\sum_{i=1}^N h_\zeta(q_\bfm,m_i)-N\mc U_\zeta(q_\bfm).
		\]
		Also, by definition,
		\[
		\mc S_\zeta(\bfm)=\sum_{i=1}^N h_\zeta(q_\bfm,m_i)+N\mc U_\zeta(q_\bfm).
		\]
		
		Since the distribution functions of
		\[
		\zeta|_{(q,1]}+\zeta([0,q])\delta_q
		\qquad\text{and}\qquad
		\zeta|_{(q_\bfm,1]}+\zeta([0,q_\bfm])\delta_{q_\bfm}
		\]
		differ only on the interval between $q$ and $q_\bfm$, we have
		\[
		\dist\!\Bigl(
		\zeta|_{(q,1]}+\zeta([0,q])\delta_q,\,
		\zeta|_{(q_\bfm,1]}+\zeta([0,q_\bfm])\delta_{q_\bfm}
		\Bigr)
		\le |q_\bfm-q|=O(\ve).
		\]
		Hence
		\[
		\dist\!\Bigl(
		\zeta_\bfm,\,
		\zeta|_{(q_\bfm,1]}+\zeta([0,q_\bfm])\delta_{q_\bfm}
		\Bigr)
		=O(\ve).
		\]
		Applying Lemma~\ref{lemma:gradient TAP}, \eqref{eq:gradTap-stability}, to the
		measure $\zeta|_{(q_\bfm,1]}+\zeta([0,q_\bfm])\delta_{q_\bfm}$, and using that
		this measure agrees with $\zeta$ above $q_\bfm$, we get
		\[
		\nabla \mc S_\zeta(\bfm)=k_\zeta(q_\bfm,\bfm)+O(\ve/\delta),
		\]
		where the error is understood componentwise. Since $|m_i|\le 1$ for every $i$,
		this yields
		\begin{equation}\label{eq:F-R-pure}
			F_{\TAP,\zeta}(\bfm)
			=
			\frac1p\langle \bfm,\nabla F_{\TAP,\zeta}(\bfm)\rangle
			+\frac1p\langle \bfm,k_\zeta(q_\bfm,\bfm)\rangle
			-\sum_{i=1}^N h_\zeta(q_\bfm,m_i)-N\mc U_\zeta(q_\bfm)
			+O\!\left(N\frac{\ve}{\delta}\right).
		\end{equation}
		In particular, on the event $\{\nabla F_{\TAP,\zeta}(\bfm)=0\}$,
		\begin{equation}\label{eq:F-R-pure-critical}
			F_{\TAP,\zeta}(\bfm)
			=
			\frac1p\langle \bfm,k_\zeta(q_\bfm,\bfm)\rangle
			-\sum_{i=1}^N h_\zeta(q_\bfm,m_i)-N\mc U_\zeta(q_\bfm)
			+O\!\left(N\frac{\ve}{\delta}\right).
		\end{equation}
		
		\smallskip\noindent\textbf{Step 2: density of the gradient.}
		Set
		\[
		G_\bfm:=\Cov(\nabla H(\bfm))
		=
		\xi'(q_\bfm)\Id+\frac{\xi''(q_\bfm)}{N}\bfm\bfm^\top.
		\]
		Since $\|\bfm\|^2=Nq_\bfm$ and $\xi$ is pure $p$-spin, the identities
		$q_\bfm\xi''(q_\bfm)=(p-1)\xi'(q_\bfm)$ and
		$p\,\xi(q_\bfm)=q_\bfm\,\xi'(q_\bfm)$ hold. Therefore $G_\bfm$ has
		eigenvalue $\xi'(q_\bfm)$ on $\bfm^\perp$ and eigenvalue
		$p\,\xi'(q_\bfm)$ on $\dR\bfm$, so that
		\begin{equation}\label{eq:Ginv-pure}
			G_\bfm^{-1}
			=
			\frac{1}{\xi'(q_\bfm)}\Id
			-\frac{p-1}{p\,\xi'(q_\bfm)\,q_\bfm}\,\frac{\bfm\bfm^\top}{N},
			\qquad
			\log\det(2\pi G_\bfm)=N\log(2\pi \xi'(q_\bfm))+O(1).
		\end{equation}
		
		By Step~1, $\nabla F_{\TAP,\zeta}(\bfm)$ is Gaussian with covariance $G_\bfm$
		and mean $-k_\zeta(q_\bfm,\bfm)+e$ where $\|e\|_\infty\le C\ve/\delta$.
		Since $\|G_\bfm^{-1}\|_{\mathrm{op}}\le 1/\xi'(q_\bfm)$ and $\|k_\zeta(q_\bfm,\bfm)\|^2=O(N)$, the cross-term
		$\langle k_\zeta(q_\bfm,\bfm),G_\bfm^{-1}e\rangle$ is $O(N\ve/\delta)$. Hence
		\begin{equation}\label{eq:density-pure}
			\log \varphi_{\nabla F_{\TAP,\zeta}(\bfm)}(0)
			=
			-\frac12\langle k_\zeta(q_\bfm,\bfm),G_\bfm^{-1}k_\zeta(q_\bfm,\bfm)\rangle
			-\frac12\log\det(2\pi G_\bfm)
			+O\!\left(N\frac{\ve}{\delta}\right).
		\end{equation}
		
		\smallskip\noindent\textbf{Step 3: completing the square.}
		By \eqref{eq:F-R-pure-critical}, the tilt differs from
		$\frac{u}{p}\langle \bfm,k_\zeta(q_\bfm,\bfm)\rangle$ only by the deterministic
		term
		\[
		-u\!\left(\sum_{i=1}^N h_\zeta(q_\bfm,m_i)+N\mc U_\zeta(q_\bfm)+Nf\right).
		\]
		Thus we must compute
		\[
		-\frac12\langle k_\zeta(q_\bfm,\bfm),G_\bfm^{-1}k_\zeta(q_\bfm,\bfm)\rangle
		+\frac{u}{p}\langle \bfm,k_\zeta(q_\bfm,\bfm)\rangle.
		\]
		Recalling that
		\[
		k_\zeta(q_\bfm,\bfm)
		=
		\partial_m h_\zeta(q_\bfm,\bfm)+\xi''(q_\bfm)\!\left(\int_{q_\bfm}^1 \zeta([0,t])\,\dd t\right)\bfm
		\]
		and using \eqref{eq:Ginv-pure}, together with
		$q_\bfm\xi''(q_\bfm)=(p-1)\xi'(q_\bfm)$ and
		$p\,\xi(q_\bfm)=q_\bfm\,\xi'(q_\bfm)$, a direct computation gives
		\begin{multline}\label{eq:complete-square-pure}
			-\frac12\langle k_\zeta(q_\bfm,\bfm),G_\bfm^{-1}k_\zeta(q_\bfm,\bfm)\rangle
			+\frac{u}{p}\langle \bfm,k_\zeta(q_\bfm,\bfm)\rangle
			\\
			=
			\frac{1}{2}N\xi(q_\bfm)u^2
			-\frac{1}{2}N\xi''(q_\bfm)\!\left(\int_{q_\bfm}^1 \zeta([0,t])\,\dd t\right)^{\!2}
			-\frac{1}{2\xi'(q_\bfm)}
			\Vert \partial_m h_\zeta(q_\bfm,\bfm)-u\xi'(q_\bfm)\bfm\Vert^2
			\\
			+\frac{N(p-1)}{2p\,\xi'(q_\bfm)\,q_\bfm}
			\left(
			\frac{1}{N}\langle \partial_m h_\zeta(q_\bfm,\bfm),\bfm\rangle
			-\xi'(q_\bfm)\!\left(uq_\bfm+\int_{q_\bfm}^1 \zeta([0,t])\,\dd t\right)
			\right)^{\!2}.
		\end{multline}
		Substituting \eqref{eq:complete-square-pure} into \eqref{eq:density-pure}, and then replacing $q_\bfm$ by $q$ at cost $O(N\ve/\delta)$, proves \eqref{eq:pure-prob0-main}.
		
		\smallskip\noindent\textbf{Step 4: critical value.}
		From \eqref{eq:F-R-pure-critical}, at a critical point we have
		\[
		F_{\TAP,\zeta}(\bfm)
		=
		\frac{1}{p}\langle \bfm,k_\zeta(q_\bfm,\bfm)\rangle
		-\sum_{i=1}^N h_\zeta(q_\bfm,m_i)-N\mc U_\zeta(q_\bfm)
		+O\!\left(N\frac{\ve}{\delta}\right).
		\]
		Using
		\[
		k_\zeta(q_\bfm,\bfm)
		=
		\partial_m h_\zeta(q_\bfm,\bfm)+\xi''(q_\bfm)\!\left(\int_{q_\bfm}^1 \zeta([0,t])\,\dd t\right)\bfm
		\]
		and $q_\bfm\xi''(q_\bfm)=(p-1)\xi'(q_\bfm)$, we get
		\begin{multline}\label{eq:R-rewrite-pure}
			F_{\TAP,\zeta}(\bfm)
			=
			-\sum_{i=1}^N\bigl(h_\zeta(q_\bfm,m_i)-\partial_m h_\zeta(q_\bfm,m_i)m_i\bigr)
			\\
			+\frac{p-1}{p}\!\left(
			N\xi'(q_\bfm)\int_{q_\bfm}^1 \zeta([0,t])\,\dd t
			-
			\langle \partial_m h_\zeta(q_\bfm,\bfm),\bfm\rangle
			\right)
			-N\mc U_\zeta(q_\bfm)
			+O\!\left(N\frac{\ve}{\delta}\right).
		\end{multline}
		Using $|q_\bfm-q|\leq\ve$, the smoothness of $q\mapsto \xi(q),\xi'(q),\xi''(q),\mc U_\zeta(q)$,
		and the fact that $m\mapsto h_\zeta(q,m)$ and $m\mapsto \partial_m h_\zeta(q,m)$ vary by $O(\ve/\delta)$
		when $q_\bfm$ is replaced by $q$ (uniformly for $|m|\le 1-\delta$), the assumptions \eqref{eq:pure-ass1}--\eqref{eq:pure-ass2}
		imply the corresponding $q_\bfm$-versions up to $O(N\ve/\delta)$. Therefore the square term in
		\eqref{eq:complete-square-pure} is $O(N\ve/\delta)$, which gives \eqref{eq:pure-prob0-simplified};
		inserting the same replacements into \eqref{eq:R-rewrite-pure} yields
		\eqref{eq:R-equals-f-pure} with error $O(N\ve/\delta)$.
	\end{proof}

	\subsection{Determinant asymptotics}

	We compute the expected value of the determinant appearing in the Kac--Rice formula \eqref{eq:KacRice}. By definition, $\zeta_\bfm$ minimizes $\TAP(\mu_N^{(\bfm)},\cdot)$, and it has an atom at $q_\bfm$. Consequently, $q_\bfm$ is an optimizer of a certain auxiliary function. We show that the first- and second-order optimality conditions for this nonconvex problem then determine the value of the subordination function, and hence that of the determinant of the free convolution.

	\begin{lemma}\label{lemma:det asymp}
		Let $\zeta$ be a probability measure on $[0,1]$ with prefix $\{0,q\}$ with $q\in (0,1)$. Let $f\in \dR$. There exists $C_\ve>0$, depending only on $\ve$, $f$, and $\xi$, with $C_\ve\to 0$ as $\ve\to 0$, such that the following holds uniformly over $\bfm\in \SUSY_1(\zeta,\ve)$: Letting $\mc{B}:=\{\nabla F_{\TAP,\zeta}(\bfm)=0,\, F_{\TAP,\zeta}(\bfm)=Nf\}$,
		\begin{equation*}
			\left| \log \dE\bigl[ |\det\nabla^2 F_{\TAP,\zeta}(\bfm)|\mid\mc{B}\bigr]
			- \log\prod_{i=1}^N\partial_{mm}h_\zeta(q,m_i)
			- \frac{N}{2}\xi''(q)\left(\int_q^1 \zeta([0,t])\,\dd t\right)^{\!2}
			\right|
			\leq C_\ve N.
		\end{equation*}
	\end{lemma}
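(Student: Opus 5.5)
We first write the Hessian in an explicit matrix form and then analyse its conditional law given $\mc B$. Differentiating $F_{\TAP,\zeta}(\bfm)=H(\bfm)-\sum_i h_\zeta(q_\bfm,m_i)-N\mc U_\zeta(q_\bfm)$ twice, and using the gradient formula (Lemma~\ref{lemma:gradient TAP}), we expect
\[
\nabla^2F_{\TAP,\zeta}(\bfm)=\nabla^2H(\bfm)-\diag\bigl(\partial_{mm}h_\zeta(q_\bfm,m_i)\bigr)-\xi''(q_\bfm)\Bigl(\int_{q_\bfm}^1\zeta([0,t])\dd t\Bigr)\Id+E,
\]
where $E$ has rank at most $O(1)$ (terms in $\bfm\bfm^\top/N$ and $\bfm\,\partial_q\partial_m h^\top/N$) plus a diagonal correction of size $O(\ve/\delta_\ve)$ coming from $\zeta_\bfm\neq\zeta$. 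The conditioning on $\mc B$ only affects $\nabla^2H(\bfm)$ through its covariance with $(\nabla H,H)$, which lives in a subspace spanned by $\bfm$ and rank-one directions. Hence, conditionally on $\mc B$, $\nabla^2 H(\bfm)=\sqrt{\xi''(q_\bfm)}\,W+(\text{rank}\le 2)$ with $W$ a GOE of variance $1/N$ restricted to $\bfm^\perp$. Low-rank and small-norm perturbations change $\log|\det|$ by at most $O(\log N)+C_\ve N$, using the Wegner-type and large-deviation concentration bounds for $\log|\det|$ of GOE plus deterministic diagonal, as in Ben Arous--Bourgade--McKenna.

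Writing $D=\diag(\partial_{mm}h_\zeta(q,m_i))$ and $c=\xi''(q)\int_q^1\zeta([0,t])\dd t$, it therefore suffices to show
\[
\tfrac1N\log\dE|\det(\sqrt{\xi''(q)}W-D-c)|=\tfrac1N\sum_i\log\partial_{mm}h_\zeta(q,m_i)+\tfrac12\xi''(q)\Bigl(\int_q^1\zeta([0,t])\dd t\Bigr)^2+o_\ve(1).
\]
The left side converges to $\int\log|\lambda|\,\dd(\sigma_{\xi''(q)}\boxplus\nu)(\lambda)$, where $\nu$ is the law of $-(\partial_{mm}h+c)$ under $\mu^{(\bfm)}_N$. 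We evaluate this via subordination: for free convolution with a semicircle of variance $s=\xi''(q)$, if $0$ lies outside the support and $\omega=-sG(0)$ solves $G(0)=G_\nu(-\omega)$, then
\[
\int\log|\lambda|\,\dd(\sigma_s\boxplus\nu)=\int\log|\lambda+\omega|\,\dd\nu-\tfrac{\omega^2}{2s}.
\]
The claim then reduces to showing that the subordination shift is exactly $\omega=c$, i.e.\ $\frac1N\sum_i(\partial_{mm}h_\zeta(q,m_i))^{-1}=\int_q^1\zeta([0,t])\dd t$, and that $0$ lies to the left of the spectrum. Granting this, the formula gives $\sum\log\partial_{mm}h$ and $c^2/(2s)=\frac12\xi''(q)(\int_q^1\zeta)^2$, matching the statement. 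The absolute value is harmless since the matrix is then a.s.\ of definite sign up to exponentially rare events.

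The main obstacle is the identification $\frac1N\sum_i 1/\partial_{mm}h_\zeta(q,m_i)\approx\int_q^1\zeta([0,t])\dd t$, together with the stability inequality $\xi''(q)\frac1N\sum_i(\partial_{mm}h)^{-2}\le1$, which places $0$ outside the support. We would derive both from the optimality of $\zeta_\bfm$ in $\TAP(\mu^{(\bfm)}_N,\cdot)$, as the determinant paragraph above the lemma suggests. Since $\zeta_\bfm$ has an atom at $q_\bfm$, perturbing the mass located at $q_\bfm$ slightly to the right is a first-order stationarity condition. The identity $\partial_{mm}h_\zeta(q,m)=1/\partial_{xx}\Phi_\zeta(q,x)$ then converts this condition into $\frac1N\sum\partial_{xx}\Phi_\zeta(q,x_i)=\int_q^1\zeta$; this is the usual $\E\,\partial_{xx}\Phi$ identity of the Auffinger--Chen SDE. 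The second-order condition, i.e.\ the nonnegativity of the second variation, should give the stability inequality $\xi''(q)\frac1N\sum(\partial_{xx}\Phi)^2\le1$. We would first derive these conditions for the macroscopic functional and then transfer them to $\bfm$ with error $C_\ve$, using $\dist(\zeta_\bfm,\cdot)\le\ve$ and the continuity of $\Phi_\zeta$ in $\zeta$. Uniformity over $\SUSY_1(\zeta,\ve)$ follows because all constants depend only on $\delta_\ve$ and $\xi$.
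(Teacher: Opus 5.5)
Your proposal is essentially the paper's proof: after the deformed-GOE reduction (packaged in the paper as Lemma~\ref{lemma:det app}), you evaluate the log-potential of $(T\#\mu_N^{(\bfm)})\boxplus\sigma_{\xi''}$ by the Hopf--Lax/subordination formula. You then show that the subordination point at $0$ is $\xi''(q_\bfm)\int_{q_\bfm}^1\zeta_\bfm([0,t])\,\dd t$, using $1/\partial_{mm}h=\partial_{xx}\Phi$, Lemma~\ref{lemma:xxphi} with the first-order condition \eqref{eq:opt3}, and admissibility from the second-order condition \eqref{eq:second order}. This is best done exactly at $(q_\bfm,\zeta_\bfm)$, transferring only the final value to $(q,\zeta)$ by continuity: a transferred inequality $\le 1+C_\ve$ would no longer certify admissibility.

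There are two slips to fix.
\begin{itemize}
\item The subordination formula should read $\int\log|\lambda+\omega|\,\dd\nu+\omega^2/(2s)$ with $\omega=sG(0)$. You flipped both signs, though your conclusion uses the correct ones.
\item The identity $\frac1N\sum_i\partial_{xx}\Phi=\int_{q_\bfm}^1\zeta_\bfm([0,t])\,\dd t$ needs $\dE[(\partial_x\Phi_{\zeta_\bfm}(t,X_t))^2]=t$ on all of $\supp(\zeta_\bfm)$. At the atom $q_\bfm$ this is automatic from law matching, so shifting that atom only yields the stability inequality at second order. At the remaining support points it comes from stationarity at the tail, not from the atom shift.
\end{itemize}
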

	
	\medskip
	
	\begin{proof}
		Let $q_\bfm=\frac{1}{N}\Vert \bfm\Vert^2\in (q-\ve,q+\ve)$. For a probability measure $\zeta'$ on $[q',1]$, set 
		\begin{equation*}
			T_{\zeta'}:m\in [-1,1]\mapsto \partial_{mm} h_{\zeta'}(q',m)+\xi''(q')\int_{q'}^1 \zeta'([0,t])\dd t.
		\end{equation*}
		By Lemma \ref{lemma:det app}, 
		\begin{multline}\label{eq:limit free}
			\log \dE[ |\det(\nabla^2 F_{\TAP,\zeta}(\bfm))|\mid \nabla F_{\TAP,\zeta}(\bfm)=0,F_{\TAP,\zeta}(\bfm)=Nf]\\= N \int \log|x|\dd (T_\zeta\#\mu_N^{(\bfm)}\boxplus\sigma_{\xi''(q)})(x)+o(N),
		\end{multline}
		with $o(N)$ uniform. Since $\dist(\zeta_\bfm,\zeta|_{(q,1]}+\zeta([0,q])\,\delta_q)<\ve$, $|q-q_{\bfm}|<\ve$ and $\bfm\in [-1+\delta_\ve,1-\delta_\ve]^N$, the maps $T_\zeta$ and $T_{\zeta_\bfm}$ are uniformly close on $[-1+\delta_\ve,1-\delta_\ve]$, so the two pushed-forward measures are close in the Wasserstein sense. Moreover, by the second-order optimality condition \eqref{eq:second order}, the support of $T_{\zeta_\bfm}\#\mu_N^{(\bfm)}\boxplus\sigma_{\xi''(q_\bfm)}$ stays uniformly bounded away from $0$, which ensures that $\log|x|$ is Lipschitz on the support. Hence there exists $C_\ve>0$ with $C_\ve\to 0$ as $\ve\to 0$ such that
		\begin{equation}\label{eq:compare5}
			\Bigl|\int \log|x|\dd (T_\zeta\#\mu_N^{(\bfm)}\boxplus\sigma_{\xi''(q)})(x)-\int \log|x|\dd (T_{\zeta_{\bfm}}\#\mu_N^{(\bfm)}\boxplus\sigma_{\xi''(q_\bfm)})(x)\Bigr|\leq C_\ve.
		\end{equation}
		Moreover, by Lemma \ref{lemma:Hopf Lax} and \eqref{eq:ch},
		\begin{equation*}
			\int \log|x|\,\dd (T_{\zeta_{\bfm}}\#\mu_N^{(\bfm)}\boxplus\sigma_{\xi''(q_\bfm)})(x)
			= \int\log|T_{\zeta_{\bfm}}(x)-\omega|\,\dd \mu_N^{(\bfm)}(x)
			+\frac{\Re(\omega)^2-\Im(\omega)^2}{2\xi''(q_\bfm)},
		\end{equation*}
		where $\omega:=\omega_{T_{\zeta_{\bfm}}\#\mu_N^{(\bfm)},\xi''(q_\bfm)}(0)$ is the subordination function of $T_{\zeta_{\bfm}}\#\mu_N^{(\bfm)}$ at time $\xi''(q_\bfm)$ and location $0$. Set
		\begin{equation*}
			\omega':=\xi''(q_\bfm)\int_{q_\bfm}^1 \zeta_{\bfm}([0,t])\dd t.
		\end{equation*}
		We show below that $\omega=\omega'$. Since $\omega'$ is real, this will imply that $\omega\in \dR$, and therefore the previous identity simplifies to
		\begin{equation}\label{eq:logNu real}
			\int \log|x|\,\dd (T_{\zeta_{\bfm}}\#\mu_N^{(\bfm)}\boxplus\sigma_{\xi''(q_\bfm)})(x)
			= \int\log|T_{\zeta_{\bfm}}(x)-\omega'|\,\dd \mu_N^{(\bfm)}(x)
			+\frac{(\omega')^2}{2\xi''(q_\bfm)}.
		\end{equation}

		First,
		\begin{equation}\label{eq:Stiel}
			G_{T_{\zeta_{\bfm}}\#\mu_N^{(\bfm)}}(\omega')=-\frac{1}{N}\sum_{i=1}^N \frac{1}{\partial_{mm} h_{\zeta_{\bfm}}(q_\bfm,m_i)}.
		\end{equation}
		Set
		\begin{equation*}
			\nu_N^{(\bfm)}:=(\partial_x\Phi_{\zeta_{\bfm}}(q_\bfm,\cdot))^{-1}\#\mu_N^{(\bfm)}.
		\end{equation*}
		Since $h_{\zeta_{\bfm}}(q_\bfm,\cdot)=\Phi_{\zeta_{\bfm}}(q_\bfm,\cdot)^*$, we have 
		\begin{equation}\label{eq:chain}
			\frac{1}{N}\sum_{i=1}^N \frac{1}{\partial_{mm} h_{\zeta_{\bfm}}(q_\bfm,m_i)}=\dE_{\mu_N^{(\bfm)}}\left[\frac{1}{\partial_{mm} h_{\zeta_{\bfm}}(q_\bfm,\cdot)}\right]=\dE_{\nu_N^{(\bfm)} }[\partial_{xx}\Phi_{\zeta_{\bfm}}(q_\bfm,\cdot)].
		\end{equation}
		By Lemma \ref{lemma:xxphi} (since $\zeta_\bfm\in \mc{P}([q_{\bfm},1])$), we have 
		\begin{equation}\label{eq:exprr}
			\partial_{xx}\Phi_{\zeta_\bfm}(q_\bfm,x)=1-\int_{q_\bfm}^1 \dE[(\partial_x\Phi_{\zeta_\bfm}(s,X_s))^2\mid X_{q_\bfm}=x]\dd \zeta_\bfm(s).
		\end{equation}
		Consider the Auffinger--Chen process
		\begin{equation*}
			\begin{cases}    \dd X_t=\xi''(t)\zeta_\bfm([0,t])\partial_x \Phi_{\zeta_\bfm}(t,X_t)\dd t+\sqrt{\xi''(t)} \dd B_t\\
				\mathrm{Law}(\partial_x\Phi_{\zeta_\bfm}(q_\bfm,X_{q_\bfm}))=\mu_N^{(\bfm)}.
			\end{cases}
		\end{equation*}
		By \eqref{eq:exprr},
		\begin{equation}\label{eq:muNs}
			\dE_{\nu_N^{(\bfm)} }[\partial_{xx}\Phi_{\zeta_\bfm}(q_\bfm,\cdot)]=1-\int_{q_\bfm}^1 \dE\left[\left(\partial_x\Phi_{\zeta_\bfm}(s,X_s)\right)^2\right]\dd \zeta_\bfm(s).
		\end{equation}
		By the optimality of $\zeta_\bfm$, we get from Lemma \ref{lemma:first order mu} (see \eqref{eq:opt3}) that for every $s\in \supp(\zeta_\bfm)$, one has 
		\begin{equation*}
			\dE\left[\left(\partial_x\Phi_{\zeta_\bfm}\left(s,X_s\right)\right)^2\right]=s.
		\end{equation*}
		Hence, inserting this into \eqref{eq:muNs} and using Stieltjes integration by parts, we get
		\begin{equation*}
			\dE_{\nu_N^{(\bfm)} }[\partial_{xx}\Phi_{\zeta_\bfm}(q_\bfm,\cdot)]=\int_{q_\bfm}^1 \zeta_\bfm([0,t])\dd t.
		\end{equation*}
		Consequently, by \eqref{eq:chain} and \eqref{eq:Stiel}, 
		\begin{equation*}
			G_{T_{\zeta_{\bfm}}\#\mu_N^{(\bfm)}}(\omega')=-\frac{\omega'}{\xi''(q_\bfm)},   
		\end{equation*}
		which gives 
		\begin{equation*}
			\omega'+\xi''(q_\bfm) G_{T_{\zeta_{\bfm}}\#\mu_N^{(\bfm)}}(\omega')=0.
		\end{equation*}
		By Lemma \ref{lemma:free co}, it remains to prove (recall \eqref{def:Omegamut}) that 
		\begin{equation}\label{eq:omega'O}
			\omega'\in \overline{\Omega_{T_{\zeta_{\bfm}}\#\mu_N^{(\bfm)},\xi''(q_\bfm)}},
		\end{equation}
		or equivalently that
		\begin{equation*}
			\frac{1}{N}\sum_{i=1}^N \frac{1}{(T_{\zeta_{\bfm}}(m_i)-\omega')^2}\leq \frac{1}{\xi''(q_\bfm)}.
		\end{equation*}
		Notice that 
		\begin{equation*}
			\frac{1}{N}\sum_{i=1}^N \frac{1}{(T_{\zeta_{\bfm}}(m_i)-\omega')^2}=\frac{1}{N}\sum_{i=1}^N \frac{1}{(\partial_{mm}h_{\zeta_{\bfm}}(q_\bfm,m_i))^2}.
		\end{equation*}
		By the optimality of $\zeta_\bfm$, we have by Lemma \ref{lemma:first order mu} (see \eqref{eq:second order}) that 
		\begin{equation*}
			\frac{1}{N}\sum_{i=1}^N \frac{1}{(\partial_{mm}h_{\zeta_{\bfm}}(q_\bfm,m_i))^2}\leq \frac{1}{\xi''(q_\bfm)},  
		\end{equation*}
		which proves \eqref{eq:omega'O}. Thus, we have $\omega=\omega'$. Substituting into \eqref{eq:logNu real} gives
		\begin{multline}\label{eq:logNu-expand}
			\int \log|x|\,\dd (T_{\zeta_{\bfm}}\#\mu_N^{(\bfm)}\boxplus\sigma_{\xi''(q_\bfm)})(x)
			\\= \frac{1}{N}\sum_{i=1}^N\log\partial_{mm}h_{\zeta_\bfm}(q_\bfm,m_i)
			+\frac{(\omega')^2}{2\xi''(q_\bfm)}
			= \frac{1}{N}\sum_{i=1}^N\log\partial_{mm}h_{\zeta_\bfm}(q_\bfm,m_i)
			+\frac{\xi''(q_\bfm)}{2}\!\left(\int_{q_\bfm}^1 \zeta_\bfm([0,t])\,\dd t\right)^{\!2},
		\end{multline}
		where we used $T_{\zeta_\bfm}(m)-\omega'=\partial_{mm}h_{\zeta_\bfm}(q_\bfm,m)$.
		Since $|q_\bfm-q|\leq\ve$, $\dist(\zeta_\bfm,\zeta|_{(q,1]}+\zeta([0,q])\delta_q)\leq\ve$, and $|m_i|\leq 1-\delta_\ve$, replacing $(q_\bfm,\zeta_\bfm)$ by $(q,\zeta)$ in \eqref{eq:logNu-expand} introduces an error of at most $C_\ve$ per coordinate (uniformly in $m_i\in[-1+\delta_\ve,1-\delta_\ve]$). Combining with \eqref{eq:compare5} and \eqref{eq:limit free}, we obtain
		\begin{multline*}
			\log \dE[ |\det\nabla^2 F_{\TAP,\zeta}(\bfm)|\mid \nabla F_{\TAP,\zeta}(\bfm)=0,F_{\TAP,\zeta}(\bfm)=Nf]\\ = \log\prod_{i=1}^N\partial_{mm}h_\zeta(q,m_i)+{\frac{N}{2}\xi''(q)\left(\int_q^1 \zeta([0,t])\dd t\right)^2 }+O_\ve(N),
		\end{multline*}
		with $|O_\ve(N)|\leq C_\ve N$ for some $C_\ve>0$ such that $C_\ve\to 0$ as $\ve\to 0$.
	\end{proof}

	\subsection{Upper bound on the complexity}
	
	We now prove the annealed upper bound on the complexity.

	\begin{lemma}\label{lemma:upper bound 0}
		Let $q\in (0,1)$ and $u\in (0,1)$. Let $\zeta\in \Prefix_2(u,q)$. Let $f\in \dR$. Let $\ve\in (0,1)$ and let $\mc{N}_\ve(f,\zeta,q)$ be the number of critical points $\bfm\in [-1,1]^N$ of $F_{\TAP,\zeta}$ in $\SUSY_1(\zeta,\ve)$ such that $\frac{1}{N}F_{\TAP,\zeta}(\bfm)\in (f-\ve,f+\ve)$.
		
		Then, 
		\begin{equation}\label{eq:logNu}
			\limsup_{\ve\to 0} \limsup_{N\to \infty}\frac{1}{N}\log \dE[\mc{N}_\ve(f,\zeta,q)]\leq u\left(\Phi_\zeta(0,0)-\frac{1}{2}\int_0^1 t\xi''(t)\zeta([0,t])\dd t-f\right).
		\end{equation}
	\end{lemma}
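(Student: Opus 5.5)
The plan is to combine the mixed-case Kac--Rice formula \eqref{eq:KacRice} of Lemma~\ref{lemma:KacRice} with the density bound of Lemma~\ref{lemma:prob0 non deg}\ref{eq:prob0 1} and the determinant asymptotics of Lemma~\ref{lemma:det asymp}. After these substitutions the integrand should factorize over coordinates, and the resulting one-dimensional integral should be evaluated exactly by the Hopf--Cole formula for the Parisi PDE on $[0,q]$, where $\zeta([0,t])\equiv u$. The pure case is handled the same way via \eqref{eq:KacRice-pure} and Lemma~\ref{lemma:prob0 pure}. The prefactor $N$ and the $f'$-integral over $(f-\ve,f+\ve)$ cost only $O(\log N)$ and $C_\ve N$ (the bounds are uniform in $f'$). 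So it suffices to bound, uniformly in $f'$, the integral over $\bfm\in\SUSY_1(\zeta,\ve)$.

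\textbf{Step 1: bound the integrand.} On $\SUSY_1(\zeta,\ve)$ we apply \eqref{eq:upper bound proba} with $\delta=\delta_\ve$.
\begin{itemize}
\item The positive square term is at most $N\ve^2/(2\xi'(q)q)$ by the second constraint defining $\SUSY_1$.
\item The errors are $O(N\ve/\delta_\ve)+O(\log N)$.
\item Adding Lemma~\ref{lemma:det asymp}, the term $+\frac N2\xi''(q)(\int_q^1\zeta([0,t])\dd t)^2$ from the determinant cancels the corresponding negative term in the density.
\end{itemize}
We are left with
\[
\frac{N}{2}\xi(q)u^2-uN\mc U_\zeta(q)-uNf+\sum_{i=1}^N\Bigl[\log\partial_{mm}h_\zeta(q,m_i)-\tfrac{1}{2\xi'(q)}\bigl(\partial_mh_\zeta(q,m_i)-u\xi'(q)m_i\bigr)^2-u\,h_\zeta(q,m_i)-\tfrac12\log(2\pi\xi'(q))\Bigr],
\]
up to $C_\ve N$ errors. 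Expanding the square gives a term $-\frac{u^2\xi'(q)}{2}\sum_i m_i^2$. On $\SUSY_1$ this equals $-\frac{N}{2}u^2\xi'(q)q+O(N\ve)$, so it can be pulled out as a constant. We then drop the indicator of $\SUSY_1$ altogether, which is legitimate for an upper bound.

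\textbf{Step 2: change of variables and Hopf--Cole.} The remaining integral is a product of $N$ identical one-dimensional integrals over $m\in(-1,1)$. Substitute $x=\partial_mh_\zeta(q,m)$, so that $\dd x=\partial_{mm}h_\zeta\,\dd m$ absorbs the determinant factor. Legendre duality gives $m=\partial_x\Phi_\zeta(q,x)$ and $h_\zeta(q,m)=xm-\Phi_\zeta(q,x)$. Then the exponent $-\frac{x^2}{2\xi'}+uxm-u(xm-\Phi_\zeta)$ collapses to $-\frac{x^2}{2\xi'(q)}+u\Phi_\zeta(q,x)$. Each factor therefore equals
\[
\dE\bigl[e^{u\Phi_\zeta(q,\sqrt{\xi'(q)}g)}\bigr]=e^{u\Phi_\zeta(0,0)},
\]
where $g$ is standard Gaussian. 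The equality holds because on $[0,q]$ the PDE \eqref{eq:PPDE} has constant coefficient $\zeta([0,t])=u$, so $e^{u\Phi_\zeta}$ solves the backward heat equation with variance $\xi'(q)-\xi'(0)=\xi'(q)$.

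\textbf{Step 3: collect constants.} The constants give
\[
\frac{u^2}{2}\bigl(\xi(q)-q\xi'(q)\bigr)=-\frac u2\int_0^q t\xi''(t)\zeta([0,t])\dd t,
\]
which combines with $-u\,\mc U_\zeta(q)$ to produce $-\frac u2\int_0^1 t\xi''(t)\zeta([0,t])\dd t$. Taking $N\to\infty$ and then $\ve\to0$, using $\ve/\delta_\ve\to0$ and $C_\ve\to0$, yields \eqref{eq:logNu}.

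The main obstacle I expect is the uniformity and bookkeeping in Step~1. The constants of Lemmas~\ref{lemma:prob0 non deg} and~\ref{lemma:det asymp} must be uniform over $\bfm\in\SUSY_1(\zeta,\ve)$ and over $f'$ in the window. Freezing $q_\bfm$ to $q$ must not blow up near $|m_i|\to1$; this is why the box $[-1+\delta_\ve,1-\delta_\ve]^N$ is built into $\SUSY_1$. Dropping the indicator must happen only after the $\sum m_i^2$ term has been traded for its constrained value, since otherwise the one-dimensional integral would carry an extra $e^{-u^2\xi' m^2/2}$ weight.
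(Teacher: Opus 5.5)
Your proposal is correct and follows the paper's proof essentially step for step. The paper likewise combines Kac--Rice with Lemma~\ref{lemma:det asymp} and the bound \eqref{eq:upper bound proba}, where the $\xi''(q)(\int_q^1\zeta([0,t])\dd t)^2$ terms cancel and the positive square is at most $N\ve^2/(2q\xi'(q))$. It then replaces $\|\bfm\|^2$ by $Nq$ before dropping the constraint, uses the change of variables $y=\partial_m h_\zeta(q,m)$ with the Legendre identity and the Hopf--Cole representation of $\Phi_\zeta(0,\cdot)$ on $[0,q]$, and concludes with $\frac{u^2}{2}(\xi(q)-q\xi'(q))-u\mc U_\zeta(q)=-\frac u2\int_0^1 t\xi''(t)\zeta([0,t])\dd t$; the pure case goes through \eqref{eq:KacRice-pure} and Lemma~\ref{lemma:prob0 pure}, using $\frac{p-1}{p}\le 1$ on the square term, as you indicate.
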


	\medskip

	\begin{proof}
		We write $C_\ve$ for a positive constant satisfying $C_\ve\to 0$ as $\ve\to 0$, 
		and $O_\ve(N)$ for a quantity bounded in absolute value by $C_\ve N$.
		
		We first derive the analogue of \eqref{eq:upperN} in the mixed and pure cases.
		
		\smallskip
		\noindent{\bf Mixed case.}
		Recall from Lemma \ref{lemma:KacRice} that 
		\begin{multline*}
			\dE[\mc{N}_\ve(f,\zeta,q)]=N\int_{[-1,1]^N}\int_{(f-\ve,f+\ve)} \dE[ |\det(\nabla^2 F_{\TAP,\zeta}(\bfm))|\mid \nabla F_{\TAP,\zeta}(\bfm)=0,F_{\TAP,\zeta}(\bfm)=Nf'] \\ \times \varphi_{(\nabla F_{\TAP,\zeta}(\bfm),F_{\TAP,\zeta}(\bfm))}(0,Nf')\indic_{\SUSY_1(\zeta,\ve)}(\bfm) \dd \bfm \dd f'.
		\end{multline*} 
		By Lemma \ref{lemma:det asymp}, for every $\bfm\in [-1,1]^N$ such that $q_\bfm=\frac{1}{N}\Vert \bfm\Vert^2\in (q-\ve,q+\ve)$ and $\dist(\zeta_\bfm,\zeta|_{(q,1]}+\zeta([0,q])\delta_q)<\ve$,
		\begin{multline*}
			\log \dE[ |\det\nabla^2 F_{\TAP,\zeta}(\bfm)|\mid \nabla F_{\TAP,\zeta}(\bfm)=0,F_{\TAP,\zeta}(\bfm)=Nf']\\ \leq \log\prod_{i=1}^N\partial_{mm}h_\zeta(q,m_i)+{\frac{N}{2}\xi''(q)\left(\int_q^1 \zeta([0,t])\dd t\right)^2 }+C_\ve N.
		\end{multline*}
		Moreover, by Lemma \ref{lemma:prob0 non deg} (Equation \eqref{eq:upper bound proba}), if in addition $\bfm\in \SUSY_1(\zeta,\ve)$,
		\begin{multline*}
			\log \varphi_{(\nabla F_{\TAP,\zeta}(\bfm),F_{\TAP,\zeta}(\bfm))}(0,Nf')\\ \leq \frac{1}{2}N\xi(q)u^2-\frac{1}{2}N\xi''(q)\left(\int_q^1 \zeta([0,t])\dd t \right)^2 -\frac{1}{2\xi'(q)}\Vert \partial_m h_\zeta(q,\bfm)-u\xi'(q)\bfm\Vert^2-\frac{N}{2}\log (2\pi \xi'(q))\\
			-u \left(\sum_{i=1}^N h_\zeta(q,m_i)+N\mc{U}_\zeta(q)+Nf'\right)+N\frac{\ve^2}{2q\xi'(q)}+O_\ve(N).
		\end{multline*}
		Combining the two above displays, noticing that the two terms $\xi''(q)(\int_q^1 \zeta([0,t])\dd t)^2$ cancel each other, and using that $f'\in (f-\ve,f+\ve)$, we deduce that
		\begin{multline}\label{eq:upperN}
			\log \dE[\mc{N}_\ve(f,\zeta,q)]\leq  \frac{1}{2}N\xi(q)u^2 -uN\mc{U}_\zeta(q)-Nu f-\frac{N}{2}\log(2\pi \xi'(q))\\+ \log \int_{[-1,1]^N} \prod_{i=1}^N \partial_{mm}h_\zeta(q,m_i)\,e^{-u \sum_{i=1}^N h_\zeta(q,m_i)-\frac{1}{2\xi'(q)}\Vert \partial_m h_\zeta(q,\bfm)-u\xi'(q)\bfm\Vert^2}\\
			\times\indic_{\frac{1}{N}\Vert \bfm\Vert^2\in (q-\ve,q+\ve)} \dd \bfm+O_\ve(N).
		\end{multline}
		
		\smallskip
		\noindent{\bf Pure case.}
		By Lemma~\ref{lemma:KacRice}, \eqref{eq:KacRice-pure},
		\begin{multline*}
			\dE[\mc{N}_\ve(f,\zeta,q)]
			=
			\int_{[-1,1]^N}
			\dE\!\left[
			|\det(\nabla^2 F_{\TAP,\zeta}(\bfm))|
			\,\middle|\,
			\nabla F_{\TAP,\zeta}(\bfm)=0
			\right]
			\varphi_{\nabla F_{\TAP,\zeta}(\bfm)}(0)
			\\
			\times
			\indic_{\SUSY_1(\zeta,\ve)}(\bfm)\,
			\indic_{(f-\ve,f+\ve)}
			\!\left(\frac1N\mc R^{\rm ex}_\zeta(\bfm)\right)
			\dd\bfm.
		\end{multline*}
		Fix $\bfm\in \SUSY_1(\zeta,\ve)$ and set
		\[
		f_\bfm:=\frac1N\mc R^{\rm ex}_\zeta(\bfm),
		\qquad
		T(\bfm):=\frac1p\langle \bfm,k_\zeta(q,\bfm)\rangle-\sum_{i=1}^N h_\zeta(q,m_i)-N\mc U_\zeta(q).
		\]
		By \eqref{eq:pure-critical-identity-KR}, on the event
		$\{\nabla F_{\TAP,\zeta}(\bfm)=0\}$ one has
		$F_{\TAP,\zeta}(\bfm)=Nf_\bfm$. Therefore
		\begin{multline*}
			\dE\!\left[
			|\det(\nabla^2 F_{\TAP,\zeta}(\bfm))|
			\,\middle|\,
			\nabla F_{\TAP,\zeta}(\bfm)=0
			\right]
			\\=
			\dE\!\left[
			|\det(\nabla^2 F_{\TAP,\zeta}(\bfm))|
			\,\middle|\,
			\nabla F_{\TAP,\zeta}(\bfm)=0,\,
			F_{\TAP,\zeta}(\bfm)=Nf_\bfm
			\right].
		\end{multline*}
		Hence, on the support of the indicator
		$\indic_{(f-\ve,f+\ve)}(N^{-1}\mc R^{\rm ex}_\zeta(\bfm))$, Lemma~\ref{lemma:det asymp} yields
		\begin{multline*}
			\log \dE\!\left[
			|\det(\nabla^2 F_{\TAP,\zeta}(\bfm))|
			\,\middle|\,
			\nabla F_{\TAP,\zeta}(\bfm)=0
			\right]\\
			\leq
			\log\prod_{i=1}^N\partial_{mm}h_\zeta(q,m_i)
			+\frac{N}{2}\xi''(q)\left(\int_q^1 \zeta([0,t])\dd t\right)^2
			+O_\ve(N).
		\end{multline*}
		
		Comparing the exact identity
		\[
		F_{\TAP,\zeta}(\bfm)
		=
		\frac1p\langle \bfm,\nabla F_{\TAP,\zeta}(\bfm)\rangle
		+\mc R^{\rm ex}_\zeta(\bfm)
		\]
		with \eqref{eq:F-R-pure}, we obtain
		\[
		\mc R^{\rm ex}_\zeta(\bfm)=T(\bfm)+O_\ve(N)
		\]
		uniformly over $\bfm\in \SUSY_1(\zeta,\ve)$. Hence, on the support of
		$\indic_{(f-\ve,f+\ve)}(N^{-1}\mc R^{\rm ex}_\zeta(\bfm))$,
		\[
		u\bigl(T(\bfm)-Nf\bigr)=O_\ve(N).
		\]
		Therefore, by Lemma~\ref{lemma:prob0 pure}, \eqref{eq:pure-prob0-main}, and the bound
		$\frac{p-1}{p}\le 1$, we get
		\begin{multline*}
			\log \varphi_{\nabla F_{\TAP,\zeta}(\bfm)}(0)\\
			\leq
			\frac{1}{2}N\xi(q)u^2
			-\frac{1}{2}N\xi''(q)\left(\int_q^1 \zeta([0,t])\dd t \right)^2
			-\frac{1}{2\xi'(q)}\Vert \partial_m h_\zeta(q,\bfm)-u\xi'(q)\bfm\Vert^2
			-\frac{N}{2}\log (2\pi \xi'(q))
			\\
			+\frac{N}{2\xi'(q)q}\left(
			\frac{1}{N}\langle \partial_m h_\zeta(q,\bfm),\bfm\rangle
			-\xi'(q)\int_0^1 \zeta([0,t])\dd t
			\right)^2
			-u \left(\sum_{i=1}^N h_\zeta(q,m_i)+N\mc{U}_\zeta(q)+Nf\right)
			+O_\ve(N).
		\end{multline*}
		Combining the last two displays and dropping the indicator
		$\indic_{(f-\ve,f+\ve)}(N^{-1}\mc R^{\rm ex}_\zeta(\bfm))$, we obtain again
		\eqref{eq:upperN}.
		
		Notice that
		\begin{equation*}
			\Vert \partial_m h_\zeta(q,\bfm)-u\xi'(q)\bfm\Vert^2=\Vert \partial_m h_\zeta(q,\bfm)\Vert^2-2u \xi'(q)\langle \bfm,\partial_m h_\zeta(q,\bfm)\rangle+\xi'(q)^2 u^2\Vert \bfm\Vert^2.
		\end{equation*}
		Hence, plugging this into \eqref{eq:upperN} and dropping the constraint that $\frac{1}{N}\Vert \bfm\Vert^2\in (q-\ve,q+\ve)$, we deduce that
		\begin{multline}\label{eq:boundN}
			\log \dE[\mc{N}_\ve(f,\zeta,q)]\leq  \frac{1}{2}N\xi(q)u^2 -uN\mc{U}_\zeta(q)-Nu f-\frac{N}{2}\log(2\pi \xi'(q))-\frac{1}{2}N\xi'(q)qu^2\\
			+\log \int_{[-1,1]^N} \prod_{i=1}^N \partial_{mm}h_\zeta(q,m_i)e^{-u\sum_{i=1}^N (h_\zeta(q,m_i)-\partial_m h_\zeta(q,m_i)m_i)-\frac{1}{2\xi'(q)}\Vert \partial_m h_\zeta(q,\bfm)\Vert^2}\dd \bfm+O_\ve(N).
		\end{multline}
		Let $g:(-1,1)\to \dR$ be strictly convex and differentiable and such that $\lim_{x\to -1} g'(x)=-\infty$ and $\lim_{x\to 1} g'(x)=\infty$. Recall that the Legendre transform of $g$ is given for every $y\in \dR$ by
		\begin{equation*}
			g^*(y)=\sup_{x\in (-1,1)}(yx-g(x)).
		\end{equation*}
		Moreover, the optimal $x$ is unique and given by $x=(g')^{-1}(y)$ (notice that $g'$ is strictly monotonic and surjective). Hence, $y=g'(x)$ and 
		\begin{equation}\label{eq:f Legendre}
			g(x)-xg'(x)=-g^*(g'(x)).
		\end{equation}
		
		Applying this to $g=h_\zeta(q,\cdot)$ (which satisfies the above assumption by Lemma \ref{lemma:convexity}), we obtain
		\begin{multline*}
			\int_{[-1,1]^N} \prod_{i=1}^N \partial_{mm}h_\zeta(q,m_i)e^{-u\sum_{i=1}^N (h_\zeta(q,m_i)-\partial_m h_\zeta(q,m_i)m_i)-\frac{1}{2\xi'(q)}\Vert \partial_m h_\zeta(q,\bfm)\Vert^2}\dd \bfm\\=\int_{[-1,1]^N} \prod_{i=1}^N \partial_{mm}h_\zeta(q,m_i)e^{u\sum_{i=1}^N (h_\zeta(q,\cdot))^*(\partial_m h_\zeta(q,m)) -\frac{1}{2\xi'(q)}\Vert \partial_m h_\zeta(q,\bfm)\Vert^2}\dd \bfm.  
		\end{multline*}
		Performing the change of variables $y=\partial_m h_\zeta(q,m)$ yields
		\begin{multline}\label{eq:comp1}
			\int_{[-1,1]^N} \prod_{i=1}^N \partial_{mm}h_\zeta(q,m_i)e^{-u\sum_{i=1}^N (h_\zeta(q,m_i)-\partial_m h_\zeta(q,m_i)m_i)-\frac{1}{2\xi'(q)}\Vert \partial_m h_\zeta(q,\bfm)\Vert^2}\dd \bfm\\= \left(\int_{\dR} e^{uh_\zeta(q,\cdot)^*(y)-\frac{1}{2\xi'(q)}y^2}\dd y\right)^N.
		\end{multline}
		Hence,
		\begin{multline*}
			\log \int_{[-1,1]^N} \prod_{i=1}^N \partial_{mm}h_\zeta(q,m_i)e^{-u\sum_{i=1}^N (h_\zeta(q,m_i)-\partial_m h_\zeta(q,m_i)m_i)-\frac{1}{2\xi'(q)}\Vert \partial_m h_\zeta(q,\bfm)\Vert^2}\dd \bfm-\frac{N}{2}\log(2\pi \xi'(q))\\
			= N\log \dE_{\mc{N}(0,\xi'(q))}[e^{u\Phi_\zeta(q,X)}].
		\end{multline*}
		Recall that for every $x\in \dR$,
		\begin{equation*}
			\Phi_\zeta(0,x)=\frac{1}{u}\log \dE_{X\sim\mc{N}(0,\xi'(q))}\Bigl[e^{u\Phi_\zeta(q,X+x)}\Bigr].
		\end{equation*}
		Hence, we find 
		\begin{multline}\label{eq:Phizeta}
			\log \int_{[-1,1]^N} \prod_{i=1}^N \partial_{mm}h_\zeta(q,m_i)e^{-u\sum_{i=1}^N (h_\zeta(q,m_i)-\partial_m h_\zeta(q,m_i)m_i)-\frac{1}{2\xi'(q)}\Vert \partial_m h_\zeta(q,\bfm)\Vert^2}\dd \bfm-\frac{N}{2}\log(2\pi \xi'(q))\\
			=uN \Phi_\zeta(0,0).
		\end{multline}
		Moreover, notice that 
		\begin{equation*}
			u(\xi(q)-\xi'(q)q)=-\int_0^q t\xi''(t) \zeta([0,t])\dd t,
		\end{equation*}
		since $\zeta([0,q))=\zeta(\{0\})=u$. Recalling that $\mc{U}_\zeta(q)=\frac{1}{2}\int_q^1 t\xi''(t)\zeta([0,t])\dd t$, we deduce that 
		\begin{equation*}
			\frac{1}{2} u^2 (\xi(q)-\xi'(q)q)-u\mc{U}_\zeta(q)=-\frac{u}{2}\int_0^1 t\xi''(t)\zeta([0,t])\dd t. 
		\end{equation*}
		Inserting this into \eqref{eq:Phizeta}, we obtain 
		\begin{multline}\label{eq:partial conclusion}
			\frac{1}{2}N\xi(q)u^2 -uN\mc{U}_\zeta(q)-Nu f-\frac{N}{2}\log(2\pi \xi'(q))-\frac{1}{2}N\xi'(q)qu^2\\
			+\log \int_{[-1,1]^N} \prod_{i=1}^N \partial_{mm}h_\zeta(q,m_i)e^{-u\sum_{i=1}^N (h_\zeta(q,m_i)-\partial_m h_\zeta(q,m_i)m_i)-\frac{1}{2\xi'(q)}\Vert \partial_m h_\zeta(q,\bfm)\Vert^2}\dd \bfm\\
			=Nu\left(\Phi_\zeta(0,0)-\frac{1}{2}\int_0^1 t\xi''(t)\zeta([0,t])\dd t-f\right).
		\end{multline}
		By \eqref{eq:boundN}, this proves the lemma.
	\end{proof}
	
	\subsection{Optimality consequences}
	
	In this subsection, we derive necessary conditions satisfied by the optimizer $\zeta$ of the Parisi functional over $\Prefix_2(u,q)$ and by the critical point $(u,q)$ of the complexity functional. These conditions, expressed in terms of the Auffinger--Chen process, are needed in the proof of the lower bound.
	
	\begin{lemma}\label{lemma:opt2prefix}
		Let $f\in \dR$, $q\in (0,1)$ and $u\in (0,1)$. Let $\zeta\in \Prefix_2(u,q)$ satisfy the assumptions of Proposition \ref{prop:annealed 0}. Let $(X_t)_{t\in [0,1]}$ be the Auffinger--Chen process
		\begin{equation*}
			\begin{cases}
				\dd X_t=\xi''(t)\zeta([0,t])\partial_x \Phi_\zeta(t,X_t)\dd t+\sqrt{\xi''(t)} \dd B_t\\
				X_0=0.
			\end{cases}
		\end{equation*}
		For every $s\in [0,1]$, set
		\begin{equation*}
			H_\zeta(s):=\frac{1}{2}\int_s^1 \xi''(r)\Bigl(\dE[(\partial_x\Phi_\zeta(r,X_r))^2]-r\Bigr)\dd r.
		\end{equation*}
		Then, the following hold:
		\begin{enumerate}[label=(\roman*)]
			\item \begin{equation}\label{eq:energyconstraint}
				-\dE[\Phi_\zeta(q,X_q)]+\int_0^q t\xi''(t)\zeta([0,t])\dd t +\frac{1}{2}\int_q^1 t\xi''(t)\zeta([0,t])\dd t=-f,
			\end{equation}
			\item \begin{equation}\label{eq:breakingpoint}
				\dE[(\partial_x\Phi_\zeta(q,X_q))^2]=q,
			\end{equation}
			\item  \begin{equation}\label{eq:Hzetaconstraint}
				\supp(\zeta)\cap [q,1]\subset \underset{[q,1]}{\mathrm{arg\,min}}(H_\zeta).
			\end{equation}
		\end{enumerate}
	\end{lemma}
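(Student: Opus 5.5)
The plan is to derive all three identities from first-order optimality conditions, using one standard tool: the directional derivative of the Parisi functional along the Auffinger--Chen process. If $\zeta_\ve$ is a perturbation with $\zeta_\ve([0,t])=\zeta([0,t])+\ve\,\eta(t)+o(\ve)$, then
\begin{equation*}
\frac{\dd}{\dd\ve}\Big|_{\ve=0}\Pari(\zeta_\ve)=\frac12\int_0^1\xi''(t)\Bigl(\dE[(\partial_x\Phi_\zeta(t,X_t))^2]-t\Bigr)\eta(t)\,\dd t .
\end{equation*}
This follows from Itô's formula applied to $\Phi_{\zeta_\ve}(t,X_t)$ and the PDE \eqref{eq:PPDE}, as in the derivative formula of Chen. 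A second ingredient is the Itô identity obtained by integrating along $X$ from $0$ to $q$:
\begin{equation*}
\dE[\Phi_\zeta(q,X_q)]-\Phi_\zeta(0,0)=\frac12\int_0^q\xi''(t)\zeta([0,t])\,\dE[(\partial_x\Phi_\zeta(t,X_t))^2]\,\dd t .
\end{equation*}

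\textbf{Item (iii).} This comes from tail optimality (i) alone. For $s\in[q,1]$ and $s'\in\supp(\zeta)\cap[q,1]$, I move a small amount of mass from a neighbourhood of $s'$ to $s$. This stays inside $\Prefix_2(u,q)$: it does not touch $[0,q)$, and the atom at $q$ remains positive for small perturbations. The resulting $\eta$ is $\pm$ an indicator of the interval between $s'$ and $s$. Since $H_\zeta'(r)=-\frac12\xi''(r)(\dE[(\partial_x\Phi_\zeta(r,X_r))^2]-r)$, the derivative formula gives a first variation proportional to $H_\zeta(s)-H_\zeta(s')$. Minimality forces $H_\zeta(s)\ge H_\zeta(s')$ for every such pair, which is exactly \eqref{eq:Hzetaconstraint}.

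\textbf{Item (ii).} This uses stationarity in $q$. Since $u>0$, criticality gives $\partial_q\Ppar(u;q)=0$. By an envelope argument using (i), this derivative is the derivative of $\Pari$ along the perturbation that shifts the atom at $q$ to $q+h$ and keeps everything else fixed. Along this perturbation $\zeta([0,t])$ changes only on $[q,q+h)$, from $\zeta([0,q])$ to $u$. The first variation is therefore
\begin{equation*}
-\bigl(\zeta([0,q])-u\bigr)\tfrac12\xi''(q)\bigl(\dE[(\partial_x\Phi_\zeta(q,X_q))^2]-q\bigr).
\end{equation*}
Because $q\in\supp\zeta$, we have $\zeta(\{q\})=\zeta([0,q])-u>0$. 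Together with $\xi''(q)>0$, this gives \eqref{eq:breakingpoint}.

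\textbf{Item (i).} This uses stationarity in $u$, namely $\Ppar(u;q)-f+u\,\partial_u\Ppar(u;q)=0$. Raising $u$ changes $\zeta([0,t])$ by $\eta=\indic_{[0,q)}$, with the tail held fixed by the envelope argument, so
\begin{equation*}
u\,\partial_u\Ppar=\frac u2\int_0^q\xi''(t)\bigl(\dE[(\partial_x\Phi_\zeta)^2]-t\bigr)\dd t .
\end{equation*}
Since $\zeta([0,t])=u$ on $[0,q)$, the $\dE[(\partial_x\Phi_\zeta)^2]$ part cancels exactly against the Itô identity above once $\Phi_\zeta(0,0)$ is replaced by $\dE[\Phi_\zeta(q,X_q)]$. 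What remains is
\begin{equation*}
\dE[\Phi_\zeta(q,X_q)]-\int_0^q t\xi''(t)u\,\dd t-\tfrac12\int_q^1 t\xi''(t)\zeta([0,t])\,\dd t=f,
\end{equation*}
which is \eqref{eq:energyconstraint}.

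\textbf{Main obstacle.} The delicate step is justifying the envelope arguments. One needs $(u,q)\mapsto\Ppar(u;q)$ to be differentiable with derivatives given by varying only the prefix parameters while freezing the optimal tail. I would reduce this to Danskin's theorem: reparametrize the tail so that the admissible set is independent of $(u,q)$, for example by rescaling $\zeta|_{(q,1]}$ relative to $q$. Then use the joint continuity and Fréchet differentiability of $\Pari$ in the metric $\dist$, which follow from the standard Lipschitz bounds on $\Phi_\zeta$. If uniqueness of the tail minimizer is unavailable, one-sided derivatives suffice, because stationarity is assumed at $(u,q)$ and both one-sided variations can be evaluated at the given optimizer $\zeta$.
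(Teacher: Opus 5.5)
Your arguments for (ii) and (i) rest on the claim that $q\in\supp(\zeta)$ forces $\zeta(\{q\})=\zeta([0,q])-u>0$. This claim is false. Membership in $\Prefix_2(u,q)$ only requires $\supp(\zeta)\cap[0,q]=\{0,q\}$ and $\zeta(\{0\})=u$, so for instance $u\delta_0+(1-u)\,\mathrm{Unif}[q,1]$ is admissible and has no atom at $q$.

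For such $\zeta$ two things break:
\begin{itemize}
\item Your shift perturbation has first variation $0\cdot(\cdots)$, so it says nothing about $\dE[(\partial_x\Phi_\zeta(q,X_q))^2]$.
\item Your perturbation $\eta=\indic_{[0,q)}$ in (i), which moves mass from the atom at $q$ to $0$, is not admissible for $v>u$. The touching argument then yields only a one-sided inequality.
\end{itemize}
The same assumption appears in your (iii), where it is harmless: remove the mass $\ve\,\zeta|_{B_r(s')}$ proportionally, which preserves the support.

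Both holes close quickly using (iii). Suppose $q$ is an accumulation point of $\supp(\zeta)\cap(q,1]$; this is automatic when $\zeta(\{q\})=0$. Then (iii) provides points $s_n\downarrow q$ in $\supp(\zeta)\cap(q,1)$ that are interior minimizers of $H_\zeta$ on $[q,1]$. Hence $\dE[(\partial_x\Phi_\zeta(s_n,X_{s_n}))^2]=s_n$, and continuity gives (ii) with no stationarity at all. Your shift argument is therefore needed only when $q$ is an isolated atom, and there it is correct.

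For (i), use the family $\zeta_v=v\delta_0+(1-v)\lambda$ with $\lambda=\zeta|_{[q,1]}/(1-u)$, as the paper does. It is admissible on both sides of $u$. By (iii), its first variation $H_\zeta(0)-\int H_\zeta\,\dd\lambda$ equals the value $H_\zeta(0)-H_\zeta(q)$ you computed, and your It\^o computation then goes through unchanged.

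With that repair, your (ii) is genuinely different from the paper's Step 2, and sounder. The paper never uses stationarity in $q$. It applies Lemma~\ref{lemma:first order mu} to $\mu=\Law(\partial_x\Phi_\zeta(q,X_q))$ and $\zeta|_{(q,1]}+\zeta([0,q])\delta_q$, and reads off (ii) from item (iv) at the boundary point $q$. But that boundary case is justified in the lemma by $\int m^2\,\dd\mu=q$, which for this $\mu$ is exactly (ii).

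Tail optimality alone gives only $H_\zeta'(q^+)\ge 0$, i.e.\ $\dE[(\partial_x\Phi_\zeta(q,X_q))^2]\le q$, and equality can fail. For small $\xi$, $\zeta=u\delta_0+(1-u)\delta_{1/2}$ is the tail optimizer in $\Prefix_2(u,1/2)$, yet $\dE[(\partial_x\Phi_\zeta(1/2,X_{1/2}))^2]=O(\xi'(1))<1/2$. So $\partial_q\Ppar(u;q)=0$ is the right ingredient, which is also what the paper invokes for the $n$-prefix analogue.

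Finally, you do not need Danskin or a reparametrization. As in the paper's Step 3, you have $\Pari(\zeta_h)\ge\Ppar(u;q+h)$ with equality at $h=0$. Together with the differentiability implicit in stationarity, this gives $\frac{\dd}{\dd h}\Pari(\zeta_h)\big|_{h=0}=0$ directly. What you do need, since $h\mapsto\zeta_h$ is not affine, is continuity of $\zeta'\mapsto\dE[(\partial_x\Phi_{\zeta'}(r,X^{\zeta'}_r))^2]$ near $(\zeta,q)$. That continuity is what justifies the first-variation formula along this path.
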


	\begin{proof}
		We first record the first-variation formula for $\Pari$. By Remark~\ref{remark:delta0} and Lemma~\ref{lemma:first order mu}\ref{item:gateaumu} with $q=0$ and $\mu=\delta_0$, for every finite signed measure $\eta$ on $[0,1]$ with $\eta([0,1])=0$,
		\[
		D\Pari(\zeta)[\eta]
		=
		\frac12\int_0^1 \xi''(s)\Bigl(\E[(\partial_x\Phi_\zeta(s,X_s))^2]-s\Bigr)\eta([0,s])\,\dd s
		=
		\int_0^1 H_\zeta(s)\,\dd\eta(s),
		\]
		where the second equality is Fubini's theorem.
		
		Set
		\[
		\lambda:=\frac{1}{1-u}\,\zeta|_{[q,1]}\in\mc P([q,1]),
		\qquad
		\zeta=u\delta_0+(1-u)\lambda.
		\]
		Since $\zeta\in \Prefix_2(u,q)$, we have $q\in \supp(\lambda)$.
		
		\smallskip
		\noindent{\bf{Step 1: proof of \eqref{eq:Hzetaconstraint}.}}
		Let $\lambda'\in\mc P([q,1])$ with $q\in \supp(\lambda')$, and define
		\[
		\lambda_\theta:=(1-\theta)\lambda+\theta\lambda',
		\qquad
		\zeta_\theta:=u\delta_0+(1-u)\lambda_\theta,
		\qquad \theta\in[0,1].
		\]
		Then $\zeta_\theta\in \Prefix_2(u,q)$. Since $\zeta$ minimizes $\Pari$ over $\Prefix_2(u,q)$, the right derivative at $\theta=0$ is nonnegative:
		\[
		0\le \frac{\dd}{\dd\theta}\bigg|_{\theta=0^+}\Pari(\zeta_\theta)
		=(1-u)\int_q^1 H_\zeta(s)\,\dd(\lambda'-\lambda)(s).
		\]
		Hence
		\[
		\int_q^1 H_\zeta\,\dd\lambda \le \int_q^1 H_\zeta\,\dd\lambda'
		\]
		for every $\lambda'\in\mc P([q,1])$ with $q\in \supp(\lambda')$.
		
		Now let $\nu\in\mc P([q,1])$ be arbitrary and set
		\[
		\nu_\varepsilon:=(1-\varepsilon)\nu+\varepsilon\delta_q.
		\]
		Then $q\in \supp(\nu_\varepsilon)$, so
		\[
		\int_q^1 H_\zeta\,\dd\lambda \le \int_q^1 H_\zeta\,\dd\nu_\varepsilon.
		\]
		By Lemma~\ref{lemma:martingales}\ref{item:mart-square}, the map
		$s\mapsto \E[(\partial_x\Phi_\zeta(s,X_s))^2]$ is continuous, hence $H_\zeta$ is continuous. Letting $\varepsilon\downarrow0$, we obtain
		\[
		\int_q^1 H_\zeta\,\dd\lambda \le \int_q^1 H_\zeta\,\dd\nu
		\qquad\text{for every }\nu\in\mc P([q,1]).
		\]
		Therefore $\lambda$ minimizes the linear functional
		$\nu\mapsto \int_q^1 H_\zeta\,\dd\nu$ over $\mc P([q,1])$, and hence
		\[
		\supp(\lambda)\subset \underset{[q,1]}{\mathrm{arg\,min}}(H_\zeta).
		\]
		Since $\supp(\lambda)=\supp(\zeta)\cap[q,1]$, this is exactly
		\eqref{eq:Hzetaconstraint}.
		
		\smallskip
		\noindent{\bf{Step 2: proof of \eqref{eq:breakingpoint}.}}
		Set
		\[
		\mu:=\mathrm{Law}\bigl(\partial_x\Phi_\zeta(q,X_q)\bigr),
		\qquad
		\zeta_\star:=\zeta|_{(q,1]}+\zeta([0,q])\delta_q\in\mc P([q,1]).
		\]
		For every $t\in[q,1]$, one has $\zeta_\star([0,t])=\zeta([0,t])$, hence
		\[
		\Phi_{\zeta_\star}(t,\cdot)=\Phi_\zeta(t,\cdot),\qquad t\in[q,1].
		\]
		Let $(X_t^\mu)_{t\in[q,1]}$ be the Auffinger--Chen process from
		\eqref{eq:AC mu} associated with $(\mu,\zeta_\star)$.
		Because
		\[
		\mu=\mathrm{Law}(\partial_x\Phi_\zeta(q,X_q))
		\qquad\text{and}\qquad
		\partial_x\Phi_{\zeta_\star}(q,\cdot)=\partial_x\Phi_\zeta(q,\cdot),
		\]
		we may choose $X_q^\mu\stackrel d=X_q$. The SDEs on $[q,1]$ then coincide, so
		\[
		(X_t^\mu)_{t\in[q,1]}\stackrel d=(X_t)_{t\in[q,1]}.
		\]
		Consequently, for every $s\in[q,1]$,
		\[
		H_{\zeta_\star}^\mu(s)
		=
		\frac12\int_s^1 \xi''(r)\Bigl(\E[(\partial_x\Phi_{\zeta_\star}(r,X_r^\mu))^2]-r\Bigr)\dd r
		=
		H_\zeta(s).
		\]
		
		By \eqref{eq:Hzetaconstraint}, every point of $\supp(\zeta)\cap[q,1]$ minimizes
		$H_\zeta$ on $[q,1]$. Since $q\in \supp(\zeta)$, we also have
		\[
		H_\zeta(q)=\min_{[q,1]}H_\zeta.
		\]
		Therefore every point of
		\[
		\supp(\zeta_\star)=\{q\}\cup (\supp(\zeta)\cap(q,1])
		\]
		minimizes $H_{\zeta_\star}^\mu$ on $[q,1]$, and hence minimizes its extension
		$\hat H_{\zeta_\star}^\mu$ from \eqref{def:Hhatmu} on $[0,1]$.
		By Lemma~\ref{lemma:first order mu}\ref{item:firstordermu},
		$\zeta_\star$ is the unique minimizer of $\TAP(\mu,\cdot)$ on $\mc P([q,1])$.
		Applying Lemma~\ref{lemma:first order mu}\ref{item:optmu} at the point
		$q\in \supp(\zeta_\star)$, we get
		\[
		\E\bigl[(\partial_x\Phi_{\zeta_\star}(q,X_q^\mu))^2\bigr]=q.
		\]
		Since $\Phi_{\zeta_\star}(q,\cdot)=\Phi_\zeta(q,\cdot)$ and $X_q^\mu\stackrel d=X_q$,
		this is exactly \eqref{eq:breakingpoint}.
		
		\smallskip
		\noindent{\bf{Step 3: proof of \eqref{eq:energyconstraint}.}}
		For $v$ close to $u$, define
		\[
		\zeta_v:=v\delta_0+(1-v)\lambda.
		\]
		Because $q\in \supp(\lambda)$, we have $\zeta_v\in\Prefix_2(v,q)$. Set
		\[
		g(v):=v\bigl(\Pari(\zeta_v)-f\bigr).
		\]
		By definition of $\Ppar(v;q)$,
		\[
		\Cpx_f(v;q)=v\bigl(\Ppar(v;q)-f\bigr)\le g(v),
		\]
		and equality holds at $v=u$, because $\zeta_u=\zeta$ and
		$\zeta$ attains the infimum defining $\Ppar(u;q)$.
		Hence the function $g-\Cpx_f(\cdot;q)$ has a local minimum $0$ at $u$.
		Since $u$ is a critical point of $v\mapsto \Cpx_f(v;q)$, we obtain
		\[
		0=(g-\Cpx_f(\cdot;q))'(u)=g'(u).
		\]
		Using the first-variation formula above and the fact that
		$\supp(\lambda)\subset\mathrm{argmin}_{[q,1]}H_\zeta$, we obtain
		\[
		0
		=
		g'(u)
		=
		\Pari(\zeta)-f
		+
		u\,D\Pari(\zeta)[\delta_0-\lambda]
		=
		\Pari(\zeta)-f+u\bigl(H_\zeta(0)-H_\zeta(q)\bigr).
		\]
		
		Now Lemma~\ref{lemma:martingales}\ref{item:mart-pathwise} gives
		\[
		\dd\Phi_\zeta(t,X_t)
		=
		\frac12\,\xi''(t)\,\zeta([0,t])\bigl(\partial_x\Phi_\zeta(t,X_t)\bigr)^2\,\dd t
		+\sqrt{\xi''(t)}\,\partial_x\Phi_\zeta(t,X_t)\,\dd B_t.
		\]
		Taking expectations and integrating from $0$ to $q$,
		\[
		\E[\Phi_\zeta(q,X_q)]
		=
		\Phi_\zeta(0,0)
		+\frac12\int_0^q \xi''(t)\,\zeta([0,t])\,\E\bigl[(\partial_x\Phi_\zeta(t,X_t))^2\bigr]\dd t.
		\]
		Since $\zeta([0,t])=u$ for $t<q$, this implies
		\[
		\Phi_\zeta(0,0)+u\bigl(H_\zeta(0)-H_\zeta(q)\bigr)
		=
		\E[\Phi_\zeta(q,X_q)]
		-\frac12\int_0^q t\,\xi''(t)\,\zeta([0,t])\,\dd t.
		\]
		Recalling that
		\[
		\Pari(\zeta)=\Phi_\zeta(0,0)-\frac12\int_0^1 t\,\xi''(t)\,\zeta([0,t])\,\dd t,
		\]
		we deduce
		\[
		\Pari(\zeta)+u\bigl(H_\zeta(0)-H_\zeta(q)\bigr)
		=
		\E[\Phi_\zeta(q,X_q)]
		-\int_0^q t\,\xi''(t)\,\zeta([0,t])\,\dd t
		-\frac12\int_q^1 t\,\xi''(t)\,\zeta([0,t])\,\dd t.
		\]
		Substituting this into the identity $0=g'(u)$ yields
		\[
		-\E[\Phi_\zeta(q,X_q)]
		+\int_0^q t\,\xi''(t)\,\zeta([0,t])\,\dd t
		+\frac12\int_q^1 t\,\xi''(t)\,\zeta([0,t])\,\dd t
		=-f,
		\]
		which is \eqref{eq:energyconstraint}.
	\end{proof}

	\subsection{Lower bound on the complexity}
	
	In this subsection, we prove the matching lower bound on the annealed complexity, completing the proof of Proposition~\ref{prop:annealed 0}. The argument combines the Kac--Rice representation with a reduction to a ``good'' event, and uses the optimality conditions from the previous subsection to verify that this event has full probability.

	\begin{proof}[Proof of Proposition \ref{prop:annealed 0}]
		By Lemma \ref{lemma:upper bound 0}, it remains to prove the lower bound. Let $\zeta\in \Prefix_2(u,q)$. We write $C_\ve$ for a positive constant satisfying $C_\ve\to 0$ as $\ve\to 0$, 
		and $O_\ve(N)$ for a quantity bounded in absolute value by $C_\ve N$.
		
		Set
		\[
		\zeta_\star := \zeta|_{(q,1]} + \zeta([0,q])\,\delta_q.
		\]

		\paragraph{\bf Step 1: reduction to a good event}
		Set $\delta_\ve=\ve^{\frac{1}{2}}$. Let $\mathrm{Good}(\varepsilon)$ denote the set of $\mathbf{m} \in [-1+\delta_\ve,1-\delta_\ve ]^N$ such that
		\begin{gather*}
			\frac{1}{N}\|\mathbf{m}\|^2 \in [q-\varepsilon,\, q+\varepsilon], \\[6pt]
			\mathrm{dist}(\zeta_{\mathbf{m}},\, \zeta_\star) \leq \varepsilon,\\
			\left|
			\frac{1}{N}\sum_{i=1}^N \partial_m h_\zeta(q, m_i)\, m_i
			- \xi'(q)\int_0^1 \zeta([0,t])\,\mathrm{d}t
			\right| \leq \varepsilon, \\[6pt]
			\frac{1}{N}\sum_{i=1}^N
			\bigl(h_\zeta(q, m_i) - \partial_m h_\zeta(q, m_i)\, m_i\bigr)
			+ \int_0^q t\,\xi''(t)\,\zeta([0,t])\,\mathrm{d}t
			+ \tfrac{1}{2}\int_q^1 t\,\xi''(t)\,\zeta([0,t])\,\mathrm{d}t\\
			\in [-f-\varepsilon,-f+\varepsilon].
		\end{gather*}
		We claim that
		\begin{multline}\label{eq:dropgood}
			\log \dE[\mc{N}_\ve(f,\zeta,q)]\geq  \frac{1}{2}N\xi(q)u^2 -uN\mc{U}_\zeta(q)-Nu f-\frac{N}{2}\log(2\pi \xi'(q))-\frac{1}{2}N\xi'(q)qu^2\\
			+\log \int \prod_{i=1}^N \partial_{mm}h_\zeta(q,m_i)e^{-u\sum_{i=1}^N (h_\zeta(q,m_i)-\partial_m h_\zeta(q,m_i)m_i)-\frac{1}{2\xi'(q)}\Vert \partial_m h_\zeta(q,\bfm)\Vert^2}\indic_{\Good(\ve)}(\bfm) \dd \bfm\\+O_\ve(N).
		\end{multline}
		We prove this separately in the mixed and pure cases.
		
		\smallskip
		\noindent{\bf Mixed case.}
		By Lemma~\ref{lemma:KacRice},
		\begin{multline*}
			\mathbb{E}[\mathcal{N}_\varepsilon(f,\zeta,q)]
			= N \int_{[-1,1]^N} \int_{(f-\varepsilon,\, f+\varepsilon)}
			\mathbb{E}\bigl[|\det(\nabla^2 F_{\mathrm{TAP},\zeta}(\mathbf{m}))|
			\mid \nabla F_{\mathrm{TAP},\zeta}(\mathbf{m}) = 0,\,
			F_{\mathrm{TAP},\zeta}(\mathbf{m}) = Nf'\bigr] \\
			\times\, \varphi_{(\nabla F_{\mathrm{TAP},\zeta}(\mathbf{m}),\,
				F_{\mathrm{TAP},\zeta}(\mathbf{m}))}(0, Nf')\,
			\mathbf{1}_{\mathrm{SUSY}_1(\zeta,\varepsilon)}(\mathbf{m})\,
			\mathrm{d}\mathbf{m}\, \mathrm{d}f'.
		\end{multline*}
		Since $\Good(\ve)\subset \SUSY_1(\zeta,\ve)$,
		\begin{multline*}
			\dE[\mc{N}_\ve(f,\zeta,q)]\geq N\int_{[-1,1]^N}\int_{(f-\ve,f+\ve)} \dE[ |\det(\nabla^2 F_{\TAP,\zeta}(\bfm))|\mid \nabla F_{\TAP,\zeta}(\bfm)=0,F_{\TAP,\zeta}(\bfm)=Nf'] \\ \times \varphi_{(\nabla F_{\TAP,\zeta}(\bfm),F_{\TAP,\zeta}(\bfm))}(0,Nf')\indic_{\Good(\ve)}(\bfm)\dd \bfm\dd f'.
		\end{multline*}
		By Lemma \ref{lemma:det asymp}, for every $\bfm\in \Good(\ve)$, 
		\begin{multline*}
			\log \dE[ |\det(\nabla^2 F_{\TAP,\zeta}(\bfm))|\mid \nabla F_{\TAP,\zeta}(\bfm)=0,F_{\TAP,\zeta}(\bfm)=Nf']\\ \geq \log\prod_{i=1}^N\partial_{mm}h_\zeta(q,m_i)+{\frac{N}{2}\xi''(q)\left(\int_q^1 \zeta([0,t])\dd t\right)^2 }+
			O_\ve(N).
		\end{multline*}
		Moreover, by Lemma \ref{lemma:prob0 non deg}, there exists $C>0$ such that for every $\bfm\in \Good(\ve)$,
		\begin{multline*}
			\log \varphi_{(\nabla F_{\TAP,\zeta}(\bfm),F_{\TAP,\zeta}(\bfm))}(0,Nf')\\ =\frac{1}{2}N\xi(q)u^2-\frac{1}{2}N\xi''(q)\left(\int_q^1 \zeta([0,t])\dd t \right)^2 -\frac{1}{2\xi'(q)}\Vert \partial_m h_\zeta(q,\bfm)-u\xi'(q)\bfm\Vert^2-\frac{N}{2}\log (2\pi \xi'(q))\\
			-u \left(\sum_{i=1}^N h_\zeta(q,m_i)+N\mc{U}_\zeta(q)+Nf'\right)+O_\ve(N).
		\end{multline*}
		Inserting the last two displays into the previous lower bound, and using that $f'\in(f-\ve,f+\ve)$, yields \eqref{eq:dropgood}.
		
		\smallskip
		\noindent{\bf Pure case.}
		By Lemma~\ref{lemma:KacRice}, \eqref{eq:KacRice-pure},
		\begin{multline*}
			\dE[\mc N_\ve(f,\zeta,q)]
			=
			\int_{[-1,1]^N}
			\dE\!\left[
			|\det(\nabla^2 F_{\TAP,\zeta}(\bfm))|
			\,\middle|\,
			\nabla F_{\TAP,\zeta}(\bfm)=0
			\right]
			\varphi_{\nabla F_{\TAP,\zeta}(\bfm)}(0)
			\\
			\times
			\indic_{\SUSY_1(\zeta,\ve)}(\bfm)\,
			\indic_{(f-\ve,f+\ve)}
			\!\left(\frac1N\mc R^{\rm ex}_\zeta(\bfm)\right)
			\dd\bfm.
		\end{multline*}
		For $\bfm\in \Good(\ve)$, set
		\[
		T(\bfm):=\frac1p\langle \bfm,k_\zeta(q,\bfm)\rangle-\sum_{i=1}^N h_\zeta(q,m_i)-N\mc U_\zeta(q).
		\]
		The same algebra as in \eqref{eq:R-rewrite-pure} gives
		\begin{multline*}
			T(\bfm)
			=
			-\sum_{i=1}^N\bigl(h_\zeta(q,m_i)-\partial_m h_\zeta(q,m_i)m_i\bigr)
			-N\int_0^q t\xi''(t)\zeta([0,t])\,\dd t
			-\frac{N}{2}\int_q^1 t\xi''(t)\zeta([0,t])\,\dd t
			\\
			+\frac{p-1}{p}\!\left(
			N\xi'(q)\int_0^1 \zeta([0,t])\,\dd t
			-
			\langle \partial_m h_\zeta(q,\bfm),\bfm\rangle
			\right).
		\end{multline*}
		By the definition of $\Good(\ve)$, this implies
		\[
		T(\bfm)=Nf+O_\ve(N).
		\]
		Comparing the exact identity
		\[
		F_{\TAP,\zeta}(\bfm)
		=
		\frac1p\langle \bfm,\nabla F_{\TAP,\zeta}(\bfm)\rangle+\mc R^{\rm ex}_\zeta(\bfm)
		\]
		with \eqref{eq:F-R-pure}, we obtain
		\[
		\mc R^{\rm ex}_\zeta(\bfm)=T(\bfm)+O_\ve(N)=Nf+O_\ve(N)
		\]
		uniformly on $\Good(\ve)$. After decreasing $\ve$ if necessary (and relabelling it), we may therefore assume that
		\[
		\Good(\ve)\subset \left\{\bfm:\frac1N\mc R^{\rm ex}_\zeta(\bfm)\in(f-\ve,f+\ve)\right\}.
		\]
		Hence
		\begin{multline*}
			\dE[\mc N_\ve(f,\zeta,q)]
			\ge
			\int_{[-1,1]^N}
			\dE\!\left[
			|\det(\nabla^2 F_{\TAP,\zeta}(\bfm))|
			\,\middle|\,
			\nabla F_{\TAP,\zeta}(\bfm)=0
			\right]
			\varphi_{\nabla F_{\TAP,\zeta}(\bfm)}\!(0)
			\,\indic_{\Good(\ve)}(\bfm)\,\dd\bfm.
		\end{multline*}
		
		Now fix $\bfm\in \Good(\ve)$ and set
		\[
		f_\bfm:=\frac1N\mc R^{\rm ex}_\zeta(\bfm).
		\]
		By \eqref{eq:pure-critical-identity-KR}, on the event
		$\{\nabla F_{\TAP,\zeta}(\bfm)=0\}$ one has
		$F_{\TAP,\zeta}(\bfm)=Nf_\bfm$. Therefore
		\begin{multline*}
			\dE\!\left[
			|\det(\nabla^2 F_{\TAP,\zeta}(\bfm))|
			\,\middle|\,
			\nabla F_{\TAP,\zeta}(\bfm)=0
			\right]
			\\=
			\dE\!\left[
			|\det(\nabla^2 F_{\TAP,\zeta}(\bfm))|
			\,\middle|\,
			\nabla F_{\TAP,\zeta}(\bfm)=0,\,
			F_{\TAP,\zeta}(\bfm)=Nf_\bfm
			\right].
		\end{multline*}
		Hence, Lemma~\ref{lemma:det asymp} implies
		\begin{multline*}
			\log \dE\!\left[
			|\det(\nabla^2 F_{\TAP,\zeta}(\bfm))|
			\,\middle|\,
			\nabla F_{\TAP,\zeta}(\bfm)=0
			\right]\\
			\ge
			\log\prod_{i=1}^N\partial_{mm}h_\zeta(q,m_i)
			+\frac{N}{2}\xi''(q)\left(\int_q^1 \zeta([0,t])\dd t\right)^2
			+O_\ve(N).
		\end{multline*}
		Moreover, $\Good(\ve)$ implies \eqref{eq:pure-ass1} and \eqref{eq:pure-ass2}, so by
		Lemma~\ref{lemma:prob0 pure}, \eqref{eq:pure-prob0-simplified}, and $T(\bfm)=Nf+O_\ve(N)$,
		\begin{multline*}
			\log \varphi_{\nabla F_{\TAP,\zeta}(\bfm)}(0)\\
			=\frac{1}{2}N\xi(q)u^2-\frac{1}{2}N\xi''(q)\left(\int_q^1 \zeta([0,t])\dd t \right)^2 -\frac{1}{2\xi'(q)}\Vert \partial_m h_\zeta(q,\bfm)-u\xi'(q)\bfm\Vert^2-\frac{N}{2}\log (2\pi \xi'(q))\\
			-u \left(\sum_{i=1}^N h_\zeta(q,m_i)+N\mc{U}_\zeta(q)+Nf\right)+O_\ve(N).
		\end{multline*}
		Inserting the last two displays into the pure Kac--Rice lower bound above yields
		\eqref{eq:dropgood}.
		
		\paragraph{\bf Step 2: checking the constraints asymptotically}
		Consider the probability measure $\mu$ on $[-1,1]$ with density 
		\begin{equation*}
			\dd \mu(m)\propto \partial_{mm}h_\zeta(q,m)e^{-u (h_\zeta(q,m)-\partial_m h_\zeta(q,m)m)-\frac{1}{2\xi'(q)}\partial_m h_\zeta(q,m)^2}\dd m.
		\end{equation*}
		Let $\nu:=(\partial_m h_\zeta(q,m))\# \mu$. Observe that by the Legendre transform identity \eqref{eq:f Legendre}, $\nu$ is the probability measure on $\dR$ with density 
		\begin{equation*}
			\frac{\dd \nu}{\dd y}(y)\propto e^{-\frac{1}{2\xi'(q)}y^2+u\Phi_\zeta(q,y)}.
		\end{equation*}
		Hence, by Lemma \ref{lemma:law Xt s}, we get that
		\begin{equation*}
			\nu=\mathrm{Law}(X_q),
		\end{equation*}
		where $X_t$ is the Auffinger--Chen process
		\begin{equation*}
			\begin{cases}
				\dd X_t=\xi''(t)\zeta([0,t])\partial_x \Phi_\zeta(t,X_t)\dd t+\sqrt{\xi''(t)} \dd B_t,\\
				X_0=0.
			\end{cases}
		\end{equation*}
		
		$\bullet$ First,
		\begin{equation*}
			\dE_\mu[m^{2}]=\dE_{\nu}[\partial_x\Phi_\zeta(q,\cdot)^2]=\dE[\partial_x\Phi_\zeta(q,X_q)^2 ].
		\end{equation*}
		By Lemma \ref{lemma:opt2prefix}, (see \eqref{eq:breakingpoint}),
		\begin{equation*}
			\dE[\partial_x \Phi_\zeta(q,X_q)^2]=q.
		\end{equation*}
		Thus,
		\begin{equation*}
			\dE_\mu[m^{2}]=q.
		\end{equation*}
		
		$\bullet$ One has
		\begin{equation*}
			\dE_\mu[m\partial_m h_\zeta(q,m)]=\dE[\partial_{x}\Phi_\zeta(q,X_q)X_q]=\xi'(q)\int_0^1 \zeta([0,t])\dd t,
		\end{equation*}
		where we have used \eqref{eq:deltaM deltaX} in Lemma \ref{lemma:expectations} with $s=0$ in the last equality.
		
		$\bullet$ By the Legendre transform identity \eqref{eq:f Legendre}, notice that 
		\begin{equation*}
			\dE_\mu[h_\zeta(q,m)-\partial_m h_\zeta(q,m)m]=-\dE[\Phi_\zeta(q,X_q)].
		\end{equation*}
		By Lemma \ref{lemma:opt2prefix}, Equation \eqref{eq:energyconstraint},
		\begin{equation*}
			-\dE[\Phi_\zeta(q,X_q)]+\int_0^q t\xi''(t)\zeta([0,t])\dd t+\frac{1}{2}\int_q^1 t\xi''(t)\zeta([0,t])\dd t=-f. 
		\end{equation*}
		Therefore,
		\begin{equation*}
			\dE_\mu[h_\zeta(q,m)-\partial_m h_\zeta(q,m)m]+\int_0^q t\xi''(t)\zeta([0,t])\dd t+\frac{1}{2}\int_q^1 t\xi''(t)\zeta([0,t])\dd t=-f.
		\end{equation*}
		
		$\bullet$ By Lemma \ref{lemma:first order mu}, to prove that $\zeta_\star$ is the unique minimizer of $\TAP(\mu,\cdot)$, it is enough to check that for every $\zeta'\in\mc{P}([q,1])$, 
		\begin{equation}\label{eq:FOmu}
			\int_q^1 H^\mu_{\zeta_\star}(s)\dd \zeta'(s)\geq \int_q^1 H^\mu_{\zeta_\star}(s)\dd \zeta_\star(s),
		\end{equation}
		where for every $s\in [q,1]$,
		\begin{equation*}
			H^\mu_{\zeta_\star}(s)=\frac{1}{2}\int_s^1 \xi''(t)\Bigl(\dE[(\partial_x\Phi_{\zeta_\star}(t,X_t^\mu))^2]-t\Bigr)\dd t,
		\end{equation*}
		and $(X_t^{\mu})_{t\in [q,1]}$ is the Auffinger--Chen process
		\begin{equation*}
			\begin{cases}
				\dd X_t^\mu=\xi''(t)\zeta_\star([0,t])\partial_x \Phi_{\zeta_\star}(t,X^\mu_t)\dd t+\sqrt{\xi''(t)} \dd B_t,\\
				\mathrm{Law}(\partial_x\Phi_{\zeta_\star}(q,X^\mu_q))=\mu.
			\end{cases}
		\end{equation*}
		Since $\zeta_\star([0,t])=\zeta([0,t])$ for every $t\in [q,1]$, the Parisi PDE on $[q,1]$ is the same for $\zeta_\star$ and $\zeta$. Hence
		\begin{equation*}
			\Phi_{\zeta_\star}(t,\cdot)=\Phi_\zeta(t,\cdot)\qquad \text{for every }t\in [q,1].
		\end{equation*}
		Moreover, by the construction of $\mu$ and the identity $\partial_m h_\zeta(q,\cdot)=(\partial_x\Phi_\zeta(q,\cdot))^{-1}$, we have
		\begin{equation*}
			(\partial_x\Phi_\zeta(q,\cdot))^{-1}\#\mu=\mathrm{Law}(X_q).
		\end{equation*}
		Therefore, $(X_t^\mu)_{t\in [q,1]}$ has the same distribution as $(X_t)_{t\in [q,1]}$, and so
		\begin{equation*}
			H^\mu_{\zeta_\star}(s)=H_\zeta(s):=\frac{1}{2}\int_s^1 \xi''(t)\Bigl(\dE[(\partial_x\Phi_\zeta(t,X_t))^2]-t\Bigr)\dd t,
			\qquad s\in [q,1].
		\end{equation*}
		By Lemma \ref{lemma:opt2prefix}, Equation \eqref{eq:Hzetaconstraint}, every point of $\supp(\zeta)\cap [q,1]$ minimizes $H_\zeta$ on $[q,1]$. Since $\zeta\in\Prefix_2(u,q)$, we have $q\in \supp(\zeta)$, hence
		\begin{equation*}
			H_\zeta(q)=\min_{[q,1]} H_\zeta.
		\end{equation*}
		It follows that every point of
		\begin{equation*}
			\supp(\zeta_\star)=\{q\}\cup (\supp(\zeta)\cap (q,1])
		\end{equation*}
		also minimizes $H_\zeta$ on $[q,1]$. Consequently,
		\begin{equation*}
			\int_q^1 H_\zeta(s)\dd \zeta_\star(s)=H_\zeta(q)=\min_{[q,1]}H_\zeta.
		\end{equation*}
		Therefore, for every probability measure $\zeta'$ on $[q,1]$,
		\begin{equation*}
			\int_q^1 H_\zeta(s)\dd \zeta'(s)\geq \min_{[q,1]}H_\zeta
			=\int_q^1 H_\zeta(s)\dd \zeta_\star(s).
		\end{equation*}
		This proves \eqref{eq:FOmu}, and thus $\zeta_\star$ is the unique minimizer of $\TAP(\mu,\cdot)$ by strict convexity.
		
		\paragraph{\bf Step 3: control on the good event under the truncated product law}
		Set
		\[
		w_\zeta(m):=\partial_{mm}h_\zeta(q,m)e^{-u (h_\zeta(q,m)-\partial_m h_\zeta(q,m)m)-\frac{1}{2\xi'(q)}\partial_m h_\zeta(q,m)^2}
		\]
		and, for $\ve\in(0,1)$,
		\[
		Z_\ve:=\int_{-1+\delta_\ve}^{1-\delta_\ve} w_\zeta(m)\,\dd m,
		\qquad
		\dd\mu_\ve(m):=\frac{\indic_{[-1+\delta_\ve,1-\delta_\ve]}(m)\,w_\zeta(m)}{Z_\ve}\,\dd m.
		\]
		Since $w_\zeta\in L^1([-1,1])$ (this follows from the change of variables $m=\partial_x\Phi_\zeta(q,y)$, which identifies $w_\zeta\,\dd m$ with the density of $\mathrm{Law}(X_q)$ up to a normalizing constant; see Lemma~\ref{lemma:law Xt s}), we have $Z_\ve\uparrow Z_0:=\int_{-1}^1 w_\zeta(m)\,\dd m$ as $\ve\downarrow 0$, and $\mu_\ve\Rightarrow\mu$ weakly.
		
		Let
		\[
		q_\ve:=\int m^2\,\mu_\ve(\dd m)
		\]
		and let $\zeta_{\star,\ve}$ be the unique minimizer of $\TAP(\mu_\ve,\cdot)$.
		By Step~2, the functions
		\[
		m\mapsto m^2,\qquad
		m\mapsto m\partial_m h_\zeta(q,m),\qquad
		m\mapsto h_\zeta(q,m)-m\partial_m h_\zeta(q,m)
		\]
		are $\mu$-integrable, so their expectations under $\mu_\ve$ converge to the corresponding target values appearing in the definition of $\Good(\ve)$. Moreover, by Lemma~\ref{lemma:stableTapMin}, since $q_\ve\to q$ and $\mu_\ve\Rightarrow\mu$, we have
		\[
		\dist(\zeta_{\star,\ve},\zeta_\star)\xrightarrow[\ve\to 0]{} 0.
		\]
		After decreasing $\ve$ if necessary (and relabelling it), we may therefore assume that all four discrepancies are at most $\ve/4$.
		
		Now let $m_1,\ldots,m_N$ be i.i.d.\ with law $\mu_\ve$. By the law of large numbers, with probability tending to $1$ as $N\to\infty$,
		\[
		\left|\frac1N\sum_{i=1}^N m_i^2-q_\ve\right|<\ve/4,
		\]
		\[
		\left|
		\frac1N\sum_{i=1}^N m_i\partial_m h_\zeta(q,m_i)
		-\int m\partial_m h_\zeta(q,m)\,\mu_\ve(\dd m)
		\right|<\ve/4,
		\]
		and
		\[
		\left|
		\frac1N\sum_{i=1}^N\bigl(h_\zeta(q,m_i)-m_i\partial_m h_\zeta(q,m_i)\bigr)
		-\int \bigl(h_\zeta(q,m)-m\partial_m h_\zeta(q,m)\bigr)\,\mu_\ve(\dd m)
		\right|<\ve/4.
		\]
		Furthermore, by the i.i.d.\ part of Lemma~\ref{lemma:stableTapMin} applied to $\mu_\ve$,
		\[
		\mu_\ve^{\otimes N}\!\left(\dist(\zeta_\bfm,\zeta_{\star,\ve})>\ve/4\right)\xrightarrow[N\to\infty]{} 0.
		\]
		Using the bounds on the expectations and the triangle inequality, we conclude that
		\begin{equation*}
			\mu_\ve^{\otimes N}(\Good(\ve)^c)\xrightarrow[N\to\infty]{} 0.
		\end{equation*}
		
		\paragraph{\bf Step 4: conclusion}
		For every $N$ and every $\ve\in(0,1)$,
		\begin{multline*}
			\int_{[-1+\delta_\ve,1-\delta_\ve]^N} \prod_{i=1}^N \partial_{mm}h_\zeta(q,m_i)e^{-u\sum_{i=1}^N (h_\zeta(q,m_i)-\partial_m h_\zeta(q,m_i)m_i)-\frac{1}{2\xi'(q)}\Vert \partial_m h_\zeta(q,\bfm)\Vert^2}\indic_{\Good(\ve)}(\bfm) \dd \bfm\\
			= Z_\ve^N\,\mu_\ve^{\otimes N}(\Good(\ve)).
		\end{multline*}
		Hence, by Step~3,
		\begin{multline*}
			\liminf_{N\to\infty}\frac1N\log
			\int_{[-1+\delta_\ve,1-\delta_\ve]^N} \prod_{i=1}^N \partial_{mm}h_\zeta(q,m_i)\\
			\times e^{-u\sum_{i=1}^N (h_\zeta(q,m_i)-\partial_m h_\zeta(q,m_i)m_i)-\frac{1}{2\xi'(q)}\Vert \partial_m h_\zeta(q,\bfm)\Vert^2}\indic_{\Good(\ve)}(\bfm) \dd \bfm
			\ge \log Z_\ve.
		\end{multline*}
		Since $Z_\ve\uparrow Z_0$ as $\ve\downarrow 0$, we get
		\begin{multline*}
			\liminf_{\ve\to 0}\liminf_{N\to\infty}\frac1N\log
			\int_{[-1+\delta_\ve,1-\delta_\ve]^N} \prod_{i=1}^N \partial_{mm}h_\zeta(q,m_i)\\
			\times e^{-u\sum_{i=1}^N (h_\zeta(q,m_i)-\partial_m h_\zeta(q,m_i)m_i)-\frac{1}{2\xi'(q)}\Vert \partial_m h_\zeta(q,\bfm)\Vert^2}\indic_{\Good(\ve)}(\bfm) \dd \bfm
			\ge \log Z_0.
		\end{multline*}
		But
		\[
		Z_0
		=
		\int_{-1}^1 \partial_{mm}h_\zeta(q,m)e^{-u(h_\zeta(q,m)-\partial_m h_\zeta(q,m)m)-\frac{1}{2\xi'(q)}\partial_m h_\zeta(q,m)^2}\,\dd m,
		\]
		so
		\begin{multline*}
			\log Z_0
			=
			\lim_{N\to\infty}\frac1N\log \int_{[-1,1]^N} \prod_{i=1}^N \partial_{mm}h_\zeta(q,m_i)\\
			\times e^{-u\sum_{i=1}^N (h_\zeta(q,m_i)-\partial_m h_\zeta(q,m_i)m_i)-\frac{1}{2\xi'(q)}\Vert \partial_m h_\zeta(q,\bfm)\Vert^2}\dd \bfm.
		\end{multline*}
		Thus, by \eqref{eq:dropgood},
		\begin{multline*}
			\liminf_{\ve\to 0}\liminf_{N\to \infty} \frac{1}{N} \log \dE[\mc{N}_\ve(f,\zeta,q)]\geq  \frac{1}{2}\xi(q)u^2 -u\mc{U}_\zeta(q)-u f-\frac{1}{2}\log(2\pi \xi'(q))-\frac{1}{2}\xi'(q)qu^2\\
			+\liminf_{N\to \infty}\frac{1}{N}\log \int_{[-1,1]^N} \prod_{i=1}^N \partial_{mm}h_\zeta(q,m_i)e^{-u\sum_{i=1}^N (h_\zeta(q,m_i)-\partial_m h_\zeta(q,m_i)m_i)-\frac{1}{2\xi'(q)}\Vert \partial_m h_\zeta(q,\bfm)\Vert^2}\dd \bfm.
		\end{multline*}
		Inserting \eqref{eq:partial conclusion}, this concludes the proof of the proposition.
	\end{proof}
	
	\section{The conditional annealed computation}\label{section:multiple}

	The aim of the section is to perform conditional annealed computations of the complexity of the TAP free energy. We consider a hierarchy of ancestors and count the number of critical points in the band orthogonal to the last ancestor, conditionally on all the ancestors being critical points of the TAP free energy and conditionally on their free energy level. We will extend the computations of Section \ref{section:base}. We assume in this section that $\xi$ is a mixed model, i.e., at least two of the 
	coefficients $\beta_p^2$ are nonzero.

	\subsection{Statement of the conditional annealed estimate}
	
	In this subsection, we state the main result of Section~\ref{section:multiple}: the conditional annealed estimate for the TAP complexity. As in Section~\ref{section:base}, we first restrict our attention to ``SUSY'' states, imposing conditions on the hierarchy $(\bfm^{(1)},\ldots,\bfm^{(n)})$ that ensure the Ward identities are satisfied.

	\begin{definition}[SUSY hierarchy of states]\label{def:susyreplicas}
		Fix $n\geq 1$ and $\ve\in(0,1)$. Set $\bfm^{(0)}=0$. Let $(u_i)_{1\leq i\leq n}$ and
		$(q_i)_{1\leq i\leq n}$ be strictly increasing sequences in $(0,1)$, and let
		$\zeta\in\Prefix_{n+1}\bigl((u_i),(q_i)\bigr)$ as in Definition~\ref{def:nprefix}.
		We define $\SUSY_n(\zeta,\ve)$ as the set of
		$(\bfm^{(1)},\ldots,\bfm^{(n)})\in([-1+\delta_\ve,1-\delta_\ve]^N)^n$ where $\delta_\ve:=\ve^{\frac{1}{2}}$ and such that, for every $k\in[n]$
		and $j\in\{1,\ldots,k-1\}$,
		\begin{equation*}
			\left|\frac{1}{N}\|\bfm^{(k)}\|^2 - q_k\right| \leq \ve
			\qquad\text{and}\qquad
			\left|\frac{1}{N}\langle \bfm^{(k)}-\bfm^{(k-1)},\bfm^{(j)}\rangle\right| \leq \ve,
		\end{equation*}
		and, for every $i,k\in[n]$,
		\begin{equation}\label{eq:SUSYassumption}
			\left|\frac{1}{N}\langle\partial_m h_\zeta(q_k,\bfm^{(k)}),\bfm^{(i)}\rangle
			-\dE\bigl[X_{q_k}\,\partial_x\Phi_\zeta(q_i,X_{q_i})\bigr]\right|\leq\ve,
		\end{equation}
		and, for every $k\in[n]$,
		\begin{equation*}
			\dist\!\bigl(\zeta_{\bfm^{(k)}},\,\zeta|_{(q_k,1]}+\zeta([0,q_k])\,\delta_{q_k}\bigr)\leq\ve.
		\end{equation*}
	\end{definition}

	Note that the SUSY assumption is only \eqref{eq:SUSYassumption}. It would be interesting to know if one can remove it.

	Next, we make some assumptions on the ancestors \emph{only}. Roughly speaking, we want the empirical measure of the coordinates of the ancestor $\bfm^{(k)}$ to be distributed approximately according to the law of $\partial_x\Phi_\zeta(q_k,X_{q_k})$ where $(X_t)$ is the Auffinger--Chen process and $q_k:=\frac{1}{N}\Vert \bfm^{(k)}\Vert^2$. In fact, we only need these two distributions to agree on a \emph{finite number} of linear statistics.

	\begin{definition}[Ancestors matching the target law]\label{def:matchlaw}
		Fix $n\geq 2$. Let $\zeta$ be a probability measure on $[0,1]$ with
		$(n+1)$-atom prefix $\{0,q_1,\ldots,q_n\}$ as in Definition~\ref{def:nprefix};
		the tail $\supp(\zeta)\cap(q_n,1]$ may be nonempty.
		Let $(X_t)_{t\in[0,1]}$ be the Auffinger--Chen process~\eqref{eq:AuffingerChen},
		and write $U:=\partial_x\Phi_\zeta(q_{n-1},X_{q_{n-1}})$.
		Define $\Test:=\{\chi_1,\ldots,\chi_5\}$, where each $\chi_i:[-1,1]\to\dR$ is given by
		\begin{align}
			\chi_1(m) &:= \dE\!\left[(\partial_x\Phi_\zeta(q_n,X_{q_n}))^2 \;\middle|\; U=m\right], \label{def:chi1}\\
			\chi_2(m) &:= \dE\!\left[\partial_x\Phi_\zeta(q_n,X_{q_n})\,X_{q_n} \;\middle|\; U=m\right], \label{def:chi2}\\
			\chi_3(m) &:=\dE[\partial_x\Phi_\zeta(q_n,X_{q_n})\mid U=m]=m, \label{def:chi3}\\
			\chi_4(m) &:= \dE\!\left[X_{q_n} \;\middle|\; U=m\right], \label{def:chi4}\\
			\chi_5(m) &:= \dE\!\left[\Phi_\zeta(q_n,X_{q_n}) \;\middle|\; U=m\right]. \label{def:chi5}
		\end{align}
		For $\ve\in(0,1)$, let $\delta_\ve=\ve^{\frac{1}{2}}$ and define $\Lawmatch_{n-1}(\zeta,\ve)$ as the set of
		$(\bfm^{(1)},\ldots,\bfm^{(n-1)})\in([-1+\delta_\ve,1-\delta_\ve]^N)^{n-1}$ such that, for every
		$\chi\in\Test$ and $k\in[n-1]$,
		\begin{equation}\label{def:Linear}
			\left|\frac{1}{N}\sum_{i=1}^N\chi(m_i^{(n-1)})\,m_i^{(k)}
			-\dE\!\left[\chi(U)\,\partial_x\Phi_\zeta(q_k,X_{q_k})\right]\right|\leq\ve,
		\end{equation}
		\begin{equation}\label{def:Linear2}
			\left|\frac{1}{N}\sum_{i=1}^N\chi(m_i^{(n-1)})\,\partial_m h_\zeta(q_k,m_i^{(k)})
			-\dE\!\left[\chi(U)\,X_{q_k}\right]\right|\leq\ve,
		\end{equation}
		\begin{equation}\label{def:Linear3}
			\left|\frac{1}{N}\sum_{i=1}^N\chi(m_i^{(n-1)})
			-\dE\!\left[\chi(U)\right]\right|\leq\ve
		\end{equation}
		and, for every $k\in[n-1]$,
		\begin{equation}\label{def:Linear4}
			\left|\frac{1}{N}\sum_{i=1}^N \Phi_\zeta\!\Bigl(q_k,\partial_m h_\zeta(q_k,m_i^{(k)})\Bigr)
			-\dE\!\left[\Phi_\zeta(q_k,X_{q_k})\right]\right|\leq\ve.
		\end{equation}
	\end{definition}
	\begin{remark}
		The event $\Lawmatch_{n-1}$ enforces that, for each fixed $k\le n-1$ and for each test
		function $\chi$ in \eqref{def:chi1}--\eqref{def:chi5}, the $\chi(m^{(n-1)}_i)$--weighted
		empirical averages of the coordinates match the corresponding moments under the law of
		$(\partial_x\Phi_\zeta(q_1,X_{q_1}),\ldots,\partial_x\Phi_\zeta(q_{n-1},X_{q_{n-1}}))$, up to error $\varepsilon$
		as in \eqref{def:Linear}--\eqref{def:Linear4}.
	\end{remark}
	
	We now state the analogue of Proposition \ref{prop:annealed 0}.

	\begin{proposition}[Conditional annealed complexity]\label{prop:annealed cond}
		Let $n\ge2$. Let $\bm{u}=(u_1,\ldots,u_{n})$,
		$\bm{q}=(q_1,\ldots,q_n)$ be strictly increasing
		in~$(0,1)$, and $\zeta\in\Prefix_{n+1}(\bm{u};\bm{q})$.
		Define the \emph{prefix free energy}
		\begin{equation}\label{def:Ppar}
			\Ppar^{(n)}(\bm{u};\bm{q})
			\;:=\;
			\inf_{\zeta'\,\in\,\Prefix_{n+1}(\bm{u};\bm{q})}
			\Pari(\zeta'),
		\end{equation}
		and the \emph{$n$-step complexity functional at level $f$}
		\begin{equation}\label{def:Cfn}
			\Cpx^{(n)}_f(\bm{u};\bm{q})
			\;:=\;
			u_n\,\bigl(\Ppar^{(n)}(\bm{u};\bm{q})-f\bigr).
		\end{equation}
		
		We assume:
		\begin{enumerate}
			\item[\emph{(i)}]
			\emph{(Tail optimality)} The infimum $\Ppar^{(n)}(\bm{u};\bm{q})$ is
			attained at $\zeta$.
			\item[\emph{(ii)}]
			\emph{(Stationarity)} $(\bm{u};\bm{q})$ is a critical point of
			$\Cpx^{(n)}_f$.
		\end{enumerate}
		Let $\ve\in(0,1)$, let
		$(\bfm^{(1)},\ldots,\bfm^{(n-1)})\in\SUSY_{n-1}(\zeta,\ve)\cap\Lawmatch_{n-1}(\zeta,\ve)$. Let
		$\mc{N}_\ve(f,\zeta,q_n,\bfm^{(n-1)})$ be the number of critical points $\bfm$
		of $F_{\TAP,\zeta}$ with
		$(\bfm^{(1)},\ldots,\bfm^{(n-1)},\bfm)\in\SUSY_n(\zeta,\ve)$ and
		$\tfrac{1}{N}F_{\TAP,\zeta}(\bfm)\in(f-\ve,f+\ve)$. Set
		\begin{equation*}
			\Skel_{n-1}=\left(\nabla F_{\TAP,\zeta}(\bfm^{(1)}),F_{\TAP,\zeta}(\bfm^{(1)}),\ldots,\nabla F_{\TAP,\zeta}(\bfm^{(n-1)}), F_{\TAP,\zeta}(\bfm^{(n-1)})\right).
		\end{equation*} Then
		\begin{multline}\label{eq:fnfn}
			\lim_{\ve\to0}\lim_{N\to\infty}
			\frac{1}{N}\log\dE\Bigl[
			\mc{N}_\ve(f,\zeta,q_n,\bfm^{(n-1)})
			\;\Big|\;
			\Skel_{n-1}=(0,N\Pari(\zeta),\ldots,0,N\Pari(\zeta))
			\Bigr]
			=
			\Cpx_f^{(n)}(\bm{u};\bm{q})
			\\=
			\zeta([0,q_n))\,(\Pari(\zeta)-f).
		\end{multline}
	\end{proposition}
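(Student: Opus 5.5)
We follow the architecture of the proof of Proposition~\ref{prop:annealed 0}, working conditionally on the skeleton throughout.

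\textbf{Conditional Kac--Rice.} The first step is a Kac--Rice formula for the field $\bfm\mapsto Z(\bfm)$, taken under the conditional law given $\Skel_{n-1}=(0,N\Pari(\zeta),\ldots,0,N\Pari(\zeta))$. Conditioning a Gaussian field on finitely many Gaussian constraints leaves it Gaussian. Its conditional covariance is obtained from the kernel $N\xi(\langle\bfm,\bfm'\rangle/N)$ by a Schur complement with respect to the $n-1$ ancestor blocks $(\nabla H(\bfm^{(k)}),H(\bfm^{(k)}))$, and its conditional mean is the corresponding regression shift. Non-degeneracy of the conditional vector comes from the mixed assumption and from the approximate orthogonality $\langle \bfm-\bfm^{(n-1)},\bfm^{(j)}\rangle\approx 0$. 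The band structure makes the cross-covariances $\Cov(\nabla H(\bfm),\nabla H(\bfm^{(k)}))=\xi'(q_k)\Id+\frac{\xi''(q_k)}{N}\bfm^{(k)}\bfm^\top$ depend on $\bfm$ only through a finite number of overlaps.

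\textbf{Conditional density.} Next we generalize Lemma~\ref{lemma:prob0 non deg}. We write the joint density of the $n$ gradient/value pairs as a ratio: the joint density at $(0,N\Pari(\zeta),\ldots,0,N\Pari(\zeta),0,Nf)$ divided by the ancestor density. We bound this ratio with the convex-dual representation~\eqref{eq:varphidual}, using the SUSY test point $w^{(k)}=(x^{(k)},v^{(k)})$. Here $v^{(k)}$ is the increments $u_k-u_{k-1}$ (with $u_0=0$), and $x^{(k)}$ is determined by the Ward-type constraints~\eqref{eq:SUSYassumption}. For an upper bound we plug this test point into the numerator, and we compute the denominator exactly via the lower-bound mechanism, where the hypotheses $\Lawmatch_{n-1}$ and $\SUSY_{n-1}$ make the ancestor residuals $O(\ve/\delta)$. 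The resulting log-density should have the form $-\frac1{2\xi'(q_n)-2\xi'(q_{n-1})}\|\partial_m h_\zeta(q_n,\bfm)-\partial_m h_\zeta(q_{n-1},\bfm^{(n-1)})-\ldots\|^2-u_n\sum_i h_\zeta(q_n,m_i)+\ldots$. It is a Gaussian kernel with variance $\xi'(q_n)-\xi'(q_{n-1})$ centred at $X_{q_{n-1}}$-coordinates, coupled to the ancestor through $\partial_m h_\zeta(q_{n-1},m^{(n-1)}_i)$. We expect this block algebra, and the verification that the $\xi''$ terms cancel against the determinant as they did in Lemma~\ref{lemma:det asymp}, to be the main obstacle. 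The test functions $\chi_1,\dots,\chi_5$ are designed so that every cross term between the new state and the ancestors reduces to one of the linear statistics \eqref{def:Linear}--\eqref{def:Linear4}. We will verify this term by term.

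\textbf{Determinant.} The conditional Hessian is again a deterministic diagonal matrix plus an independent GOE of variance $\xi''(q_n)$, up to finite-rank perturbations that do not affect $\frac1N\log|\det|$. Lemma~\ref{lemma:det asymp} then applies verbatim with $(q,\zeta)$ replaced by $(q_n,\zeta)$ and gives $\prod_i\partial_{mm}h_\zeta(q_n,m_i)\,e^{\frac N2\xi''(q_n)(\int_{q_n}^1\zeta([0,t])\dd t)^2}$.

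\textbf{Integration over $\bfm$.} Combining the density and determinant bounds, the integral over $\bfm$ factorizes coordinatewise given $m^{(n-1)}_i$. The change of variables $y=\partial_m h_\zeta(q_n,m)$ turns each factor into $\dE[e^{u_n\Phi_\zeta(q_n,y+\sqrt{\xi'(q_n)-\xi'(q_{n-1})}G)}]=e^{u_n\Phi_\zeta(q_{n-1},y)}$ by the Parisi recursion on $[q_{n-1},q_n)$, where $\zeta([0,t])=u_n$. Summing over coordinates using \eqref{def:Linear4} and unrolling the recursion down to $\Phi_\zeta(0,0)$ gives the upper bound $u_n(\Pari(\zeta)-f)$. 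In this unrolling, the ancestor normalizations exactly cancel the lower levels $u_k$, $k<n$, because each ancestor sits at level $\Pari(\zeta)$.

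\textbf{Lower bound.} For the lower bound we restrict to a good set, as in Steps 1--4 of the proof of Proposition~\ref{prop:annealed 0}. We sample $m_i$ independently from the conditional law of $\partial_x\Phi_\zeta(q_n,X_{q_n})$ given $U=m^{(n-1)}_i$. The required constraints are supplied by the $n$-level analogue of Lemma~\ref{lemma:opt2prefix}, obtained by varying $u_n$ and $q_n$ and using stationarity of $\Cpx^{(n)}_f$: namely $\dE[\partial_x\Phi_\zeta(q_n,X_{q_n})^2]=q_n$, the energy identity at level $f$, and $H_\zeta$-minimality on $[q_n,1]$. With these, a conditional law of large numbers shows that the good set has asymptotically full mass, and the bound closes.
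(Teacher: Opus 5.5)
Your architecture matches the paper's: conditional Kac--Rice; joint density divided by ancestor density, both via the convex dual, with a test-point upper bound for the numerator and an exact denominator; finite-rank reduction to Lemma~\ref{lemma:det asymp}; the coordinatewise Gaussian integral via Lemma~\ref{lemma:law Xt s}; and a lower bound under the kernel $\Law(\partial_x\Phi_\zeta(q_n,X_{q_n})\mid U=m_i^{(n-1)})$. But the key input, the energy multipliers of the test point, is wrong.

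The energy part of the quadratic form is $N\sum_{i,j}\xi(q_{\min(i,j)})v_iv_j=N\sum_k(\xi(q_k)-\xi(q_{k-1}))\bigl(\sum_{j\ge k}v_j\bigr)^2$. To reproduce the Parisi weights $\zeta([0,t])=u_k$ on $[q_{k-1},q_k)$, the \emph{tail} sums must satisfy $\sum_{j\ge k}v_j=u_k$. This forces $v_k=u_k-u_{k+1}=-\zeta(\{q_k\})$ for $k\le n-1$ and $v_n=u_n=\zeta([0,q_n))$: the ancestors carry strictly negative multipliers and the new state carries the full $u_n$.

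Your backward increments $v^{(k)}=u_k-u_{k-1}$ agree with this only when $n=1$. They have tail sums $u_n-u_{k-1}$ and put $u_n-u_{n-1}$ on the last free-energy constraint. So your test point produces $-(u_n-u_{n-1})Nf$ instead of $-u_nNf$, which is inconsistent with the $u_n$-weighted form you then write down. The dual is attained only at the unique optimizer, and in the mixed case the conditional covariance of the energies given the gradients is of order $N$. Hence multipliers off by $O(1)$ cost an amount of order $N$ in the exponent at every $\bfm$, and your upper bound cannot match. For $n\ge 3$ even the ``exact'' evaluation of the ancestor density fails, because the residual $\Gamma w-a$ does not vanish. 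With the correct multipliers, the unconstrained minimizer in $x$ automatically satisfies the Ward constraints (Lemma~\ref{lemma:compression2}), so you need not impose them.

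Fixing this changes two later steps.

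First, the multipliers are not stable under adding a level. In the $(n-1)$-level ancestor density the last ancestor carries $u_{n-1}$; in the $n$-level density it carries $u_{n-1}-u_n$. This shift produces, in the conditional density of Lemma~\ref{lemma:prob0 replica}, the terms $-Nu_n(f-\Pari(\zeta))$ and
$+u_n\sum_i\bigl(h_\zeta-m\,\partial_m h_\zeta\bigr)(q_{n-1},m_i^{(n-1)})=-u_n\sum_i\Phi_\zeta\bigl(q_{n-1},\partial_m h_\zeta(q_{n-1},m_i^{(n-1)})\bigr)$, using \eqref{eq:f Legendre}. The latter cancels coordinatewise and exactly against the factor $e^{u_n\Phi_\zeta(q_{n-1},\cdot)}$ from the Gaussian integral. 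So nothing is unrolled down to $\Phi_\zeta(0,0)$, and $\Pari(\zeta)$ enters only as the prescribed level of the ancestors.

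Second, the exact ancestor density requires hypothesis \eqref{eq:ass ancestors} with $f_k=\Pari(\zeta)$. This follows from \eqref{def:Linear4} only in combination with \eqref{eq:optfzeta}, which comes from stationarity of $\Cpx^{(n)}_f$ in $u_1,\ldots,u_{n-1}$. You never invoke these variations, since you only vary $u_n$ and $q_n$.
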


	\begin{remark}[Consistency with the base case]
		For $n=1$, one has $\bm{u}=(u)$, $\bm{q}=(q)$, $u_{n}=u=\zeta(\{0\})$, so $\Cpx^{(n)}_f(\bm{u};\bm{q})=\Cpx_f(u;q)$ and
		\eqref{eq:fnfn} recovers the conclusion of
		Proposition~\ref{prop:annealed 0}.
	\end{remark}

	\begin{remark}[On the choice of the free energy level of the ancestors]
		Differentiating \eqref{def:Cfn} with respect to $u_l=\zeta([0,q_l))$ for $l\in [n-1]$ forces the TAP free energy of the ancestors to equal $\Pari(\zeta)$ (see \eqref{eq:optfzeta}), while differentiating with respect to $u_n$ gives the last-state condition \eqref{eq:optf}, involving $f$. Moreover, taking $f=\Pari(\zeta)$ in \eqref{eq:fnfn} gives a sub-extensive complexity (i.e., not growing exponentially).
		
		At the thermodynamic equilibrium ($f=\Pari(\zeta)$), all TAP states have the same free energy, equal to the thermodynamic free energy, which is consistent with \cite[Theorem 4]{chen2018generalized}. 
	\end{remark}

	\subsection{Computing the conditional probability}
	
	In this subsection, we compute the joint Gaussian density of the gradients and values of $F_{\TAP,\zeta}$ along the hierarchy $(\bfm^{(1)},\ldots,\bfm^{(n)})$, extending Lemma~\ref{lemma:prob0 non deg} to $n$ states.

	\begin{lemma}\label{lemma:prob0 replica}
		Let $\ve>0$. Let $\zeta$ be a probability measure on $[0,1]$ with prefix $\{0,q_1,\ldots,q_n\}$ where $0<q_1<\ldots<q_n<1$. Let us emphasize that $\supp(\zeta)\cap (q_n,1]$ can be nonempty. For every $j\in [n-1]$, set $\Delta_j:=-\zeta(\{q_j\})$ and set $\Delta_n=\zeta([0,q_n))=\zeta([0,q_{n-1}])$. Let $f_1,\ldots,f_n\in \dR$. For every $\bfm\in [-1,1]^N$, denote $Z(\bfm):=(\nabla F_{\TAP,\zeta}(\bfm),F_{\TAP,\zeta}(\bfm))$.

		Let $(\bfm^{(1)},\ldots,\bfm^{(n)})\in \SUSY_n(\zeta,\ve)$ where $\SUSY_n(\zeta,\ve)$ is as in Definition \ref{def:susyreplicas}. Moreover, assume that for every $k\in [n-1]$, 
		\begin{multline}\label{eq:ass ancestors}
			\sum_{i=1}^N (h_\zeta(q_k,m_i^{(k)})-\partial_m h_\zeta(q_k,m_i^{(k)})m_i^{(k)})+\frac{1}{2}N\int_0^1 t\xi''(t)\zeta([0,t])\dd t+\frac{1}{2}N\int_{0}^{q_k} t\xi''(t)\zeta([0,t]) \dd t\\=-Nf_k+O(N\ve^{1/2}).
		\end{multline}
		For every $k\in [n]$, let $\Skel_k:=(Z(\bfm^{(1)}),\ldots,Z(\bfm^{(k)}))$.

		Then, 
		\begin{align}\label{eq:prob ineq}
			& \log \varphi_{\Skel_n\mid \Skel_{n-1}}(0,Nf_1,\ldots,0,Nf_n\mid 0,Nf_1,\ldots,0,Nf_{n-1})\\ \notag
			&\leq -\frac{1}{2}N\xi''(q_n)\left(\int_{q_n}^1 \zeta([0,t])\dd t\right)^2\\ \notag
			&-\frac{1}{2(\xi'(q_n)-\xi'(q_{n-1}))}\left\Vert \partial_m h_\zeta(q_n,\bfm^{(n)})-\partial_m h_\zeta(q_{n-1},\bfm^{(n-1)})\right\Vert^2\\ \notag
			& -\Delta_n\sum_{i=1}^N \bigl(h_\zeta(q_n,m_i^{(n)})-\partial_m h_\zeta(q_n,m_i^{(n)})m_i^{(n)}\bigr)\\ \notag
			&\quad +\Delta_n \sum_{i=1}^N \bigl(h_\zeta(q_{n-1},m_i^{(n-1)})- \partial_m h_\zeta(q_{n-1},m_i^{(n-1)})m_i^{(n-1)}\bigr)\\ \notag
			&- N\Delta_n(f_n\!-\!f_{n-1}) -\tfrac{N}{2} \log\bigl(2\pi (\xi'(q_n)\!-\!\xi'(q_{n-1}))\bigr) +O(N\ve^{1/2}).
		\end{align}

		Suppose in addition that 
		\begin{multline}\label{eq:addiass}
			\sum_{i=1}^N \bigl(h_\zeta(q_n,m_i^{(n)})-\partial_m h_\zeta(q_n,m_i^{(n)})m_i^{(n)}\bigr)\\
			+\tfrac{1}{2}N\!\int_0^1 t\xi''(t)\zeta([0,t])\dd t+\tfrac{1}{2}N \!\int_{0}^{q_n} t\xi''(t)\zeta([0,t]) \dd t=-Nf_n+O(N\ve^{1/2}).
		\end{multline}
		Then the inequality \eqref{eq:prob ineq} is an equality.
	\end{lemma}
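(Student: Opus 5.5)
The conditional density is a ratio of Gaussian densities, and I will compute it as $\varphi_{\Skel_n}/\varphi_{\Skel_{n-1}}$. The plan is to mirror Lemma~\ref{lemma:prob0 non deg}, using the convex-dual representation for both numerator and denominator. Concretely,
\[
-\log\varphi_{\Skel_k}(z)=\tfrac12\langle z_k,\Gamma_k^{-1}z_k\rangle+\tfrac12\log\det(2\pi\Gamma_k),
\]
where $\Gamma_k$ is the covariance of $(Z(\bfm^{(1)}),\ldots,Z(\bfm^{(k)}))$. Its entries are
\begin{align*}
\Cov(\partial_iH(\bfm^{(a)}),\partial_jH(\bfm^{(b)}))&=\xi'(R_{ab})\delta_{ij}+\tfrac1N\xi''(R_{ab})m^{(b)}_im^{(a)}_j,\\
\Cov(\partial_iH(\bfm^{(a)}),H(\bfm^{(b)}))&=\xi'(R_{ab})m^{(b)}_i,
\end{align*}
with $R_{ab}=\frac1N\langle\bfm^{(a)},\bfm^{(b)}\rangle$. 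On $\SUSY_n(\zeta,\ve)$ we have $R_{ab}=q_{a\wedge b}+O(\ve)$. Hence, up to $O(\ve)$ perturbations, the covariance is exactly that of a hierarchical (tree-indexed) Gaussian field. It therefore admits a martingale-increment decomposition: the increments $\nabla H(\bfm^{(k)})-\nabla H(\bfm^{(k-1)})$, projected off $\Span(\bfm^{(1)},\ldots,\bfm^{(k)})$, are independent with covariance $(\xi'(q_k)-\xi'(q_{k-1}))\Id$. The rank-$O(n)$ remainder and the $n$ scalar values are handled by finite-dimensional block Schur complements, as in Step~2 of Lemma~\ref{lemma:prob0 non deg}. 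The determinant ratio then gives $-\frac N2\log(2\pi(\xi'(q_n)-\xi'(q_{n-1})))+O(\log N)+O(N\ve)$.

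For the quadratic form I will use $-\langle z,\Gamma^{-1}z\rangle=\inf_w\{\langle w,\Gamma w\rangle-2\langle w,z\rangle\}$ for the numerator. For the upper bound I only need a test point there and the exact value of the denominator. The denominator is computed exactly, since it concerns the ancestors alone. I choose the test point $w=(x^{(1)},v_1,\ldots,x^{(n)},v_n)$ following the SUSY ansatz. The scalar multipliers are $v_k=\Delta_k$, i.e.\ $-\zeta(\{q_k\})$ for $k<n$ and $\zeta([0,q_n))$ for $k=n$; their partial sums telescope to $\zeta([0,q_k))$. The gradient multipliers are
\[
x^{(k)}=\frac{\partial_m h_\zeta(q_k,\bfm^{(k)})-\partial_m h_\zeta(q_{k-1},\bfm^{(k-1)})}{\xi'(q_k)-\xi'(q_{k-1})}-(\text{correction along }\bfm^{(k)}),
\]
with the correction fixed by the Ward identity $\langle x^{(k)},\bfm^{(i)}\rangle\approx N\int\zeta([0,t])\dd t$-type constraints encoded in \eqref{eq:SUSYassumption}.

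Expanding $\langle w,\Gamma_n w\rangle-2\langle w,z_n\rangle$ and subtracting the same expression restricted to the first $n-1$ blocks, most terms cancel because of the hierarchical structure. Three contributions survive:
\begin{itemize}
\item the last increment term, giving $-\|\partial_mh_\zeta(q_n,\bfm^{(n)})-\partial_mh_\zeta(q_{n-1},\bfm^{(n-1)})\|^2/(2(\xi'(q_n)-\xi'(q_{n-1})))$;
\item the $\xi''(q_n)(\int_{q_n}^1\zeta([0,t])\dd t)^2$ term, arising as in \eqref{eq:majo};
\item the linear terms in $v_n$.
\end{itemize}
In the linear terms, the Legendre identity \eqref{eq:f Legendre} converts $\sum_ih_\zeta$ into $h-m\partial_mh$ combinations. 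The ancestor free energies enter through \eqref{eq:ass ancestors}, and together these produce the $\Delta_n$-weighted differences and $-N\Delta_n(f_n-f_{n-1})$. Deterministic integrals of $t\xi''(t)\zeta([0,t])$ combine using $\zeta([0,t])$ piecewise constant on the prefix, exactly as $u(\xi(q)-q\xi'(q))=-\int_0^qt\xi''\zeta$ was used before. All $q_\bfm$-versus-$q_k$ replacements and defects $\Delta_i$ from Lemma~\ref{lemma:gradient TAP} cost $O(N\ve/\delta_\ve)=O(N\ve^{1/2})$.

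For equality under \eqref{eq:addiass}, I argue as in Step~4 of Lemma~\ref{lemma:prob0 non deg}. I compute the residual $r=\Gamma_nw-z_n$ and show that its gradient blocks are $O(\ve^{1/2})$ componentwise, using \eqref{eq:SUSYassumption}, and that its scalar blocks are $O(N\ve^{1/2})$, using \eqref{eq:ass ancestors} and \eqref{eq:addiass}. By the block scaling of $\Gamma_n^{-1}$, $\langle r,\Gamma_n^{-1}r\rangle=O(N\ve)$, so the test point is optimal to this order.

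The main obstacle is the uniform control of $\Gamma_n^{-1}$. Because $R_{ab}$ equals $q_{a\wedge b}$ only up to $O(\ve)$, I must show that the $n(N+1)$-dimensional matrix remains well conditioned, i.e.\ that the increments $\xi'(q_k)-\xi'(q_{k-1})>0$ and the finite-dimensional Schur complements (positive by the mixed-case argument for $D(q)$) stay bounded below. Only then is the residual bound legitimate. I would first establish this for the exact tree covariance and then treat the $O(\ve)$ deviation as a perturbation.
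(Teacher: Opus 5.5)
Your overall architecture matches the paper's. You write the conditional density as $\varphi_{\Skel_n}/\varphi_{\Skel_{n-1}}$, use the convex-dual formula, take the SUSY test point $v_k=\Delta_k$ with gradient fields obeying the $\mc D_n$-type constraints, and combine an upper bound at level $n$ with an exact value at level $n-1$, certifying equality by a residual argument.

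The gap is in how you obtain the exact denominator. You subtract ``the same expression restricted to the first $n-1$ blocks'', i.e.\ you evaluate the level-$(n-1)$ dual functional $w\mapsto\langle w,\Gamma'_{n-1}w\rangle-2\langle w,a_{n-1}\rangle$ (with $\Gamma'_{n-1}=\Cov(\Skel_{n-1})$) at $w|_{n-1}=(x^{(1)},\Delta_1,\ldots,x^{(n-1)},\Delta_{n-1})$. That point is not the level-$(n-1)$ minimizer. Since the denominator enters with the opposite sign, a non-optimal point there pushes the bound in the wrong direction.

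Concretely, set $L:=\langle x^{(n)},\nabla H(\bfm^{(n)})\rangle+\Delta_nH(\bfm^{(n)})$. The level-$n$ block equations for $k\le n-1$ hold by construction, \eqref{eq:SUSYassumption} and \eqref{eq:ass ancestors}. Hence the residual of the restricted point is $\Gamma'_{n-1}w|_{n-1}-a_{n-1}=-\Cov(\Skel_{n-1},L)$. Its scalar entries are $-N\xi(q_k)\Delta_n$, because $\langle x^{(n)},\bfm^{(k)}\rangle=0$ for $k<n$. So the functional at the restricted point exceeds its minimum by $\Var(\dE[L\mid\Skel_{n-1}])\ge N\Delta_n^2\xi(q_{n-1})+O(N\ve)$. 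Your computed $\log\varphi_{\Skel_n\mid\Skel_{n-1}}$ would therefore be off by an order-$N$ amount, and no cancellation can then yield \eqref{eq:prob ineq}, either as an inequality or as an equality.

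The missing idea is that the deepest ancestor's multiplier changes when the descendant is removed. At level $n-1$ the weights are $\tilde\Delta_j=-\zeta(\{q_j\})$ for $j\le n-2$ and $\tilde\Delta_{n-1}=\zeta([0,q_{n-1}))=\Delta_{n-1}+\Delta_n$. The gradient fields are correspondingly built from $y_{n-1}^{(i)}=y_n^{(i)}+\xi'(q_i)\Delta_n(\bfm^{(n)}-\bfm^{(n-1)})$ for $i\le n-1$. You must run the SUSY construction separately at level $n-1$ with these weights. Hypothesis \eqref{eq:ass ancestors} is exactly what makes that point optimal; that is its role, not a substitution into the linear terms.

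The difference is then a real computation, not a bulk cancellation. The shift of every $y^{(i)}$, together with $\bfm^{(n)}-\bfm^{(n-1)}\perp\Span(\bfm^{(1)},\ldots,\bfm^{(n-1)})$, collapses the difference of the squared-norm sums to three pieces: the last increment term, the term $-2\Delta_n\bigl(\langle\partial_mh_\zeta(q_n,\bfm^{(n)}),\bfm^{(n)}\rangle-\langle\partial_mh_\zeta(q_{n-1},\bfm^{(n-1)}),\bfm^{(n-1)}\rangle\bigr)$, and the term $N\Delta_n^2\bigl(q_n\xi'(q_n)-q_{n-1}\xi'(q_{n-1})\bigr)$. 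Combined with the $-2\Delta_n\sum h_\zeta$ linear terms, this is where the $h-m\partial_mh$ combinations come from, not \eqref{eq:f Legendre}. The leftover $\mc U_\zeta$ and $\xi-q\xi'$ terms cancel because $\zeta([0,t])=\Delta_n$ on $[q_{n-1},q_n)$.

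A smaller slip concerns your displayed $x^{(k)}$. It is really the cumulative field $z^{(k)}=\sum_{i\ge k}x^{(i)}$, dual to the increment $\nabla F_{\TAP,\zeta}(\bfm^{(k)})-\nabla F_{\TAP,\zeta}(\bfm^{(k-1)})$. Its correction is $\sum_{j\ge k}\Delta_j\bfm^{(j)}$, not a multiple of $\bfm^{(k)}$. The field dual to $\nabla F_{\TAP,\zeta}(\bfm^{(k)})$ is $z^{(k)}-z^{(k+1)}$, and it is this that lies in $\mc D_n$ by Lemma~\ref{lemma:compression2}.

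Finally, the conditioning of $\Gamma'_n$ that you single out as the main obstacle is not where the difficulty lies. The paper treats the $O(\ve)$ deviations from the exact tree covariance by continuity from the exact-constraint case.
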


	We begin with a short auxiliary computation.
	\begin{lemma}\label{lemma:compression2}
		Let $\zeta$ be a probability measure on $[0,1]$ with an $(n+1)$-atom prefix $\{0,q_1,\ldots,q_n\}$, where $0<q_1<\cdots<q_n<1$.
		Define $q_{ij}:=q_{\min(i,j)}$ for $i,j\in[n]$, and set
		\begin{equation*}
			\Delta_k :=
			\begin{cases}
				- \zeta(\{q_k\}) & \text{if } k \in [n-1],\\
				\zeta([0,q_n)) & \text{if } k = n.
			\end{cases}
		\end{equation*}
		Let $(\bfm^{(1)},\ldots,\bfm^{(n)})\in \SUSY_n(\zeta,\ve)$, where $\SUSY_n(\zeta,\ve)$ is as in Definition \ref{def:susyreplicas}, and define
		\begin{equation*}
			z^{(i)} :=- \sum_{k=1}^n \xi'(q_{ik})\, \Delta_k\, \bfm^{(k)},
			\qquad i \in [n].
		\end{equation*}
		Then for every $i,j\in [n]$,
		\begin{equation*}
			\langle z^{(i)}+\partial_m h_\zeta(q_i,\bfm^{(i)}),\,\bfm^{(j)}\rangle
			= N\,\xi'(q_{ij})\int_{q_j}^1 \zeta([0,t])\,\dd t+O(N\ve).
		\end{equation*}
	\end{lemma}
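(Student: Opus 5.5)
The proof splits the inner product into the $\partial_m h_\zeta$ part and the $z^{(i)}$ part. Each is replaced, up to $O(N\ve)$, by a deterministic quantity using the constraints defining $\SUSY_n(\zeta,\ve)$, and the result reduces to an identity for the Auffinger--Chen process plus a summation by parts over the prefix atoms.

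\textbf{Step 1: Gram matrix of the hierarchy.} For $j<k$, telescope $\bfm^{(k)}=\bfm^{(j)}+\sum_{l=j+1}^{k}(\bfm^{(l)}-\bfm^{(l-1)})$. The constraints $|\frac1N\langle\bfm^{(l)}-\bfm^{(l-1)},\bfm^{(j)}\rangle|\le\ve$ for $l>j$, together with $|\frac1N\|\bfm^{(j)}\|^2-q_j|\le\ve$, then give
\[
\tfrac1N\langle\bfm^{(k)},\bfm^{(j)}\rangle=q_{kj}+O(n\ve)\qquad\text{for all }k,j\in[n].
\]
Since $n$ is fixed,
\[
\tfrac1N\langle z^{(i)},\bfm^{(j)}\rangle=-\sum_{k=1}^n\xi'(q_{ik})\,\Delta_k\,q_{kj}+O(\ve).
\]
The SUSY constraint \eqref{eq:SUSYassumption} gives directly
\[
\tfrac1N\langle\partial_m h_\zeta(q_i,\bfm^{(i)}),\bfm^{(j)}\rangle=\dE\bigl[X_{q_i}\,\partial_x\Phi_\zeta(q_j,X_{q_j})\bigr]+O(\ve).
\]
So the claim reduces to the deterministic identity
\begin{equation*}
\dE\bigl[X_{q_i}\,\partial_x\Phi_\zeta(q_j,X_{q_j})\bigr]-\sum_{k=1}^n\xi'(q_{ik})\,\Delta_k\,q_{kj}=\xi'(q_{ij})\int_{q_j}^1\zeta([0,t])\,\dd t .
\end{equation*}

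\textbf{Step 2: the process expectation.} Write $M_t:=\partial_x\Phi_\zeta(t,X_t)$, a martingale (Lemma~\ref{lemma:martingales}). Decompose
\[
X_{q_i}=\int_0^{q_i}\xi''(t)\,\zeta([0,t])\,M_t\,\dd t+\int_0^{q_i}\sqrt{\xi''(t)}\,\dd B_t .
\]
The drift term contributes $\int_0^{q_i}\xi''(t)\,\zeta([0,t])\,\dd E[M_{t\wedge q_j}^2]$-type terms, namely $\int_0^{q_i}\xi''(t)\zeta([0,t])\,\dE[M_{t\wedge q_j}^2]\,\dd t$. By It\^o isometry, the noise term contributes $\int_0^{q_{ij}}\xi''(t)\,\dE[\partial_{xx}\Phi_\zeta(t,X_t)]\,\dd t$.

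This is exactly the computation behind \eqref{eq:deltaM deltaX} in Lemma~\ref{lemma:expectations}. I would invoke that lemma with general $s$, or redo it with $q_i\ne q_j$. The inputs are:
\begin{itemize}
\item the representation $\dE[\partial_{xx}\Phi_\zeta(t,X_t)]=1-\int_t^1\dE[M_s^2]\,\dd\zeta(s)$, as in \eqref{eq:exprr};
\item the optimality relations $\dE[M_{q_k}^2]=q_k$ at the prefix atoms, which follow from \eqref{eq:breakingpoint} in its $n$-atom version, using tail optimality (i) and the argument of Lemma~\ref{lemma:opt2prefix}.
\end{itemize}
On $[0,q_n]$, $\zeta([0,t])$ is piecewise constant, equal to $u_k$ on $[q_{k-1},q_k)$. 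The result should be an explicit piecewise expression in $\xi'(q_k)$, $q_k$ and the $u_k$, plus the tail term $\xi'(q_{ij})\int_{q_j}^1\zeta([0,t])\,\dd t$.

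\textbf{Step 3: summation by parts.} Since $\Delta_k=-(u_{k+1}-u_k)$ for $k<n$ and $\Delta_n=u_n$, Abel summation with $g(t):=\xi'(q_i\wedge t)\,(q_j\wedge t)$ and $g(0)=0$ gives
\[
\sum_k\Delta_k\,g(q_k)=\sum_k u_k\bigl(g(q_k)-g(q_{k-1})\bigr)=\int_0^{q_n}\zeta([0,t])\,\dd g(t).
\]
Comparing with Step 2 term by term should cancel everything except $\xi'(q_{ij})\int_{q_j}^1\zeta([0,t])\,\dd t$. The base case $i=j=n=1$ serves as a check: there this is the identity $u q\,\xi'(q)+\xi'(q)\int_q^1\zeta=\xi'(q)\int_0^1\zeta$ used in Lemma~\ref{lemma:prob0 non deg}.

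\textbf{Main obstacle.} The main obstacle is Step 2 for $i\neq j$, specifically the asymmetric case $q_i>q_j$. There the drift contribution involves $\dE[M_{q_j}M_t]=\dE[M_{q_j}^2]$ for $t\ge q_j$ (by the martingale property), and the $\partial_{xx}\Phi_\zeta$ integral only runs to $q_j$. I would handle the two cases $i\le j$ and $i>j$ separately and verify the cancellation against $g$ in each.
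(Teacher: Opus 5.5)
Your overall route is the paper's. The overlap constraints and \eqref{eq:SUSYassumption} reduce the claim to an identity for $\dE[X_{q_i}M_{q_j}]$. Your Abel summation against $g(t)=\xi'(q_i\wedge t)(q_j\wedge t)$ is a tidier packaging of the paper's three-case computation, which treats $j\ge i$ through $\bfm^{(j)}-\bfm^{(i)}$, \eqref{eq:deltaM X} and a Stieltjes integration by parts on $[q_i,q_j]$.

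\textbf{The gap.} One of your stated inputs is wrong, and Step~2 does not close with it. The representation $\dE[\partial_{xx}\Phi_\zeta(t,X_t)]=1-\int_{[t,1]}\dE[M_s^2]\,\zeta(\dd s)$ of \eqref{eq:exprr} holds only for measures carried by $[t,1]$; it is stated there for $\zeta_\bfm$. Your $\zeta$ charges $[0,t)$, so Lemma~\ref{lemma:xxphi} gives instead
\[
\dE[\partial_{xx}\Phi_\zeta(t,X_t)]=1-\zeta([0,t))\,\dE[M_t^2]-\int_{[t,1]}\dE[M_s^2]\,\zeta(\dd s).
\]
The extra term is exactly what cancels your drift contribution on $[0,q_i\wedge q_j]$, for a.e.\ $t$. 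With your version you keep a spurious positive term $\int_0^{q_i\wedge q_j}\xi''(t)\zeta([0,t])\dE[M_t^2]\,\dd t$, and the target identity fails already for $i=j$. With the correct formula,
\[
\dE[X_{q_i}M_{q_j}]=\int_0^{q_i\wedge q_j}\xi''(t)\Bigl(1-\int_{[t,1]}\dE[M_s^2]\,\zeta(\dd s)\Bigr)\dd t+\indic_{\{i>j\}}\,\dE[M_{q_j}^2]\int_{q_j}^{q_i}\xi''(t)\zeta([0,t])\,\dd t .
\]
This is \eqref{eq:XtMt-master} combined with \eqref{eq:XsMt-general}; these, not \eqref{eq:deltaM deltaX}, are the identities to cite.

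\textbf{Finishing the computation.} You need $\dE[M_s^2]=s$ for $\zeta$-a.e.\ $s$, on the tail $\supp(\zeta)\cap(q_n,1]$ as well as at the prefix atoms. Then $1-\int_{[t,1]}s\,\zeta(\dd s)=t\,\zeta([0,t))+\int_t^1\zeta([0,s])\,\dd s$. Integrating by parts with $\xi'(0)=0$ gives, for $i\le j$,
\[
\dE[X_{q_i}M_{q_j}]=\xi'(q_i)\int_{q_i}^1\zeta([0,t])\,\dd t+\int_0^{q_i}\zeta([0,t])\bigl(t\xi''(t)+\xi'(t)\bigr)\dd t .
\]
This equals $\xi'(q_i)\int_{q_j}^1\zeta([0,t])\,\dd t+\int_0^{q_n}\zeta([0,t])\,\dd g(t)$, because $g'=\xi'(q_i)$ on $(q_i,q_j)$. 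For $i>j$, the extra drift term matches $g'=q_j\xi''$ on $(q_j,q_i)$. So your bookkeeping does work once the formula is fixed.

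\textbf{Hypotheses.} These relations are not among the lemma's hypotheses; the paper's proof uses them silently too. They must be imported from Lemma~\ref{lemma:optnprefix} under the assumptions of Proposition~\ref{prop:annealed cond}: \eqref{eq:statqkk} at $q_1,\dots,q_n$, and \eqref{eq:suppzzz} on the tail (at interior minimizers, $H_\zeta'=0$). The prefix equalities come from stationarity (ii), not from tail optimality (i). Perturbing $\zeta$ only on $[q_n,1]$ says nothing at $q_1,\dots,q_{n-1}$, and at the endpoint $q_n$ it yields only $\dE[M_{q_n}^2]\le q_n$.
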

	\begin{proof}
		We verify the identity by treating the diagonal and off-diagonal cases separately.
		
		\medskip
		\noindent\textit{Diagonal case: $i = j$.}\enspace
		Observe that $\sum_{k=i}^n \Delta_k = \zeta([0,q_i))$.
		Expanding the inner product and using the overlap structure of $\SUSY_n(\zeta,\ve)$ yields
		\begin{multline*}
			\langle z^{(i)},\bfm^{(i)}\rangle
			= N\sum_{k=1}^{i-1}\xi'(q_k)\,q_k\,\zeta(\{q_k\})
			- N\,\xi'(q_i)\,q_i\,\zeta\bigl([0,q_i)\bigr)\\
			= N\!\int_0^{q_i}\xi'(t)\,t\,\dd\zeta(t)
			- N\,\xi'(q_i)\,q_i\,\zeta\bigl([0,q_i]\bigr)+O(N\ve).
		\end{multline*}
		Combining this with the definition of $\SUSY_n(\zeta,\ve)$, we obtain
		\begin{equation}\label{eq:diag}
			\langle z^{(i)}+\partial_m h_\zeta(q_i,\bfm^{(i)}),\,\bfm^{(i)}\rangle
			= N\xi'(q_i)\int_{q_i}^1 \zeta([0,t])\,\dd t+O(N\ve).
		\end{equation}
		
		\medskip
		\noindent\textit{Off-diagonal case: $j \le i$.}\enspace
		Since $q_{ik} = q_k$ for $k \le i$ and $q_{jk} = q_{\min(j,k)}$, the inner product evaluates to
		\begin{multline*}
			\langle z^{(i)},\bfm^{(j)}\rangle
			= -N\sum_{k=1}^{i}\xi'(q_k)\,\Delta_k\,q_{jk}
			- N\,\xi'(q_i)\,q_j\,\zeta\bigl([0,q_i]\bigr)
			\\ = N\int_0^{q_i}\xi'(u)\,\min(u,q_j)\,\dd\zeta(u)
			- N\,\xi'(q_i)\,q_j\,\zeta\bigl([0,q_i]\bigr)+O(N\ve).
		\end{multline*}
		Invoking the definition of $\SUSY_n(\zeta,\ve)$ then gives
		\begin{equation*}
			\langle z^{(i)}+\partial_m h_\zeta(q_i,\bfm^{(i)}),\,\bfm^{(j)}\rangle
			= N\,\xi'(q_j)\int_{q_j}^1 \zeta([0,t])\,\dd t+O(N\ve),
		\end{equation*}
		which is the desired identity since $q_{ij} = q_j$ when $j \le i$.
		
		\medskip
		\noindent\textit{Off-diagonal case: $j \ge i$.}\enspace
		We instead compute the difference
		\begin{equation*}
			\langle z^{(i)},\bfm^{(j)}-\bfm^{(i)}\rangle
			= N\,\xi'(q_i)\biggl(\sum_{k=i+1}^j
			\zeta(\{q_k\})(q_k-q_i) - (q_j-q_i)\,\zeta\bigl([0,q_j]\bigr)\biggr)+O(N\ve).
		\end{equation*}
		Applying Stieltjes integration by parts on $[q_i,q_j]$,
		\begin{equation*}
			\int_{(q_i,q_j]}(t-q_i)\,\dd\zeta(t)
			+ \int_{q_i}^{q_j}\zeta([0,t])\,\dd t
			= (q_j-q_i)\,\zeta\bigl([0,q_j]\bigr),
		\end{equation*}
		so the parenthetical expression above simplifies to
		$-\int_{q_i}^{q_j}\zeta([0,t])\,\dd t$. Therefore
		\begin{equation*}
			\langle z^{(i)},\bfm^{(j)}-\bfm^{(i)}\rangle
			= -N\,\xi'(q_i)\int_{q_i}^{q_j}\zeta([0,t])\,\dd t+O(N\ve),
		\end{equation*}
		and by the definition of $\SUSY_n(\zeta,\ve)$ and \eqref{eq:deltaM X},
		the same relation holds with $z^{(i)}$ replaced by
		$z^{(i)}+\partial_m h_\zeta(q_i,\bfm^{(i)})$.
		Adding this to the diagonal identity~\eqref{eq:diag} yields
		\begin{multline*}
			\langle z^{(i)}+\partial_m h_\zeta(q_i,\bfm^{(i)}),\,\bfm^{(j)}\rangle
			= N\,\xi'(q_i)\int_{q_i}^1 \zeta([0,t])\,\dd t
			- N\,\xi'(q_i)\int_{q_i}^{q_j}\zeta([0,t])\,\dd t
			\\ = N\,\xi'(q_i)\int_{q_j}^1 \zeta([0,t])\,\dd t+O(N\ve),
		\end{multline*}
		which is the claim since $q_{ij} = q_i$ when $i \le j$.
	\end{proof}

	\medskip
	
	We now turn to the proof of Lemma \ref{lemma:prob0 replica}.

	\begin{proof}[Proof of Lemma \ref{lemma:prob0 replica}]
		We prove the result assuming that all the constraints in $\SUSY_n(\zeta,\ve)$ hold exactly (i.e., with $\ve=0$); the general case follows by a continuity argument analogous to the one in the proof of Lemma~\ref{lemma:prob0 non deg}, which introduces errors of order $O(N\ve/\delta_\ve)=O(N\ve^{1/2})$ (since $\delta_\ve=\ve^{1/2}$).
		
		Set $q_0=0$ and, for $i,j\in[n]$, set $q_{ij}:=q_{\min(i,j)}$.
		
		\paragraph{\bf{Step 1: the convex dual identity}}
		By Lemma~\ref{lemma:gradient TAP}, \eqref{eq:gradTap-stability}, for every $k\in[n]$,
		\begin{equation*}
			\nabla F_{\TAP,\zeta}(\bfm^{(k)})=\nabla H(\bfm^{(k)})-k_\zeta(q_k,\bfm^{(k)})+O(\ve),
		\end{equation*}
		where $k_\zeta$ is as in \eqref{def:kqm} and the error term is understood componentwise. Moreover, recall that
		\begin{equation*}
			F_{\TAP,\zeta}(\bfm^{(k)})=H(\bfm^{(k)})-\sum_{i=1}^N h_\zeta(q_k,m_i^{(k)})-N\mc U_\zeta(q_k),
		\end{equation*}
		where $h_\zeta$ is as in \eqref{def:hqm} and $\mc U_\zeta$ as in \eqref{def:mcUzeta}. Set
		\begin{equation*}
			\Gamma_n:=\Cov\bigl[\nabla F_{\TAP,\zeta}(\bfm^{(1)}),\ldots,\nabla F_{\TAP,\zeta}(\bfm^{(n)})\bigr],
			\qquad
			\Gamma_n':=\Cov[\Skel_n].
		\end{equation*}
		Let $a_n=(a_n^{(1)},\ldots,a_n^{(n)})$ where, for every $k\in[n]$,
		\begin{equation*}
			a_n^{(k)}
			:=\Bigl(k_\zeta(q_k,\bfm^{(k)}),\;Nf_k+\sum_{i=1}^N h_\zeta(q_k,m_i^{(k)})+N\mc U_\zeta(q_k)\Bigr)\in\dR^{N+1}.
		\end{equation*}
		By the above, $\Skel_n=(Z(\bfm^{(1)}),\ldots,Z(\bfm^{(n)}))$ is a Gaussian vector of covariance matrix~$\Gamma_n'$ (positive definite since $\xi$ is a mixed model; see the proof of Lemma~\ref{lemma:prob0 non deg}); hence
		\begin{equation}\label{eq:log density n}
			\log \varphi_{\Skel_n}(0,Nf_1,\ldots,0,Nf_n)
			=-\tfrac12\langle a_n,(\Gamma_n')^{-1}a_n\rangle-\tfrac12\log\det(2\pi\Gamma_n').
		\end{equation}
		Using the convex dual identity
		\begin{equation*}
			-\langle a_n,(\Gamma_n')^{-1}a_n\rangle
			=\inf_{w\in\dR^{n(N+1)}}\bigl\{\langle w,\Gamma_n'w\rangle-2\langle w,a_n\rangle\bigr\},
		\end{equation*}
		we write $w=(x^{(1)},v_1,\ldots,x^{(n)},v_n)$ with $x^{(k)}\in\dR^N$ and $v_k\in\dR$, so that
		\begin{equation}\label{eq:infGamma'}
			-\langle a_n,(\Gamma_n')^{-1}a_n\rangle
			=\inf_{\substack{x^{(k)}\in\dR^N,\,v_k\in\dR\\k\in[n]}}
			\bigl(\langle w,\Gamma_n'w\rangle-2\langle w,a_n\rangle\bigr).
		\end{equation}
		Observe next that the covariance identities for $H$, together
		with the hierarchical orthogonality
		\[
		\langle \bfm^{(j)},\bfm^{(i+1)}-\bfm^{(i)}\rangle=0
		\quad\text{for }j\le i
		\qquad(\text{so that }\langle \bfm^{(i)},\bfm^{(j)}\rangle=Nq_{ij}),
		\]
		yield
		\begin{equation*}
			\langle w,\Gamma_n'w\rangle
			=\langle x,\Gamma_n x\rangle
			+N\sum_{i,j=1}^n \xi(q_{ij})v_iv_j
			+2\sum_{i,j=1}^n \xi'(q_{ij})v_j\langle x^{(i)},\bfm^{(j)}\rangle.
		\end{equation*}
		Hence, taking $v_i:=\Delta_i$ for every $i\in[n]$ in \eqref{eq:infGamma'} yields
		\begin{equation}\label{eq:boundzngn}
			\begin{aligned}
				-\langle a_n,(\Gamma_n')^{-1}a_n\rangle
				\le\;&
				N\sum_{i,j=1}^n \xi(q_{ij})\Delta_i\Delta_j
				-2\sum_{i=1}^n \Delta_i\Bigl(\sum_{k=1}^N h_\zeta(q_i,m_k^{(i)})+N\mc U_\zeta(q_i)+Nf_i\Bigr)\\
				&-N\sum_{i=1}^n \xi''(q_i)\Bigl(\int_{q_i}^1 \zeta([0,t])\,\dd t\Bigr)^2\\
				&+\inf_{x\in\mc D_n}\Bigl(
				\sum_{i,j=1}^n \xi'(q_{ij})\langle x^{(i)},x^{(j)}\rangle
				-2\sum_{i=1}^n\Bigl\langle \partial_m h_\zeta(q_i,\bfm^{(i)})-\sum_{j=1}^n \xi'(q_{ij})\Delta_j\bfm^{(j)},\,x^{(i)}\Bigr\rangle
				\Bigr),
			\end{aligned}
		\end{equation}
		where
		\begin{equation*}
			\mc D_n:=\Bigl\{x\in(\dR^N)^n:\ \forall\,i,j\in[n],\
			\langle x^{(i)},\bfm^{(j)}\rangle=N\mathbf 1_{\{i=j\}}\int_{q_i}^1 \zeta([0,t])\,\dd t\Bigr\}.
		\end{equation*}

		Define, for $i\in[n]$,
		\begin{equation}\label{def:y(k)}
			y_n^{(i)}:=\partial_m h_\zeta(q_i,\bfm^{(i)})-\sum_{j=1}^n \xi'(q_{ij})\Delta_j\bfm^{(j)},
			\qquad y_n^{(0)}:=0.
		\end{equation}
		Then the infimum in \eqref{eq:boundzngn} can be rewritten as
		\begin{equation}\label{eq:xDn}
			\inf_{x\in\mc D_n}\bigl(\langle x,(\xi'(Q)\otimes I_N)x\rangle-2\langle x,y_n\rangle\bigr),
			\qquad Q=(q_{ij})_{i,j\in[n]}.
		\end{equation}
		Notice that, since $\sum_{k=i}^n \Delta_k=\zeta([0,q_i))$,
		\begin{equation*}
			\sum_{i,j=1}^n\xi(q_{ij})\Delta_i\Delta_j
			=\sum_{i=1}^n(\xi(q_i)-\xi(q_{i-1}))\Bigl(\sum_{k=i}^n \Delta_k\Bigr)^2
			=\sum_{i=1}^n(\xi(q_i)-\xi(q_{i-1}))\zeta([0,q_i))^2.
		\end{equation*}
		
		\paragraph{\bf{Step 2: optimization over $x$}}
		Introduce the change of variables $z^{(k)}:=\sum_{i=k}^n x^{(i)}$. Then
		\begin{equation*}
			\langle x,(\xi'(Q)\otimes I_N)x\rangle
			=\sum_{k=1}^n (\xi'(q_k)\!-\!\xi'(q_{k-1})) \Vert z^{(k)}\Vert^2,
			\qquad
			\langle x,y_n\rangle
			=\sum_{k=1}^n \langle z^{(k)},y_n^{(k)}-y_n^{(k-1)}\rangle.
		\end{equation*}
		Hence, the unconstrained infimum satisfies
		\begin{equation*}
			\inf_{x\in(\dR^N)^n}\bigl(\langle x,(\xi'(Q)\otimes I_N)x\rangle-2\langle x,y_n\rangle\bigr)
			=-\sum_{k=1}^n \frac{\Vert y_n^{(k)}-y_n^{(k-1)}\Vert^2}{\xi'(q_k)-\xi'(q_{k-1})},
		\end{equation*}
		and the unique minimizer is given by
		\begin{equation}\label{def:zn}
			z_n^{(k)}:=\frac{y_n^{(k)}-y_n^{(k-1)}}{\xi'(q_k)-\xi'(q_{k-1})},\qquad k\in[n].
		\end{equation}
		Let $L\in\dR^{n\times n}$ be the lower-triangular matrix $L_{ik}:=\indic_{\{k\le i\}}$, so that $z^{(k)}=\sum_{i=k}^n x^{(i)}=(L^T x)^{(k)}$.
		Let $x_n:=((L^{-1})^T\otimes I_N)z_n$, i.e.\ $x_n^{(k)}=z_n^{(k)}-z_n^{(k+1)}$ with $z_n^{(n+1)}:=0$.
		We claim that $x_n\in\mc D_n$, so the restriction to $\mc D_n$ in \eqref{eq:xDn} is not lossy.
		
		By Lemma~\ref{lemma:compression2}, for every $i,j\in[n]$,
		\begin{equation}\label{eq:co}
			\langle y_n^{(i)},\bfm^{(j)}\rangle
			=N\xi'(q_{ij})\int_{q_j}^1 \zeta([0,t])\,\dd t.
		\end{equation}
		Fix $k\in[n]$.
		If $j<k$, then $q_{kj}=q_{k-1,j}=q_j$, so \eqref{eq:co} gives
		$\langle y_n^{(k)}-y_n^{(k-1)},\bfm^{(j)}\rangle=0$.
		If $j\ge k$, then $q_{kj}=q_k$ and $q_{k-1,j}=q_{k-1}$, so
		$\langle y_n^{(k)}-y_n^{(k-1)},\bfm^{(j)}\rangle
		=N(\xi'(q_k)-\xi'(q_{k-1}))\int_{q_j}^1 \zeta([0,t])\,\dd t$.
		Therefore, by \eqref{def:zn},
		\begin{equation*}
			\langle z_n^{(k)},\bfm^{(j)}\rangle
			=
			\begin{cases}
				0,& j<k,\\[2pt]
				N\displaystyle\int_{q_j}^1 \zeta([0,t])\,\dd t,& j\ge k.
			\end{cases}
		\end{equation*}
		Since $x_n^{(k)}=z_n^{(k)}-z_n^{(k+1)}$, it follows that for every $j\in[n]$,
		\begin{equation*}
			\langle x_n^{(k)},\bfm^{(j)}\rangle
			=
			\begin{cases}
				0,& j\ne k,\\[2pt]
				N\displaystyle\int_{q_k}^1 \zeta([0,t])\,\dd t,& j=k,
			\end{cases}
		\end{equation*}
		i.e.\ $x_n\in\mc D_n$, as claimed.
		
		Inserting the value of the infimum into \eqref{eq:boundzngn} then yields
		\begin{equation}\label{eq:u}
			\begin{aligned}
				-\langle a_n,(\Gamma_n')^{-1}a_n\rangle
				\le\;&
				N\sum_{i=1}^n(\xi(q_i)-\xi(q_{i-1}))\zeta([0,q_i))^2
				-2\sum_{i=1}^n \Delta_i\Bigl(\sum_{k=1}^N h_\zeta(q_i,m_k^{(i)})+N\mc U_\zeta(q_i)+Nf_i\Bigr)\\
				&-N\sum_{i=1}^n \xi''(q_i)\Bigl(\int_{q_i}^1 \zeta([0,t])\,\dd t\Bigr)^2
				-\sum_{i=1}^n\frac{\Vert y_n^{(i)}-y_n^{(i-1)}\Vert^2}{\xi'(q_i)-\xi'(q_{i-1})}.
			\end{aligned}
		\end{equation}
		
		\paragraph{\bf{Step 3: equality in \eqref{eq:u} under \eqref{eq:addiass}}}
		Suppose that \eqref{eq:addiass} holds. In view of the computations in Steps~1--2, the inequality \eqref{eq:u} is an equality if and only if $w=(x_n^{(1)},\Delta_1,\ldots,x_n^{(n)},\Delta_n)$ is the minimizer of $w'\mapsto \frac{1}{2}\langle w',\Gamma_n'w'\rangle-\langle w',a_n\rangle$. Since the objective in \eqref{eq:infGamma'} is strictly convex, this holds if and only if $\Gamma_n'w=a_n$, which is equivalent to
		\begin{equation}\label{eq:conditions}
			\begin{cases}
				\langle x_n^{(i)},\bfm^{(j)}\rangle=N\mathbf 1_{\{i=j\}}\displaystyle\int_{q_i}^1 \zeta([0,t])\,\dd t,\qquad & i,j\in[n],\\[6pt]
				\xi'(q_k)\langle x_n^{(k)},\bfm^{(k)}\rangle
				+N\displaystyle\sum_{j=1}^n\xi(q_{kj})\Delta_j
				-\displaystyle\sum_{i=1}^N h_\zeta(q_k,m_i^{(k)})-N\mc U_\zeta(q_k)=Nf_k,\qquad & k\in[n].
			\end{cases}
		\end{equation}
		The first condition is precisely $x_n\in\mc D_n$, which was established in Step~2. It remains to verify the second line. Fix $k\in[n]$ and write
		\begin{equation*}
			\sum_{i=1}^N h_\zeta(q_k,m_i^{(k)})
			=\sum_{i=1}^N\bigl(h_\zeta(q_k,m_i^{(k)})-\partial_m h_\zeta(q_k,m_i^{(k)})m_i^{(k)}\bigr)
			+\langle \partial_m h_\zeta(q_k,\bfm^{(k)}),\bfm^{(k)}\rangle.
		\end{equation*}
		By $x_n\in\mc D_n$ we have $\xi'(q_k)\langle x_n^{(k)},\bfm^{(k)}\rangle=N\xi'(q_k)\int_{q_k}^1\zeta([0,t])\,\dd t$. Moreover, taking $i=j=k$ in \eqref{eq:co} and subtracting
		$\bigl\langle \sum_{j=1}^n\xi'(q_{kj})\Delta_j\bfm^{(j)},\,\bfm^{(k)}\bigr\rangle$ from both sides of \eqref{eq:co} gives
		\begin{equation*}
			\langle \partial_m h_\zeta(q_k,\bfm^{(k)}),\bfm^{(k)}\rangle
			=N\xi'(q_k)\int_{q_k}^1\zeta([0,t])\,\dd t
			-N\sum_{j=1}^{k-1}\xi'(q_j)q_j\zeta(\{q_j\})
			+N\xi'(q_k)q_k\zeta([0,q_k)).
		\end{equation*}
		Observe next that
		\begin{equation*}
			\sum_{j=1}^n \xi(q_{kj})\Delta_j
			=-\sum_{j=1}^{k-1}\xi(q_j)\zeta(\{q_j\})+\xi(q_k)\sum_{j=k}^n \Delta_j
			=-\sum_{j=1}^{k-1}\xi(q_j)\zeta(\{q_j\})+\xi(q_k)\zeta([0,q_k)).
		\end{equation*}
		Combining the last three displays yields
		\begin{multline*}
			\xi'(q_k)\langle x_n^{(k)},\bfm^{(k)}\rangle
			+N\sum_{j=1}^n \xi(q_{kj})\Delta_j
			-\sum_{i=1}^N h_\zeta(q_k,m_i^{(k)})-N\mc U_\zeta(q_k)\\
			\qquad=
			-\sum_{i=1}^N\bigl(h_\zeta(q_k,m_i^{(k)})-\partial_m h_\zeta(q_k,m_i^{(k)})m_i^{(k)}\bigr)
			-N\zeta([0,q_k))\!\int_0^{q_k}\!t\xi''(t)\,\dd t
			+N\sum_{j=1}^{k-1}\zeta(\{q_j\})\!\int_0^{q_j}\!t\xi''(t)\,\dd t\\
			-N\mc U_\zeta(q_k).
		\end{multline*}
		Using Stieltjes integration by parts,
		\begin{equation*}
			\sum_{j=1}^{k-1}\zeta(\{q_j\})\int_0^{q_j} t\xi''(t)\,\dd t
			=\zeta([0,q_k))\int_0^{q_k} t\xi''(t)\,\dd t-\int_0^{q_k}\zeta([0,t])\,t\xi''(t)\,\dd t,
		\end{equation*}
		we obtain
		\begin{multline*}
			\xi'(q_k)\langle x_n^{(k)},\bfm^{(k)}\rangle
			+N\sum_{j=1}^n \xi(q_{kj})\Delta_j
			-\sum_{i=1}^N h_\zeta(q_k,m_i^{(k)})-N\mc U_\zeta(q_k)
			\\=
			-\sum_{i=1}^N\bigl(h_\zeta(q_k,m_i^{(k)})-\partial_m h_\zeta(q_k,m_i^{(k)})m_i^{(k)}\bigr)
			-\frac N2\!\int_0^1\! t\xi''(t)\zeta([0,t])\,\dd t
			-\frac N2\!\int_0^{q_k}\! t\xi''(t)\zeta([0,t])\,\dd t.
		\end{multline*}
		For $k\le n-1$ this equals $Nf_k$ by \eqref{eq:ass ancestors}, and for $k=n$ it equals $Nf_n$ by \eqref{eq:addiass}. Thus the second condition in \eqref{eq:conditions} holds, and therefore $\Gamma_n'w=a_n$, so that \eqref{eq:u} is an equality:
		\begin{equation}\label{eq:Goodn}
			\begin{aligned}
				-\langle a_n,(\Gamma_n')^{-1}a_n\rangle
				=\;& N\sum_{i=1}^n(\xi(q_i)-\xi(q_{i-1}))\zeta([0,q_i))^2
				-2\sum_{i=1}^n \Delta_i\Bigl(\sum_{k=1}^N h_\zeta(q_i,m_k^{(i)})+N\mc U_\zeta(q_i)+Nf_i\Bigr)\\
				&-N\sum_{i=1}^n \xi''(q_i)\Bigl(\int_{q_i}^1 \zeta([0,t])\,\dd t\Bigr)^2
				-\sum_{i=1}^n\frac{\Vert y_n^{(i)}-y_n^{(i-1)}\Vert^2}{\xi'(q_i)-\xi'(q_{i-1})}.
			\end{aligned}
		\end{equation}
		
		\paragraph{\bf{Step 4: conclusion}}
		Let $a_{n-1}$, $\Gamma_{n-1}'$ be defined as above with $\Skel_{n-1}$ in place of $\Skel_n$.
		Since the conditional density of a Gaussian vector is Gaussian,
		\begin{multline*}
			\log \varphi_{\Skel_n\mid\Skel_{n-1}}(0,Nf_1,\ldots,0,Nf_n\mid 0,Nf_1,\ldots,0,Nf_{n-1})
			\\=\log\varphi_{\Skel_n}(0,Nf_1,\ldots,0,Nf_n)-\log\varphi_{\Skel_{n-1}}(0,Nf_1,\ldots,0,Nf_{n-1}).
		\end{multline*}
		In view of \eqref{eq:log density n}, it suffices to compare the quadratic forms and the determinants at levels $n$ and $n-1$.
		
		We begin with the quadratic forms. For the $(n\!-\!1)$-level computation, the relevant weights are
		\begin{equation*}
			\tilde \Delta_j:=-\zeta(\{q_j\})\quad (1\le j\le n-2),
			\qquad
			\tilde \Delta_{n-1}:=\zeta([0,q_{n-1}))=\Delta_{n-1}+\Delta_n,
		\end{equation*}
		and we define $y_{n-1}^{(i)}$ by \eqref{def:y(k)} with $n$ replaced by $n-1$ and $\Delta$ replaced by $\tilde \Delta$.
		Since \eqref{eq:ass ancestors} holds up to $k=n-1$, Step~3 applied at level $n-1$ yields the equality
		\begin{equation}\label{eq:Goodn-1}
			\begin{aligned}
				-\langle a_{n-1},(\Gamma_{n-1}')^{-1}a_{n-1}\rangle
				=\;&
				N\sum_{i=1}^{n-1}(\xi(q_i)-\xi(q_{i-1}))\zeta([0,q_i))^2
				-2\sum_{i=1}^{n-1} \tilde \Delta_i\Bigl(\sum_{k=1}^N h_\zeta(q_i,m_k^{(i)})+N\mc U_\zeta(q_i)+Nf_i\Bigr)\\
				&-N\sum_{i=1}^{n-1} \xi''(q_i)\Bigl(\int_{q_i}^1 \zeta([0,t])\,\dd t\Bigr)^{\!2}\\
				&-\sum_{i=1}^{n-1}
				\frac{\| y_{n-1}^{(i)}\!-\!y_{n-1}^{(i-1)}\|^2}
				{\xi'(q_i)\!-\!\xi'(q_{i-1})}.
			\end{aligned}
		\end{equation}
		Combining the inequality \eqref{eq:u} at level $n$ with the equality \eqref{eq:Goodn-1} at level $n-1$, we get
		\begin{equation*}
			-\frac12\langle a_n,(\Gamma_n')^{-1}a_n\rangle+\frac12\langle a_{n-1},(\Gamma_{n-1}')^{-1}a_{n-1}\rangle
			\le \frac12\bigl(\mathrm{RHS}_n-\mathrm{RHS}_{n-1}\bigr),
		\end{equation*}
		where $\mathrm{RHS}_n$ and $\mathrm{RHS}_{n-1}$ denote the right-hand sides of \eqref{eq:u} and \eqref{eq:Goodn-1} respectively.
		All terms with index $\le n-2$ cancel. Using $\tilde \Delta_{n-1}=\Delta_{n-1}+\Delta_n$, we obtain
		\begin{multline*}
			\mathrm{RHS}_n-\mathrm{RHS}_{n-1}
			= N(\xi(q_n)-\xi(q_{n-1}))\zeta([0,q_n))^2
			- N\xi''(q_n)\Bigl(\int_{q_n}^1\zeta([0,t])\,\dd t\Bigr)^2\\
			-2\Delta_n\Bigl(\sum_{i=1}^N h_\zeta(q_n,m_i^{(n)})+N\mc U_\zeta(q_n)+Nf_n
			-\sum_{i=1}^N h_\zeta(q_{n-1},m_i^{(n-1)})-N\mc U_\zeta(q_{n-1})-Nf_{n-1}\Bigr)\\
			-\Biggl[\sum_{i=1}^n\frac{\Vert y_n^{(i)}-y_n^{(i-1)}\Vert^2}{\xi'(q_i)\!-\!\xi'(q_{i-1})}
			-\sum_{i=1}^{n-1}\frac{\Vert y_{n-1}^{(i)}-y_{n-1}^{(i-1)}\Vert^2}{\xi'(q_i)\!-\!\xi'(q_{i-1})}\Biggr].
		\end{multline*}
		
		We now compute the difference of the squared-norm sums. From the definitions of $y_n$ and $y_{n-1}$ and the identity $\tilde \Delta_{n-1}=\Delta_{n-1}+\Delta_n$, for every $i\le n-1$,
		\begin{multline*}
			y_n^{(i)}=y_{n-1}^{(i)}-\xi'(q_i)\Delta_n\,(\bfm^{(n)}-\bfm^{(n-1)}),
			\qquad\text{hence}\\
			y_n^{(i)}-y_n^{(i-1)}=\bigl(y_{n-1}^{(i)}-y_{n-1}^{(i-1)}\bigr)-(\xi'(q_i)\!-\!\xi'(q_{i-1})) \Delta_n\,(\bfm^{(n)}-\bfm^{(n-1)}).
		\end{multline*}
		Using $\sum_{i=1}^{n-1}(y_{n-1}^{(i)}-y_{n-1}^{(i-1)})=y_{n-1}^{(n-1)}$,
		the telescoping identity $\sum_{i=1}^{n-1}(\xi'(q_i)-\xi'(q_{i-1}))=\xi'(q_{n-1})$,
		and $\|\bfm^{(n)}-\bfm^{(n-1)}\|^2=N(q_n-q_{n-1})$, we compute
		\begin{multline*}
			\sum_{i=1}^{n-1}\frac{\Vert y_n^{(i)}-y_n^{(i-1)}\Vert^2-\Vert y_{n-1}^{(i)}-y_{n-1}^{(i-1)}\Vert^2}{\xi'(q_i)-\xi'(q_{i-1})}\\
			=-2\Delta_n\langle y_{n-1}^{(n-1)},\bfm^{(n)}-\bfm^{(n-1)}\rangle+N\xi'(q_{n-1})\Delta_n^2(q_n-q_{n-1}).
		\end{multline*}
		Moreover, by definition,
		\begin{equation*}
			y_n^{(n)}-y_n^{(n-1)}
			=\partial_m h_\zeta(q_n,\bfm^{(n)})-\partial_m h_\zeta(q_{n-1},\bfm^{(n-1)})-(\xi'(q_n)\!-\!\xi'(q_{n-1})) \Delta_n\,\bfm^{(n)}.
		\end{equation*}
		Since $\bfm^{(n)}-\bfm^{(n-1)}\perp \Span\{\bfm^{(1)},\ldots,\bfm^{(n-1)}\}$, the definition \eqref{def:y(k)} at level $n-1$ gives
		$\langle y_{n-1}^{(n-1)},\bfm^{(n)}-\bfm^{(n-1)}\rangle=\langle \partial_m h_\zeta(q_{n-1},\allowbreak\bfm^{(n-1)}),\bfm^{(n)}-\bfm^{(n-1)}\rangle$.
		Expanding the squares and using
		$\lVert\bfm^{(n)}\rVert^2=Nq_n$, we obtain
		\begin{multline*}
			\sum_{i=1}^n\frac{\Vert y_n^{(i)}-y_n^{(i-1)}\Vert^2}{\xi'(q_i)\!-\!\xi'(q_{i-1})}
			-\sum_{i=1}^{n-1}\frac{\Vert y_{n-1}^{(i)}-y_{n-1}^{(i-1)}\Vert^2}{\xi'(q_i)\!-\!\xi'(q_{i-1})}
			=\frac{\bigl\Vert\partial_m h_\zeta(q_n,\bfm^{(n)})-\partial_m h_\zeta(q_{n-1},\bfm^{(n-1)})\bigr\Vert^2}{\xi'(q_n)-\xi'(q_{n-1})}
			\\-2\Delta_n\Bigl(\langle \partial_m h_\zeta(q_n,\bfm^{(n)}),\bfm^{(n)}\rangle-\langle \partial_m h_\zeta(q_{n-1},\bfm^{(n-1)}),\bfm^{(n-1)}\rangle\Bigr)
			+N \Delta_n^2\bigl(\xi'(q_n)q_n-\xi'(q_{n-1})q_{n-1}\bigr).
		\end{multline*}
		Plugging this into $\mathrm{RHS}_n-\mathrm{RHS}_{n-1}$ and grouping terms yields
		\begin{align*}
			&-\frac12\langle a_n,(\Gamma_n')^{-1}a_n\rangle+\frac12\langle a_{n-1},(\Gamma_{n-1}')^{-1}a_{n-1}\rangle\\
			&\qquad \le
			-\frac{N}{2}\xi''(q_n)\Bigl(\int_{q_n}^1\zeta([0,t])\,\dd t\Bigr)^2
			-\frac{\bigl\Vert\partial_m h_\zeta(q_n,\bfm^{(n)})-\partial_m h_\zeta(q_{n-1},\bfm^{(n-1)})\bigr\Vert^2}{2(\xi'(q_n)-\xi'(q_{n-1}))}\\
			&\qquad\quad
			-\Delta_n\!\sum_{i=1}^N\bigl(h_\zeta(q_n,m_i^{(n)})\!-\!\partial_m h_\zeta(q_n,m_i^{(n)})m_i^{(n)}\bigr)\\
			&\qquad\quad
			+\Delta_n\!\sum_{i=1}^N\bigl(h_\zeta(q_{n-1},m_i^{(n-1)})\!-\!\partial_m h_\zeta(q_{n-1},m_i^{(n-1)})m_i^{(n-1)}\bigr)\\
			&\qquad\quad
			-N\Delta_n(f_n-f_{n-1})
			-N \Delta_n\bigl(\mc U_\zeta(q_n)-\mc U_\zeta(q_{n-1})\bigr)\\
			&\qquad\quad
			+\tfrac{N\Delta_n^2}{2}\bigl(\xi(q_n)\!-\!q_n\xi'(q_n)\\
			&\hspace{8em}
			-\xi(q_{n-1})\!+\!q_{n-1}\xi'(q_{n-1})\bigr).
		\end{align*}
		Since $\supp(\zeta)\cap(q_{n-1},q_n)=\varnothing$, we have $\zeta([0,t])=\Delta_n$ for all $t\in[q_{n-1},q_n)$. Hence
		\begin{equation*}
			\mc U_\zeta(q_n)-\mc U_\zeta(q_{n-1})
			=-\frac12\int_{q_{n-1}}^{q_n} t\xi''(t)\zeta([0,t])\,\dd t
			=-\frac{\Delta_n}{2}\int_{q_{n-1}}^{q_n} t\xi''(t)\,\dd t.
		\end{equation*}
		Moreover, using $(\xi(q)-q\xi'(q))'=-q\xi''(q)$,
		\begin{equation*}
			\xi(q_n)-q_n\xi'(q_n)-\xi(q_{n-1})+q_{n-1}\xi'(q_{n-1})
			=-\int_{q_{n-1}}^{q_n} t\xi''(t)\,\dd t.
		\end{equation*}
		These two identities show that the last two terms cancel, yielding exactly the right-hand side of \eqref{eq:prob ineq} without the determinant contribution.
		
		We now turn to the determinant. Write the covariance of $\Skel_n$ in block form
		\[
		\Gamma_n'
		=\begin{pmatrix}
			G_n & C_n\\
			C_n^T & E_n
		\end{pmatrix},
		\]
		where $G_n$ is the covariance of the $n$ gradients (dimension $nN\times nN$) and $E_n$ is the covariance of the $n$ energies (dimension $n\times n$).
		
		Recall that $\Cov(H(\bfm),H(\bfm'))=N\xi(\langle \bfm,\bfm'\rangle/N)$, hence for $i,j\in[n]$ and coordinates $a,b\in[N]$,
		\[
		\Cov(\partial_a H(\bfm^{(i)}),\partial_b H(\bfm^{(j)}))
		=\xi'(q_{ij})\mathbf 1_{\{a=b\}}+\frac{1}{N}\xi''(q_{ij})\,m^{(j)}_a\,m^{(i)}_b,
		\qquad q_{ij}=\frac1N\langle \bfm^{(i)},\bfm^{(j)}\rangle.
		\]
		Equivalently, the $(i,j)$ block of $G_n$ is
		\[
		(G_n)_{ij}=\xi'(q_{ij})I_N+\frac{1}{N}\xi''(q_{ij})\,\bfm^{(j)}(\bfm^{(i)})^T.
		\]
		Therefore,
		\[
		G_n = (\xi'(Q)\otimes I_N) + \frac{1}{N}R_n,
		\qquad
		R_n:=\sum_{i,j=1}^n \xi''(q_{ij})\,(e_i\otimes \bfm^{(j)})(e_j\otimes \bfm^{(i)})^T,
		\]
		so $R_n$ has rank at most $n^2$ (hence $G_n$ is a finite-rank perturbation of $\xi'(Q)\otimes I_N$).
		
		Let $A:=\xi'(Q)\otimes I_N$. Since $\xi'(Q)\succ 0$, $A$ is invertible and
		\[
		\det G_n = \det A \cdot \det\!\bigl(I_{nN}+A^{-1}(N^{-1}R_n)\bigr).
		\]
		Now $A^{-1}=(\xi'(Q)^{-1}\otimes I_N)$ and $\|\bfm^{(k)}\|^2=\langle \bfm^{(k)},\bfm^{(k)}\rangle=Nq_k$, so each rank-one term
		$\frac{1}{N}\xi''(q_{ij})(e_i\otimes \bfm^{(j)})(e_j\otimes \bfm^{(i)})^T$
		has operator norm $O(1)$, and $\mathrm{rank}(R_n)\le n^2$. Hence $A^{-1}(N^{-1}R_n)$ has rank $\le n^2$ and bounded operator norm, so
		\[
		\log\det G_n
		=\log\det A + O(1)
		= N\log\det \xi'(Q) + O(1),
		\]
		since $\det(\xi'(Q)\otimes I_N)=\det(\xi'(Q))^N$.
		
		Applying the Schur complement formula,
		\[
		\det\Gamma_n'=\det G_n\cdot \det\bigl(E_n-C_n^T G_n^{-1}C_n\bigr),
		\]
		where the second determinant is over an $n\times n$ matrix. Its entries are $O(N)$ (indeed $E_n$ has entries $N\xi(q_{ij})$ and
		$C_n$ has entries of order $\|\bfm^{(j)}\|=O(\sqrt N)$), so it contributes $O(\log N)$ to $\log\det\Gamma_n'$.
		Consequently,
		\[
		\log\det\Gamma_n'=N\log\det\xi'(Q)+O(\log N),
		\qquad
		\log\det\Gamma_{n-1}'=N\log\det\xi'(Q_{n-1})+O(\log N).
		\]
		Since $\xi'(Q)$ has the $LDL^T$ factorization with $L_{ik}=\mathbf 1_{\{k\le i\}}$, we have
		$\det\xi'(Q)=\prod_{i=1}^n (\xi'(q_i)-\xi'(q_{i-1}))$ and $\det\xi'(Q_{n-1})=\prod_{i=1}^{n-1}(\xi'(q_i)-\xi'(q_{i-1}))$.
		Therefore,
		\[
		-\frac12\log\frac{\det(2\pi\Gamma_n')}{\det(2\pi\Gamma_{n-1}')}
		=-\frac{N}{2}\log\bigl(2\pi(\xi'(q_n)-\xi'(q_{n-1}))\bigr)+O(\log N),
		\]
		and combining with the quadratic-form bound yields \eqref{eq:prob ineq}.
		
		Finally, if in addition \eqref{eq:addiass} holds, then \eqref{eq:u} is an equality by Step~3; since \eqref{eq:Goodn-1} is already an equality, the bound \eqref{eq:prob ineq} holds with equality. This concludes the proof.
	\end{proof}
	
	\subsection{Deriving optimality conditions}
	
	In this subsection, we derive necessary conditions satisfied by the optimizer $\zeta$ in the assumption of Proposition~\ref{prop:annealed cond}, extending the optimality consequences of Lemma~\ref{lemma:opt2prefix} to the hierarchical setting.
	
	\begin{lemma}\label{lemma:optnprefix}
		Let $n\geq 2$ and let $(u_i)_{1\leq i\leq n}$ and $(q_i)_{1\leq i\leq n}$ be strictly increasing sequences of numbers in $(0,1)$. Let $\zeta\in \Prefix_{n+1}((u_i)_{1\leq i\leq n},(q_i)_{1\leq i\leq n})$ where $\Prefix_{n+1}$ is as in Definition \ref{def:nprefix}. Suppose that $\zeta$ satisfies the assumptions of Proposition \ref{prop:annealed cond}. Let $(X_t)_{t\in [0,1]}$ be the Auffinger--Chen process
		\begin{equation*}
			\begin{cases}
				\dd X_t=\xi''(t)\zeta([0,t])\partial_x \Phi_\zeta(t,X_t)\dd t+\sqrt{\xi''(t)} \dd B_t\\
				X_0=0.
			\end{cases}
		\end{equation*}
		For every $s\in [0,1]$, set
		\begin{equation*}
			H_\zeta(s):=\frac{1}{2}\int_s^1 \xi''(r)\Bigl(\dE[(\partial_x\Phi_\zeta(r,X_r))^2]-r\Bigr)\dd r.
		\end{equation*}
		Then, the following hold:
		\begin{enumerate}
			\item For every $k\in [n-1]$,
			\begin{equation}\label{eq:optfzeta}
				-\dE[\Phi_\zeta(q_k,X_{q_k})]+\frac{1}{2}\int_0^1 t\xi''(t)\zeta([0,t])\dd t +\frac{1}{2}\int_0^{q_k}t\xi''(t) \zeta([0,t])\dd t=-\Pari(\zeta),
			\end{equation}
			\item   \begin{equation}\label{eq:optf}
				-\dE[\Phi_\zeta(q_n,X_{q_n})]+\frac{1}{2}\int_0^1 t\xi''(t)\zeta([0,t])\dd t +\frac{1}{2}\int_0^{q_n}t\xi''(t) \zeta([0,t])\dd t=-f,
			\end{equation}
			\item For every $k\in [n]$,
			\begin{equation}\label{eq:statqkk}
				\dE[(\partial_x\Phi_\zeta(q_k,X_{q_k}))^2]=q_k,
			\end{equation}
			\item  \begin{equation}\label{eq:suppzzz}
				\supp(\zeta)\cap [q_n,1]\subset \underset{[q_n,1]}{\mathrm{arg\,min}}(H_\zeta).
			\end{equation}
		\end{enumerate}
	\end{lemma}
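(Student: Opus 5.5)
The plan mirrors the proof of Lemma~\ref{lemma:opt2prefix}, with variations now running over each block of the prefix. Throughout I use the first-variation formula
\[
D\Pari(\zeta)[\eta]=\int_0^1 H_\zeta(s)\,\dd\eta(s),\qquad \eta([0,1])=0,
\]
which follows from Lemma~\ref{lemma:first order mu} with $q=0$, $\mu=\delta_0$.

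\textbf{Items (4) and (3).} Write $\zeta=\sum_{j=0}^{n-1}\zeta(\{q_j\})\delta_{q_j}+(1-u_n)\lambda$ with $\lambda\in\mc P([q_n,1])$ and $q_n\in\supp\lambda$. Perturbing $\lambda$ within $\{\lambda'\in\mc P([q_n,1]):q_n\in\supp\lambda'\}$ keeps $\zeta$ inside $\Prefix_{n+1}(\bm u;\bm q)$. Tail optimality (i), together with the $\delta_{q_n}$-mixing and continuity argument of Step~1 of Lemma~\ref{lemma:opt2prefix}, then gives \eqref{eq:suppzzz}.

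For \eqref{eq:statqkk} at $k=n$, I repeat Step~2 of Lemma~\ref{lemma:opt2prefix}: take $\mu=\Law(\partial_x\Phi_\zeta(q_n,X_{q_n}))$ and $\zeta_\star=\zeta|_{(q_n,1]}+\zeta([0,q_n])\delta_{q_n}$, and invoke Lemma~\ref{lemma:first order mu}.

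For $k<n$ the tail argument is unavailable, since $q_k$ is not the bottom of a free tail. Instead I use stationarity (ii) in the variable $q_k$. Moving the atom at $q_k$ while keeping all the weights $u_i$ fixed gives a differentiable family $\zeta^{(q_k')}\in\Prefix_{n+1}$. Since $\Cpx^{(n)}_f\le u_n(\Pari(\zeta^{(q'_k)})-f)$, with equality at $q_k'=q_k$, stationarity forces $\partial_{q_k}\Pari(\zeta^{(q_k')})=0$. Differentiating the Parisi functional in the location of an atom gives $\zeta(\{q_k\})\,H_\zeta'(q_k)=0$. Because $\zeta(\{q_k\})=u_{k+1}-u_k>0$, we get $H_\zeta'(q_k)=0$, which is exactly \eqref{eq:statqkk}. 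The same argument also works at $k=n$, provided the $q_n$-derivative of the tail infimum is handled via an envelope argument. I expect justifying this derivative in the atom location, via the continuity of $s\mapsto\E[(\partial_x\Phi_\zeta(s,X_s))^2]$ from Lemma~\ref{lemma:martingales}, to be routine but delicate.

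\textbf{Items (1) and (2).} I differentiate in the weights. For $l\in[n]$, perturb $u_l\to v$, transferring mass between the atoms $q_{l-1}$ and $q_l$; for $l=n$, between $q_{n-1}$ and the tail $\lambda$. The resulting $\zeta_v$ stays in $\Prefix_{n+1}$. The envelope argument of Step~3 of Lemma~\ref{lemma:opt2prefix} gives $0=\partial_v[u_n(\Pari(\zeta_v)-f)]$.

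For $l<n$ the prefactor $u_n$ is untouched, so $D\Pari(\zeta)[\delta_{q_{l-1}}-\delta_{q_l}]=0$, that is, $H_\zeta(q_{l-1})=H_\zeta(q_l)$ for every $l\le n-1$. Combining this with $H_\zeta(q_n)=\min_{[q_n,1]}H_\zeta=\int H_\zeta\,\dd\lambda$ from (4), and using $l=n$, yields
\[
\Pari(\zeta)-f+u_n\bigl(H_\zeta(q_{n-1})-H_\zeta(q_n)\bigr)=0 .
\]

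Next, integrate Itô's formula (Lemma~\ref{lemma:martingales}) for $\Phi_\zeta(t,X_t)$ from $0$ to $q_k$. Using (3), i.e.\ $\E[(\partial_x\Phi_\zeta)^2]=t$ at the atoms, and the piecewise constancy of $\zeta([0,t])$, this gives
\[
\E\Phi_\zeta(q_k,X_{q_k})=\Phi_\zeta(0,0)+\tfrac12\int_0^{q_k}t\xi''\zeta([0,t])\,\dd t+\sum_{j\le k}u_j\bigl(H_\zeta(q_{j-1})-H_\zeta(q_j)\bigr).
\]
The sum vanishes for $k\le n-1$, which gives \eqref{eq:optfzeta}. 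For $k=n$ only the last term survives, and it equals $(f-\Pari(\zeta))/u_n\cdot u_n$; this gives \eqref{eq:optf}.

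The main obstacle is the bookkeeping in this last step, namely recovering the $\frac12\int_0^1+\frac12\int_0^{q_k}$ form. I would verify it first on the case $n=1$, checking that it reduces to \eqref{eq:energyconstraint}.
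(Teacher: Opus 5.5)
Your overall route matches the paper's. You use a tail perturbation for \eqref{eq:suppzzz}, derivatives in the weights $u_l$ plus It\^o's formula for \eqref{eq:optfzeta}--\eqref{eq:optf}, and derivatives in atom locations plus stationarity for \eqref{eq:statqkk}. Several pieces are cleaner than the paper's sketch:
\begin{itemize}
\item Perturbing only the part $\lambda$ of $\zeta$ on $[q_n,1]$ is the right move. Mixing measures on $[q_1,1]$, as the sketch does, leaves $\Prefix_{n+1}(\bm u;\bm q)$.
\item Your telescoped formula for $\E\Phi_\zeta(q_k,X_{q_k})$ is correct. It uses only $\zeta([0,t])=u_j$ on $[q_{j-1},q_j)$ and the definition of $H_\zeta$, not \eqref{eq:statqkk}.
\item The bookkeeping you worry about closes in one line via $\Phi_\zeta(0,0)=\Pari(\zeta)+\frac12\int_0^1 t\xi''(t)\zeta([0,t])\,\dd t$.
\item Your argument for \eqref{eq:statqkk} with $k<n$ is right: a two-sided move of the atom of mass $u_{k+1}-u_k>0$, the envelope inequality, and $u_n>0$.
\end{itemize}

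\textbf{The step that fails.} Your primary argument for \eqref{eq:statqkk} at $k=n$, namely repeating Step~2 of Lemma~\ref{lemma:opt2prefix}, does not work.
\begin{itemize}
\item That step applies Lemma~\ref{lemma:first order mu}\ref{item:optmu} at the boundary point $q_n$ of $\supp(\zeta_\star)$. The boundary case there is justified only by $\E[(\partial_x\Phi_{\zeta_\star}(q_n,X^\mu_{q_n}))^2]=\int m^2\,\dd\mu$.
\item For $\mu=\Law(\partial_x\Phi_\zeta(q_n,X_{q_n}))$, this equals $q_n$ exactly when the claim holds, so the argument is circular.
\item What \eqref{eq:suppzzz} really gives is that $q_n$ minimizes $H_\zeta$ on $[q_n,1]$. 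Hence $H_\zeta'(q_n^+)\ge 0$, i.e.\ $\E[(\partial_x\Phi_\zeta(q_n,X_{q_n}))^2]\le q_n$.
\item This can be strict. For small $\xi$, the bound $0\le\partial_{xx}\Phi_\zeta\le 1$ gives $\E[(\partial_x\Phi_\zeta(s,X_s))^2]\le\E[X_s^2]<s$ on $(0,1]$. Then $H_\zeta$ is increasing and \eqref{eq:suppzzz} holds with $\lambda=\delta_{q_n}$.
\end{itemize}

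\textbf{The repair.} Stationarity in $q_n$ is therefore indispensable, as in your fallback, but run it one-sidedly.
\begin{itemize}
\item If $a:=\zeta(\{q_n\})>0$, move this atom to $q_n'\in(q_{n-1},q_n]$; this stays in $\Prefix_{n+1}$, whereas an upward move can collide with the rest of $\supp\lambda$. You have $\Pari(\zeta^{(q_n')})\ge\Ppar^{(n)}(\bm u;q_1,\ldots,q_{n-1},q_n')$, with equality at $q_n'=q_n$. Together with $\partial_{q_n}\Ppar^{(n)}=0$, this yields $aH_\zeta'(q_n)\le 0$, which is the reverse inequality.
\item If $a=0$, points of $\supp\lambda$ accumulate at $q_n$ from above. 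Each is an interior minimizer of $H_\zeta$, so it satisfies $\E[(\partial_x\Phi_\zeta(s,X_s))^2]=s$, and continuity in $s$ gives the claim at $q_n$.
\end{itemize}
With this replacement your proof is complete, modulo the differentiability of $\Pari$ in atom locations that you already flag.
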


	\begin{proof}
		The proof follows the same strategy as the proof of Lemma~\ref{lemma:opt2prefix}; we only sketch the differences.
		
		Write $\zeta=u_1\delta_0+(1-u_1)\lambda$ with $\lambda\in\mc P([q_1,1])$ and $q_1\in\supp(\lambda)$. Set $\lambda':=(1-\theta)\lambda+\theta\lambda_0$ for a perturbation $\lambda_0\in\mc P([q_1,1])$ with $q_1\in\supp(\lambda_0)$, and define $\zeta_\theta:=u_1\delta_0+(1-u_1)\lambda'$. Then $\zeta_\theta\in\Prefix_{n+1}(\bm u;\bm q)$, and the tail optimality assumption gives $\frac{\dd}{\dd\theta}\big|_{\theta=0^+}\Pari(\zeta_\theta)\geq 0$. By the first-variation formula from Lemma~\ref{lemma:first order mu}\ref{item:gateaumu}, this yields \eqref{eq:suppzzz} exactly as in the proof of~\eqref{eq:Hzetaconstraint}.
		
		For the stationarity conditions, differentiating $\Cpx_f^{(n)}(\bm u;\bm q)$ with respect to $u_l$ for $l\in[n-1]$ gives $\frac{\partial}{\partial u_l}\Pari(\zeta)=0$; combined with the first-variation formula in $u_l$, this yields \eqref{eq:optfzeta} by the same computation as for~\eqref{eq:energyconstraint}. Differentiating with respect to $u_n$ gives $\Pari(\zeta)+u_n\frac{\partial}{\partial u_n}\Pari(\zeta)=f$, which together with the first-variation formula yields \eqref{eq:optf}. Finally, differentiating with respect to $q_k$ and using the stationarity condition $\partial_{q_k}\Cpx_f^{(n)}=0$ gives \eqref{eq:statqkk}, by the same argument as for~\eqref{eq:breakingpoint}.
	\end{proof}

	\subsection{Proof of the conditional annealed estimate}
	We are now in a position to prove the section's main result, following the proof of Proposition \ref{prop:annealed 0}.

	\begin{proof}[Proof of Proposition \ref{prop:annealed cond}]
		Set
		\[
		u_n:=\zeta([0,q_n)),\qquad
		\zeta_\star:=\zeta|_{(q_n,1]}+\zeta([0,q_n])\delta_{q_n}.
		\]
		As in the proof of Proposition \ref{prop:annealed 0}, we write $C_\ve$ for a
		positive quantity such that $C_\ve\to 0$ as $\ve\to 0$, and $O_\ve(N)$ for any
		quantity bounded in absolute value by $C_\ve N$.
		
		\paragraph{\bf Step 1: upper bound.}
		By Lemma \ref{lemma:KacRice},
		\begin{align*}
			&\dE\Bigl[\mc N_\ve(f,\zeta,q_n,\bfm^{(n-1)})\mid
			\Skel_{n-1}=(0,N\Pari(\zeta),\ldots,0,N\Pari(\zeta))\Bigr]\\
			&\qquad=
			N\int_{[-1,1]^N}\int_{f-\ve}^{f+\ve}
			\dE\Bigl[
			|\det(\nabla^2F_{\TAP,\zeta}(\bfm))|
			\,\Big|\,
			\Skel_n=(0,N\Pari(\zeta),\ldots,0,N\Pari(\zeta),0,Nf')
			\Bigr]\\
			&\hspace{8em}\times
			\varphi_{Z(\bfm)\mid\Skel_{n-1}}
			\Bigl(
			0,Nf'
			\,\Big|\,
			0,N\Pari(\zeta),\ldots,0,N\Pari(\zeta)
			\Bigr)\\
			&\hspace{8em}\times
			\indic_{(\bfm^{(1)},\ldots,\bfm^{(n-1)},\bfm)\in \SUSY_n(\zeta,\ve)}
			\,\dd\bfm\,\dd f'.
		\end{align*}
		For every $\bfm$ in the domain of integration, the conditional law of $\nabla^2 H(\bfm)$ given $\Skel_n$ differs from its law given $(Z(\bfm))$ by a finite-rank (at most $2(n-1)$-rank) perturbation arising from the conditioning on $\Skel_{n-1}$. Since Lemma~\ref{lemma:det app} computes the determinant via the free convolution of $T_\zeta\#\mu_N^{(\bfm)}$ with a semicircle law, and a finite-rank perturbation changes the Stieltjes transform by $O(1/N)$, the same asymptotic holds. More precisely, computations analogous to those in the proof of Lemma~\ref{lemma:det asymp} give
		\begin{multline*}
			\log \dE\Bigl[
			|\det(\nabla^2F_{\TAP,\zeta}(\bfm))|
			\,\Big|\,
			\Skel_n=(0,N\Pari(\zeta),\ldots,0,N\Pari(\zeta),0,Nf')
			\Bigr]\\
			=
			\sum_{i=1}^N\log \partial_{mm}h_\zeta(q_n,m_i)
			+\frac N2\xi''(q_n)\Bigl(\int_{q_n}^1\zeta([0,t])\,\dd t\Bigr)^2
			+O_\ve(N).
		\end{multline*}
		
		Next, for every $k\le n-1$, \eqref{eq:optfzeta} and \eqref{def:Linear4} give
		\begin{multline*}
			\sum_{i=1}^N\bigl(h_\zeta(q_k,m_i^{(k)})-\partial_m h_\zeta(q_k,m_i^{(k)})\,m_i^{(k)}\bigr)
			+\frac N2\int_0^1 t\xi''(t)\zeta([0,t])\,\dd t\\
			+\frac N2\int_0^{q_k} t\xi''(t)\zeta([0,t])\,\dd t
			=-N\Pari(\zeta)+O_\ve(N).
		\end{multline*}
		Hence the hypothesis \eqref{eq:ass ancestors} of Lemma \ref{lemma:prob0 replica}
		holds with $f_1=\cdots=f_{n-1}=\Pari(\zeta)$, and therefore
		\begin{align*}
			&\log
			\varphi_{Z(\bfm)\mid\Skel_{n-1}}
			\Bigl(
			0,Nf'
			\,\Big|\,
			0,N\Pari(\zeta),\ldots,0,N\Pari(\zeta)
			\Bigr)\\
			&\quad\le
			-\frac N2\xi''(q_n)\Bigl(\int_{q_n}^1\zeta([0,t])\,\dd t\Bigr)^2\\
			&\qquad
			-\frac{\bigl\|
				\partial_m h_\zeta(q_n,\bfm)-\partial_m h_\zeta(q_{n-1},\bfm^{(n-1)})
				\bigr\|^2}{2(\xi'(q_n)-\xi'(q_{n-1}))}\\
			&\qquad
			-u_n\sum_{i=1}^N\bigl(h_\zeta(q_n,m_i)-m_i\,\partial_m h_\zeta(q_n,m_i)\bigr)\\
			&\qquad
			+u_n\sum_{i=1}^N\bigl(h_\zeta(q_{n-1},m_i^{(n-1)})
			-m_i^{(n-1)}\,\partial_m h_\zeta(q_{n-1},m_i^{(n-1)})\bigr)\\
			&\qquad
			-Nu_n(f-\Pari(\zeta))
			-\frac N2\log\bigl(2\pi(\xi'(q_n)-\xi'(q_{n-1}))\bigr)
			+O_\ve(N),
		\end{align*}
		where we used $|f'-f|\le \ve$ and absorbed the resulting error into $O_\ve(N)$.
		
		Combining the last two displays and integrating coordinatewise, we obtain
		\begin{align*}
			&\log\dE\Bigl[\mc N_\ve(f,\zeta,q_n,\bfm^{(n-1)})\mid
			\Skel_{n-1}=(0,N\Pari(\zeta),\ldots,0,N\Pari(\zeta))\Bigr]\\
			&\quad\le
			u_n\sum_{i=1}^N\bigl(h_\zeta(q_{n-1},m_i^{(n-1)})
			-m_i^{(n-1)}\,\partial_m h_\zeta(q_{n-1},m_i^{(n-1)})\bigr)\\
			&\qquad
			-\frac N2\log\bigl(2\pi(\xi'(q_n)-\xi'(q_{n-1}))\bigr)
			+Nu_n(\Pari(\zeta)-f)+O_\ve(N)\\
			&\qquad
			+\sum_{i=1}^N
			\log \int_{-1}^1
			\partial_{mm}h_\zeta(q_n,m)\,
			\exp\!\biggl(
			-\frac{(\partial_m h_\zeta(q_n,m)-\partial_m h_\zeta(q_{n-1},m_i^{(n-1)}))^2}
			{2(\xi'(q_n)-\xi'(q_{n-1}))}\\
			&\hspace{14em}
			-u_n\bigl(h_\zeta(q_n,m)-m\,\partial_m h_\zeta(q_n,m)\bigr)
			\biggr)\,\dd m.
		\end{align*}
		By \eqref{eq:f Legendre},
		\[
		h_\zeta(q_{n-1},a)-a\,\partial_m h_\zeta(q_{n-1},a)
		=
		-\Phi_\zeta\bigl(q_{n-1},\partial_m h_\zeta(q_{n-1},a)\bigr),
		\]
		and the change of variables $y=\partial_m h_\zeta(q_n,m)$ together with
		Lemma \ref{lemma:law Xt s} gives
		\begin{multline*}
			\int_{-1}^1
			\partial_{mm}h_\zeta(q_n,m)\,
			\exp\!\biggl(
			-\frac{(\partial_m h_\zeta(q_n,m)-x)^2}{2(\xi'(q_n)-\xi'(q_{n-1}))}\\
			-u_n\bigl(h_\zeta(q_n,m)-m\,\partial_m h_\zeta(q_n,m)\bigr)
			\biggr)\,\dd m
			=
			\sqrt{2\pi(\xi'(q_n)-\xi'(q_{n-1}))}\,e^{u_n\Phi_\zeta(q_{n-1},x)}.
		\end{multline*}
		The two terms cancel exactly, and we conclude that
		\[
		\log\dE\Bigl[\mc N_\ve(f,\zeta,q_n,\bfm^{(n-1)})\mid
		\Skel_{n-1}=(0,N\Pari(\zeta),\ldots,0,N\Pari(\zeta))\Bigr]
		\le
		Nu_n(\Pari(\zeta)-f)+O_\ve(N).
		\]
		
		\paragraph{\bf Step 2: reduction to a good event.}
		Set $\delta_\ve:=\ve^{1/2}$ and let $\Good(\ve)\subset[-1+\delta_\ve,1-\delta_\ve]^N$
		be the set of $\bfm$ such that
		\[
		(\bfm^{(1)},\ldots,\bfm^{(n-1)},\bfm)\in \SUSY_n(\zeta,\ve),
		\]
		and
		\begin{multline*}
			\frac1N\sum_{i=1}^N\bigl(h_\zeta(q_n,m_i)-m_i\,\partial_m h_\zeta(q_n,m_i)\bigr)\\
			+\frac12\int_0^1 t\xi''(t)\zeta([0,t])\,\dd t
			+\frac12\int_0^{q_n} t\xi''(t)\zeta([0,t])\,\dd t
			\in[-f-\ve,-f+\ve].
		\end{multline*}
		In particular, every $\bfm\in \Good(\ve)$ satisfies
		\[
		\dist(\zeta_\bfm,\zeta_\star)\le \ve.
		\]
		
		Restricting the Kac--Rice integral to $\Good(\ve)$ and using the equality case
		of Lemma \ref{lemma:prob0 replica} (the last energy constraint holds with $f'$
		in place of $f$ because $|f'-f|\le \ve$), we get
		\begin{align*}
			&\log\dE\Bigl[\mc N_\ve(f,\zeta,q_n,\bfm^{(n-1)})\mid
			\Skel_{n-1}=(0,N\Pari(\zeta),\ldots,0,N\Pari(\zeta))\Bigr]\\
			&\quad\ge
			u_n\sum_{i=1}^N\bigl(h_\zeta(q_{n-1},m_i^{(n-1)})
			-m_i^{(n-1)}\,\partial_m h_\zeta(q_{n-1},m_i^{(n-1)})\bigr)\\
			&\qquad
			-\frac N2\log\bigl(2\pi(\xi'(q_n)-\xi'(q_{n-1}))\bigr)
			+Nu_n(\Pari(\zeta)-f)
			+\log I_{N,\ve}
			+O_\ve(N),
		\end{align*}
		where
		\begin{multline*}
			I_{N,\ve}:=
			\int_{[-1,1]^N}
			\prod_{i=1}^N
			\partial_{mm}h_\zeta(q_n,m_i)\,
			\exp\!\biggl(
			-\frac{(\partial_m h_\zeta(q_n,m_i)-\partial_m h_\zeta(q_{n-1},m_i^{(n-1)}))^2}
			{2(\xi'(q_n)-\xi'(q_{n-1}))}\\
			-u_n\bigl(h_\zeta(q_n,m_i)-m_i\,\partial_m h_\zeta(q_n,m_i)\bigr)
			\biggr)
			\indic_{\Good(\ve)}(\bfm)\,\dd\bfm.
		\end{multline*}
		
		\paragraph{\bf Step 3: control of the good event under a truncated conditional product law.}
		For $a\in[-1+\delta_\ve,1-\delta_\ve]$, define the truncated normalizing constant
		\begin{multline*}
			Z_\ve(a):=
			\int_{-1+\delta_\ve}^{1-\delta_\ve}
			\partial_{mm}h_\zeta(q_n,m)\,
			\exp\!\biggl(
			-\frac{(\partial_m h_\zeta(q_n,m)-\partial_m h_\zeta(q_{n-1},a))^2}
			{2(\xi'(q_n)-\xi'(q_{n-1}))}\\
			-u_n\bigl(h_\zeta(q_n,m)-m\,\partial_m h_\zeta(q_n,m)\bigr)
			\biggr)\,\dd m,
		\end{multline*}
		and let $\pi_\ve(\dd m\mid a)$ be the probability measure on
		$[-1+\delta_\ve,1-\delta_\ve]$ proportional to the integrand above.
		Let
		\[
		\mathbb P_\ve^{(N)}:=\bigotimes_{i=1}^N \pi_\ve(\cdot\mid m_i^{(n-1)}).
		\]
		Then
		\[
		I_{N,\ve}
		=
		\Bigl(\prod_{i=1}^N Z_\ve(m_i^{(n-1)})\Bigr)\,
		\mathbb P_\ve^{(N)}\bigl(\Good(\ve)\bigr).
		\]
		
		We now show that
		\begin{equation}\label{eq:good-prob-cond-rewrite}
			\mathbb P_\ve^{(N)}\bigl(\Good(\ve)^c\bigr)\longrightarrow 0
			\qquad\text{as }N\to\infty.
		\end{equation}
		To this end, let $\pi(\cdot\mid a)$ denote the same kernel without truncation
		(i.e., integrating over $[-1,1]$ instead of $[-1+\delta_\ve,1-\delta_\ve]$).
		If
		\[
		U:=\partial_x\Phi_\zeta(q_{n-1},X_{q_{n-1}}),
		\]
		then by \eqref{eq:f Legendre} and Lemma \ref{lemma:law Xt s},
		\[
		\partial_m h_\zeta(q_n,\cdot)\#\pi(\cdot\mid a)
		=
		\Law(X_{q_n}\mid U=a).
		\]
		Consequently,
		\begin{align*}
			\int m^2\,\pi(\dd m\mid a)&=\chi_1(a),\\ 
			\int m\,\pi(\dd m\mid a)&=\chi_3(a)=a,\\
			\int m\,\partial_m h_\zeta(q_n,m)\,\pi(\dd m\mid a)&=\chi_2(a),\\
			\int \partial_m h_\zeta(q_n,m)\,\pi(\dd m\mid a)&=\chi_4(a),\\
			\int \bigl(h_\zeta(q_n,m)-m\,\partial_m h_\zeta(q_n,m)\bigr)\,\pi(\dd m\mid a)&=-\chi_5(a).
		\end{align*}
		
		For each fixed $\ve$, the corresponding truncated expectations
		\[
		a\mapsto \int \psi(m)\,\pi_\ve(\dd m\mid a)
		\]
		are continuous on $[-1+\delta_\ve,1-\delta_\ve]$ for every bounded continuous
		$\psi$, and differ from the untruncated ones by $O_\ve(1)$ uniformly on this
		compact set. Using the weak law-matching input
		\eqref{def:Linear}--\eqref{def:Linear4}, we therefore obtain the following target
		means, all up to an $O_\ve(N)$ error:
		\begin{align*}
			\dE_{\mathbb P_\ve^{(N)}}\!\Bigl[\sum_{i=1}^N m_i^2\Bigr]
			&=Nq_n+O_\ve(N),\\
			\dE_{\mathbb P_\ve^{(N)}}\!\Bigl[\sum_{i=1}^N (m_i-m_i^{(n-1)})m_i^{(k)}\Bigr]
			&=O_\ve(N),
			\qquad 1\le k\le n-1,\\
			\dE_{\mathbb P_\ve^{(N)}}\!\Bigl[\sum_{i=1}^N \partial_m h_\zeta(q_n,m_i)\,m_i^{(k)}\Bigr]
			&=
			N\,\dE\bigl[X_{q_n}\partial_x\Phi_\zeta(q_k,X_{q_k})\bigr]
			+O_\ve(N),\\
			&\hspace{12em}
			1\le k\le n,\\
			\dE_{\mathbb P_\ve^{(N)}}\!\Bigl[\sum_{i=1}^N \partial_m h_\zeta(q_k,m_i^{(k)})\,m_i\Bigr]
			&=
			N\,\dE\bigl[X_{q_k}\partial_x\Phi_\zeta(q_n,X_{q_n})\bigr]
			+O_\ve(N),\\
			&\hspace{12em}
			1\le k\le n-1,\\
			\dE_{\mathbb P_\ve^{(N)}}\!\Bigl[\sum_{i=1}^N \bigl(h_\zeta(q_n,m_i)
			-m_i\,\partial_m h_\zeta(q_n,m_i)\bigr)\Bigr]
			&=
			-N\,\dE[\Phi_\zeta(q_n,X_{q_n})]+O_\ve(N).
		\end{align*}
		In the last line, \eqref{eq:optf} turns the right-hand side into
		\[
		-Nf-\frac N2\int_0^1 t\xi''(t)\zeta([0,t])\,\dd t
		-\frac N2\int_0^{q_n} t\xi''(t)\zeta([0,t])\,\dd t
		+O_\ve(N).
		\]
		
		Since the coordinates are independent and uniformly bounded under
		$\mathbb P_\ve^{(N)}$, Chebyshev's inequality gives concentration around these
		means. Because there are only finitely many scalar and bilinear constraints, it
		follows that all of them hold with $\mathbb P_\ve^{(N)}$-probability tending to
		$1$.
		
		It remains to control the metric constraint on $\zeta_\bfm$.
		Recall that $\zeta_\bfm$ is the unique minimizer of $\TAP(\mu_N^{(\bfm)},\cdot)$
		on $\mc P([q_\bfm,1])$, and that $\TAP(\mu,\zeta')$ depends on $\mu$ only through
		$q_\mu=\int m^2\,\dd\mu$ and $\int h_{\zeta'}(q_\mu,m)\,\dd\mu(m)$.
		Set
		\[
		\mu_\ve:=\int_{-1+\delta_\ve}^{1-\delta_\ve} \pi_\ve(\cdot\mid a)\,\Law(U)(\dd a).
		\]
		We show that $\TAP(\mu_N^{(\bfm)},\cdot)$ is uniformly close
		to $\TAP(\mu_\ve,\cdot)$, using only finitely many linear statistics of
		the ancestors.
		
		By the Legendre duality~\eqref{eq:f Legendre} and Lemma~\ref{lemma:law Xt s},
		the pushforward of $\pi(\cdot\mid a)$ by $\partial_m h_\zeta(q_n,\cdot)$ is
		$\Law(X_{q_n}\mid U=a)$; hence
		\[
		\int h_\zeta(q_n,m)\,\pi(\dd m\mid a)
		=\chi_2(a)-\chi_5(a),
		\qquad
		\int m^2\,\pi(\dd m\mid a)=\chi_1(a).
		\]
		The truncated kernels $\pi_\ve(\cdot\mid a)$ differ from $\pi(\cdot\mid a)$
		by $O_\ve(1)$ uniformly on $[-1+\delta_\ve,1-\delta_\ve]$.
		Under $\mathbb P_\ve^{(N)}$, the coordinates $m_i$ are independent and
		bounded, so for every fixed continuous $g$,
		\[
		\frac1N\sum_{i=1}^N g(m_i)
		-\frac1N\sum_{i=1}^N \int g\,\dd\pi_\ve(\cdot\mid m_i^{(n-1)})
		\xrightarrow[N\to\infty]{} 0
		\qquad\text{in }\mathbb P_\ve^{(N)}\text{-probability}
		\]
		by Chebyshev's inequality ($\mathrm{Var}=O(1/N)$).
		Applying this with $g(m)=m^2$ and $g(m)=h_{\zeta'}(q_n,m)$, it suffices
		to compare the \emph{deterministic} averages
		$\frac1N\sum\int g\,\dd\pi_\ve(\cdot\mid m_i^{(n-1)})$ with $\int g\,\dd\mu_\ve$.
		
		For the reference measure $\zeta$ and $g=h_\zeta(q_n,\cdot)$:
		\[
		\frac1N\sum_{i=1}^N\int h_\zeta(q_n,m)\,\pi_\ve(\dd m\mid m_i^{(n-1)})
		=\frac1N\sum_{i=1}^N\bigl(\chi_2(m_i^{(n-1)})-\chi_5(m_i^{(n-1)})\bigr)+O_\ve(1).
		\]
		By the law-matching condition~\eqref{def:Linear3} applied to $\chi_2$ and $\chi_5$,
		this equals $\dE[\chi_2(U)-\chi_5(U)]+O_\ve(1)=\int h_\zeta(q_n,m)\,\dd\mu_\ve(m)+O_\ve(1)$.
		Similarly, \eqref{def:Linear3} applied to $\chi_1$ gives
		$\frac1N\sum\chi_1(m_i^{(n-1)})=q_n+O(\ve)$.
		
		For a nearby measure $\zeta'\in\mc P([q_n,1])$, the stability of the
		Parisi PDE gives
		$\sup_{m\in[-1+\delta_\ve,1-\delta_\ve]}|h_{\zeta'}(q_n,m)-h_\zeta(q_n,m)|
		\le C\,\dist(\zeta',\zeta)$. Since this bound is uniform in the ancestor
		coordinates, we obtain
		\[
		\sup_{\zeta'\in\mc P([q_n,1])}
		\bigl|\TAP(\mu_N^{(\bfm)},\zeta')-\TAP(\mu_\ve,\zeta')\bigr|
		=O_\ve(1)
		\qquad\text{in }\mathbb P_\ve^{(N)}\text{-probability.}
		\]
		Since $\mc P([q_n,1])$ is compact and $\zeta'\mapsto\TAP(\mu_\ve,\zeta')$ is
		strictly convex with unique minimizer $\zeta_{\star,\ve}$, uniform
		convergence of the functionals implies convergence of the minimizers:
		\[
		\dist(\zeta_\bfm,\zeta_{\star,\ve})\xrightarrow[N\to\infty]{} 0
		\qquad\text{in }\mathbb P_\ve^{(N)}\text{-probability.}
		\]
		
		Finally, as in Step~2 of the proof of Proposition~\ref{prop:annealed 0},
		the unique minimizer of $\TAP(\mu,\cdot)$ for
		$\mu:=\Law(\partial_x\Phi_\zeta(q_n,X_{q_n}))$ is $\zeta_\star$.
		Since $\mu_\ve\Rightarrow\mu$ and $\int m^2\,\dd\mu_\ve\to q_n$ as $\ve\to0$
		(by dominated convergence), Lemma~\ref{lemma:stableTapMin} gives
		$\dist(\zeta_{\star,\ve},\zeta_\star)\to 0$.
		After decreasing $\ve$ if necessary,
		$\dist(\zeta_{\star,\ve},\zeta_\star)\le \ve/2$, and hence
		\[
		\mathbb P_\ve^{(N)}\bigl(\dist(\zeta_\bfm,\zeta_\star)>\ve\bigr)\xrightarrow[N\to\infty]{} 0.
		\]
		Combining this with the concentration estimates above proves
		\eqref{eq:good-prob-cond-rewrite}.
		
		\paragraph{\bf Step 4: conclusion.}
		For $a\in[-1+\delta_\ve,1-\delta_\ve]$, let $Z(a)$ denote the normalization
		without truncation:
		\begin{multline*}
			Z(a):=
			\int_{-1}^1
			\partial_{mm}h_\zeta(q_n,m)\,
			\exp\!\biggl(
			-\frac{(\partial_m h_\zeta(q_n,m)-\partial_m h_\zeta(q_{n-1},a))^2}
			{2(\xi'(q_n)-\xi'(q_{n-1}))}\\
			-u_n\bigl(h_\zeta(q_n,m)-m\,\partial_m h_\zeta(q_n,m)\bigr)
			\biggr)\,\dd m.
		\end{multline*}
		By the integral identity from Step~1,
		\begin{multline*}
			\log Z(a)
			=
			\tfrac12\log\bigl(2\pi(\xi'(q_n)-\xi'(q_{n-1}))\bigr)
			+u_n\Phi_\zeta\bigl(q_{n-1},\partial_m h_\zeta(q_{n-1},a)\bigr)\\
			=
			\tfrac12\log\bigl(2\pi(\xi'(q_n)-\xi'(q_{n-1}))\bigr)
			-u_n\bigl(h_\zeta(q_{n-1},a)-a\,\partial_m h_\zeta(q_{n-1},a)\bigr).
		\end{multline*}
		Moreover, the truncation error satisfies
		\[
		\frac1N\sum_{i=1}^N \bigl(\log Z_\ve(m_i^{(n-1)})-\log Z(m_i^{(n-1)})\bigr)=O_\ve(1),
		\]
		since $\log Z_\ve(a)-\log Z(a)=O_\ve(1)$ uniformly on $[-1+\delta_\ve,1-\delta_\ve]$. Therefore,
		\begin{multline*}
			\sum_{i=1}^N \log Z_\ve(m_i^{(n-1)})
			=
			\frac N2\log\bigl(2\pi(\xi'(q_n)-\xi'(q_{n-1}))\bigr)\\
			-u_n\sum_{i=1}^N
			\bigl(h_\zeta(q_{n-1},m_i^{(n-1)})
			-m_i^{(n-1)}\,\partial_m h_\zeta(q_{n-1},m_i^{(n-1)})\bigr)
			+O_\ve(N).
		\end{multline*}
		Since $\mathbb P_\ve^{(N)}(\Good(\ve))\to 1$, we have
		\[
		\log I_{N,\ve}
		=
		\sum_{i=1}^N \log Z_\ve(m_i^{(n-1)})+o_N(N)
		\]
		for every fixed $\ve$. Inserting the last two displays into the lower bound from
		Step~2 yields
		\[
		\log\dE\Bigl[\mc N_\ve(f,\zeta,q_n,\bfm^{(n-1)})\mid
		\Skel_{n-1}=(0,N\Pari(\zeta),\ldots,0,N\Pari(\zeta))\Bigr]
		\ge
		Nu_n(\Pari(\zeta)-f)+O_\ve(N).
		\]
		Together with Step~1, this gives
		\[
		\lim_{\ve\to0}\lim_{N\to\infty}
		\frac1N\log\dE\Bigl[\mc N_\ve(f,\zeta,q_n,\bfm^{(n-1)})\mid
		\Skel_{n-1}=(0,N\Pari(\zeta),\ldots,0,N\Pari(\zeta))\Bigr]
		=
		u_n(\Pari(\zeta)-f).
		\]
		Finally, assumption \emph{(i)} says that
		\[
		\Ppar^{(n)}(\bm u;\bm q)=\Pari(\zeta),
		\]
		and by definition $u_n=\zeta([0,q_n))$, so the right-hand side is
		\[
		u_n\bigl(\Ppar^{(n)}(\bm u;\bm q)-f\bigr)=\Cpx_f^{(n)}(\bm u;\bm q).
		\]
		This completes the proof.
	\end{proof}
	
	\appendix

	\section{The supersymmetric ansatz}\label{section:SUSY}

	This appendix collects the heuristic supersymmetric computations that motivate the rigorous proofs of Sections \ref{section:base} and \ref{section:multiple}. The discussion in this section is purely formal and is not intended to be rigorous.

	\subsection{Ansatz for the annealed complexity}\label{sub:SUSY}

	\paragraph{\bf{Writing the complexity as a partition function}}
	Let $f\in \dR$, $q\in (0,1)$, and $u\in (0,1)$. Let $\zeta\in \Prefix_2(u,q)$ satisfy the assumptions of Proposition \ref{prop:annealed 0}, and note that $u=\zeta(\{0\})$. Recall $\mc{N}_\ve(f,\zeta,q)$ from the statement of Proposition \ref{prop:annealed 0}. Set $\zeta_\star=\zeta|_{(q,1]}+\zeta([0,q])\delta_q$.

	Replacing the condition $F_{\TAP,\zeta}(\bfm)\in N(f-\ve,f+\ve)$ by the sharper constraint $F_{\TAP,\zeta}(\bfm)=Nf$, the Kac--Rice formula yields (formally)
	\begin{equation*}
		\dE[\mc{N}_0(f,\zeta,q)]\approx\dE\left[\int_{[-1,1]^N} |\det(\nabla^2 F_{\TAP,\zeta}(\bfm))|\,\delta_{\nabla F_{\TAP,\zeta}}(0)\,\delta_{F_{\TAP,\zeta}(\bfm)}(Nf)\,\indic_{\SUSY_1(\zeta,0)}(\bfm) \dd \bfm\right].
	\end{equation*}
	We introduce a simplified quantity obtained by dropping the absolute value of the determinant: formally set
	\begin{equation*}
		\tilde{\mc{N}}_0(f,\zeta,q)=\int_{[-1,1]^N} \det(\nabla^2 F_{\TAP,\zeta}(\bfm))\,\delta_{\nabla F_{\TAP,\zeta}(\bfm)}(0)\,\delta_{F_{\TAP,\zeta}(\bfm)}(Nf)\,\indic_{\SUSY_1(\zeta,0)}(\bfm)\delta_{\zeta_\bfm-\zeta_\star}(0) \dd \bfm.
	\end{equation*}
	(As we shall see, the condition $\zeta_\bfm=\zeta_\star$ implies that the bulk of the spectrum of the Hessian has a fixed sign (positive in our convention), which justifies this simplification.) For a matrix $A\in \mc{M}_N(\dR)$, we use the standard Grassmann representation
	\begin{equation}\label{eq:detA}
		\det A=\int e^{-\bar\psi^{\top}A\psi}\dd\bar{\psi}\dd\psi,
	\end{equation}
	where $\dd \bar{\psi}\dd \psi=\prod_{i=1}^N \dd \bar{\psi}_i\dd \psi_i$ denotes the Berezin measure. The Dirac delta function admits the integral representation
	\begin{equation}\label{eq:delta0}
		\delta_0(y)=\frac{1}{2i\pi}\int_{i\dR}e^{xy}\dd x.
	\end{equation}
	Applying \eqref{eq:detA} to $\nabla^2 F_{\TAP,\zeta}(\bfm)$ and \eqref{eq:delta0} to $\partial_i F_{\TAP,\zeta}(\bfm)$ for each $i\in [N]$, as well as to $F_{\TAP,\zeta}(\bfm)-Nf$, and performing the change of variables $x\mapsto -x$ (which preserves the contour $(i\dR)^N$), we obtain
	\begin{equation}\label{eq:tildeNf}
		\dE[\tilde{\mc{N}}_0(f,\zeta,q)]=\frac{1}{(2i\pi)^{N+1}} \dE\left[\int e^{S(\bfm,x,v,\bar\psi,\psi)}\indic_{\SUSY_1(\zeta,0)}(\bfm) \dd \bfm \dd x \dd v \dd \bar\psi \dd \psi\right],
	\end{equation}
	where $S$ is the action
	\begin{equation*}
		S(\bfm,x,v,\bar\psi,\psi)= \,x\cdot\nabla F_{\TAP,\zeta}(\bfm)-\bar{\psi}^{\top}\nabla^2 F_{\TAP,\zeta}(\bfm)\psi+v(F_{\TAP,\zeta}(\bfm)-Nf),
	\end{equation*}
	with $x\in (i\dR)^N$ and $v\in i\dR$.

	\paragraph{\bf{The BRST supersymmetry}}
	Consider the infinitesimal fermionic transformation with odd parameter $\varepsilon$:
	\begin{equation*}
		\delta m_i=\varepsilon\psi_i,\quad
		\delta x_i=-\varepsilon v\psi_i,\quad
		\delta\bar\psi_i=\varepsilon x_i,\quad
		\delta\psi_i=0.
	\end{equation*}
	A standard computation shows that the Berezinian (super-Jacobian) of this change of variables equals $1$ and that
	\begin{equation*}
		\delta S=0.
	\end{equation*}
	Consequently, both the integrand and the measure are invariant under this transformation. Hence, for any sufficiently regular observable $O=O(\bfm,x,\bar{\psi},\psi)$,
	\begin{equation*}
		0=\int \delta(Oe^S)=\int (\delta O) e^S,
	\end{equation*}
	which yields the Ward identities
	\begin{equation*}
		\int (\delta O)\, e^S=0.
	\end{equation*}
	Taking
	\(
	O=\sum_{i=1}^N\bar\psi_i m_i
	\), we obtain
	\begin{equation}\label{eq:Ward1}
		\int \langle\bar\psi,\psi\rangle\, e^S=\int \langle x, \bfm\rangle\, e^S.
	\end{equation}

	A supersymmetric saddle is a critical point of the (disorder-averaged) effective action that satisfies all Ward identities simultaneously.

	\paragraph{\bf{Computing the annealed complexity}}
	Differentiating the TAP correction (using $q=\frac{1}{N}\Vert \bfm\Vert^2$ and $\zeta_\bfm=\zeta|_{(q,1]}+\zeta([0,q])\delta_q$, see Lemma \ref{lemma:gradient TAP}) gives
	\begin{equation}\label{eq:grad co}
		\partial_i \left(\sum_{j=1}^N \Phi_\zeta^*(q,\cdot)(m_j)+\frac{1}{2}N\int_q^1 t\xi''(t)\zeta([0,t])\dd t\right)=k_\zeta(q,m_i),
	\end{equation}
	where $k_\zeta(q,m_i)$ is as in \eqref{def:kqm}. Furthermore, for every $i,j\in [N]$,
	\begin{multline}\label{eq:Hess co}
		\partial_{ij} \left(\sum_{k=1}^N \Phi_\zeta^*(q,\cdot)(m_k)+\frac{1}{2}N\int_q^1 t\xi''(t)\zeta([0,t])\dd t\right)=\left(\partial_{mm}h_\zeta(q,m_i)+\xi''(q)\int_q^1 \zeta([0,t])\dd t\right)\indic_{i=j}\\+O\left(\frac{1}{N}\right).
	\end{multline}
	Taking expectations in \eqref{eq:tildeNf} and applying \eqref{eq:grad co}--\eqref{eq:Hess co} yields
	\begin{equation}\label{eq:tildeNf2}
		\dE[\tilde{\mc{N}}_0(f,\zeta,q)]=\frac{1}{(2i\pi)^{N+1}}\int e^{\dE[S(\bfm,x,v,\bar\psi,\psi)]+\frac{1}{2}\Var[S(\bfm,x,v,\bar\psi,\psi)] }\indic_{\SUSY_1(\zeta,0)}(\bfm)\dd \bfm \dd x\dd v \dd \bar\psi\dd \psi.
	\end{equation}
	Explicitly,
	\begin{multline*}
		\dE[S(\bfm,x,v,\bar\psi,\psi)]=-\sum_{i=1}^N x_i k_\zeta(q,m_i)-\sum_{i}\bar{\psi}_i\psi_i \Bigl(\partial_{mm}h_\zeta(q,m_i)+\xi''(q)\int_q^1 \zeta([0,t])\dd t\Bigr)\\+v\left(-\sum_{i=1}^N h_\zeta(q,m_i)-N\mc{U}_\zeta(q)-Nf\right)+O(1),
	\end{multline*}
	and
	\begin{multline*}
		\Var[S(\bfm,x,v,\bar\psi,\psi)]= v^2 N \xi(q) + \xi'(q)\,\|x\|^2 + \frac{\xi''(q)}{N}\,\langle \bfm, x\rangle^2
		- \frac{\xi''(q)}{N}\,\left(\sum_{i=1}^N \bar{\psi}_i\psi_i\right)^2 +2v \xi'(q)\sum_{i=1}^N x_i m_i
		\\+ \frac{2\xi''(q)}{N}\!\left[\langle \bar\psi, x\rangle \langle \bfm,\psi\rangle
		+ \langle\bar\psi, \bfm\rangle \langle x,\psi\rangle + v\langle\bar{\psi}, \bfm\rangle \langle \bfm,\psi\rangle\right]
		+ O\Bigl(\frac{1}{N^2}\Bigr).
	\end{multline*}

	\paragraph{\bf{Connection to random matrix theory}}
	Let $t>0$. Let $G$ be an $N\times N$ GOE matrix with parameter $t$, i.e.\ $G$ is real symmetric and
	\[
	(G_{ij})_{1\le i<j\le N}\ \text{i.i.d. } \mathcal N(0,t/N),\qquad (G_{ii})_{1\le i\le N}\ \text{i.i.d. } \mathcal N(0,2t/N),
	\]
	independent across $i\le j$.
	Equivalently, $G$ has density proportional to $\exp\{-\frac{N}{4t}\tr(G^2)\}$. Let $D_N$ be a diagonal matrix. By \eqref{eq:detA},
	\begin{equation*}
		\det(G+D_N)=\int e^{-\sum_{i,j=1}^N(G_{ij}+D_{ii}\delta_{i,j})\bar{\psi}_i \psi_j}\prod_{i=1}^N \dd\bar\psi_i \dd\psi_i.
	\end{equation*}
	Taking expectations yields
	\begin{equation}\label{eq:detE}
		\dE[\det(G+D_N)]=\int  e^{-\sum_{i=1}^N D_{ii}\bar{\psi}_i \psi_i-\frac{t}{2N}(\sum_{i=1}^N\bar{\psi}_i \psi_i)^2}\prod_{i=1}^N \dd\bar\psi_i \dd\psi_i.
	\end{equation}
	The quadratic term in the exponent can be linearized via the Hubbard--Stratonovich transform:
	\begin{equation}\label{eq:take saddle}
		e^{-\frac{t}{2N}(\sum_{i=1}^N\bar{\psi}_i \psi_i)^2}=\sqrt \frac{N}{2\pi t}\int_{\dR} e^{-\frac{N}{2t}z^2+iz\sum_{i=1}^N\bar{\psi}_i \psi_i}\dd z.
	\end{equation}
	Substituting \eqref{eq:take saddle} into \eqref{eq:detE} and integrating out the fermionic variables gives
	\begin{equation*}
		\dE[\det(G+D_N)]=\sqrt \frac{N}{2\pi t}\int_{\dR}e^{-\frac{N}{2t}z^2} \prod_{i=1}^N (d_i-iz) \dd z.
	\end{equation*}
	Suppose now that $\frac{1}{N}\sum_{i=1}^N \delta_{D_{ii}}$ converges to a probability measure $\mu$ on $\dR$ with support bounded away from $0$. The integral representation shows that $\dE[\det(G+D_N)]>0$, and by the steepest descent method one can establish that
	\begin{equation*}
		\frac{1}{N} \log \dE[\det(G+D_N)]=\frac{\omega_{\mu,t}^2(0)}{2t}+\int \log|x-\omega_{\mu,t}(0)|\dd \mu(x)+o_N(1),
	\end{equation*}
	where $\omega_{\mu,t}(0)\in \mathbb{C}^+\cup \dR$ is the subordination function of $\mu$ at time $t$ and location $0$; its definition is recalled in Lemma \ref{lemma:free co} in the Appendix. In particular, taking the saddle point in \eqref{eq:take saddle}, we deduce that whenever $\ell(\mu\boxplus\sigma_t)>0$, the typical value of $\langle \bar{\psi},\psi\rangle$ in \eqref{eq:detE} satisfies
	\begin{equation}\label{eq:approxbarpsipsi}
		\frac{1}{N}\langle \bar{\psi},\psi\rangle \approx \frac{1}{t}\omega_{\mu,t}(0).
	\end{equation}

	The value of $\omega_{\mu,t}(0)$ admits a variational characterization. Recall that the left edge of the free convolution is given by
	\begin{equation*}
		\ell(\mu\boxplus \sigma_t)=\inf\left\{x+tG_\mu(x):\int \frac{\dd \mu(\lambda)}{(x-\lambda)^2}\geq \frac{1}{t}\right\}.
	\end{equation*}
	If $\ell(\mu\boxplus\sigma_t)>0$, then $\omega_{\mu,t}(0)\in \dR$ and
	\begin{equation*}
		\frac{1}{N} \log \dE[\det(G+D_N)]=\inf_{\lambda\leq \omega_{\mu,t}(\ell(\mu\boxplus \sigma_t))}\left\{ \frac{\lambda^2}{2t}+\int\log(x-\lambda)\dd\mu(x)\right\}+o_N(1).
	\end{equation*}

	Since $\zeta|_{(q,1]}+\zeta([0,q])\delta_q$ is the minimizer of $\TAP(\mu_N^{(\bfm)},\cdot)$, Lemma \ref{lemma:first order mu} gives
	\begin{equation*}
		\xi''(q)\dE[\partial_{mm} h_\zeta(q,\cdot)^{-2}]\leq 1.
	\end{equation*}
	Since $\Vert \bfm\Vert^2=Nq$, we will show in Lemma \ref{lemma:det asymp} that this implies
	\begin{equation*}
		\omega_{T_\zeta\#\mu_N^{(\bfm)},\xi''(q)}(0)=\xi''(q)\int_q^1 \zeta([0,t])\dd t,
	\end{equation*}
	where $T_\zeta={\partial_{mm} h_\zeta}+\xi''(q)\int_q^1 \zeta([0,t])\dd t$. In view of \eqref{eq:approxbarpsipsi}, we therefore expect
	\begin{equation}\label{eq:barpsipsi}
		\frac{1}{N} \langle \bar{\psi},\psi\rangle\approx \int_q^1 \zeta([0,t])\dd t.
	\end{equation}

	\paragraph{\bf{Identifying the supersymmetric saddle via the Ward identities}}
	Combining the Ward identity \eqref{eq:Ward1} with \eqref{eq:barpsipsi}, we seek a saddle satisfying
	\begin{equation}\label{eq:Ward}
		\langle \bar{\psi},\psi\rangle=\langle x,\bfm\rangle=N \int_q^1 \zeta([0,t])\dd t.
	\end{equation}
	The remaining Ward identities further enforce $\langle \bar\psi, x\rangle=0$, $\langle \bfm,\psi\rangle=0$, $\langle\bar\psi, \bfm\rangle=0$, and $\langle x,\psi\rangle=0$. We also impose the constraint $\Vert \bfm\Vert^2=Nq$. Let $\mc{F}$ denote the subspace of $(\bfm,x,v,\bar{\psi},\psi)$ satisfying all these conditions, and define the restricted partition function
	\begin{equation*}
		I:=\int e^{\dE[S(\bfm,x,v,\bar\psi,\psi)]+\frac{1}{2}\Var[S(\bfm,x,v,\bar\psi,\psi)] }\indic_{\mc{F}}(\bfm,x,v,\bar\psi,\psi)\,\indic_{\SUSY_1(\zeta,0)}(\bfm) \dd \bfm \dd x\dd v \dd \bar{\psi} \dd \psi.
	\end{equation*}
	On $\mc{F}$, the exponent simplifies to $\mc{S}(\bfm,x,v,\bar{\psi},\psi)$, where
	\begin{multline*}
		\mc{S}(\bfm,x,v,\bar{\psi},\psi)=-\sum_{i=1}^N x_i k_\zeta(q,m_i)-\sum_{i}\bar{\psi}_i\psi_i \left(\partial_{mm}h_\zeta(q,m_i)+\xi''(q)\int_q^1 \zeta([0,t])\dd t\right)\\-v\left(\sum_{i=1}^N h_\zeta(q,m_i)+N\mc{U}_\zeta(q)+Nf\right)+\frac{1}{2}v^2 N \xi(q) + \frac{1}{2}\xi'(q)\,\|x\|^2  +v \xi'(q)\sum_{i=1}^N x_i m_i.
	\end{multline*}
	The constraints are enforced via Lagrange multipliers. One can verify that
	\begin{equation*}
		I\approx \int e^{\mc{S}(\bfm,x,v,\bar{\psi},\psi)}e^{\xi''(q)(\int_q^1 \zeta([0,t])\dd t)\left(\sum_{i=1}^N \bar{\psi}_i \psi_i+\sum_{i=1}^N x_i m_i \right)}\indic_{\SUSY_1(\zeta,0)}(\bfm) \dd \bfm \dd x\dd v \dd \bar{\psi}\dd \psi:=I'.
	\end{equation*}
	Integrating out the fermionic variables $\bar{\psi}$ and $\psi$ produces the determinant of the diagonal matrix with entries $\partial_{mm} h_\zeta(q,m_i)$, $i\in [N]$, so that
	\begin{multline*}
		I'=\int  \prod_{i=1}^N \partial_{mm} h_\zeta(q,m_i)\,\exp\biggl\{\sum_{i=1}^N \bigl(-x_i \partial_m h_\zeta(q,m_i)+v\xi'(q)x_im_i\bigr)\\
		+\tfrac{1}{2}\xi'(q)\Vert x\Vert^2-v\Bigl(\sum_{i=1}^N h_\zeta(q,m_i)+N\mc{U}_\zeta(q)+Nf\Bigr)+\tfrac{1}{2}Nv^2 \xi(q)\biggr\}\dd \bfm \dd x\dd v,
	\end{multline*}
	where we have used the identity $k_\zeta(q,m)=\partial_m h_\zeta(q,m)+m\xi''(q)\int_q^1 \zeta([0,t])\dd t$. Integrating over $x$ then gives
	\begin{equation*}
		I'=\int e^{\tilde{\mc{S}}(\bfm,v)}\dd \bfm \dd v,
	\end{equation*}
	where
	\begin{multline*}
		\tilde{\mc{S}}(\bfm,v)=\sum_{i=1}^N\log \partial_{mm} h_\zeta(q,m_i)-v\left(\sum_{i=1}^N h_\zeta(q,m_i)+N\mc{U}_\zeta(q)+Nf\right)+\frac{1}{2}v^2 N\xi(q)\\-\frac{1}{2\xi'(q)}\Vert \partial_m h_\zeta(q,\bfm)-v\xi'(q)\bfm\Vert^2.
	\end{multline*}
	A direct computation identifies the saddle as $v=u$ (i.e., $v=\zeta(\{0\})$), and yields
	\begin{equation}\label{eq:infuq}
		\log I'=uN\left(\Phi_{\zeta}(0,0)-\frac{1}{2}\int_0^1 t\xi''(t)\zeta([0,t])\dd t-f\right)+o(N).
	\end{equation}

	\paragraph{\bf{Consistency check}}
	Denote by $\mu$ the candidate measure identified above:
	\begin{equation*}
		\dd \mu(m) \propto \partial_{mm}h_\zeta(q,m)\,e^{-\frac{1}{2\xi'(q)}\partial_m h_\zeta(q,m)^2-u (h_\zeta(q,m)-\partial_m h_\zeta(q,m)m) }\indic_{[-1,1]}(m)\,\dd m.
	\end{equation*}
	Let $\nu$ be the pushforward of $\mu$ under the map $\partial_m h_\zeta(q,\cdot)$. A key observation is that $\nu=\mathrm{Law}(X_q)$, where $X_t$ is the Auffinger--Chen process
	\begin{equation*}
		\begin{cases}
			\dd X_t=\xi''(t)\zeta([0,t])\partial_x \Phi_\zeta(t,X_t)\dd t+\sqrt{\xi''(t)} \dd B_t,\\
			X_0=0.
		\end{cases}
	\end{equation*}
	Using properties of the Auffinger--Chen process (see Section \ref{section:Auffinger}) together with the optimality conditions on $\zeta$ (see the statement of Proposition \ref{prop:annealed 0}), one can verify that $\mu$ is indeed a consistent solution. We omit the details here, as a complete treatment is given in Section \ref{section:base}.

	Throughout the remainder of the paper, the Ward identity \eqref{eq:Ward} serves as a central guide for the computations.

	\subsection{Ansatz for the conditional annealed complexity}\label{sub:SUSYhier}
	The supersymmetric computation of Section \ref{sub:SUSY} can be extended to compute the number of critical points of a hierarchical system of replicas. For brevity, we record only the resulting ansatz.

	\paragraph{\bf{Partition function}}

	Let $f\in \dR$. Let $n\geq 2$ and let $(u_i)_{1\leq i\leq n}$ and $(q_i)_{1\leq i\leq n}$ be strictly increasing sequences of numbers in $(0,1)$. Let $\zeta\in \Prefix_{n+1}((u_i)_{1\leq i\leq n},(q_i)_{1\leq i\leq n})$ where $\Prefix_{n+1}$ is as in Definition \ref{def:nprefix}. Suppose that $\zeta$ satisfies the assumptions of Proposition \ref{prop:annealed cond}.

	Set $f_1=\Pari(\zeta),\ldots,f_{n-1}=\Pari(\zeta)$ and $f_n=f$. We count the number of critical points $\bfm^{(n)}$ of free energy level $f$ of square radius $q_n$ that are in the band orthogonal to some $\bfm^{(n-1)}$ critical point of level $\Pari(\zeta)$ of square radius $q_{n-1}$, which is the band orthogonal to some $\bfm^{(n-2)}$ critical point of level $\Pari(\zeta)$ of square radius $q_{n-2}$, etc. Dropping the absolute value of the determinant, a good guess for the number of critical points of this hierarchical subsystem is
	\begin{multline*}
		\dE\left[\int_{([-1,1]^N)^n } \prod_{k=1}^n \det(\nabla^2 F_{\TAP,\zeta}(\bfm^{(k)}))\right. \\ \times \left. \prod_{k=1}^n \left(\delta_{\nabla F_{\TAP,\zeta}(\bfm^{(k)})}(0)\delta_{F_{\TAP,\zeta}(\bfm^{(k)})}(Nf_k)\indic_{(\bfm^{(1)},\ldots,\bfm^{(n)})\in \SUSY_n(\zeta,0)} \right)\prod_{k=1}^n \dd \bfm^{(k)} \right].
	\end{multline*}
	As in Section \ref{sub:SUSY}, one can introduce Lagrange fields $x^{(k)}\in \dR^N$ and Lagrange parameters $\Delta_k\in \dR$, for $k\in [n]$ and fermionic fields $\psi^{(k)}$, $\bar{\psi}^{(k)}$.

	\paragraph{\bf{Summarizing the ansatz}}

	The Ward identities suggest that for the optimum,
	\begin{equation}\label{eq:an1}
		\langle x^{(i)},\bfm^{(j)}\rangle=0 \quad \text{and}\quad   \langle \bar{\psi}^{(i)},{\psi}^{(j)}\rangle=0\quad \text{for every $i\neq j \in [n]$,}
	\end{equation}
	and
	\begin{equation*}
		\langle x^{(i)},\bfm^{(i)}\rangle=\langle \bar{\psi}^{(i)},\psi^{(i)}\rangle\quad \text{for every $i\in [n]$}.
	\end{equation*}
	Moreover, as in Section \ref{sub:SUSY}, the fact that $\zeta_{\bfm^{(k)}}=\zeta|_{(q_k,1]}+\zeta([0,q_k])\delta_{q_k}$ gives
	\begin{equation}\label{eq:an2}
		\langle \bar{\psi}^{(i)},\psi^{(i)}\rangle=N\int_{q_i}^1 \zeta([0,t])\dd t=\langle x^{(i)},\bfm^{(i)}\rangle.
	\end{equation}
	Moreover, the Lagrange multipliers are expected to be given by
	\begin{equation*}
		\Delta_n=\zeta([0,q_n))\quad \text{and for every $i\in [n-1]$,}\quad \Delta_i=-\zeta(\{q_i\}).
	\end{equation*}
	Observe that for every $i\in [n]$, one has $\Delta_i+\cdots+\Delta_n=\zeta([0,q_i))$.

	\section{Properties of the Parisi PDE}\label{section:Auffinger}

	\subsection{The Auffinger--Chen SDE}
	Recall from Assumption \ref{ass:xi} that $\xi$ is smooth, strictly convex, $\xi(0)=0$ and $\xi'(0)=0$.

	The Parisi PDE~\eqref{eq:ParisiPDE} is a semilinear parabolic equation
	whose quadratic gradient nonlinearity places it in the class of
	Hamilton--Jacobi--Bellman equations arising in stochastic optimal control.
	We now make this connection precise.
	
	A \emph{feedback control} is a Borel-measurable map
	$\alpha\colon[0,1]\times\mathbb{R}\to\mathbb{R}$ such that,
	for every initial condition $(t,x)\in[0,1]\times\mathbb{R}$,
	the stochastic differential equation
	\begin{equation}\label{eq:AuffingerChen b}
		\dd X_s
		= \zeta\bigl([0,s]\bigr)\,\xi''(s)\,\alpha(s,X_s)\,\dd s
		+ \sqrt{\xi''(s)}\,\dd B_s,
		\qquad X_t = x,
	\end{equation}
	admits a unique strong solution $(X_s)_{s\in[t,1]}$ satisfying
	the square-integrability condition
	\begin{equation}\label{eq:admissibility}
		\mathbb{E}\!\left[
		\int_t^1 \xi''(s)\,\zeta\bigl([0,s]\bigr)\,\alpha(s,X_s)^2\,\dd s
		\right] < \infty.
	\end{equation}
	Denote the class of all such feedback controls by~$\mathcal{A}$.
	Consider the stochastic optimal control problem of maximizing the
	payoff functional
	\begin{equation}\label{eq:control_rep}
		\Phi_\zeta(t,x)
		= \sup_{\alpha\in\mathcal{A}}\;
		\mathbb{E}\!\left[
		\log\bigl(2\cosh(X_1)\bigr)
		- \frac{1}{2}\int_t^1
		\xi''(s)\,\zeta\bigl([0,s]\bigr)\,\alpha(s,X_s)^2
		\,\dd s
		\;\middle|\; X_t = x
		\right],
	\end{equation}
	where $(X_s)_{s\in[t,1]}$ solves~\eqref{eq:AuffingerChen b}
	under the feedback control~$\alpha$.
	By the dynamic programming principle, the value
	function~$\Phi_\zeta$ satisfies the Hamilton--Jacobi--Bellman equation
	\[
	\partial_t \Phi_\zeta
	+ \frac{\xi''(t)}{2}\,\partial_{xx}\Phi_\zeta
	+ \sup_{a\in\mathbb{R}}\Bigl\{
	\zeta\bigl([0,t]\bigr)\,\xi''(t)\,a\;\partial_x\Phi_\zeta
	- \tfrac{1}{2}\,\xi''(t)\,\zeta\bigl([0,t]\bigr)\,a^2
	\Bigr\}
	= 0.
	\]
	Optimizing the Hamiltonian pointwise in~$a$ yields a maximizer
	$a^\star = \partial_x\Phi_\zeta(t,x)$ (which is unique for time $t$ such that $\zeta([0,t])>0$), and substitution recovers the
	Parisi PDE
	\begin{equation}\label{eq:ParisiPDE}
		\begin{cases}
			\partial_t \Phi_\zeta(t,x)
			= -\dfrac{\xi''(t)}{2}\Bigl(
			\partial_{xx}\Phi_\zeta(t,x)
			+ \zeta\bigl([0,t]\bigr)\bigl(\partial_x\Phi_\zeta(t,x)\bigr)^2
			\Bigr), \\[6pt]
			\Phi_\zeta(1,x) = \log\bigl(2\cosh x\bigr).
		\end{cases}
	\end{equation}
	A standard verification argument then confirms that~$\Phi_\zeta$
	coincides with the value function~\eqref{eq:control_rep} and that the
	optimal feedback control is
	\begin{equation}\label{eq:optimal_control}
		\alpha^\star(t,x) = \partial_x\Phi_\zeta(t,x).
	\end{equation}
	Substituting~\eqref{eq:optimal_control} into~\eqref{eq:AuffingerChen b}
	gives the closed-loop dynamics
	\begin{equation}\label{eq:AuffingerChen}
		\dd X_t
		= \zeta\bigl([0,t]\bigr)\,\xi''(t)\,\partial_x\Phi_\zeta(t,X_t)\,\dd t+\sqrt{\xi''(t)}\,\dd B_t, \qquad X_0 = 0,
	\end{equation}
	which is called the \emph{Auffinger--Chen SDE}.
	In particular, the following differential identities hold.
	
	\begin{lemma}[Differential identities]\label{lemma:martingales}
		Let $X_t$ denote the Auffinger--Chen process \eqref{eq:AuffingerChen}. Then:
		\begin{enumerate}[label=(\roman*)]
			\item\label{item:mart-pathwise} Define
			\begin{equation*}
				Y_t:=\Phi_\zeta(t,X_t)-\Phi_\zeta(0,X_0)-\frac12\int_0^t \xi''(s)\,\zeta([0,s])(\partial_x\Phi_\zeta(s,X_s))^2\,\dd s-\int_0^t \partial_x\Phi_\zeta(s,X_s)\sqrt{\xi''(s)}\,\dd B_s .
			\end{equation*}
			Then $\dd Y_t=0$, i.e.\ $Y_t$ is a.s.\ constant in $t$.
			
			\item\label{item:mart-mg} The process $\bigl(\partial_x\Phi_\zeta(t,X_t)\bigr)_{t\in[0,1]}$ is a martingale.
			
			\item\label{item:mart-second} The second derivative along the optimal trajectory satisfies
			\begin{equation*}
				\dd\bigl(\partial_{xx}\Phi_\zeta(t,X_t)\bigr)
				=-\zeta([0,t])\,\xi''(t)\bigl(\partial_{xx}\Phi_\zeta(t,X_t)\bigr)^2\,\dd t
				+\sqrt{\xi''(t)}\,\partial_{xxx}\Phi_\zeta(t,X_t)\,\dd B_t.
			\end{equation*}
			
			\item\label{item:mart-square} Moreover,
			\begin{equation*}
				\dd\bigl(\partial_x\Phi_\zeta(t,X_t)\bigr)^2
				=2\sqrt{\xi''(t)}\,\partial_x\Phi_\zeta(t,X_t)\,\partial_{xx}\Phi_\zeta(t,X_t)\,\dd B_t
				+\xi''(t)\bigl(\partial_{xx}\Phi_\zeta(t,X_t)\bigr)^2\,\dd t .
			\end{equation*}
		\end{enumerate}
	\end{lemma}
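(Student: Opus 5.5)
Each of the four identities is a stochastic-calculus computation built on Itô's formula along the Auffinger--Chen process \eqref{eq:AuffingerChen}. The only other ingredients are the Parisi PDE \eqref{eq:ParisiPDE} and its $x$-derivatives. As a preliminary, we record the regularity needed to make these applications legitimate. On each interval where $t\mapsto\zeta([0,t])$ is continuous, $\Phi_\zeta$ is smooth in $x$ and its $x$-derivatives of every order are bounded. In particular $|\partial_x\Phi_\zeta|\le 1$ and $0<\partial_{xx}\Phi_\zeta\le 1$. At the (at most countably many) jump times of $\zeta([0,t])$, $\Phi_\zeta$ is still continuous in $t$. We therefore first prove the identities for $\zeta$ with finitely many atoms, where the PDE holds classically between atoms and Itô's formula can be applied piecewise and patched by continuity. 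The general case then follows from the standard Lipschitz stability of $\zeta\mapsto\Phi_\zeta$ and its derivatives together with stability of the SDE. Boundedness of $\partial_x\Phi_\zeta$, $\partial_{xx}\Phi_\zeta$ and $\partial_{xxx}\Phi_\zeta$ guarantees that every stochastic integral appearing below is a true martingale.

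For \ref{item:mart-pathwise}, apply Itô's formula to $\Phi_\zeta(t,X_t)$:
\[
\dd\Phi_\zeta=\Bigl(\partial_t\Phi_\zeta+\zeta([0,t])\xi''(t)(\partial_x\Phi_\zeta)^2+\tfrac{\xi''(t)}{2}\partial_{xx}\Phi_\zeta\Bigr)\dd t+\sqrt{\xi''(t)}\,\partial_x\Phi_\zeta\,\dd B_t .
\]
Substituting $\partial_t\Phi_\zeta$ from the PDE cancels the $\partial_{xx}$ term and half of the quadratic term. What remains is $\tfrac12\xi''\zeta([0,t])(\partial_x\Phi_\zeta)^2\,\dd t+\sqrt{\xi''}\,\partial_x\Phi_\zeta\,\dd B_t$, which is exactly $\dd Y_t=0$.

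For \ref{item:mart-mg}, differentiate the PDE once in $x$:
\[
\partial_t\partial_x\Phi_\zeta=-\tfrac{\xi''}{2}\bigl(\partial_{xxx}\Phi_\zeta+2\zeta([0,t])\,\partial_x\Phi_\zeta\,\partial_{xx}\Phi_\zeta\bigr).
\]
Itô's formula for $\partial_x\Phi_\zeta(t,X_t)$ produces the drift $\partial_t\partial_x\Phi_\zeta+\zeta\xi''\partial_x\Phi_\zeta\partial_{xx}\Phi_\zeta+\tfrac{\xi''}{2}\partial_{xxx}\Phi_\zeta$, and this vanishes identically. Hence $\dd\partial_x\Phi_\zeta=\sqrt{\xi''}\,\partial_{xx}\Phi_\zeta\,\dd B_t$, which is a martingale because the integrand is bounded.

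For \ref{item:mart-second}, differentiate the PDE twice:
\[
\partial_t\partial_{xx}\Phi_\zeta=-\tfrac{\xi''}{2}\bigl(\partial_{xxxx}\Phi_\zeta+2\zeta(\partial_{xx}\Phi_\zeta)^2+2\zeta\,\partial_x\Phi_\zeta\,\partial_{xxx}\Phi_\zeta\bigr).
\]
In Itô's formula for $\partial_{xx}\Phi_\zeta(t,X_t)$, the drift contributions $\zeta\xi''\partial_x\Phi_\zeta\partial_{xxx}\Phi_\zeta$ and $\tfrac{\xi''}{2}\partial_{xxxx}\Phi_\zeta$ cancel the corresponding terms. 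The residual drift is $-\zeta([0,t])\xi''(t)(\partial_{xx}\Phi_\zeta)^2$, and the martingale part is $\sqrt{\xi''}\,\partial_{xxx}\Phi_\zeta\,\dd B_t$.

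For \ref{item:mart-square}, apply Itô's product rule to the square of the martingale from \ref{item:mart-mg}:
\[
\dd M_t^2=2M_t\,\dd M_t+\dd\langle M\rangle_t ,\qquad M_t=\partial_x\Phi_\zeta(t,X_t),\quad \dd M_t=\sqrt{\xi''}\,\partial_{xx}\Phi_\zeta\,\dd B_t,
\]
so that $\dd\langle M\rangle_t=\xi''(\partial_{xx}\Phi_\zeta)^2\,\dd t$, which is the claimed formula.

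The main obstacle is entirely in the first paragraph. One has to justify the pointwise use of the PDE and of its second $x$-derivative at times where $\zeta([0,t])$ jumps or $\zeta$ is not atomic, and check that $\partial_{xxxx}\Phi_\zeta$ exists. I would handle this by the finite-atom approximation combined with the known $C^\infty$-in-$x$ bounds on $\Phi_\zeta$ (Auffinger--Chen, Jagannath--Tobasco), passing to the limit in the integrated forms of the identities.
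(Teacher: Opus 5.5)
Your proof is correct and follows the paper's argument. In each case you apply It\^o's formula to $\Phi_\zeta$, $\partial_x\Phi_\zeta$ and $\partial_{xx}\Phi_\zeta$ along the Auffinger--Chen process and simplify the drift with the Parisi PDE and its first and second $x$-derivatives, and you get (iv) from the product rule for the square of the martingale in (ii). Your preliminary paragraph adds something the paper only invokes as ``standing integrability conditions'': the finite-atom approximation and the bounded $x$-derivatives that make every stochastic integral a true martingale.
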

	
	\begin{proof}
		We first establish~\ref{item:mart-pathwise}. By It\^o's formula, for a smooth $f(t,x)$,
		\[
		\dd f(t,X_t)=\partial_t f(t,X_t)\,\dd t+\partial_x f(t,X_t)\,\dd X_t+\frac12\,\partial_{xx}f(t,X_t)\,\dd[X]_t.
		\]
		Applying this to $f=\Phi_\zeta$ and using $\dd[X]_t=\xi''(t)\,\dd t$ together with the optimal dynamics
		\[
		\dd X_t=\zeta([0,t])\,\xi''(t)\,\partial_x\Phi_\zeta(t,X_t)\,\dd t+\sqrt{\xi''(t)}\,\dd B_t,
		\]
		we obtain
		\begin{align*}
			\dd \Phi_\zeta(t,X_t)
			&=\partial_t\Phi_\zeta(t,X_t)\,\dd t
			+\partial_x\Phi_\zeta(t,X_t)\,\dd X_t
			+\frac12\,\partial_{xx}\Phi_\zeta(t,X_t)\,\xi''(t)\,\dd t\\
			&=\Bigl(\partial_t\Phi_\zeta
			+\zeta([0,t])\,\xi''(t)\bigl(\partial_x\Phi_\zeta\bigr)^2
			+\frac12\,\xi''(t)\partial_{xx}\Phi_\zeta\Bigr)(t,X_t)\,\dd t
			+\sqrt{\xi''(t)}\,\partial_x\Phi_\zeta(t,X_t)\,\dd B_t.
		\end{align*}
		Invoking the Parisi PDE \eqref{eq:ParisiPDE} to substitute for $\partial_t\Phi_\zeta$, the drift simplifies to
		\[
		\frac12\,\zeta([0,t])\,\xi''(t)\bigl(\partial_x\Phi_\zeta(t,X_t)\bigr)^2\,\dd t,
		\]
		and therefore
		\[
		\dd \Phi_\zeta(t,X_t)
		=\frac12\,\zeta([0,t])\,\xi''(t)(\partial_x\Phi_\zeta(t,X_t))^2\,\dd t+\sqrt{\xi''(t)}\partial_x\Phi_\zeta(t,X_t) \,\dd B_t.
		\]
		By construction of $Y_t$, this is exactly equivalent to $\dd Y_t=0$.
		
		We next prove~\ref{item:mart-mg}. Applying It\^o's formula to $f=\partial_x\Phi_\zeta$ yields
		\begin{align*}
			\dd\bigl(\partial_x\Phi_\zeta(t,X_t)\bigr)
			&=\partial_{tx}\Phi_\zeta(t,X_t)\,\dd t
			+\partial_{xx}\Phi_\zeta(t,X_t)\,\dd X_t
			+\frac12\,\partial_{xxx}\Phi_\zeta(t,X_t)\,\xi''(t)\,\dd t\\
			&=\Bigl(\partial_{tx}\Phi_\zeta
			+\zeta([0,t])\,\xi''(t)\,\partial_{xx}\Phi_\zeta\,\partial_x\Phi_\zeta
			+\tfrac12\,\xi''(t)\partial_{xxx}\Phi_\zeta\Bigr)(t,X_t)\,\dd t\\
			&\quad+\sqrt{\xi''(t)}\,\partial_{xx}\Phi_\zeta(t,X_t)\,\dd B_t.
		\end{align*}
		Differentiating \eqref{eq:ParisiPDE} in $x$ gives
		\[
		\partial_{tx}\Phi_\zeta(t,x)
		=-\frac{\xi''(t)}{2}\Bigl(\partial_{xxx}\Phi_\zeta(t,x)
		+2\,\zeta([0,t])\,\partial_{xx}\Phi_\zeta(t,x)\,\partial_x\Phi_\zeta(t,x)\Bigr).
		\]
		Substituting this identity into the drift term above cancels it identically, leaving
		\[
		\dd\bigl(\partial_x\Phi_\zeta(t,X_t)\bigr)=\sqrt{\xi''(t)}\,\partial_{xx}\Phi_\zeta(t,X_t)\,\dd B_t.
		\]
		Hence $\partial_x\Phi_\zeta(t,X_t)$ is a local martingale; under the standing integrability conditions ensuring square-integrability of the stochastic integral, it is a martingale.
		
		We now prove~\ref{item:mart-second}. Set
		\[
		V(t,x):=\partial_{xx}\Phi_\zeta(t,x).
		\]
		Applying It\^o's formula to $V(t,X_t)$ gives
		\begin{align*}
			\dd V(t,X_t)
			&=\partial_t V(t,X_t)\,\dd t+\partial_x V(t,X_t)\,\dd X_t
			+\frac12\,\partial_{xx}V(t,X_t)\,\dd[X]_t\\
			&=\partial_{txx}\Phi_\zeta(t,X_t)\,\dd t
			+\partial_{xxx}\Phi_\zeta(t,X_t)\,\dd X_t
			+\frac12\,\partial_{xxxx}\Phi_\zeta(t,X_t)\,\xi''(t)\,\dd t.
		\end{align*}
		Using the optimal dynamics
		\[
		\dd X_t=\zeta([0,t])\,\xi''(t)\,\partial_x\Phi_\zeta(t,X_t)\,\dd t
		+\sqrt{\xi''(t)}\,\dd B_t,
		\]
		we obtain
		\begin{align*}
			\dd V(t,X_t)
			&=\Bigl(\partial_{txx}\Phi_\zeta
			+\zeta([0,t])\,\xi''(t)\,\partial_x\Phi_\zeta\,\partial_{xxx}\Phi_\zeta
			+\frac12\,\xi''(t)\,\partial_{xxxx}\Phi_\zeta\Bigr)(t,X_t)\,\dd t\\
			&\qquad\qquad +\sqrt{\xi''(t)}\,\partial_{xxx}\Phi_\zeta(t,X_t)\,\dd B_t.
		\end{align*}
		Next differentiate \eqref{eq:ParisiPDE} twice in $x$. Since
		\[
		\partial_{xx}\bigl(\partial_x\Phi_\zeta\bigr)^2
		=2\bigl(\partial_{xx}\Phi_\zeta\bigr)^2
		+2\,\partial_x\Phi_\zeta\,\partial_{xxx}\Phi_\zeta,
		\]
		we have
		\[
		\partial_{txx}\Phi_\zeta(t,x)
		=-\frac{\xi''(t)}{2}\Bigl(
		\partial_{xxxx}\Phi_\zeta(t,x)
		+2\,\zeta([0,t])\Bigl((\partial_{xx}\Phi_\zeta(t,x))^2
		+\partial_x\Phi_\zeta(t,x)\,\partial_{xxx}\Phi_\zeta(t,x)\Bigr)\Bigr).
		\]
		Substituting this into the drift above, the $\partial_{xxxx}\Phi_\zeta$ terms cancel,
		and the $\partial_x\Phi_\zeta\,\partial_{xxx}\Phi_\zeta$ terms cancel as well, leaving
		\[
		\dd V(t,X_t)
		=-\zeta([0,t])\,\xi''(t)\,\bigl(\partial_{xx}\Phi_\zeta(t,X_t)\bigr)^2\,\dd t
		+\sqrt{\xi''(t)}\,\partial_{xxx}\Phi_\zeta(t,X_t)\,\dd B_t,
		\]
		which is exactly the claimed identity in~\ref{item:mart-second}.
		
		Finally,~\ref{item:mart-square} is an immediate consequence of It\^o's formula applied to the square of the martingale in~\ref{item:mart-mg}, namely
		\(
		\dd(u_t^2)=2u_t\,\dd u_t+\dd[u]_t
		\)
		with $u_t=\partial_x\Phi_\zeta(t,X_t)$.
	\end{proof}

	\begin{lemma}\label{lemma:xxphi}
		Let $\zeta\in\mc P([0,1])$ and fix $q\in[0,1]$.
		Let $(X_t)_{t\in[q,1]}$ solve the Auffinger--Chen SDE 
		\[
		\dd X_t=\xi''(t)\,\zeta([0,t])\,\partial_x\Phi_\zeta(t,X_t)\dd t+\sqrt{\xi''(t)}\,\dd B_t.
		\]
		Write $u(t,y):=\partial_x\Phi_\zeta(t,y)$. Then for every $x\in\dR$,
		\begin{equation}\label{eq:xxphi-general}
			\partial_{xx}\Phi_\zeta(q,x)
			=1-\zeta\bigl([0,q)\bigr)\,u(q,x)^2
			-\int_{[q,1]}\E\!\left[u(t,X_t)^2\,\middle|\,X_q=x\right]\zeta(\dd t).
		\end{equation}
		Consequently,
		\[
		\E\big[\partial_{xx}\Phi_\zeta(q,X_q)\big]
		=1-\zeta\bigl([0,q)\bigr)\,\E\big[u(q,X_q)^2\big]
		-\int_{[q,1]}\E\big[u(t,X_t)^2\big]\,\zeta(\dd t).
		\]
		Moreover, if $\E[u(t,X_t)^2]=t$ for $\zeta$-a.e.\ $t\in[q,1]$
		and also $\E[u(q,X_q)^2]=q$,
		then
		\[
		\E\big[\partial_{xx}\Phi_\zeta(q,X_q)\big]
		=1-\zeta\bigl([0,q)\bigr)\,q-\int_{[q,1]}t\,\zeta(\dd t)
		=\int_q^1 \zeta([0,t])\dd t.
		\]
	\end{lemma}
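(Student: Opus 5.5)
The plan is to derive \eqref{eq:xxphi-general} from the stochastic differential of $\partial_{xx}\Phi_\zeta$ along the Auffinger--Chen trajectory, combined with the boundary value of $\partial_{xx}\Phi_\zeta$ at $t=1$. At the terminal time, $\Phi_\zeta(1,x)=\log(2\cosh x)$ gives
\[
\partial_{xx}\Phi_\zeta(1,x)=1-\tanh^2 x=1-u(1,x)^2 .
\]
By Lemma~\ref{lemma:martingales}\ref{item:mart-second}, started from $X_q=x$ and integrated over $[q,1]$, conditional expectation kills the martingale term and yields
\[
\partial_{xx}\Phi_\zeta(q,x)=\E\bigl[1-u(1,X_1)^2\mid X_q=x\bigr]+\int_q^1\zeta([0,t])\,\xi''(t)\,\E\bigl[(\partial_{xx}\Phi_\zeta(t,X_t))^2\mid X_q=x\bigr]\dd t .
\]

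Next, Lemma~\ref{lemma:martingales}\ref{item:mart-square} identifies the integrand: $\xi''(t)\E[(\partial_{xx}\Phi_\zeta)^2\mid X_q=x]$ is exactly $\frac{\dd}{\dd t}g(t)$, where
\[
g(t):=\E[u(t,X_t)^2\mid X_q=x].
\]
So the integral equals $\int_q^1\zeta([0,t])\,g'(t)\,\dd t$. Stieltjes integration by parts with $F(t)=\zeta([0,t])$ (right-continuous, with $g$ continuous) then gives
\[
\int_q^1 F\,\dd g = F(1)g(1)-F(q^-)g(q)-\int_{[q,1]} g\,\dd\zeta .
\]
Since $F(1)=1$, the $g(1)=\E[u(1,X_1)^2\mid X_q=x]$ term cancels the one coming from the terminal condition. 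Because $X_q=x$ is deterministic, $g(q)=u(q,x)^2$, and $F(q^-)=\zeta([0,q))$. This is exactly \eqref{eq:xxphi-general}. The boundary convention at $q$ is the point to be careful about: the atom at $q$ must be counted inside $\int_{[q,1]}$ and not in $\zeta([0,q))$.

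The unconditional version then follows by taking expectations over $X_q$ and using the tower property. For the final claim, substitute $\E[u(t,X_t)^2]=t$ and $\E[u(q,X_q)^2]=q$ to get $1-\zeta([0,q))\,q-\int_{[q,1]}t\,\zeta(\dd t)$. A second Stieltjes integration by parts, namely
\[
\int_{[q,1]}t\,\dd\zeta = 1-q\,\zeta([0,q))-\int_q^1\zeta([0,t])\,\dd t,
\]
turns this into $\int_q^1\zeta([0,t])\,\dd t$.

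I expect the main obstacle to be technical rather than conceptual: justifying that the stochastic integrals are true martingales, which requires boundedness of $\partial_{xx}\Phi_\zeta$ and $\partial_{xxx}\Phi_\zeta$, and justifying Itô's formula when $\zeta$ has atoms, so that $\Phi_\zeta$ is only piecewise smooth in $t$. I would handle both by first proving the identity for $\zeta$ with finitely many atoms, where $\Phi_\zeta$ is smooth between atoms with bounded $x$-derivatives, and then passing to general $\zeta$ using the Lipschitz continuity of $\zeta\mapsto\Phi_\zeta$ and of its derivatives in the metric $\dist$.
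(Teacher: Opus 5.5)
Your proposal is correct and follows the paper's proof step for step. Both use It\^o's formula for $\partial_{xx}\Phi_\zeta$ along the trajectory (Lemma~\ref{lemma:martingales}\ref{item:mart-second}), identify the drift as the derivative of $t\mapsto\E[u(t,X_t)^2\mid X_q=x]$ via Lemma~\ref{lemma:martingales}\ref{item:mart-square}, apply Stieltjes integration by parts with the atom at $q$ placed inside $\int_{[q,1]}$, cancel against the terminal value $1-\tanh^2$, and finish with the layer-cake (equivalently, integration-by-parts) identity. The only thing you add is the regularization remark, which the paper does not make. If you carry it out, note that $\zeta_n([0,q))$ and $\int_{[q,1]}\cdot\,\dd\zeta_n$ need not converge separately when $\zeta$ has an atom at $q$. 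Their sum, however, equals $\int \tilde g\,\dd\zeta$ with $\tilde g(t)=g(\max(t,q))$ continuous, so it is weakly continuous and causes no obstruction.
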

	
	\begin{proof}
		Let $u(t,y)=\partial_x\Phi_\zeta(t,y)$ and set
		$V(t):=\E\!\left[u(t,X_t)^2\mid X_q=x\right]$.
		From Lemma~\ref{lemma:martingales}\ref{item:mart-second},
		\[
		\dd\big(\partial_{xx}\Phi_\zeta(t,X_t)\big)
		=-\zeta([0,t])\,\xi''(t)\big(\partial_{xx}\Phi_\zeta(t,X_t)\big)^2\dd t
		+\sqrt{\xi''(t)}\,\partial_{xxx}\Phi_\zeta(t,X_t)\dd B_t.
		\]
		Taking conditional expectation given $X_q=x$ and integrating from $q$ to $1$ yields
		\begin{equation}\label{eq:xxphi-step1-gen}
			\E\!\left[\partial_{xx}\Phi_\zeta(1,X_1)\mid X_q=x\right]-\partial_{xx}\Phi_\zeta(q,x)
			=-\int_q^1 \zeta([0,t])\,\xi''(t)\,
			\E\!\left[\big(\partial_{xx}\Phi_\zeta(t,X_t)\big)^2\mid X_q=x\right]\dd t.
		\end{equation}
		From Lemma~\ref{lemma:martingales}\ref{item:mart-square},
		\[
		\dd\big(u(t,X_t)^2\big)
		=2\sqrt{\xi''(t)}\,u(t,X_t)\,\partial_{xx}\Phi_\zeta(t,X_t)\dd B_t
		+\xi''(t)\big(\partial_{xx}\Phi_\zeta(t,X_t)\big)^2\dd t,
		\]
		so $V$ is absolutely continuous on $[q,1]$ with
		\begin{equation}\label{eq:Vprime}
			V'(t)=\xi''(t)\,\E\!\left[\big(\partial_{xx}\Phi_\zeta(t,X_t)\big)^2\mid X_q=x\right].
		\end{equation}
		Substituting \eqref{eq:Vprime} into \eqref{eq:xxphi-step1-gen} gives
		\begin{equation}\label{eq:xxphi-step2-gen}
			\E\!\left[\partial_{xx}\Phi_\zeta(1,X_1)\mid X_q=x\right]-\partial_{xx}\Phi_\zeta(q,x)
			=-\int_q^1 \zeta([0,t])\,V'(t)\dd t
			=-\int_{(q,1]} \zeta([0,t])\dd V(t).
		\end{equation}
		Since $V$ is continuous, Stieltjes integration by parts yields
		\[
		\int_{(q,1]} \zeta([0,t])\dd V(t)
		=\zeta([0,1])\,V(1)-\zeta([0,q])\,V(q)-\int_{(q,1]} V(t)\,\zeta(\dd t).
		\]
		Using $\zeta([0,1])=1$ and decomposing
		$\zeta([0,q])=\zeta([0,q))+\zeta(\{q\})$, we obtain
		\begin{align}
			\E\!\left[\partial_{xx}\Phi_\zeta(1,X_1)\mid X_q=x\right]-\partial_{xx}\Phi_\zeta(q,x)
			&=-V(1)+\zeta([0,q))\,V(q)+\zeta(\{q\})\,V(q)+\int_{(q,1]} V(t)\,\zeta(\dd t) \notag\\
			&=-V(1)+\zeta([0,q))\,V(q)+\int_{[q,1]} V(t)\,\zeta(\dd t),
			\label{eq:xxphi-step3-gen}
		\end{align}
		where the last step recombines the atom $\zeta(\{q\})\,V(q)$ with the integral
		over $(q,1]$ into a single integral over $[q,1]$.
		
		Now $\partial_{xx}\Phi_\zeta(1,y)=1-\tanh^2(y)=1-u(1,y)^2$, so
		$\E[\partial_{xx}\Phi_\zeta(1,X_1)\mid X_q=x]=1-V(1)$.
		Also $V(q)=u(q,x)^2$.
		Substituting into \eqref{eq:xxphi-step3-gen} and rearranging:
		\begin{align*}
			\partial_{xx}\Phi_\zeta(q,x)
			&=\bigl(1-V(1)\bigr)+V(1)-\zeta([0,q))\,u(q,x)^2-\int_{[q,1]} V(t)\,\zeta(\dd t)\\
			&=1-\zeta\bigl([0,q)\bigr)\,u(q,x)^2-\int_{[q,1]} V(t)\,\zeta(\dd t),
		\end{align*}
		which is \eqref{eq:xxphi-general}.
		
		The second statement follows by taking expectation over $X_q$.
		For the final claim, substitute $\E[u(t,\allowbreak X_t)^2]=t$ and $\E[u(q,\allowbreak X_q)^2]=q$ to get
		\[
		\E\big[\partial_{xx}\Phi_\zeta(q,X_q)\big]
		=1-\zeta\bigl([0,q)\bigr)\,q-\int_{[q,1]}t\,\zeta(\dd t).
		\]
		To verify this equals $\int_q^1\zeta([0,t])\dd t$, write
		\begin{equation*}
			1-\zeta\bigl([0,q)\bigr)\,q-\int_{[q,1]}t\,\zeta(\dd t)
			=1-\int_{[0,q)}q\,\zeta(\dd t)-\int_{[q,1]}t\,\zeta(\dd t).
		\end{equation*}
		More directly, note that $\int_{[0,1]}1\,\zeta(\dd t)=1$ and therefore
		\[
		1-\zeta([0,q))\,q-\int_{[q,1]}t\,\zeta(\dd t)
		=\int_{[0,q)}(1-q)\,\zeta(\dd t)+\int_{[q,1]}(1-t)\,\zeta(\dd t).
		\]
		Using the layer-cake identity
		$\int_A(1-t)\,\zeta(\dd t)=\int_A\int_t^1\dd s\,\zeta(\dd t)$
		and Fubini's theorem,
		\begin{align*}
			\int_{[0,q)}(1-q)\,\zeta(\dd t)+\int_{[q,1]}(1-t)\,\zeta(\dd t)
			&=\int_{[0,q)}\int_q^1\dd s\,\zeta(\dd t)+\int_{[q,1]}\int_t^1\dd s\,\zeta(\dd t)\\
			&=\int_q^1\zeta([0,q))\dd s+\int_q^1\zeta([q,s])\dd s\\
			&=\int_q^1\bigl(\zeta([0,q))+\zeta([q,s])\bigr)\dd s\\
			&=\int_q^1\zeta([0,s])\dd s,
		\end{align*}
		as claimed.
	\end{proof}

	\begin{lemma}\label{lemma:law Xt s}
		Let $\zeta\in\mc P([0,1])$ and let $(X_t)_{t\in[0,1]}$ solve the Auffinger--Chen SDE
		\[
		\dd X_t=\xi''(t)\zeta([0,t])\,\partial_x \Phi_\zeta(t,X_t)\,\dd t+\sqrt{\xi''(t)}\,\dd B_t,
		\qquad X_0=0.
		\]
		Fix $0\le s<t\le 1$ and assume that
		\[
		\zeta((s,t))=0,
		\]
		so that $\zeta([0,u])=\zeta([0,s])$ for every $u\in[s,t)$. Set $\alpha:=\zeta([0,s])$.
		Then, conditionally on $X_s=x$, the law of $X_t$ is absolutely continuous with respect to Lebesgue measure and satisfies
		\[
		\mathrm{Law}(X_t\mid X_s=x)(\dd y)\ \propto\
		\exp\!\left(-\frac{(y-x)^2}{2(\xi'(t)-\xi'(s))}+\alpha\,\Phi_\zeta(t,y)\right)\dd y.
		\]
		Equivalently, writing $p^{BM}_{s,t}(x,y)$ for the transition density of the inhomogeneous Brownian motion
		$\dd Y_u=\sqrt{\xi''(u)}\,\dd W_u$ (so that $\Var(Y_t-Y_s)=\xi'(t)-\xi'(s)$), one has
		\[
		p_{s,t}(x,y)=\frac{e^{\alpha \Phi_\zeta(t,y)}}{e^{\alpha \Phi_\zeta(s,x)}}\,p^{BM}_{s,t}(x,y),
		\qquad
		p^{BM}_{s,t}(x,y)=\frac{1}{\sqrt{2\pi(\xi'(t)-\xi'(s))}}
		\exp\!\left(-\frac{(y-x)^2}{2(\xi'(t)-\xi'(s))}\right).
		\]
	\end{lemma}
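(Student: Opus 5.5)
The plan is to identify the transition density of $X$ on $[s,t]$ by a Doob $h$-transform (Girsanov) argument. On $[s,t)$ the drift coefficient $\xi''(u)\zeta([0,u])=\alpha\,\xi''(u)$ is constant up to the factor $\xi''(u)$, so the SDE reads
\[
\dd X_u=\alpha\,\xi''(u)\,\partial_x\Phi_\zeta(u,X_u)\,\dd u+\sqrt{\xi''(u)}\,\dd B_u .
\]
This is exactly the diffusion obtained from the time-inhomogeneous Brownian motion $\dd Y_u=\sqrt{\xi''(u)}\,\dd W_u$ by the $h$-transform with $h(u,y):=e^{\alpha\Phi_\zeta(u,y)}$. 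The first step is to verify that $h$ is space-time harmonic for $Y$ on $[s,t]$:
\[
\partial_u h+\tfrac{\xi''(u)}{2}\partial_{yy}h=0 .
\]
Computing directly, $\partial_u h+\tfrac{\xi''}{2}\partial_{yy}h=\alpha h\bigl(\partial_u\Phi_\zeta+\tfrac{\xi''}{2}(\partial_{yy}\Phi_\zeta+\alpha(\partial_y\Phi_\zeta)^2)\bigr)$. The bracket vanishes by the Parisi PDE \eqref{eq:ParisiPDE}, since $\zeta([0,u])=\alpha$ for $u\in[s,t)$. This is the Hopf--Cole linearization. If $\alpha=0$ the claim is trivial: the drift vanishes and $X$ is just $Y$.

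Next, $M_u:=h(u,Y_u)/h(s,Y_s)$ is therefore a positive local martingale under the law of $Y$ started at $x$ at time $s$. By Itô,
\[
\dd M_u=M_u\,\alpha\,\partial_y\Phi_\zeta(u,Y_u)\sqrt{\xi''(u)}\,\dd W_u .
\]
Because $|\partial_x\Phi_\zeta|\le 1$ (it is a conditional expectation of $\tanh$; this also follows from the martingale property in Lemma~\ref{lemma:martingales}), Novikov's condition holds trivially, so $M$ is a true martingale on $[s,t]$. Girsanov's theorem then shows that under $\dd\mathbb Q=M_t\,\dd\mathbb P$ the process $Y$ solves the SDE above with drift $\alpha\xi''\partial_y\Phi_\zeta$. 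By strong uniqueness of that SDE (the drift is Lipschitz in $x$, since $\partial_{xx}\Phi_\zeta$ is bounded), its law coincides with $\Law(X\mid X_s=x)$ on $[s,t]$.

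Consequently, for bounded measurable $g$,
\[
\E[g(X_t)\mid X_s=x]=\E_x\bigl[g(Y_t)\,e^{\alpha\Phi_\zeta(t,Y_t)-\alpha\Phi_\zeta(s,x)}\bigr],
\]
and since $Y_t-Y_s\sim\mathcal N(0,\xi'(t)-\xi'(s))$ this gives exactly
\[
p_{s,t}(x,y)=e^{\alpha\Phi_\zeta(t,y)-\alpha\Phi_\zeta(s,x)}\,p^{BM}_{s,t}(x,y).
\]
The proportionality statement follows at once. The normalization is consistent: harmonicity gives $e^{\alpha\Phi_\zeta(s,x)}=\E_x[e^{\alpha\Phi_\zeta(t,Y_t)}]$, which is the known recursive formula for $\Phi_\zeta$ on a jump-free interval.

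The only delicate point is the endpoint $t$, where $\zeta$ may have an atom at $t$, so that $\zeta([0,t])\neq\alpha$. This does not matter. The drift at the single time $t$ has zero $\dd u$-measure, $\Phi_\zeta(t,\cdot)$ is continuous in time, and the Parisi PDE holds on $[s,t)$. So I would apply the argument on $[s,t-\eta]$ and let $\eta\downarrow0$ using continuity of $\Phi_\zeta$ and of the paths. This also covers possibly nonsmooth time dependence at atoms of $\zeta$ inside $[s,t)$, but none exist, since $\zeta((s,t))=0$; an atom at $s$ is fine because $\zeta([0,s])=\alpha$ already includes it. I expect this endpoint/regularity bookkeeping to be the main technical obstacle, and everything else is a routine computation.
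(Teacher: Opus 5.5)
Your proposal is correct and follows the paper's proof essentially step for step: $e^{\alpha\Phi_\zeta}$ solves the backward heat equation on the plateau $[s,t)$ (Hopf--Cole), so it defines a Doob $h$-transform, via Girsanov, of the inhomogeneous Brownian motion whose drift is exactly the Auffinger--Chen drift, and the transition density is read off from the Radon--Nikodym factor. You also make explicit the points the paper leaves implicit, namely Novikov via $|\partial_x\Phi_\zeta|\le 1$, uniqueness in law via the Lipschitz drift, and the harmless possible atom of $\zeta$ at the endpoint $t$.
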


	\begin{proof}
		Set $\alpha:=\zeta([0,s])$ and define the Hopf--Cole transform
		\[
		H(u,y):=\exp\bigl(\alpha\,\Phi_\zeta(u,y)\bigr).
		\]
		Since $\zeta((s,t))=0$, we have $\zeta([0,u])=\alpha$ for all $u\in[s,t)$. Multiplying the Parisi PDE
		by $\alpha$ and using the identities
		\[
		\partial_u H=\alpha H\,\partial_u\Phi_\zeta,
		\qquad
		\partial_{yy}H=\alpha H\,\partial_{yy}\Phi_\zeta+\alpha^2 H\,(\partial_y\Phi_\zeta)^2,
		\]
		one checks that $H$ solves the backward heat equation on $[s,t]$:
		\begin{equation}\label{eq:H-backward-heat}
			\partial_u H(u,y)+\frac12\,\xi''(u)\,\partial_{yy}H(u,y)=0,
			\qquad u\in[s,t).
		\end{equation}
		
		Let $(Y_u)_{u\in[s,t]}$ be the inhomogeneous Brownian motion started at $Y_s=x$,
		\[
		\dd Y_u=\sqrt{\xi''(u)}\,\dd W_u,\qquad Y_s=x,
		\]
		defined on a filtered probability space with law denoted $P^{BM}_{s,x}$. By It\^o's formula and
		\eqref{eq:H-backward-heat}, the process $(H(u,Y_u))_{u\in[s,t]}$ is a positive $P^{BM}_{s,x}$--martingale. 
		Define a new probability measure $Q_{s,x}$ on $\mc F_t$ by
		\begin{equation}\label{eq:Doob-h-transform}
			\frac{\dd Q_{s,x}}{\dd P^{BM}_{s,x}}\Big|_{\mc F_t}=\frac{H(t,Y_t)}{H(s,x)}.
		\end{equation}
		Standard Doob $h$--transform (equivalently, Girsanov's theorem applied to \eqref{eq:Doob-h-transform})
		implies that under $Q_{s,x}$, the process $(Y_u)_{u\in[s,t]}$ satisfies
		\[
		\dd Y_u=\xi''(u)\,\partial_y\log H(u,Y_u)\,\dd u+\sqrt{\xi''(u)}\,\dd \widetilde W_u
		=\alpha\,\xi''(u)\,\partial_y\Phi_\zeta(u,Y_u)\,\dd u+\sqrt{\xi''(u)}\,\dd \widetilde W_u,
		\]
		where $(\widetilde W_u)$ is a $Q_{s,x}$--Brownian motion. Since $\zeta([0,u])=\alpha$ for $u\in[s,t]$,
		this is precisely the Auffinger--Chen dynamics on the plateau $[s,t]$. Consequently, the conditional law of
		$(X_u)_{u\in[s,t]}$ given $X_s=x$ coincides with the law of $(Y_u)_{u\in[s,t]}$ under $Q_{s,x}$.
		
		In particular, for any bounded measurable $f$,
		\[
		\E\bigl[f(X_t)\mid X_s=x\bigr]
		=\E_{Q_{s,x}}\bigl[f(Y_t)\bigr]
		=\E_{P^{BM}_{s,x}}\!\left[\frac{H(t,Y_t)}{H(s,x)}\,f(Y_t)\right]
		=\frac{1}{H(s,x)}\int_{\mathbb R} H(t,y)\,f(y)\,p^{BM}_{s,t}(x,y)\,\dd y.
		\]
		Therefore the conditional transition density satisfies
		\[
		p_{s,t}(x,y)=\frac{H(t,y)}{H(s,x)}\,p^{BM}_{s,t}(x,y)
		=\frac{e^{\alpha\Phi_\zeta(t,y)}}{e^{\alpha\Phi_\zeta(s,x)}}\,p^{BM}_{s,t}(x,y),
		\]
		and substituting the explicit Gaussian form of $p^{BM}_{s,t}$ yields the stated proportionality formula.
	\end{proof}

	\begin{lemma}\label{lemma:expectations}
		Let $(X_t)$ solve the Auffinger--Chen SDE with $X_0=0$ and set $M_t:=\partial_x\Phi_\zeta(t,X_t)$.
		Let $0\le s<t\le 1$.
		\begin{enumerate}[label=(\roman*)]
			\item\label{item:exp-deltaXM}
			\begin{equation}\label{eq:deltaX M}
				\dE[(X_t-X_s)M_s]
				=\dE[M_s^2]\int_s^t\zeta([0,u])\,\xi''(u)\,\dd u.
			\end{equation}
			
			\item\label{item:exp-deltaMX}
			\begin{equation}\label{eq:deltaM X}
				\dE[(M_t-M_s)\,X_s]=0.
			\end{equation}
			
			\item\label{item:exp-XtMt}
			\begin{equation}\label{eq:XtMt-master}
				\dE[X_t M_t]
				=\xi'(t)-\int_{[0,1]}\xi'\bigl(\min(t,u)\bigr)\,\dE[M_u^2]\;\zeta(\dd u).
			\end{equation}
			
			Consequently, for all $s,t\in [0,1]$,
			\begin{equation}\label{eq:XsMt-general}
				\dE[X_s\, M_t]
				=\begin{cases}
					\dE[X_s M_s], & s\le t,\\[4pt]
					\dE[X_t M_t]+\dE[M_t^2]\displaystyle\int_t^s\zeta([0,u])\,\xi''(u)\,\dd u, & s\ge t.
				\end{cases}
			\end{equation}
			
			\item\label{item:exp-product} Suppose that $\dE[\partial_x\Phi_\zeta(t,X_t)^2]=t$  for $\zeta$-a.e. $t\in [0,1]$. Suppose $\zeta((s,t))=0$ and $s,t\in\supp(\zeta)$. Then
			\begin{equation}\label{eq:deltaM deltaX}
				\dE[(M_t-M_s)(X_t-X_s)]
				=\bigl(\xi'(t)-\xi'(s)\bigr)\int_s^1\zeta([0,u])\,\dd u.
			\end{equation}
		\end{enumerate}
	\end{lemma}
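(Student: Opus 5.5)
Throughout, write $M_t:=\partial_x\Phi_\zeta(t,X_t)$, $\mathcal F_t$ for the Brownian filtration, and $D_t:=\xi''(t)\zeta([0,t])M_t\,\dd t$ for the drift of $X$. The whole lemma rests on two facts from Lemma~\ref{lemma:martingales}. First, $\dd M_t=\sqrt{\xi''(t)}\,\partial_{xx}\Phi_\zeta(t,X_t)\,\dd B_t$, so $M$ is a martingale and $\dd\E[M_t^2]=\xi''(t)\E[(\partial_{xx}\Phi_\zeta)^2]\dd t$. Second, $\dd X_t=D_t+\sqrt{\xi''}\,\dd B_t$. Each item then reduces to Itô calculus together with Stieltjes integration by parts.

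\textbf{Item (i).} Write $X_t-X_s=\int_s^t D_u+\int_s^t\sqrt{\xi''(u)}\,\dd B_u$. The stochastic integral has zero conditional mean given $\mathcal F_s$, so it is orthogonal to $M_s$. For the drift term we need $\E[M_uM_s]$, and the martingale property gives $\E[M_uM_s]=\E[M_s^2]$ for $u\ge s$. This yields \eqref{eq:deltaX M}.

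\textbf{Item (ii).} Since $X_s$ is $\mathcal F_s$-measurable and $\E[M_t-M_s\mid\mathcal F_s]=0$, the claim is immediate.

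\textbf{Item (iii).} Apply Itô's product rule:
\[
\dd(X_tM_t)=X_t\,\dd M_t+M_t\,\dd X_t+\dd[X,M]_t .
\]
Taking expectations,
\[
\frac{\dd}{\dd t}\E[X_tM_t]=\xi''(t)\zeta([0,t])\E[M_t^2]+\xi''(t)\E[\partial_{xx}\Phi_\zeta(t,X_t)].
\]
To handle $\E[\partial_{xx}\Phi_\zeta(t,X_t)]$, use Lemma~\ref{lemma:xxphi} at $q=t$ with $X_0=0$. Since the law of $X_t$ is the marginal of the Markov process, the formula gives
\[
\E[\partial_{xx}\Phi_\zeta(t,X_t)]=1-\zeta([0,t))\E[M_t^2]-\int_{[t,1]}\E[M_u^2]\,\zeta(\dd u).
\]
Substituting, the $\zeta([0,t))$ term cancels against the drift contribution up to the atom at $t$, and the derivative equals
\[
\xi''(t)\Big(1-\int_{(t,1]}\E[M_u^2]\,\zeta(\dd u)\Big)
\]
for a.e.\ $t$. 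Integrate from $0$ (where $X_0=0$) and apply Fubini. This turns $\int_0^t\xi''(r)\int_{(r,1]}(\cdot)\,\zeta(\dd u)\,\dd r$ into $\int\xi'(\min(t,u))\E[M_u^2]\,\zeta(\dd u)$, giving \eqref{eq:XtMt-master}; the atom at $u=0$ contributes $\xi'(0)=0$.

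\textbf{Consequence \eqref{eq:XsMt-general}.} For $s\le t$, this is exactly (ii). For $s\ge t$, write $X_s=X_t+(X_s-X_t)$ and apply (i) with the roles of $s,t$ swapped.

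\textbf{Item (iv).} Expand the product:
\[
\E[(M_t-M_s)(X_t-X_s)]=\E[M_tX_t]-\E[M_tX_s]-\E[M_sX_t]+\E[M_sX_s].
\]
By (ii), $\E[M_tX_s]=\E[M_sX_s]$. By (i),
\[
\E[M_sX_t]=\E[M_sX_s]+\E[M_s^2]\int_s^t\zeta([0,u])\xi''(u)\,\dd u .
\]
Hence the quantity equals
\[
\E[X_tM_t]-\E[X_sM_s]-s\,\zeta([0,s])\big(\xi'(t)-\xi'(s)\big),
\]
where we used $\E[M_s^2]=s$ and $\zeta([0,u])=\zeta([0,s])$ on $[s,t)$. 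Now insert (iii) with $\E[M_u^2]=u$ for $\zeta$-a.e.\ $u$, including the atoms $s,t\in\supp\zeta$. The difference $\E[X_tM_t]-\E[X_sM_s]$ becomes
\[
(\xi'(t)-\xi'(s))\Big(1-\int_{(s,1]}u\,\zeta(\dd u)\Big),
\]
because $\xi'(\min(t,u))-\xi'(\min(s,u))$ vanishes for $u\le s$, vanishes on the gap $(s,t)$, and equals $\xi'(t)-\xi'(s)$ for $u\ge t$. Finally, the layer-cake computation at the end of Lemma~\ref{lemma:xxphi} shows
\[
1-s\,\zeta([0,s])-\int_{(s,1]}u\,\zeta(\dd u)=\int_s^1\zeta([0,u])\,\dd u,
\]
which gives \eqref{eq:deltaM deltaX}.

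\textbf{Main obstacle.} The delicate step is the bookkeeping of atoms in (iii) and (iv): deciding whether $\zeta([0,t))$ or $\zeta([0,t])$ appears, and making sure atoms at $s$ and $t$ are neither lost nor double counted. The case analysis of $\min(t,u)$ versus $\min(s,u)$ in (iv) is where this has to be checked carefully. Justifying that the stochastic integrals are true martingales is routine, since $|M|\le1$ and $\partial_{xx}\Phi_\zeta$ is bounded.
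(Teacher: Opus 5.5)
Your proof is correct and follows the paper's argument essentially step for step. You use conditioning and the martingale property for (i)--(ii), the It\^o product rule with Lemma~\ref{lemma:xxphi} and Fubini for (iii), and the expansion via (i)--(ii) for (iv). Your evaluation of $\E[X_tM_t]-\E[X_sM_s]$ directly from \eqref{eq:XtMt-master} plus the layer-cake identity at $s$ is only a cosmetic variant of the paper's use of Lemma~\ref{lemma:xxphi} at $t$.

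One point needs tightening: $s\in\supp(\zeta)$ need not be an atom, so $\E[M_s^2]=s$ does not follow from the $\zeta$-a.e.\ hypothesis alone. It follows from continuity of $r\mapsto\E[M_r^2]$, which you already recorded from Lemma~\ref{lemma:martingales}, together with the fact that a set of full $\zeta$-measure is dense in $\supp(\zeta)$; the paper makes this continuity argument explicitly.
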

	
	\begin{proof}
		Throughout, $(M_t)$ is a martingale by Lemma~\ref{lemma:martingales}\ref{item:mart-mg}.
		
		\medskip\noindent\textit{Proof of~\ref{item:exp-deltaXM}.}
		From the SDE, $X_t-X_s=\int_s^t\zeta([0,u])\,\xi''(u)\,M_u\,\dd u+\int_s^t\sqrt{\xi''(u)}\,\dd B_u$.
		Multiplying by $M_s$ and taking expectations, the stochastic integral vanishes
		(being independent of~$\mc F_s$), and the martingale property
		$\dE[M_u\mid\mc F_s]=M_s$ for $u\ge s$ gives
		\[
		\dE[(X_t-X_s)M_s]
		=\int_s^t\zeta([0,u])\,\xi''(u)\,\dE[M_s\,M_u]\,\dd u
		=\dE[M_s^2]\int_s^t\zeta([0,u])\,\xi''(u)\,\dd u.
		\]
		
		\medskip\noindent\textit{Proof of~\ref{item:exp-deltaMX}.}
		Since $X_s$ is $\mc F_s$-measurable and $\dE[M_t-M_s\mid\mc F_s]=0$, this is immediate.
		
		\medskip\noindent\textit{Proof of~\ref{item:exp-XtMt}.}
		By It\^o's product rule,
		\[
		\dd(X_t M_t)
		=M_t\,\dd X_t+X_t\,\dd M_t+\dd[X,M]_t.
		\]
		Using $\dd M_t=\sqrt{\xi''(t)}\,\partial_{xx}\Phi_\zeta(t,X_t)\,\dd B_t$
		and $\dd[X,M]_t=\xi''(t)\,\partial_{xx}\Phi_\zeta(t,X_t)\,\dd t$,
		the drift of $\dd(X_t M_t)$ is
		\[
		\xi''(t)\Bigl(\zeta([0,t])\,M_t^2+\partial_{xx}\Phi_\zeta(t,X_t)\Bigr)\dd t.
		\]
		Taking expectations and integrating from $0$ to $t$ (with $X_0=0$):
		\begin{equation}\label{eq:XtMt-ODE}
			\dE[X_t M_t]
			=\int_0^t\xi''(s)\Bigl(\zeta([0,s])\,\dE[M_s^2]
			+\dE[\partial_{xx}\Phi_\zeta(s,X_s)]\Bigr)\dd s.
		\end{equation}
		By Lemma~\ref{lemma:xxphi},
		\[
		\dE[\partial_{xx}\Phi_\zeta(s,X_s)]
		=1-\zeta\bigl([0,s)\bigr)\,\dE[M_s^2]-\int_{[s,1]}\dE[M_u^2]\,\zeta(\dd u).
		\]
		Since $\zeta([0,s])-\zeta([0,s))=\zeta(\{s\})=0$ for Lebesgue-a.e.\ $s$,
		the integrand in \eqref{eq:XtMt-ODE} simplifies $\dd s$-a.e.\ to
		\[
		\xi''(s)\Bigl(1-\int_{s}^1\dE[M_u^2]\,\zeta(\dd u)\Bigr).
		\]
		Hence
		\[
		\dE[X_t M_t]
		=\xi'(t)-\int_0^t\xi''(s)\int_{s}^1\dE[M_u^2]\,\zeta(\dd u)\,\dd s.
		\]
		Exchanging the order of integration, the inner Lebesgue integral
		over $s\in[0,\min(t,u)]$ evaluates to $\xi'(\min(t,u))$, yielding
		\eqref{eq:XtMt-master}.
		
		The formula \eqref{eq:XsMt-general} follows: the case $s\le t$ is
		\ref{item:exp-deltaMX}; the case $s\ge t$ combines $\dE[X_s M_t]=\dE[X_t M_t]+\dE[(X_s-X_t)M_t]$
		with~\ref{item:exp-deltaXM}.
		
		\medskip\noindent\textit{Proof of~\ref{item:exp-product}.}
		By~\ref{item:exp-deltaXM} and~\ref{item:exp-deltaMX},
		\begin{multline}
			\dE[(M_t-M_s)(X_t-X_s)]
			=\dE[M_t(X_t-X_s)]-\dE[M_s(X_t-X_s)]
			\\=\bigl(\dE[X_t M_t]-\dE[X_s M_s]\bigr)-\dE[M_s^2]\int_s^t\zeta([0,u])\,\xi''(u)\,\dd u,
		\end{multline}
		where we used $\dE[M_t X_s]=\dE[M_s X_s]$ from~\ref{item:exp-deltaMX}.
		Since $\zeta((s,t))=0$, we have $\zeta([0,u])=\zeta([0,s])=:m$ for $u\in[s,t)$ and
		$\int_{[r,1]}\dE[M_u^2]\,\zeta(\dd u)$ is constant in $r\in(s,t)$.
		From the proof of~\ref{item:exp-XtMt}, the difference reduces to
		\[
		\dE[X_t M_t]-\dE[X_s M_s]
		=\bigl(\xi'(t)-\xi'(s)\bigr)
		\Bigl(1-\int_{[t,1]}\dE[M_u^2]\,\zeta(\dd u)\Bigr).
		\]
		Subtracting $\dE[M_s^2]\cdot m\bigl(\xi'(t)-\xi'(s)\bigr)$ and
		using $s,t\in\supp(\zeta)$ so that $\dE[M_s^2]=s$ (by assumption and since $s\mapsto \dE[M_s^2]$ is continuous), we get
		\[
		1-\int_{t}^1\dE[M_u^2]\,\zeta(\dd u)
		=\dE[\partial_{xx}\Phi_\zeta(t,X_t)]+\zeta([0,t))\,t
		=\int_t^1\zeta([0,u])\,\dd u+mt.
		\]
		Substituting, we obtain
		\[
		\dE[(M_t-M_s)(X_t-X_s)]
		=\bigl(\xi'(t)-\xi'(s)\bigr)\Bigl(\int_t^1\zeta([0,u])\,\dd u+mt-sm\Bigr)
		=\bigl(\xi'(t)-\xi'(s)\bigr)\int_s^1\zeta([0,u])\,\dd u,
		\]
		since $\zeta([0,u])=m$ on $[s,t)$.
	\end{proof}

	\begin{lemma}[Parisi flow preserves convexity]\label{lemma:convexity}
		Let $g:\dR\to\dR$ be $C^2$ with $g''\ge \kappa$ for some $\kappa\in\dR$, and let
		$\Phi_\zeta$ be the solution of the Parisi PDE run backward on $[t,1]$ with terminal
		condition $\Phi_\zeta(1,\cdot)=g$, where $\xi''\ge 0$ and $\zeta\ge 0$.
		Then for every $s\in[t,1]$,
		\[
		\inf_{x\in\dR}\,\partial_{xx}\Phi_\zeta(s,x)
		\;\ge\;
		\inf_{x\in\dR}\,g''(x)
		\;\ge\;
		\kappa.
		\]
		In particular, $x\mapsto\Phi_\zeta(s,x)$ is everywhere at least as convex as $g$.
	\end{lemma}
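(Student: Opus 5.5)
The strategy is a maximum-principle argument for $w(s,x):=\partial_{xx}\Phi_\zeta(s,x)$, carried out along the Auffinger--Chen diffusion, where the nonlinearity only pushes $w$ upward.

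First treat the case where $\zeta$ has finitely many atoms and $g$ is smooth with bounded derivatives of all orders. Then $\Phi_\zeta$ is smooth in $x$ on each interval where $\zeta([0,s])$ is constant, and continuous in $s$. Differentiating the Parisi PDE twice in $x$ (exactly as in the proof of Lemma~\ref{lemma:martingales}\ref{item:mart-second}) gives
\[
\partial_s w+\frac{\xi''(s)}{2}\Bigl(\partial_{xx}w+2\zeta([0,s])\,\partial_x\Phi_\zeta\,\partial_x w\Bigr)=-\xi''(s)\,\zeta([0,s])\,w^2\le 0 .
\]
So $w$ is a supersolution of the linear backward equation with drift $b(s,x)=\xi''(s)\zeta([0,s])\partial_x\Phi_\zeta(s,x)$.

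Next, run the diffusion $\dd X_r=b(r,X_r)\,\dd r+\sqrt{\xi''(r)}\,\dd B_r$ from $X_s=x$. The drift is globally Lipschitz, since $\partial_{xx}\Phi_\zeta$ is bounded by a Gronwall bound in the finite-atom case, so the SDE is well posed. Lemma~\ref{lemma:martingales}\ref{item:mart-second} then shows that $w(r,X_r)$ has nonpositive drift. Taking expectations gives
\[
w(s,x)=\E\bigl[w(1,X_1)\bigr]+\int_s^1 \xi''(r)\,\zeta([0,r])\,\E\bigl[w(r,X_r)^2\bigr]\,\dd r\ \ge\ \E\bigl[g''(X_1)\bigr]\ge \inf g''.
\]
This step requires the stochastic integral $\int\sqrt{\xi''}\,\partial_{xxx}\Phi_\zeta\,\dd B$ to be a true martingale, which follows from uniform bounds on $\partial_{xxx}\Phi_\zeta$. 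Those bounds are obtained, interval by interval, from the Hopf--Cole representation of Lemma~\ref{lemma:law Xt s}: on each plateau $e^{\alpha\Phi}$ is a Gaussian convolution of $e^{\alpha\Phi(\text{right end})}$, and derivatives of the terminal data propagate. When $\zeta([0,r])=0$ the PDE is the heat equation, and the bound is immediate from convolution.

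Finally, pass to general $\zeta$ and general $C^2$ data $g$. Approximate $\zeta$ by atomic measures $\zeta_k$ in the metric~\eqref{def:dist}, and approximate $g$ by mollifications $g_\epsilon$, which satisfy $\inf g_\epsilon''\ge\inf g''$. The Parisi solution is Lipschitz stable in $\zeta$ in sup norm, and $\Phi_{\zeta_k}(s,\cdot)\to\Phi_\zeta(s,\cdot)$ locally uniformly. A uniform lower bound $\partial_{xx}\ge\kappa$ is preserved under this convergence: $x\mapsto\Phi(s,x)-\kappa x^2/2$ is convex, and convexity survives pointwise limits. This yields $\partial_{xx}\Phi_\zeta(s,\cdot)\ge\inf g''$ wherever $\Phi_\zeta$ is $C^2$, and in the distributional sense otherwise.

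The main obstacle is the regularity bookkeeping: justifying the martingale property and the bounded-derivative estimates for general (non-atomic) $\zeta$. I would handle it entirely through the atomic approximation combined with the Hopf--Cole/Gaussian-convolution formulas, so that only semilinear-heat estimates on finitely many intervals are needed.
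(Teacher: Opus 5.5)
Your argument is correct, but its core step differs from the paper's. The paper never differentiates the PDE. For step-function $\zeta$ it works layer by layer with the explicit Cole--Hopf map $\mathcal{T}_{m,\sigma}f(x)=\frac1m\log\E\, e^{mf(x+\sigma Z)}$. It computes $(\mathcal{T}_{m,\sigma}f)''(x)=\E_{Q_x}[f''(x+\sigma Z)]+m\Var_{Q_x}(f'(x+\sigma Z))\ge\kappa$ by differentiating a Gaussian integral twice, and then inducts over the layers. It passes to general $\zeta$ through the same step you use: $\Phi-\kappa x^2/2$ stays convex under locally uniform limits.

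Your Feynman--Kac identity $w(s,x)=\E[g''(X_1)]+\int_s^1\xi''(r)\zeta([0,r])\E[w(r,X_r)^2]\,\dd r$ is the same formula read along the diffusion. On a plateau, $Q_x$ is exactly the conditional law of $X_1$ given $X_s=x$ (Lemma~\ref{lemma:law Xt s}). The variance $\Var_{Q_x}(f')=\int_s^1\xi''(r)\E[w(r,X_r)^2]\,\dd r$ is the quadratic variation of the martingale $\partial_x\Phi_\zeta(r,X_r)$.

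The paper's route is more economical. It needs only $g\in C^2$ plus integrability, with no SDE well-posedness, no bound on $\partial_{xxx}\Phi_\zeta$ and no true-martingale check. Your route applies It\^o's formula to $\partial_{xx}\Phi_\zeta$, which costs two extra derivatives. That is why you need to smooth $g$ and do the regularity bookkeeping you flag. The Hopf--Cole formula you plan to use for that bookkeeping already gives the paper's identity directly.

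One small gap: mollification alone does not put a general $C^2$ $g$ into your base case, since it does not make $g'$ or $g''$ bounded. You would also need a truncation and stability of the solution in the terminal data. This is immaterial for $g=\log(2\cosh x)$, whose derivatives of order $\ge 1$ are all bounded, and the paper's argument likewise relies on implicit growth conditions on $g$.

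In return, your version gives a single comparison argument over the whole interval $[s,1]$, tied directly to Lemma~\ref{lemma:martingales}\ref{item:mart-second}, rather than an induction over layers.
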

	
	\begin{proof}
		\textbf{Step 1: step-function $\zeta$.}
		On a layer $[a,b]$ where $t\mapsto\zeta([0,t])$ is constant equal to $m$, set $\sigma^2:=\int_a^b\xi''(r)\,dr$
		and let $Z\sim \mc{N}(0,1)$. The corresponding backward map is
		\[
		\mathcal{T}_{m,\sigma}(f)(x)
		:=
		\begin{cases}
			\E\,f(x+\sigma Z), & m=0,\\[4pt]
			\dfrac{1}{m}\log\E\exp\!\bigl\{mf(x+\sigma Z)\bigr\}, & m>0.
		\end{cases}
		\]
		Suppose $f''\ge\kappa$. For $m=0$ the claim is immediate:
		$(\mathcal{T}_{0,\sigma}f)''(x)=\E f''(x+\sigma Z)\ge\kappa$.
		For $m>0$, let $Q_x$ denote the probability measure whose density (with respect to the
		law of $Z$) is proportional to $e^{mf(x+\sigma Z)}$. A direct computation gives
		\[
		(\mathcal{T}_{m,\sigma}f)''(x)
		=\E_{Q_x}\!\bigl[f''(x+\sigma Z)\bigr]
		+m\,\mathrm{Var}_{Q_x}\!\bigl(f'(x+\sigma Z)\bigr)
		\;\ge\;\kappa,
		\]
		where the inequality uses $f''\ge\kappa$ for the first term and nonnegativity of
		variance for the second. Hence every layer preserves the lower curvature bound $\kappa$.
		For a step function $\zeta$, the solution satisfies
		$\Phi_\zeta(s,\cdot)=\mathcal{T}_{m_1,\sigma_1}\circ\cdots\circ\mathcal{T}_{m_p,\sigma_p}(g)$
		over the layers covering $[s,1]$, so $\inf_x\,\partial_{xx}\Phi_\zeta(s,x)\ge\kappa$
		follows by induction.
		
		\textbf{Step 2: general $\zeta$.}
		For general $\zeta$, choose probability measures $\zeta_n$ with finitely many atoms such that $\zeta_n\to\zeta$.
		The corresponding solutions $\Phi_{\zeta_n}$ converge to $\Phi_\zeta$ locally uniformly
		(see, e.g., \cite[Proposition~2.3]{jagannath2017some}).
		Since each $\Phi_{\zeta_n}(s,\cdot)-\frac{\kappa}{2}(\cdot)^2$ is convex, so is its locally uniform limit $\Phi_\zeta(s,\cdot)-\frac{\kappa}{2}(\cdot)^2$, which gives $\partial_{xx}\Phi_\zeta(s,\cdot)\ge\kappa$.
	\end{proof}
	
	\subsection{Optimality conditions for the TAP correction}\label{sub:TAP correction}
	
	In this subsection, we study the variational problem $\zeta\mapsto \TAP(\mu,\zeta)$.
	
	Recall that we abuse notation and write $\zeta([0,t])$ for $\zeta([q,t])$ when $\zeta\in \mc{P}([q,1])$ and $t\in [q,1]$.
	
	\begin{lemma}\label{lemma:first order mu}
		Let $q\in [0,1)$, $\bfm\in (-1,1)^N$, and set $\mu:=\frac{1}{N}\sum_{i=1}^N \delta_{m_i}\in\mc{P}((-1,1))$. Let $\zeta\in \mc{P}([q,1])$ and let $(X_t^{\mu})_{t\in [q,1]}$ be the Auffinger--Chen process with law-matching condition
		\begin{equation}\label{eq:AC mu}
			\begin{cases}    \dd X_t^\mu=\xi''(t)\,\zeta([0,t])\,\partial_x \Phi_\zeta(t,X^\mu_t)\,\dd t+\sqrt{\xi''(t)}\, \dd B_t,\\
				\mathrm{Law}\bigl(\partial_x\Phi_\zeta(q,X^\mu_q)\bigr)=\mu.
			\end{cases}
		\end{equation}
		Recall $\TAP(\mu,\zeta)$ from \eqref{def:TAPmuzeta}. The following assertions hold.
		\begin{enumerate}[label=(\roman*)]
			\item\label{item:convexmu} The map $\zeta \mapsto \TAP(\mu,\zeta)$ is strictly convex on $\mc{P}([q,1])$.
			\item\label{item:gateaumu} For every finite signed measure $\eta$ on $[q,1]$ with $\eta([q,1])=0$, the Gâteaux derivative of $\TAP(\mu,\cdot)$ at $\zeta$ in the direction $\eta$ is given by 
			\begin{equation}\label{eq:gateau2}
				D\TAP(\mu,\cdot)(\zeta)[\eta]=\frac{1}{2}\int_q^1 \xi''(s)\Bigl(\dE\bigl[(\partial_x\Phi_\zeta(s,X^\mu_s))^2\bigr]-s \Bigr)\,\eta([0,s])\,\dd s.
			\end{equation}
			\item\label{item:firstordermu} For every $s\in [q,1]$, define 
			\begin{equation}\label{def:Hmu}
				H^\mu_\zeta(s):=\frac{1}{2}\int_s^1 \xi''(r)\Bigl(\dE\bigl[(\partial_x\Phi_\zeta(r,X^\mu_r))^2\bigr]-r\Bigr)\dd r,
			\end{equation}
			and extend it to $[0,1]$ by setting
			\begin{equation}
				\label{def:Hhatmu}    \hat{H}_\zeta^\mu(s):=\begin{cases}
					{H}_\zeta^\mu(q) & \text{if $s\in [0,q)$,}\\
					{H}_\zeta^\mu(s) & \text{if $s\in [q,1]$.}
				\end{cases}
			\end{equation}
			Then $\zeta\in \mc{P}([q,1])$ is the unique minimizer of $\TAP(\mu,\cdot)$ if and only if for every $\zeta'\in \mc{P}([q,1])$,
			\begin{equation*}
				\int_q^1 \hat{H}^\mu_{\zeta}(s)\, \dd \zeta'(s) \geq  \int_q^1 \hat{H}^\mu_{\zeta}(s)\, \dd \zeta(s).
			\end{equation*}
			Equivalently, $\zeta\in \mc{P}([q,1])$ is the unique minimizer of $\TAP(\mu,\cdot)$ if and only if 
			\begin{equation}\label{eq:supp}
				\supp(\zeta)\subset \underset{[0,1]}{\mathrm{arg\,min}}\, \hat{H}_\zeta^\mu.
			\end{equation}
			\item\label{item:optmu} Let $\zeta\in \mc{P}([q,1])$ be the unique minimizer of $\TAP(\mu,\cdot)$. Then, for every $q'\in \supp(\zeta)$,
			\begin{equation}\label{eq:opt3}
				\dE\bigl[(\partial_x\Phi_\zeta(q',X^\mu_{q'}))^2\bigr]=q'.
			\end{equation}
			(For interior points $q'\in (q,1)$ this is a first-order optimality condition; at the boundary $q'=q$ it follows from the law-matching condition~\eqref{eq:AC mu}, since $\dE[(\partial_x\Phi_\zeta(q,X_q^\mu))^2]=\int m^2\,\dd\mu(m)=q$.)
			\item\label{item:2ndordermu} Let $\zeta\in \mc{P}([q,1])$ be the unique minimizer of $\TAP(\mu,\cdot)$. Then, for every $q'\in \supp(\zeta)$,
			\begin{equation}\label{eq:second order}
				\xi''(q')\,\dE\bigl[(\partial_{xx}\Phi_\zeta(q',X^\mu_{q'}))^2\bigr]\leq 1.
			\end{equation}
			\item\label{item:qsupp} Let $\zeta\in\mc{P}([q,1])$ be the unique minimizer of $\TAP(\mu,\cdot)$. Then
			\begin{equation}\label{eq:qinsupport}
				q\in \supp(\zeta).
			\end{equation}
		\end{enumerate}
	\end{lemma}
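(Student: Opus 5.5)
Everything runs through the stochastic-control representation \eqref{eq:control_rep} and the Auffinger--Chen process \eqref{eq:AC mu} started from the law-matching condition at time $q$. With the substitution $x\mapsto\partial_x\Phi_\zeta(q,x)$, $\TAP(\mu,\zeta)$ becomes $-\int \Phi^*_\zeta(q,m)\,\dd\mu(m)$ minus the deterministic term. I will use the dual formula $-\Phi^*_\zeta(q,m)=\inf_x\{\Phi_\zeta(q,x)-mx\}$, realized at $x=\partial_m h_\zeta(q,m)$. The approach is the standard one for the Parisi functional (Auffinger--Chen convexity, Jagannath--Tobasco first-order conditions), adapted to a fixed marginal $\mu$ at the initial time $q$.

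\textbf{(i) Strict convexity.} For each fixed $x$, the map $\zeta\mapsto\Phi_\zeta(q,x)$ is convex, with strict convexity coming from the Auffinger--Chen argument through the control representation. The map $\zeta\mapsto\int_q^1 t\xi''(t)\zeta([0,t])\,\dd t$ is linear. An infimum over $x$ of convex functions is not convex in general, so I will not argue pointwise. Instead I will write $\TAP(\mu,\zeta)=\sup$ over couplings, or, more concretely, use the Lagrangian form
\[
\TAP(\mu,\zeta)=\sup_{\lambda}\Bigl\{\int\!\Phi_\zeta(q,\lambda(m))\,\dd\mu - \int m\lambda(m)\,\dd\mu\Bigr\}
\]
with the sign reversed appropriately. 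I expect this sign bookkeeping to be the main obstacle: $-\Phi^*$ is concave in $\Phi$, so convexity of $\TAP$ in $\zeta$ must come from the structure of the problem. I would first try to follow Chen--Panchenko--Subag, who state uniqueness of the minimizer. Failing that, I would prove strict convexity along segments directly: differentiate twice in $\theta$ along $\zeta_\theta$ and use the envelope theorem for the optimal $x$, together with Auffinger--Chen's second-variation positivity.

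\textbf{(ii) Gâteaux derivative.} By the envelope theorem, the optimal $x=\partial_m h_\zeta(q,m)$ does not need to be differentiated. The derivative therefore equals $\int\partial_\theta\Phi_{\zeta_\theta}(q,x)\,\dd\nu(x)$, where $\nu=\mathrm{Law}(X_q^\mu)$, minus $\tfrac12\int_q^1 t\xi''\eta([0,t])\,\dd t$. Linearizing the Parisi PDE \eqref{eq:ParisiPDE} and applying Feynman--Kac along the optimal dynamics \eqref{eq:AuffingerChen b} gives
\[
\partial_\theta\Phi=\tfrac12\int_q^1\xi''(s)\,\eta([0,s])\,\dE\bigl[(\partial_x\Phi_\zeta(s,X_s))^2\bigr]\,\dd s,
\]
which is \eqref{eq:gateau2}.

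\textbf{(iii) Characterization of the minimizer.} Fubini turns \eqref{eq:gateau2} into $\int H^\mu_\zeta\,\dd\eta$. Convexity from (i) makes the variational inequality $D\TAP[\zeta'-\zeta]\ge0$ for all $\zeta'$ necessary and sufficient, and strictness gives uniqueness. Restricting $\zeta'$ to Dirac masses yields the support condition \eqref{eq:supp}; the extension $\hat H$ is constant on $[0,q)$ and adds nothing.

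\textbf{(iv)--(vi) Consequences at the minimizer.}
\begin{itemize}
\item[(iv)] At an interior support point $q'$, the function $H$ has a minimum with derivative $H'(q')=-\tfrac12\xi''(q')\bigl(\dE[\cdots]-q'\bigr)=0$. At $q'=q$ the identity follows from the law matching.
\item[(v)] $H$ also has nonnegative second derivative at such a minimum. Differentiating via Lemma \ref{lemma:martingales}\ref{item:mart-square} gives $\tfrac{\dd}{\dd s}\dE[M_s^2]=\xi''(s)\,\dE[(\partial_{xx}\Phi)^2]\le 1$. At the boundary point $q$ I will use a one-sided version, perturbing the mass toward $q^+$.
\item[(vi)] If $\min\supp\zeta=q_0>q$, then $\zeta([0,t])=0$ on $[q,q_0)$, so $X$ is a Brownian motion there and $\dE[M_t^2]$ is convex with slope $\xi''(t)\,\dE[(\partial_{xx}\Phi)^2]$. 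Comparing $H(q)$ with $H(q_0)$ using $\dE[M_q^2]=q$ and $\dE[M_{q_0}^2]=q_0$ together with the convexity gives $H(q)\le H(q_0)$, with strict inequality unless the two coincide. This contradicts minimality unless $q\in\supp\zeta$.
\end{itemize}
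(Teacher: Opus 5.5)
Your parts (ii)--(v) follow the paper's proof step for step: the envelope theorem plus Feynman--Kac for the linearized Parisi PDE, then Fubini, then first- and second-order conditions on $H^\mu_\zeta$ at support points, taken one-sided at $q$. For (i), the paper also just invokes \cite[Lemma~42]{chen2018generalized}.

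\textbf{The gap is in (vi).} The paper cites this from \cite[Theorem~12(5)]{chen2018generalized}; you try to prove it directly. Your comparison only gives $H^\mu_\zeta(q)\le H^\mu_\zeta(q_0)$, and that is \emph{not} a contradiction. Part (iii) gives $\supp(\zeta)\subset\mathrm{arg\,min}\,\hat H^\mu_\zeta$, not the reverse inclusion, so $q$ being one more minimizer is perfectly consistent. You need the strict inequality, i.e.\ you must rule out $g(t):=\dE[(\partial_x\Phi_\zeta(t,X^\mu_t))^2]-t\equiv 0$ on $[q,q_0]$. Your phrase ``strict unless the two coincide'' asserts this but does not prove it.

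Here is how to close it. On the plateau $\zeta([0,t])=0$, Lemma~\ref{lemma:martingales}\ref{item:mart-second} makes $\partial_{xx}\Phi_\zeta(t,X^\mu_t)$ a martingale. Hence
$\frac{\dd}{\dd t}\bigl(\xi''(t)\,\dE[(\partial_{xx}\Phi_\zeta(t,X^\mu_t))^2]\bigr)=\xi^{(3)}(t)\,\dE[(\partial_{xx}\Phi_\zeta)^2]+\xi''(t)^2\,\dE[(\partial_{xxx}\Phi_\zeta)^2]\ge 0$, using $\xi^{(3)}\ge0$ (true for nonnegative coefficients).
\begin{itemize}
\item This identity is also what justifies your unproved claim that $t\mapsto\dE[M_t^2]$ is convex on the plateau.
\item If $g\equiv0$, then $\xi''(t)\,\dE[(\partial_{xx}\Phi_\zeta)^2]=g'(t)+1\equiv1$, so the derivative above vanishes.
\item That forces $\partial_{xxx}\Phi_\zeta(t,\cdot)\equiv0$ for $t\in(q,q_0)$, since $X^\mu_t$ has full support there.
\item So $\partial_{xx}\Phi_\zeta(t,\cdot)$ is constant, hence $0$ because $\partial_x\Phi_\zeta(t,\cdot)$ is bounded. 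This contradicts $\xi''(t)\,\dE[(\partial_{xx}\Phi_\zeta)^2]=1$.
\end{itemize}
You also need $q_0<1$ in order to apply (iv) at $q_0$. This holds because $(H^\mu_\zeta)'(1)=\tfrac12\xi''(1)\bigl(1-\dE[\tanh^2(X^\mu_1)]\bigr)>0$, so $1$ never minimizes $H^\mu_\zeta$.

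\textbf{On (i).} Citing is fine, but neither sketched alternative would work as written.
\begin{itemize}
\item Your displayed ``Lagrangian'' should be an infimum: $-\int\Phi^*_\zeta(q,m)\,\dd\mu(m)=\inf_\lambda\int\bigl(\Phi_\zeta(q,\lambda(m))-m\lambda(m)\bigr)\dd\mu(m)$.
\item Along a segment, the second derivative of $\inf_x F(\theta,x)$ is $\partial^2_\theta F-(\partial_{\theta x}F)^2/\partial_{xx}F$. So Auffinger--Chen positivity of $\partial_\theta^2\Phi$ is not enough.
\end{itemize}
What makes convexity work is \emph{joint} convexity of $(x,\zeta)\mapsto\Phi_\zeta(q,x)$. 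In the open-loop control representation, fix a control $\alpha$. The endpoint $x+\int_q^1\xi''(s)\zeta([0,s])\alpha_s\,\dd s+\int_q^1\sqrt{\xi''(s)}\,\dd B_s$ is then affine in $(x,\zeta)$, and the running cost is linear in $\zeta$. So the supremum over controls is jointly convex, and its partial infimum over $x$ is convex in $\zeta$. Strictness still requires additional work.
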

	
	\begin{remark}\label{remark:delta0}
		Notice the crucial property that 
		\begin{equation*}
			\TAP(\delta_0,\zeta)=\Phi_\zeta(0,0)-\frac{1}{2}\int_0^1 t\xi''(t)\zeta([0,t])\dd t=\Pari(\zeta).
		\end{equation*}
		Therefore, the minimizer of the right-hand side of the last display satisfies \eqref{eq:opt3}, \eqref{eq:second order} and \eqref{eq:qinsupport}.
	\end{remark}

	\begin{proof}
		Assertion~\ref{item:convexmu} follows from \cite[Lemma~42]{chen2018generalized}.
		
		\medskip
		\textit{Proof of~\ref{item:gateaumu}.} Set $m(s)=\zeta([0,s])$. Fix a finite signed measure $\eta$ on $[q,1]$ with $\eta([q,1])=0$ and set $h(s):=\eta([0,s])$, $\zeta_\varepsilon:=\zeta+\varepsilon\eta$, $m_\varepsilon:=m+\varepsilon h$, $\Phi_\varepsilon:=\Phi_{\zeta_\varepsilon}$, $u_\varepsilon:=\partial_x\Phi_\varepsilon$, and $u:=\partial_x\Phi_\zeta$. Define the difference quotient
		\[
		\Delta_\varepsilon:=\frac{\Phi_\varepsilon-\Phi_0}{\varepsilon},\qquad \Delta_\varepsilon(1,\cdot)=0,
		\]
		where $\Phi_0:=\Phi_\zeta$. One can check that $\Delta_\varepsilon$ converges locally uniformly to the solution $\Psi$ of
		\[
		\partial_s \Psi
		= -\tfrac12\xi''(s)\Big(\partial_{xx}\Psi+2m(s)\,u(s,\cdot)\,\partial_x\Psi+h(s)\,u(s,\cdot)^2\Big),
		\qquad \Psi(1,\cdot)=0.
		\]
		By the Feynman--Kac representation theorem applied to the operator
		\[
		\mathcal L_s f=\tfrac12\xi''(s)\bigl(\partial_{xx}f+2m(s)\,u(s,\cdot)\,\partial_x f\bigr),
		\]
		for every $s\in [q,1]$ and $x\in \dR$,
		\begin{equation}\label{eq:Psisx}
			\Psi(s,x)=\frac{1}{2} \int_s^1 \xi''(r)\,h(r)\,\dE_{s,x}\bigl[(\partial_x\Phi_\zeta(r,X_r))^2\bigr]\,\dd r,
		\end{equation}
		where $\dE_{s,x}$ denotes the expectation for the diffusion with generator $\mc{L}_s$ started at position $x$ at time $s$.
		
		We now differentiate the Legendre transform term. Recall that 
		\[
		\Phi_{\zeta_\varepsilon}^*(q,y)=\sup_x\bigl\{xy-\Phi_{\zeta_\varepsilon}(q,x)\bigr\}.
		\]
		By the envelope theorem (the supremum being attained at a unique point by strict convexity of $\Phi_\zeta(q,\cdot)$), the derivative with respect to $\varepsilon$ at $\varepsilon=0$ is computed by differentiating only the integrand at the optimal $x$. For every $m'\in (-1,1)$, the supremum defining $\Phi_\zeta^*(q,m')$ is attained at $x(m'):=(\partial_x\Phi_\zeta(q,\cdot))^{-1}(m')$, hence
		\begin{equation*}
			\frac{\dd}{\dd \varepsilon}\bigg|_{\varepsilon=0} \Phi_{\zeta_\varepsilon}^*(q,m')=- \frac{\dd}{\dd \varepsilon}\bigg|_{\varepsilon=0}\Phi_{\zeta_\varepsilon}(q,x(m'))=-\Psi(q,x(m')).
		\end{equation*}
		Averaging against $\mu$ gives
		\begin{equation}\label{eq:xY}
			\frac{\dd}{\dd \varepsilon}\bigg|_{\varepsilon=0}\int \Phi_{\zeta_\varepsilon}^*(q,m')\,\dd \mu(m')=- \frac{1}{N}\sum_{i=1}^N\Psi(q,x(m_i)).
		\end{equation}
		By the construction of the Auffinger--Chen process~\eqref{eq:AC mu}, the law of $X^\mu_q$ is chosen so that $\partial_x\Phi_\zeta(q,X_q^\mu)\sim \mu$, which implies $X_q^\mu\sim (\partial_x\Phi_\zeta(q,\cdot))^{-1}\#\mu$. Therefore,
		\begin{equation*}
			\frac{1}{N}\sum_{i=1}^N\Psi(q,x(m_i))=\dE[\Psi(q,X^\mu_q)].  
		\end{equation*}
		Inserting~\eqref{eq:Psisx} (with $s=q$) into~\eqref{eq:xY}, we obtain
		\begin{equation*}
			\frac{\dd}{\dd \varepsilon}\bigg|_{\varepsilon=0}\int \Phi_{\zeta_\varepsilon}^*(q,m')\,\dd \mu(m')=-\frac{1}{2}\int_q^1 \xi''(r)\,h(r)\,\dE\bigl[(\partial_{x}\Phi_\zeta(r,X^\mu_r))^2\bigr]\,\dd r.
		\end{equation*}
		Adding the derivative of the linear term $-\frac{1}{2}\int_q^1 s\,\xi''(s)\,h(s)\,\dd s$ yields~\eqref{eq:gateau2}.
		
		\medskip
		\textit{Proof of~\ref{item:firstordermu}.}
		By Fubini's theorem, one can rewrite~\eqref{eq:gateau2} as
		\begin{equation*}
			D\TAP(\mu,\cdot)(\zeta)[\eta]=\int_{[q,1]}H_\zeta^\mu(s)\,\dd \eta(s)+H^\mu_\zeta(q)\,\eta([0,q)).
		\end{equation*}
		By the strict convexity established in~\ref{item:convexmu}, the measure $\zeta$ minimizes $\TAP(\mu,\cdot)$ over $\mc{P}([q,1])$ if and only if $D\TAP(\mu,\cdot)(\zeta)[\zeta'-\zeta]\geq 0$ for every $\zeta'\in \mc{P}([q,1])$, that is,
		\begin{equation*}
			\int_{[q,1]}  H^\mu_{\zeta}(s)\, \dd \zeta'(s)+H_\zeta^\mu(q)\,\zeta'([0,q)) \geq \int_{[q,1]}  H^\mu_{\zeta}(s)\, \dd \zeta(s)+H_\zeta^\mu(q)\,\zeta([0,q)).
		\end{equation*}
		Recalling that $\zeta,\zeta'\in \mc{P}([q,1])$ (so both may charge $[0,q)$ through their definition on the full interval, but here $\zeta'([0,q))=0$ and $\zeta([0,q))=0$), the terms involving $\eta([0,q))$ vanish. More precisely, introducing
		\begin{equation*}
			\hat{H}_\zeta^\mu(s):=\begin{cases}
				{H}_\zeta^\mu(q) & \text{if $s\in [0,q)$,}\\
				{H}_\zeta^\mu(s) & \text{if $s\in [q,1]$,}
			\end{cases}
		\end{equation*}
		the optimality condition reads: for every $\zeta'\in \mc{P}([q,1])$,
		\begin{equation*}
			\int_q^1 \hat{H}^\mu_{\zeta}(s)\, \dd \zeta'(s) \geq  \int_q^1 \hat{H}^\mu_{\zeta}(s)\, \dd \zeta(s).
		\end{equation*}
		The equivalence with the support condition~\eqref{eq:supp} is classical: the above condition states that $\zeta$ minimizes the linear functional $\zeta'\mapsto \int \hat{H}^\mu_\zeta\,\dd\zeta'$ over $\mc{P}([q,1])$, which holds if and only if $\zeta$ is supported on the set of minimizers of $\hat{H}^\mu_\zeta$ over $[0,1]$.
		
		\medskip
		\textit{Proof of~\ref{item:optmu}.}
		Since $q'\in \supp(\zeta)\subset \mathrm{arg\,min}\,\hat{H}_\zeta^\mu$ by~\eqref{eq:supp} and $q'\in [q,1]$, the function $H_\zeta^\mu$ attains its minimum at $q'$. In particular $\frac{\dd}{\dd s}H_\zeta^\mu(s)\big|_{s=q'}=0$. Differentiating~\eqref{def:Hmu} gives
		\[
		\frac{\dd}{\dd s}H_\zeta^\mu(s)=\frac{1}{2}\xi''(s)\Bigl(s-\dE\bigl[(\partial_x\Phi_\zeta(s,X^\mu_s))^2\bigr]\Bigr).
		\]
		Since $\xi''(q')>0$, the vanishing of this derivative at $s=q'$ yields~\eqref{eq:opt3}.
		
		\medskip
		\textit{Proof of~\ref{item:2ndordermu}.}
		As in the proof of~\ref{item:optmu}, $H_\zeta^\mu$ attains its minimum at every $q'\in \supp(\zeta)$, so $\frac{\dd^2}{\dd s^2}H_\zeta^\mu(s)\big|_{s=q'}\geq 0$ (at the boundary $q'=q$, this is the right second derivative, which is well-defined since the first-order condition~\eqref{eq:opt3} holds and $H_\zeta^\mu$ is smooth on $(q,q+\delta)$ for small $\delta$). Set $u(s,x):=\partial_x\Phi_\zeta(s,x)$. By Lemma~\ref{lemma:martingales}\ref{item:mart-square} (whose derivation depends only on the Parisi PDE and the drift structure of the process, and therefore applies equally to $(X_t^\mu)$ in place of $(X_t)$),
		\begin{equation}\label{eq:diffexpmu}
			\frac{\dd}{\dd s}\dE\bigl[u(s,X^\mu_s)^2\bigr]=\xi''(s)\,\dE\bigl[(\partial_{xx}\Phi_\zeta(s,X^\mu_s))^2\bigr].
		\end{equation}
		Differentiating~\eqref{def:Hmu} twice and using~\eqref{eq:opt3} (which ensures the first-order term vanishes) together with~\eqref{eq:diffexpmu}, we obtain
		\[
		\frac{\dd^2}{\dd s^2}H_\zeta^\mu(s)\bigg|_{s=q'}=\frac{1}{2}\xi''(q')\bigl(1-\xi''(q')\,\dE\bigl[(\partial_{xx}\Phi_\zeta(q',X^\mu_{q'}))^2\bigr]\bigr).
		\]
		Since $\xi''(q')>0$ and $\frac{\dd^2}{\dd s^2}H_\zeta^\mu(s)\big|_{s=q'}\geq 0$, this yields~\eqref{eq:second order}.
		
		\medskip
		\textit{Proof of~\ref{item:qsupp}.}
		This is proved in~\cite{chen2018generalized}; see Theorem~12, item~(5) therein.
	\end{proof}

	\subsection{Gradient of the TAP correction}
	
	We now compute the gradient of the TAP correction.

	\begin{lemma}[Bounds for the defect term]\label{lemma:Delta-bound}
		Fix $q\in[0,1)$ and let $\mu$ be a probability measure on $[-1,1]$ with
		$\int m^2\,\mu(\dd m)=q$.
		For $\zeta\in\mc P([q,1])$, let $(X_t^{\mu,\zeta})_{t\in[q,1]}$ be the
		Auffinger--Chen process \eqref{eq:AC mu}, set
		\[
		u_\zeta(t,x):=\partial_x\Phi_\zeta(t,x),
		\]
		and define
		\begin{equation}\label{eq:def-Delta}
			\Delta_\zeta^\mu
			:=
			\int_{[q,1]}
			\Bigl(\dE\bigl[u_\zeta(t,X_t^{\mu,\zeta})^2\bigr]-t\Bigr)\,\zeta(\dd t).
		\end{equation}
		Then:
		\begin{enumerate}[label=(\roman*)]
			\item\label{it:Delta-crude}
			For every $\zeta\in\mc P([q,1])$,
			\[
			|\Delta_\zeta^\mu|\le 1.
			\]
			
			\item\label{it:Delta-explicit}
			Let $\zeta_\mu$ be the unique minimizer of $\TAP(\mu,\cdot)$ on $\mc P([q,1])$.
			Assume that $\mu$ is supported in $[-1+\delta,1-\delta]$ for some
			$\delta\in(0,1)$, and let $\zeta\in\mc P([q,1])$ satisfy
			\[
			\zeta(\{q\})\ge u_0>0.
			\]
			Then there exists $C=C(\xi,u_0)$ such that
			\[
			|\Delta_\zeta^\mu|
			\le \frac{C}{\delta}\,\dist(\zeta,\zeta_\mu).
			\]
		\end{enumerate}
	\end{lemma}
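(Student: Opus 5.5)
Part (i) follows from the two Auffinger--Chen identities already available; part (ii) is a stability argument for the Parisi flow, compared against the zero defect at the minimizer $\zeta_\mu$.

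For \ref{it:Delta-crude}, write $M_t:=u_\zeta(t,X_t^{\mu,\zeta})$. The derivation of Lemma~\ref{lemma:xxphi} uses only the Parisi PDE and the drift of the process, so it applies verbatim to $X^{\mu,\zeta}$ started at time $q$. Since $\zeta\in\mc P([q,1])$ gives $\zeta([0,q))=0$, it yields
\[
\E\bigl[\partial_{xx}\Phi_\zeta(q,X_q^{\mu,\zeta})\bigr]=1-\int_{[q,1]}\E[M_t^2]\,\zeta(\dd t).
\]
Hence
\[
\Delta_\zeta^\mu=1-\int_{[q,1]} t\,\zeta(\dd t)-\E\bigl[\partial_{xx}\Phi_\zeta(q,X_q)\bigr].
\]
Two facts bound $\partial_{xx}\Phi_\zeta$ uniformly. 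First, $|M_t|\le1$, because $\partial_x\Phi_\zeta$ takes values in $(-1,1)$; the terminal condition has $|\tanh|<1$ and the Parisi flow preserves this, as $M$ is a martingale. Second, Lemma~\ref{lemma:convexity} gives $\partial_{xx}\Phi_\zeta\ge0$, applied with $g=\log(2\cosh)$, for which $g''\ge0$. The expression above then shows $\partial_{xx}\Phi_\zeta(q,x)\in[0,1]$ pointwise. Both $1-\int t\,\zeta(\dd t)$ and $\E[\partial_{xx}\Phi_\zeta]$ lie in $[0,1]$, so their difference lies in $[-1,1]$.

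For \ref{it:Delta-explicit}, the idea is that $\Delta^\mu_{\zeta_\mu}=0$ by \eqref{eq:opt3}, since $\E[M_t^2]=t$ on $\supp(\zeta_\mu)$. It therefore suffices to estimate the Lipschitz dependence of $\zeta\mapsto\Delta_\zeta^\mu$ in $\dist$. I would split $\Delta_\zeta^\mu-\Delta_{\zeta_\mu}^\mu$ into two terms:
\begin{enumerate}[label=(\alph*)]
\item $\int t\,\dd(\zeta_\mu-\zeta)=-\int_q^1(\zeta_\mu([q,s])-\zeta([q,s]))\,\dd s$, which is bounded by $\dist(\zeta,\zeta_\mu)$;
\item $\E[\partial_{xx}\Phi_{\zeta_\mu}(q,X_q^{\mu,\zeta_\mu})]-\E[\partial_{xx}\Phi_{\zeta}(q,X_q^{\mu,\zeta})]$.
\end{enumerate}
In (b) the initial laws are $(\partial_x\Phi_\cdot(q,\cdot))^{-1}\#\mu$. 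Since the statement only involves $X_q$, (b) equals
\[
\int\Bigl(\frac{1}{\partial_{mm}h_{\zeta_\mu}(q,m)}-\frac{1}{\partial_{mm}h_\zeta(q,m)}\Bigr)\,\mu(\dd m)
\]
by the Legendre relation used in \eqref{eq:chain}. So it reduces to comparing $(\partial_x\Phi_\zeta(q,\cdot))^{-1}$ and its derivative for two measures. I would use the standard Lipschitz stability of the Parisi PDE (as in \cite{jagannath2017some}): $\|\partial_x^k\Phi_\zeta-\partial_x^k\Phi_{\zeta'}\|_\infty\le C\,\dist(\zeta,\zeta')$ for $k=1,2,3$. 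I would then transfer this to $h=\Phi^*$ through the inverse-function identity, evaluated at points $x(m)$ with $|m|\le1-\delta$.

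The main obstacle is this transfer, since $\partial_{xx}\Phi_\zeta(q,x(m))$ degenerates as $|m|\to1$. Inverting $\partial_x\Phi$ costs a factor of order $1/\partial_{xx}\Phi_\zeta(q,x(m))$, and this is where the hypothesis $\zeta(\{q\})\ge u_0$ enters. The plan is to use \eqref{eq:xxphi-general}, which gives
\[
\partial_{xx}\Phi_\zeta(q,x)\ge(\ldots)\ge c(u_0)\,(1-m^2),
\]
since the atom at $q$ weights a term comparable to the conditional variance of the martingale $M$. Together with $|x(m)|\lesssim\log(1/\delta)$ (as $\Phi_\zeta(q,\cdot)$ grows like $\log\cosh$ up to $O(1)$), this yields a factor $C/\delta$ after controlling the reciprocal. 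If the lower bound via \eqref{eq:xxphi-general} proves awkward, the fallback is to use $\Phi_\zeta(q,x)\ge\log\cosh x-C$ and its convexity to get $\partial_{xx}\Phi_\zeta\gtrsim\delta$ on the relevant range. Collecting (a) and (b) gives $|\Delta_\zeta^\mu|\le (C/\delta)\,\dist(\zeta,\zeta_\mu)$.
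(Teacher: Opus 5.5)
Your proof is correct, but part (ii) follows a genuinely different route from the paper's. Write $d:=\dist(\zeta,\zeta_\mu)$ and $g_\nu(t):=\E[u_\nu(t,X^{\mu,\nu}_t)^2]-t$.

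\textbf{The paper's route.} It splits $\Delta_\zeta^\mu=\int g_{\zeta_\mu}\,\dd\zeta+\int(g_\zeta-g_{\zeta_\mu})\,\dd\zeta$. The first piece is bounded by Kantorovich--Rubinstein duality, since $g_{\zeta_\mu}$ is Lipschitz and vanishes on $\supp(\zeta_\mu)$. The second is bounded by coupling $X^{\mu,\zeta}$ and $X^{\mu,\zeta_\mu}$ through the same $M\sim\mu$ and Brownian motion. This gives $|X^{\mu,\zeta}_q-X^{\mu,\zeta_\mu}_q|\lesssim d/(u_0\delta)$, which is then propagated to all $t\in[q,1]$ by Gr\"onwall.

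\textbf{Your route.} You use Lemma~\ref{lemma:xxphi} (with $\nu([0,q))=0$) to collapse the defect to time-$q$ data:
$\Delta_\nu^\mu=1-\int t\,\nu(\dd t)-\int\partial_{xx}\Phi_\nu(q,x_\nu(m))\,\mu(\dd m)$, where $x_\nu(m):=(\partial_x\Phi_\nu(q,\cdot))^{-1}(m)$. After subtracting $\Delta^\mu_{\zeta_\mu}=0$, only a single inversion at time $q$ remains.

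\textbf{Comparison.} Your route dispenses with the pathwise coupling on $[q,1]$, the Gr\"onwall step and the Lipschitz estimate on $g_{\zeta_\mu}$. It also makes the origin of the factor $1/\delta$ transparent. The price is one extra derivative: you need $\sup_x|\partial_{xx}\Phi_\zeta(q,x)-\partial_{xx}\Phi_{\zeta_\mu}(q,x)|\le C d$ and a uniform bound on $\partial_{xxx}\Phi$. The paper only uses first-derivative stability together with $0\le\partial_{xx}\Phi\le 1$. Both sets of facts are standard for the Parisi PDE.

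\textbf{Details to fix when you write it out.}
\begin{itemize}
\item The lower bound $\partial_{xx}\Phi_\zeta(q,y)\ge\zeta(\{q\})\bigl(1-u_\zeta(q,y)^2\bigr)$ must hold on the whole segment between $x_\zeta(m)$ and $x_{\zeta_\mu}(m)$, not just at $x_\zeta(m)$. This bound is exactly what \eqref{eq:xxphi-general} gives, using only $\E[u_\zeta(t,X_t)^2\mid X_q=y]\le 1$ for $t>q$.
\item By monotonicity, the segment bound requires $|u_\zeta(q,x_{\zeta_\mu}(m))-m|\le C_0 d\le\delta/2$. The complementary regime $d>\delta/(2C_0)$ must then be handled by part (i); this is the same case split the paper makes.
\item Your fallback would not work: convexity together with $\Phi_\zeta\ge\log\cosh-C$ gives no pointwise lower bound on $\partial_{xx}\Phi_\zeta$. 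It is not needed, and neither is the bound $|x(m)|\lesssim\log(1/\delta)$.
\item In part (i), your detour through Lemma~\ref{lemma:xxphi} is valid. The paper simply notes $|u_\zeta|\le 1$, so each integrand $\E[u_\zeta^2]-t$ already lies in $[-1,1]$.
\end{itemize}
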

	
	\begin{proof}
		For $\zeta\in\mc P([q,1])$, write
		\[
		g_\zeta(t):=\dE\bigl[u_\zeta(t,X_t^{\mu,\zeta})^2\bigr]-t,
		\qquad t\in[q,1],
		\]
		so that
		\[
		\Delta_\zeta^\mu=\int_{[q,1]} g_\zeta(t)\,\zeta(\dd t).
		\]
		
		\smallskip
		\noindent\emph{Proof of \ref{it:Delta-crude}.}
		As in Lemma~\ref{lemma:martingales}\ref{item:mart-mg}, the process
		$u_\zeta(t,X_t^{\mu,\zeta})$ is a martingale on $[q,1]$ with terminal value
		$\tanh(X_1^{\mu,\zeta})\in[-1,1]$. Hence
		\[
		|u_\zeta(t,X_t^{\mu,\zeta})|\le 1
		\qquad\text{a.s. for every }t\in[q,1].
		\]
		Therefore $|g_\zeta(t)|\le 1$ for every $t$, and so
		\[
		|\Delta_\zeta^\mu|
		\le \int_{[q,1]} |g_\zeta(t)|\,\zeta(\dd t)
		\le 1.
		\]
		
		\smallskip
		\noindent\emph{Proof of \ref{it:Delta-explicit}.}
		Set
		\[
		\bar\zeta:=\zeta_\mu,\qquad
		u:=u_\zeta,\qquad
		\bar u:=u_{\bar\zeta},\qquad
		X:=X^{\mu,\zeta},\qquad
		\bar X:=X^{\mu,\bar\zeta},
		\qquad
		d:=\dist(\zeta,\bar\zeta).
		\]
		We decompose
		\[
		\Delta_\zeta^\mu
		=
		\int g_{\bar\zeta}\,\dd\zeta
		+
		\int (g_\zeta-g_{\bar\zeta})\,\dd\zeta.
		\]
		
		First we bound the term involving $g_{\bar\zeta}$.
		By Lemma~\ref{lemma:first order mu}\ref{item:optmu},
		\[
		g_{\bar\zeta}(t)=0
		\qquad\text{for every }t\in\supp(\bar\zeta),
		\]
		hence
		\[
		\int g_{\bar\zeta}\,\dd\bar\zeta=0.
		\]
		Also, Lemma~\ref{lemma:convexity} and Lemma~\ref{lemma:xxphi} (applied at time
		$s$) imply
		\[
		0\le \partial_x\bar u(s,x)=\partial_{xx}\Phi_{\bar\zeta}(s,x)\le 1
		\qquad\text{for all }(s,x)\in[q,1]\times\dR.
		\]
		Therefore, exactly as in Lemma~\ref{lemma:martingales}\ref{item:mart-square},
		for $q\le s<t\le 1$,
		\[
		\dE\bigl[\bar u(t,\bar X_t)^2\bigr]
		-
		\dE\bigl[\bar u(s,\bar X_s)^2\bigr]
		=
		\int_s^t
		\xi''(r)\,
		\dE\bigl[(\partial_{xx}\Phi_{\bar\zeta}(r,\bar X_r))^2\bigr]\dd r,
		\]
		and hence
		\[
		\Bigl|
		\dE\bigl[\bar u(t,\bar X_t)^2\bigr]
		-
		\dE\bigl[\bar u(s,\bar X_s)^2\bigr]
		\Bigr|
		\le \|\xi''\|_\infty\,|t-s|.
		\]
		Thus $g_{\bar\zeta}$ is Lipschitz on $[q,1]$ with
		\[
		\Lip(g_{\bar\zeta})\le 1+\|\xi''\|_\infty.
		\]
		Since $\dist$ is the $1$-Wasserstein distance on $[q,1]$, the
		Kantorovich--Rubinstein duality gives
		\[
		\Bigl|\int g_{\bar\zeta}\,\dd\zeta\Bigr|
		=
		\Bigl|\int g_{\bar\zeta}\,\dd\zeta-\int g_{\bar\zeta}\,\dd\bar\zeta\Bigr|
		\le (1+\|\xi''\|_\infty)\,d.
		\]
		
		Next we bound the term involving $g_\zeta-g_{\bar\zeta}$.
		Standard stability estimates for the Parisi PDE
		(see, e.g., \cite[Proof of Theorem~12(ii)]{chen2018generalized})
		give a constant $C_0=C_0(\xi)$ such that
		\[
		\sup_{(t,x)\in[q,1]\times\dR}|u(t,x)-\bar u(t,x)|\le C_0\,d.
		\]
		
		Assume first that
		\[
		d\le \frac{\delta}{2C_0}.
		\]
		Since $\zeta([0,q))=0$, Lemma~\ref{lemma:xxphi}, \eqref{eq:xxphi-general},
		gives for every $x\in\dR$,
		\begin{align*}
			\partial_x u(q,x)
			&=
			1-\int_{[q,1]}
			\E\!\left[u(t,X_t)^2\,\middle|\,X_q=x\right]\zeta(\dd t) \\
			&\ge
			1-\zeta(\{q\})u(q,x)^2-\zeta((q,1]) \\
			&=
			\zeta(\{q\})\bigl(1-u(q,x)^2\bigr)
			\ge
			u_0\bigl(1-u(q,x)^2\bigr).
		\end{align*}
		
		Now couple $X$ and $\bar X$ using the same Brownian motion and the same random
		variable $M\sim\mu$, by setting
		\[
		X_q=(\partial_x\Phi_\zeta(q,\cdot))^{-1}(M),
		\qquad
		\bar X_q=(\partial_x\Phi_{\bar\zeta}(q,\cdot))^{-1}(M).
		\]
		Then
		\[
		u(q,X_q)=M=\bar u(q,\bar X_q)
		\qquad\text{a.s.}
		\]
		Hence
		\[
		|u(q,\bar X_q)-M|
		=
		|u(q,\bar X_q)-\bar u(q,\bar X_q)|
		\le C_0\,d
		\le \delta/2.
		\]
		Because $M\in[-1+\delta,1-\delta]$ a.s., both endpoint values
		$u(q,X_q)$ and $u(q,\bar X_q)$ belong to $[-1+\delta/2,1-\delta/2]$.
		Since $x\mapsto u(q,x)$ is nondecreasing, every value of $u(q,\cdot)$ on the
		interval between $X_q$ and $\bar X_q$ also belongs to
		$[-1+\delta/2,1-\delta/2]$. Therefore, on that interval,
		\[
		\partial_x u(q,\cdot)
		\ge
		u_0\Bigl(1-(1-\delta/2)^2\Bigr)
		\ge \frac{u_0\delta}{2}.
		\]
		By the mean value theorem,
		\[
		|X_q-\bar X_q|
		\le
		\frac{2}{u_0\delta}\,
		|u(q,X_q)-u(q,\bar X_q)|
		\le
		\frac{2C_0}{u_0\delta}\,d.
		\]
		Taking expectations gives the same bound for $\dE|X_q-\bar X_q|$.
		
		Using the coupled SDEs, the bound $|u|\le 1$, and the fact that
		$x\mapsto\bar u(s,x)$ is $1$-Lipschitz for every $s\in[q,1]$, we obtain for
		$t\in[q,1]$,
		\begin{align*}
			\dE|X_t-\bar X_t|
			&\le
			\dE|X_q-\bar X_q|
			+
			\|\xi''\|_\infty\int_q^t |\zeta([0,s])-\bar\zeta([0,s])|\,\dd s \\
			&\qquad
			+
			\|\xi''\|_\infty\int_q^t
			\Bigl(
			\sup_x|u(s,x)-\bar u(s,x)|
			+
			\dE|X_s-\bar X_s|
			\Bigr)\,\dd s.
		\end{align*}
		Since
		\[
		\int_q^1 |\zeta([0,s])-\bar\zeta([0,s])|\,\dd s=d,
		\]
		the previous bounds and Gr\"onwall's lemma yield
		\[
		\sup_{t\in[q,1]}\dE|X_t-\bar X_t|
		\le \frac{C_1(\xi,u_0)}{\delta}\,d.
		\]
		Consequently, for every $t\in[q,1]$,
		\begin{align*}
			|g_\zeta(t)-g_{\bar\zeta}(t)|
			&=
			\Bigl|
			\dE\bigl[u(t,X_t)^2-\bar u(t,\bar X_t)^2\bigr]
			\Bigr| \\
			&\le
			2\,\dE|u(t,X_t)-\bar u(t,\bar X_t)| \\
			&\le
			2\sup_x|u(t,x)-\bar u(t,x)|
			+
			2\,\dE|\bar u(t,X_t)-\bar u(t,\bar X_t)| \\
			&\le
			2C_0\,d+2\,\dE|X_t-\bar X_t|
			\le
			\frac{C_2(\xi,u_0)}{\delta}\,d.
		\end{align*}
		Hence, when $d\le \delta/(2C_0)$,
		\[
		\Bigl|\int (g_\zeta-g_{\bar\zeta})\,\dd\zeta\Bigr|
		\le
		\sup_{t\in[q,1]}|g_\zeta(t)-g_{\bar\zeta}(t)|
		\le
		\frac{C_2(\xi,u_0)}{\delta}\,d.
		\]
		Combining this with the bound on $\int g_{\bar\zeta}\,\dd\zeta$ gives
		\[
		|\Delta_\zeta^\mu|
		\le
		(1+\|\xi''\|_\infty)\,d+\frac{C_2(\xi,u_0)}{\delta}\,d
		\le
		\frac{C(\xi,u_0)}{\delta}\,d,
		\]
		since $\delta\le 1$.
		
		If instead $d>\delta/(2C_0)$, then part~\ref{it:Delta-crude} yields
		\[
		|\Delta_\zeta^\mu|\le 1\le \frac{2C_0}{\delta}\,d.
		\]
		This proves part~\ref{it:Delta-explicit}.
	\end{proof}

	\begin{lemma}\label{lemma:gradient TAP}
		Recall $\TAP$ from \eqref{def:TAPmu} and $k_\zeta$ from~\eqref{def:kqm}.
		Fix $\bfm\in(-1,1)^N$ and set $q=\frac{1}{N}\Vert \bfm\Vert^2$.
		Let $\zeta$ be a probability measure on $[q,1]$. Then for every $i\in[N]$:
		\begin{enumerate}[label=(\roman*)]
			\item (\emph{Exact gradient at the minimizer}) If $\zeta_\bfm$ is the minimizer in Definition~\ref{def:zetam}, then
			\begin{equation}\label{eq:gradmin}
				\partial_i \TAP(\mu_N^{(\bfm)})=-\frac{1}{N}k_{\zeta_\bfm}(q,m_i).
			\end{equation}
			
			\item (\emph{Exact defect formula and explicit boundary stability}) One has
			\begin{equation}\label{eq:gradTap-defect}
				-N\partial_i \TAP(\mu_N^{(\bfm)},\zeta)
				=
				k_\zeta(q,m_i)-m_i\,\xi''(q)\,\Delta_\zeta^{\mu_N^{(\bfm)}},
			\end{equation}
			where $\Delta_\zeta^{\mu_N^{(\bfm)}}$ is defined in \eqref{eq:def-Delta}.
			Consequently, if
			\[
			\max_{j\in[N]}|m_j|\le 1-\delta
			\qquad\text{for some }\delta\in(0,1),
			\]
			and
			\[
			\zeta(\{q\})\ge u_0>0,
			\]
			then there exists $C=C(\xi,u_0)<\infty$ such that
			\begin{equation}\label{eq:gradTap-stability}
				\Bigl|\partial_i \TAP(\mu_N^{(\bfm)},\zeta)+\frac{1}{N}k_{\zeta}(q,m_i)\Bigr|
				\le \frac{C}{N\delta}\,\dist(\zeta_\bfm,\zeta).
			\end{equation}
		\end{enumerate}
	\end{lemma}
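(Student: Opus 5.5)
We start with the exact defect formula \eqref{eq:gradTap-defect}. The two other claims follow from it: \eqref{eq:gradmin} is the case $\zeta=\zeta_\bfm$, and \eqref{eq:gradTap-stability} follows from Lemma~\ref{lemma:Delta-bound}. Fix $\zeta\in\mc P([q,1])$ and set $\mu=\mu_N^{(\bfm)}$, $q=q_\bfm$. We differentiate
\[
\TAP(\mu,\zeta)=-\frac1N\sum_j h_\zeta(q,m_j)-\mc U_\zeta(q)
\]
in $m_i$, where $q$ depends on $\bfm$ through $\partial_i q=2m_i/N$. The measure $\zeta$ is held fixed; with the convention $\zeta([0,t])=\zeta([q,t])$, moving $q$ inside $[\min\supp$-region$]$ changes only the lower endpoint. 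This gives three contributions:
\[
-\frac1N\partial_m h_\zeta(q,m_i),\qquad -\frac{2m_i}{N}\cdot\frac1N\sum_j\partial_q h_\zeta(q,m_j),\qquad -\frac{2m_i}{N}\,\partial_q\mc U_\zeta(q).
\]
The last one is explicit: $\partial_q\mc U_\zeta(q)=-\frac12 q\xi''(q)\zeta([0,q])$.

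\textbf{Step 1: the $q$-derivative of $h_\zeta$.} By the envelope theorem, $\partial_q h_\zeta(q,m)=-\partial_q\Phi_\zeta(q,x(m))$ with $x(m)=(\partial_x\Phi_\zeta(q,\cdot))^{-1}(m)$. The Parisi PDE then gives
\[
\partial_q h_\zeta(q,m)=\tfrac{\xi''(q)}2\bigl(\partial_{xx}\Phi_\zeta(q,x(m))+\zeta([0,q])m^2\bigr).
\]
Averaging over $\mu$ and using the law-matching construction \eqref{eq:AC mu}, under which $x(m_j)$ is distributed as $X^\mu_q$, we get
\[
\frac1N\sum_j\partial_q h_\zeta=\tfrac{\xi''(q)}2\Bigl(\dE[\partial_{xx}\Phi_\zeta(q,X^\mu_q)]+\zeta(\{q\})q\Bigr).
\]
Here $\int m^2\,\dd\mu=q$, and $\zeta([0,q])=\zeta(\{q\})$ since $\zeta$ lives on $[q,1]$.

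\textbf{Step 2: Lemma~\ref{lemma:xxphi}.} Apply it with $\zeta([0,q))=0$:
\[
\dE[\partial_{xx}\Phi_\zeta(q,X^\mu_q)]=1-\int_{[q,1]}\dE[u(t,X_t)^2]\,\zeta(\dd t)=1-\int_{[q,1]}t\,\zeta(\dd t)-\Delta^\mu_\zeta.
\]
The layer-cake computation at the end of that lemma shows $1-\int_{[q,1]}t\,\zeta(\dd t)=\int_q^1\zeta([0,t])\dd t+q\zeta(\{q\})$.

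\textbf{Step 3: collecting terms.} Substituting, the $q\zeta(\{q\})$ pieces from Step 1, Step 2 and $\partial_q\mc U_\zeta$ cancel: $\frac12+\frac12-1=0$ times $q\xi''(q)\zeta(\{q\})$. What remains is
\[
-N\partial_i\TAP(\mu,\zeta)=\partial_m h_\zeta(q,m_i)+m_i\xi''(q)\int_q^1\zeta([0,t])\dd t-m_i\xi''(q)\Delta^\mu_\zeta,
\]
which is exactly \eqref{eq:gradTap-defect}. The main care point is this bookkeeping of the atom at $q$ under the abuse of notation, together with justifying the differentiation in $q$ of $\Phi_\zeta(q,\cdot)$ for fixed $\zeta$. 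The latter holds because $\Phi_\zeta$ is $C^1$ in $t$ away from atoms, and we use one-sided derivatives consistent with the atom convention.

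\textbf{Step 4: consequences.} At $\zeta=\zeta_\bfm$, Lemma~\ref{lemma:first order mu}\ref{item:optmu} gives $g_{\zeta_\bfm}=0$ on the support, so $\Delta=0$. Moreover, $\partial_i\TAP(\mu_N^{(\bfm)})=\partial_i\TAP(\mu,\zeta_\bfm)$ by the envelope theorem, using uniqueness of the minimizer (Lemma~\ref{lemma:first order mu}\ref{item:convexmu}) and continuity of $\zeta_\bfm$ in $\bfm$. This yields \eqref{eq:gradmin}. For \eqref{eq:gradTap-stability}, combine the defect formula with $|m_i|\le1$, boundedness of $\xi''$, and Lemma~\ref{lemma:Delta-bound}\ref{it:Delta-explicit}, which applies since $\mu$ is supported in $[-1+\delta,1-\delta]$ and $\zeta(\{q\})\ge u_0$.
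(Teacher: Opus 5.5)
Your route is the same as the paper's: chain rule in $m_i$ through $q_\bfm$, envelope theorem plus the Parisi PDE for $\partial_q h_\zeta$, then Lemma~\ref{lemma:xxphi}, and finally Lemmas~\ref{lemma:first order mu} and~\ref{lemma:Delta-bound} for the two consequences. Your final formula is also correct. The problem is the atom bookkeeping in Steps~2--3, which is exactly the point you flag as delicate. It is wrong in two places that happen to cancel, so the derivation as written does not establish \eqref{eq:gradTap-defect}.

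First, the identity you attribute to the layer-cake computation is false. With $\zeta([0,q))=0$ that computation gives exactly
\[
1-\int_{[q,1]}t\,\zeta(\dd t)=\int_q^1\zeta([0,t])\,\dd t ,
\]
with no extra $q\zeta(\{q\})$; for instance, with $\zeta=\delta_q$ both sides equal $1-q$. Second, measure the bracket $\frac1N\sum_j\partial_q h_\zeta(q,m_j)+\partial_q\mc U_\zeta(q)$, which gets multiplied by $2m_i$, in units of $q\xi''(q)\zeta(\{q\})$. Your own formula $\partial_q\mc U_\zeta(q)=-\frac12 q\xi''(q)\zeta([0,q])$ contributes $-\frac12$ there, not $-1$. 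If you take your Steps~1--2 and $\partial_q\mc U_\zeta$ at face value, the count is $\frac12+\frac12-\frac12=\frac12$. That leaves a spurious term $m_i\xi''(q)\,q\,\zeta(\{q\})$ in $-N\partial_i\TAP(\mu,\zeta)$.

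The correct bookkeeping is simpler. The only atom terms come from Step~1 and from $\partial_q\mc U_\zeta$: they are $+\frac{\xi''(q)}{2}\zeta([0,q])\,q$ and $-\frac{\xi''(q)}{2}\,q\,\zeta([0,q])$. These cancel before Lemma~\ref{lemma:xxphi} is invoked, giving
\[
-N\partial_i\TAP(\mu,\zeta)=\partial_m h_\zeta(q,m_i)+m_i\,\xi''(q)\,\E\bigl[\partial_{xx}\Phi_\zeta(q,X^\mu_q)\bigr],
\]
which is the paper's \eqref{eq:diG reduced}. Lemma~\ref{lemma:xxphi} with the correct layer-cake identity then gives $\E[\partial_{xx}\Phi_\zeta(q,X^\mu_q)]=\int_q^1\zeta([0,t])\,\dd t-\Delta^\mu_\zeta$, which is \eqref{eq:xx}, and \eqref{eq:gradTap-defect} follows. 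This cancellation holds whatever value is used for $\zeta([0,q])$, so left and right $q$-derivatives give the same gradient and the one-sided-derivative issue you raise is harmless. Step~4 is fine as written.
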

	
	\begin{proof}
		Since $q=\frac{1}{N}\Vert \bfm\Vert^2$ depends on~$\bfm$, we have
		$\partial_i q=\frac{2m_i}{N}$. Differentiating $\TAP(\mu_N^{(\bfm)},\zeta)$
		with respect to $m_i$ by the chain rule yields
		\begin{equation}\label{eq:diG raw}
			-N \partial_i \TAP(\mu_N^{(\bfm)},\zeta)
			=
			\partial_m h_\zeta(q,m_i)
			+
			\frac{2m_i}{N}
			\left(
			\sum_{j=1}^N \partial_q h_\zeta(q,m_j)
			+
			N\,\partial_q\mc{U}_\zeta(q)
			\right).
		\end{equation}
		A direct computation gives
		\[
		\partial_q\mc{U}_\zeta(q)=-\frac{1}{2}\,q\,\xi''(q)\,\zeta([0,q]).
		\]
		
		Recall that
		\[
		h_\zeta(q,a)=\sup_{x\in\dR}\bigl(ax-\Phi_\zeta(q,x)\bigr),
		\qquad a\in(-1,1).
		\]
		Since $\Phi_\zeta(q,\cdot)$ is strictly convex, the supremum is attained at the
		unique point
		\[
		x(a):=(\partial_x \Phi_\zeta(q,\cdot))^{-1}(a).
		\]
		By the envelope theorem,
		\[
		\partial_q h_\zeta(q,a)=-\partial_q \Phi_\zeta\bigl(q,x(a)\bigr).
		\]
		(When $\zeta$ has an atom at $q$, $\partial_q$ denotes the right derivative; the Parisi PDE determines it unambiguously.)
		Using the Parisi PDE~\eqref{eq:ParisiPDE},
		\begin{equation}\label{eq:PDE identity}
			\partial_q \Phi_\zeta(q,x)
			=-\frac{\xi''(q)}{2}
			\Bigl(
			\partial_{xx}\Phi_\zeta(q,x)
			+
			\zeta([0,q])\,\bigl(\partial_x \Phi_\zeta(q,x)\bigr)^2
			\Bigr),
		\end{equation}
		we obtain
		\begin{equation}\label{eq:dqh}
			\partial_q h_\zeta(q,a)
			=
			\frac{\xi''(q)}{2}
			\Bigl(
			\partial_{xx}\Phi_\zeta\bigl(q,x(a)\bigr)
			+
			\zeta([0,q])\,a^2
			\Bigr).
		\end{equation}
		
		Let
		\[
		\nu_N^{(\bfm)}:=(\partial_x\Phi_\zeta(q,\cdot))^{-1}\#\,\mu_N^{(\bfm)}.
		\]
		Substituting~\eqref{eq:dqh} into~\eqref{eq:diG raw} and using
		$\frac{1}{N}\sum_{j=1}^N m_j^2=q$, we get
		\begin{align}
			-N\partial_i \TAP(\mu_N^{(\bfm)},\zeta)
			&=
			\partial_m h_\zeta(q,m_i)
			+
			m_i\,\xi''(q)
			\int \partial_{xx}\Phi_\zeta(q,x)\,\dd\nu_N^{(\bfm)}(x).
			\label{eq:diG reduced}
		\end{align}
		
		By Lemma~\ref{lemma:xxphi},
		\[
		\int \partial_{xx}\Phi_\zeta(q,x)\,\dd\nu_N^{(\bfm)}(x)
		=
		\int_q^1 \zeta([0,s])\,\dd s
		-
		\int_{[q,1]}
		\Bigl(
		\dE\bigl[u_\zeta(t,X_t^{\mu_N^{(\bfm)},\zeta})^2\bigr]-t
		\Bigr)\,
		\zeta(\dd t),
		\]
		that is,
		\begin{equation}\label{eq:xx}
			\int \partial_{xx}\Phi_\zeta(q,x)\,\dd\nu_N^{(\bfm)}(x)
			=
			\int_q^1 \zeta([0,s])\,\dd s
			-
			\Delta_\zeta^{\mu_N^{(\bfm)}}.
		\end{equation}
		Inserting this into~\eqref{eq:diG reduced} yields
		\eqref{eq:gradTap-defect}.
		
		If $\zeta=\zeta_\bfm$, then by Lemma~\ref{lemma:first order mu}\ref{item:optmu},
		\[
		\dE\bigl[u_{\zeta_\bfm}(t,X_t^{\mu_N^{(\bfm)},\zeta_\bfm})^2\bigr]=t
		\qquad\text{for every }t\in \supp(\zeta_\bfm),
		\]
		hence $\Delta_{\zeta_\bfm}^{\mu_N^{(\bfm)}}=0$. Therefore
		\eqref{eq:gradTap-defect} gives
		\[
		-N\partial_i \TAP(\mu_N^{(\bfm)},\zeta_\bfm)=k_{\zeta_\bfm}(q,m_i).
		\]
		Since $\zeta_\bfm$ is the minimizer in the definition of $\TAP(\mu_N^{(\bfm)})$,
		the envelope theorem implies
		\[
		\partial_i \TAP(\mu_N^{(\bfm)})
		=
		\partial_i \TAP(\mu_N^{(\bfm)},\zeta_\bfm),
		\]
		which proves \eqref{eq:gradmin}.
		
		Finally, assume that $\max_{j\in[N]}|m_j|\le 1-\delta$ and
		$\zeta(\{q\})\ge u_0>0$. Then $\mu_N^{(\bfm)}$ is supported in
		$[-1+\delta,1-\delta]$, so Lemma~\ref{lemma:Delta-bound} gives
		\[
		|\Delta_\zeta^{\mu_N^{(\bfm)}}|
		\le \frac{C(\xi,u_0)}{\delta}\,\dist(\zeta,\zeta_\bfm).
		\]
		Using \eqref{eq:gradTap-defect} and $|m_i|\le 1$, we obtain
		\[
		\Bigl|\partial_i \TAP(\mu_N^{(\bfm)},\zeta)+\frac{1}{N}k_\zeta(q,m_i)\Bigr|
		=
		\frac{|m_i|\,\xi''(q)}{N}\,|\Delta_\zeta^{\mu_N^{(\bfm)}}|
		\le
		\frac{C}{N\delta}\,\dist(\zeta,\zeta_\bfm),
		\]
		which proves \eqref{eq:gradTap-stability}.
	\end{proof}

	\subsection{Stability of the TAP correction}
	
	In this subsection, we show that the minimizer of $\TAP(\mu,\cdot)$ depends continuously on the empirical measure $\mu$ and on the overlap $q$. This stability is used in the main proofs to pass from finite-$N$ empirical measures to their deterministic limits.
	
	Recall the metric $\dist$ on $\mc{P}([0,1])$ \eqref{def:dist}.

	\begin{lemma}[Stability of the TAP minimizer]\label{lemma:stableTapMin}
		Fix $q\in (0,1)$ and let $\mu\in \mc{P}([-1,1])$ with $\int m^2\,\mu(\dd m)=q$. Let $\zeta_\star\in\mc{P}([q,1])$ be the unique minimizer of $\TAP(\mu,\cdot)$
		and for every $N$, let $\zeta_N$ be the unique minimizer of $\TAP(\mu_N,\cdot)$,
		where $q_N:=\int m^2\,\dd\mu_N(m)$.
		If $q_N\to q$ and $\mu_N\Rightarrow \mu$ weakly, then
		\begin{equation*}
			\dist(\zeta_N,\zeta_\star)\xrightarrow[N\to\infty]{} 0.
		\end{equation*}
		In particular, if $\int m^2\,\mu(\dd m)=q$,
		$m_1,\ldots,m_N$ are i.i.d.\ with law~$\mu$,
		$q_N=\frac1N\sum_{i=1}^N m_i^2$,
		$\mu_N=\frac1N\sum_{i=1}^N\delta_{m_i}$, and $\zeta_\bfm$ denotes the minimizer in
		Definition~\ref{def:zetam}, then for every $\ve>0$,
		\begin{equation*}
			\mu^{\otimes N}\!\left(\dist(\zeta_\bfm,\zeta_\star)>\ve\right)\xrightarrow[N\to\infty]{} 0.
		\end{equation*}
	\end{lemma}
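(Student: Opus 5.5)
The plan is to view $\zeta_N$ and $\zeta_\star$ as minimizers of strictly convex functionals on compact spaces of measures. We then show that the functionals $\TAP(\mu_N,\cdot)$ converge uniformly to $\TAP(\mu,\cdot)$, after transporting everything to a common interval. Since the supports $[q_N,1]$ move, we first extend each $\zeta'\in\mc P([q_N,1])$ to $\mc P([0,1])$ and regard all minimizers as elements of the compact metric space $(\mc P([0,1]),\dist)$. Concretely, we use the map $\zeta'\mapsto \zeta'|_{(q,1]}+\zeta'([0,q])\delta_q$ to move measures between $\mc P([q_N,1])$ and $\mc P([q,1])$. This map moves $\zeta'$ by at most $|q_N-q|$ in $\dist$.

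\textbf{Step 1 (compactness).} By compactness, from any subsequence of $(\zeta_N)$ we extract a further subsequence converging to some $\bar\zeta$. Since $q_N\to q$, the limit $\bar\zeta$ lies in $\mc P([q,1])$. It then suffices to show $\bar\zeta=\zeta_\star$, by uniqueness of the minimizer (Lemma~\ref{lemma:first order mu}\ref{item:convexmu}).

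\textbf{Step 2 (joint continuity of the functional).} We show that $(\nu,r,\zeta')\mapsto \TAP(\nu,\zeta')$, with $r=\int m^2\dd\nu$, is continuous along our sequences. The term $\mc U_{\zeta'}(r)$ is clearly continuous in $(r,\zeta')$. For the entropy-like term $\int \Phi^*_{\zeta'}(r,m)\dd\nu(m)$, we use that $(r,\zeta')\mapsto\Phi_{\zeta'}(r,\cdot)$ is Lipschitz in sup norm, by the standard Parisi PDE stability cited in the proof of Lemma~\ref{lemma:Delta-bound}. Since Legendre transforms are $1$-Lipschitz in sup norm, $\Phi^*_{\zeta'}(r,\cdot)$ is uniformly continuous in $(r,\zeta')$ on $[-1,1]$. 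Moreover, $\Phi^*$ is bounded on $[-1,1]$: the values at $\pm1$ are finite, since $\Phi(r,x)-|x|$ is bounded. It is also convex, hence lower semicontinuous, and it is continuous on $[-1,1]$, being a convex function finite on a closed interval in dimension one. Weak convergence $\mu_N\Rightarrow\mu$ then gives $\int\Phi^*_{\zeta'}(q_N,\cdot)\dd\mu_N\to\int \Phi^*_{\zeta'}(q,\cdot)\dd\mu$, uniformly in $\zeta'$, after an equicontinuity argument in $m$. I expect this step to be the main obstacle: equicontinuity of $\Phi^*$ near $m=\pm1$ must be checked uniformly in $\zeta'$. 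I would try to derive it from the uniform bounds $0\le\partial_{xx}\Phi\le1$ and $|\partial_x\Phi|\le1$, which give $|\Phi^*(r,m)-\Phi^*(r,m')|\le$ a modulus depending only on the tails of $\Phi(r,x)-|x|$. These tails are uniform, since $\Phi_{\zeta'}(r,x)-\log 2\cosh x$ is bounded by $\xi'(1)$.

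\textbf{Step 3 (passing to the limit).} For any $\zeta'\in\mc P([q,1])$, let $\zeta'_N$ be its transport to $\mc P([q_N,1])$. Optimality gives $\TAP(\mu_N,\zeta_N)\le \TAP(\mu_N,\zeta'_N)$. Taking limits along the subsequence and using Step 2, we get $\TAP(\mu,\bar\zeta)\le \TAP(\mu,\zeta')$. Hence $\bar\zeta$ is a minimizer, so $\bar\zeta=\zeta_\star$, and the full sequence converges.

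\textbf{Step 4 (the i.i.d.\ statement).} By the strong law of large numbers, almost surely $\mu_N\Rightarrow\mu$ (Varadarajan's theorem) and $q_N\to q$. Applying the deterministic statement pathwise yields $\dist(\zeta_\bfm,\zeta_\star)\to0$ almost surely. Almost sure convergence implies convergence in probability, which gives the claimed bound on $\mu^{\otimes N}$. If some $m_i=\pm1$, we use the finiteness of $\Phi^*$ at the endpoints noted in Step 2, so that $\zeta_\bfm$ remains well defined.
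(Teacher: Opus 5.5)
Your argument is correct, and it is a genuinely different route from the paper's only in the sense that the paper gives no argument: it defers entirely to the proof of Lemma~22 in \cite{chen2018generalized}. Your proof is the standard compactness-plus-uniqueness scheme, and it is self-contained:
\begin{itemize}
\item extract a $\dist$-convergent subsequence $\zeta_N\to\bar\zeta\in\mc P([q,1])$;
\item show $\TAP(\mu_N,\cdot)$ converges to $\TAP(\mu,\cdot)$ along convergent sequences of measures;
\item pass to the limit in $\TAP(\mu_N,\zeta_N)\le\TAP(\mu_N,\zeta'_N)$;
\item conclude with uniqueness of $\zeta_\star$.
\end{itemize}
Uniqueness of $\zeta_\star$ is a hypothesis of the statement, so you do not actually need Lemma~\ref{lemma:first order mu}, which is only formulated for empirical measures. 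The analytic inputs are few: the Lipschitz stability of $\Phi_\zeta(t,x)$ in $(t,\zeta)$ in sup norm, the $1$-Lipschitz property of the Legendre transform in sup norm, and finiteness of $\Phi^*$ at $\pm1$. The argument also never uses uniqueness of $\zeta_N$, only that it is a minimizer. The citation buys brevity; yours buys a proof written in the present notation.

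Two small corrections.

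First, in Step~2, convexity does not give lower semicontinuity. Nor is a finite convex function on a closed interval automatically continuous: take $0$ on $[0,1)$ and $1$ at $1$. The correct reasoning is that $\Phi^*_{\zeta'}(r,\cdot)$ is lower semicontinuous as a supremum of affine functions. Separately, the chord inequality for a convex function gives upper semicontinuity at the endpoints. Together these make it continuous on $[-1,1]$.

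Second, the ``main obstacle'' you anticipate (equicontinuity near $\pm1$ uniformly in $\zeta'$) is not needed. You only pass to the limit along sequences $(q_N,\zeta_N)\to(q,\bar\zeta)$ and $(q_N,\zeta'_N)\to(q,\zeta')$. With $f_N:=\Phi^*_{\zeta_N}(q_N,\cdot)$ and $f:=\Phi^*_{\bar\zeta}(q,\cdot)$, it suffices to write
$|\int f_N\,\dd\mu_N-\int f\,\dd\mu|\le\sup_{[-1,1]}|f_N-f|+|\int f\,\dd\mu_N-\int f\,\dd\mu|$.
The first term tends to $0$ by the sup-norm stability of $\Phi$ combined with the $1$-Lipschitz property of the Legendre transform. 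The second tends to $0$ by weak convergence, since the single function $f$ is continuous and bounded on $[-1,1]$.
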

	
	\begin{proof}
		This follows from \cite[Proof of Lemma~22]{chen2018generalized}.
	\end{proof}

	\section{Random matrices computations}\label{section:RMT comp}

	\subsection{Conditional law of the Hessian}
	
	In this subsection, we decompose the Hessian $\nabla^2 H(\bfm)$ into blocks along $\bfm$ and $\bfm^\perp$, and compute the conditional law of each block given the value and gradient of $H$ at $\bfm$.
	
	\begin{lemma}\label{lemma:conditional hessian}
		Fix $\bfm\in[-1,1]^N$ and set $q:=\|\bfm\|^2/N\in[0,1]$.
		Assume $q>0$ (the case $q=0$ does not arise in our applications).
		Choose an orthonormal basis
		\[
		e_1=\frac{\bfm}{\sqrt{Nq}},\qquad e_2,\dots,e_N\in \bfm^\perp,
		\]
		and decompose the Hessian $\nabla^2 H(\bfm)$ in this basis as
		\[
		\nabla^2 H(\bfm)=
		\begin{pmatrix}
			A & B^{\!\top}\\[1mm]
			B & C
		\end{pmatrix},
		\quad A\in\dR,\quad B\in\dR^{N-1},\quad C\in\dR^{(N-1)\times(N-1)}.
		\]
		Condition on $H(\bfm)=Nf$ and $\nabla H(\bfm)=x$.
		Write $x_\parallel:=\langle x,e_1\rangle$ and let
		\[
		x_\perp := \bigl(\langle x,e_2\rangle,\dots,\langle x,e_N\rangle\bigr)\in\dR^{N-1}
		\]
		be the coordinate vector of the projection of $x$ onto $\bfm^\perp$ in the basis $(e_2,\dots,e_N)$.
		The following assertions hold.
		\begin{enumerate}[label=(\roman*)]
			\item\label{item:tangent} \textup{(Tangent block.)}
			The block $C$ is independent of $\bigl(H(\bfm),\nabla H(\bfm)\bigr)$, and its entries satisfy
			\[
			\Cov(C_{ij},C_{kl})=\frac{\xi''(q)}{N}\bigl(\delta_{ik}\delta_{jl}+\delta_{il}\delta_{jk}\bigr),
			\qquad i,j,k,l\in\{2,\dots,N\}.
			\]
			Equivalently (under the convention that $\GOE_{n}$ has off-diagonal variance $1$ and diagonal variance $2$),
			\[
			C\ \stackrel{d}{=}\ \sqrt{\xi''(q)}\;\frac{\GOE_{N-1}}{\sqrt{N}}.
			\]
			
			\item\label{item:mixed} \textup{(Mixed block.)}
			Conditionally on $\bigl(H(\bfm),\nabla H(\bfm)\bigr)$, the vector $B$ is Gaussian and independent of $C$, with
			\begin{equation}\label{eq:B-mean}
				\dE\!\left[B\;\middle|\;H(\bfm)=Nf,\;\nabla H(\bfm)=x\right]
				=\frac{\xi''(q)}{\xi'(q)}\,\sqrt{\frac{q}{N}}\; x_\perp,
			\end{equation}
			\begin{equation}\label{eq:B-cov}
				\Var\!\left(B\;\middle|\;H(\bfm)=Nf,\;\nabla H(\bfm)=x\right)
				=\frac{1}{N}\!\left(\xi''(q)+q\,\xi^{(3)}(q)-\frac{\xi''(q)^2\,q}{\xi'(q)}\right) I_{N-1}.
			\end{equation}
			
			\item\label{item:longitudinal} \textup{(Longitudinal entry.)}
			Assume that $\det\Sigma_X>0$ (this holds whenever $\xi$ is a mixed model).
			Define
			\begin{equation}\label{eq:SigmaX}
				\Sigma_X:=
				\begin{pmatrix}
					N\xi(q) & \xi'(q)\sqrt{Nq}\\[1mm]
					\xi'(q)\sqrt{Nq} & \xi'(q)+q\,\xi''(q)
				\end{pmatrix},
			\end{equation}
			\begin{equation}\label{eq:SigmaAX}
				\Sigma_{A,X}:=\Bigl(\;\xi''(q)\,q\;,\;\bigl(\xi^{(3)}(q)\,q+2\,\xi''(q)\bigr)\sqrt{q/N}\;\Bigr),
			\end{equation}
			\begin{equation}\label{eq:VarA}
				\Var(A)=\frac{2\,\xi''(q)}{N}+\frac{4\,\xi^{(3)}(q)\,q}{N}+\frac{\xi^{(4)}(q)\,q^2}{N}.
			\end{equation}
			Then, conditionally on $\bigl(H(\bfm),\nabla H(\bfm)\bigr)$, the entry $A$ is Gaussian and independent of $C$, with
			\begin{equation}\label{eq:A-mean}
				\dE\!\left[A\;\middle|\;H(\bfm)=Nf,\;\nabla H(\bfm)=x\right]
				=\;\Sigma_{A,X}\;\Sigma_X^{-1}\;\binom{Nf}{x_\parallel},
			\end{equation}
			\begin{equation}\label{eq:A-var}
				\Var\!\left(A\;\middle|\;H(\bfm)=Nf,\;\nabla H(\bfm)=x\right)
				=\;\Var(A)\;-\;\Sigma_{A,X}\;\Sigma_X^{-1}\;\Sigma_{A,X}^{\!\top}.
			\end{equation}
			
			\item\label{item:joint} \textup{(Joint structure.)}
			Conditionally on $\bigl(H(\bfm)=Nf,\nabla H(\bfm)=x\bigr)$, the triple $(A,B,C)$ is jointly Gaussian.
			Moreover, $C$ is independent of $(A,B)$, and $A$ and $B$ are conditionally independent as well (so $(A,B,C)$ are conditionally independent given $(H(\bfm),\nabla H(\bfm))$).
		\end{enumerate}
	\end{lemma}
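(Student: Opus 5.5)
The plan is to reduce everything to covariance computations for the Gaussian field $H$ near $\bfm$, followed by Gaussian conditioning. We differentiate the covariance kernel $N\xi(\langle \bfm,\bfm'\rangle/N)$ up to twice in each argument and evaluate at $\bfm=\bfm'$, with $\|\bfm\|^2=Nq$. This gives the joint covariance of $(H,\nabla H,\nabla^2H)$ at $\bfm$:
\[
\Cov(\partial_{ij}H,\partial_{kl}H)=\tfrac{\xi''}{N}(\delta_{ik}\delta_{jl}+\delta_{il}\delta_{jk})+\tfrac{\xi^{(3)}}{N^2}(\cdots m\ldots)+\tfrac{\xi^{(4)}}{N^3}m_im_jm_km_l,
\]
together with
\[
\Cov(\partial_{ij}H,\partial_k H)=\tfrac{\xi''}{N}(\delta_{ik}m_j+\delta_{jk}m_i)+\tfrac{\xi^{(3)}}{N^2}m_im_jm_k,
\qquad
\Cov(\partial_{ij}H,H)=\tfrac{\xi''}{N}m_im_j.
\]
All the $\xi^{(3)}$ and $\xi^{(4)}$ terms carry at least one factor of a coordinate of $\bfm$ in each slot pairing. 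So after rotating to the basis $e_1=\bfm/\sqrt{Nq}$, $e_2,\dots,e_N\perp\bfm$, each such term vanishes unless the corresponding index equals $1$. This is the basic mechanism behind the block structure.

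For \ref{item:tangent}, take $i,j\ge2$. The covariances of $C_{ij}$ with $H$ and with every component of $\nabla H$ vanish: each term contains $\langle e_i,\bfm\rangle$ or $\langle e_j,\bfm\rangle$, except the term $\tfrac{\xi''}{N}\delta_{ik}m_j$, which is also zero. Joint Gaussianity then gives independence of $C$ from $(H(\bfm),\nabla H(\bfm))$. Its covariance retains only the $\xi''$ GOE term, which yields the stated scaled-GOE law.

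For \ref{item:mixed}, the vector $B_j=\partial_{1j}H$ with $j\ge2$ correlates only with $\langle\nabla H,e_j\rangle$, with covariance $\tfrac{\xi''}{N}\sqrt{Nq}=\xi''\sqrt{q/N}$. The gradient covariance restricted to $\bfm^\perp$ is $\xi'(q)I$, and is block-orthogonal to the $e_1$ and $H$ directions. The regression formula therefore gives mean $\tfrac{\xi''}{\xi'}\sqrt{q/N}\,x_\perp$. The unconditional variance of $B_j$ is $\tfrac{1}{N}(\xi''+q\xi^{(3)})$, and subtracting $\xi''^2q/(N\xi')$ gives \eqref{eq:B-cov}. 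The same computation shows that $\Cov(B_j,C_{kl})=0$.

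For \ref{item:longitudinal}, the entry $A$ correlates only with $H(\bfm)$ and $x_\parallel$. The entries of $\Sigma_{A,X}$ come from the formulas above: $\xi''q$ for $H$, and $(2\xi''+q\xi^{(3)})\sqrt{q/N}$ for $\partial_{e_1}H$. The matrix $\Sigma_X$ is the covariance of $(H,x_\parallel)$ already computed in Lemma~\ref{lemma:KacRice}. Standard Gaussian conditioning then gives \eqref{eq:A-mean} and \eqref{eq:A-var}, and $\det\Sigma_X>0$ follows from the $D(q)>0$ computation.

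Finally, \ref{item:joint} follows from one more check: $\Cov(A,B_j)$, for $j\ge2$, and the conditional cross-covariance both vanish, because $A,x_\parallel,H$ are orthogonal to the $\bfm^\perp$ directions and each term carries a factor $\langle e_j,\bfm\rangle=0$. This block orthogonality survives conditioning. The main obstacle is purely bookkeeping. We must compute the fourth-order kernel derivative correctly, including symmetrization, to get $\Var(A)=(2\xi''+4q\xi^{(3)}+q^2\xi^{(4)})/N$. We must also verify that every conditional cross-covariance between blocks is zero rather than merely small.
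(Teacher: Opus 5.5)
Your proposal is correct and follows essentially the same route as the paper. You differentiate the kernel to get the covariance identities for $(H,\nabla H,\nabla^2 H)$ at $\bfm$, pass to the basis in which $\bfm$ has only its first coordinate $\sqrt{Nq}$ nonzero, read off that $C$ is uncorrelated with $(H(\bfm),\nabla H(\bfm))$, that $B$ correlates only with the $\bfm^\perp$ components of the gradient, and that $A$ correlates only with $(H(\bfm),\langle\nabla H(\bfm),e_1\rangle)$, and then apply Gaussian regression, with (iv) following as in the paper from $\Cov(A,B_j)=0$ and the independence of $(H(\bfm),\langle\nabla H(\bfm),e_1\rangle)$ from the $\bfm^\perp$ gradient components (the check $\Cov(A,C_{kl})=0$ is the same index count and is worth stating explicitly).
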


	\begin{proof}
		Throughout, we work in the orthonormal basis $\{e_1,\dots,e_N\}$ with $e_1=\bfm/\sqrt{Nq}$ and $e_2,\dots,e_N\in\bfm^\perp$.
		In this basis, the coordinates of $\bfm$ are $\sigma_1=\sqrt{Nq}$ and $\sigma_r=0$ for $r\geq 2$.
		We write
		\[
		\nabla^2 H(\bfm)=\begin{pmatrix}
			A & B^\top \\
			B & C
		\end{pmatrix},\qquad A\in\dR,\quad B\in\dR^{N-1},\quad C\in\mc{M}_{N-1}(\dR).
		\]
		Recall that if $(X_1,X_2)$ is a centered Gaussian vector with covariance
		$\bigl(\begin{smallmatrix}\Sigma_{11} & \Sigma_{12} \\ \Sigma_{21} & \Sigma_{22}\end{smallmatrix}\bigr)$, then
		\begin{equation}\label{eq:gaussian regression}
			\mathrm{Law}(X_1\mid X_2)=\mc{N}\!\bigl(\Sigma_{12}\Sigma_{22}^{-1}X_2,\;\Sigma_{11}-\Sigma_{12}\Sigma_{22}^{-1}\Sigma_{21}\bigr).
		\end{equation}
		
		The proof relies on the following covariance identities, obtained by differentiating
		$\dE[H(\sigma)H(\tau)]=N\xi(\sigma\cdot\tau/N)$ the appropriate number of times and evaluating at $\tau=\sigma=\bfm$:
		\begin{alignat}{2}
			\dE\bigl[\partial_{ij}H(\bfm)\;\partial_{kl}H(\bfm)\bigr]
			&= \frac{\xi^{(4)}(q)}{N^3}\,\sigma_i\sigma_j\sigma_k\sigma_l
			+ \frac{\xi^{(3)}(q)}{N^2}\bigl(\sigma_j\sigma_k\delta_{il}+\sigma_j\sigma_l\delta_{ik}
			+ \sigma_i\sigma_k\delta_{jl}+\sigma_i\sigma_l\delta_{jk}\bigr) \notag \\
			&\qquad + \frac{\xi''(q)}{N}\bigl(\delta_{ik}\delta_{jl}+\delta_{il}\delta_{jk}\bigr), \label{eq:HH-cov} \\[2mm]
			\dE\bigl[\partial_{ij}H(\bfm)\;\partial_k H(\bfm)\bigr]
			&= \frac{\xi^{(3)}(q)}{N^2}\,\sigma_i\sigma_j\sigma_k
			+ \frac{\xi''(q)}{N}\bigl(\sigma_i\delta_{jk}+\sigma_j\delta_{ik}\bigr), \label{eq:Hg-cov} \\[2mm]
			\dE\bigl[\partial_{ij}H(\bfm)\;H(\bfm)\bigr]
			&= \frac{\xi''(q)}{N}\,\sigma_i\sigma_j. \label{eq:HH0-cov}
		\end{alignat}
		
		\medskip
		\textit{Proof of~\ref{item:tangent}.}
		For $i,j\in\{2,\dots,N\}$, \eqref{eq:HH0-cov} and \eqref{eq:Hg-cov} give
		$\Cov(C_{ij},H(\bfm))=0$ and $\Cov(C_{ij},\allowbreak\partial_k H(\bfm))=0$ for all $k$ since $\sigma_r=0$ for $r\geq 2$.
		Hence $C$ is independent of $(H(\bfm),\nabla H(\bfm))$.
		
		For $i,j,k,l\in\{2,\dots,N\}$, \eqref{eq:HH-cov} reduces to
		\[
		\Cov(C_{ij},C_{kl})=\frac{\xi''(q)}{N}\bigl(\delta_{ik}\delta_{jl}+\delta_{il}\delta_{jk}\bigr),
		\]
		which identifies the law of $C$ and yields~\ref{item:tangent}.
		
		\medskip
		\textit{Proof of~\ref{item:mixed}.}
		For $r\in\{2,\dots,N\}$, the mixed entry is $B_r=\partial_{1r}H(\bfm)$.
		By \eqref{eq:HH0-cov}, $\Cov(B_r,H(\bfm))=0$ since $\sigma_r=0$.
		By \eqref{eq:Hg-cov} with $(i,j)=(1,r)$,
		\[
		\Cov(B_r,\partial_k H(\bfm))
		=\frac{\xi''(q)}{N}\,\sigma_1\,\delta_{rk}
		=\xi''(q)\sqrt{\frac{q}{N}}\;\delta_{rk},\qquad k\in\{2,\dots,N\},
		\]
		and $\Cov(B_r,\partial_1 H(\bfm))=0$.
		
		Next, \eqref{eq:HH-cov} with $(i,j,k,l)=(1,r,1,s)$ gives, for $r,s\in\{2,\dots,N\}$,
		\[
		\Cov(B_r,B_s)
		=\frac{\xi''(q)}{N}\delta_{rs}+\frac{\xi^{(3)}(q)}{N^2}\sigma_1^2\,\delta_{rs}
		=\frac{1}{N}\bigl(\xi''(q)+q\,\xi^{(3)}(q)\bigr)\delta_{rs}.
		\]
		On $\bfm^\perp$, the gradient coordinates have covariance
		$\Var\bigl((\nabla H(\bfm))_{\bfm^\perp}\bigr)=\xi'(q)\,I_{N-1}$.
		Thus Gaussian regression \eqref{eq:gaussian regression} yields the conditional mean \eqref{eq:B-mean} and
		\[
		\Var\!\left(B\;\middle|\;H(\bfm),\nabla H(\bfm)\right)
		=\frac{1}{N}\bigl(\xi''(q)+q\,\xi^{(3)}(q)\bigr)I_{N-1}
		-\frac{\xi''(q)^2\,q}{N\,\xi'(q)}\,I_{N-1},
		\]
		which is exactly \eqref{eq:B-cov}.
		Finally, $\Cov(B_r,C_{kl})=0$ for all $r,k,l\ge 2$ by \eqref{eq:HH-cov}, hence $B$ is (conditionally) independent of $C$.
		
		\medskip
		\textit{Proof of~\ref{item:longitudinal}.}
		Write $g:=\nabla H(\bfm)$ and $g_\parallel:=\langle g,e_1\rangle$.
		From \eqref{eq:HH0-cov} with $i=j=1$,
		\[
		\Cov(A,H(\bfm))=\frac{\xi''(q)}{N}\,\sigma_1^2=\xi''(q)\,q.
		\]
		From \eqref{eq:Hg-cov} with $i=j=k=1$,
		\[
		\Cov(A,g_\parallel)
		=\frac{\xi^{(3)}(q)}{N^2}\,\sigma_1^3+\frac{2\,\xi''(q)}{N}\,\sigma_1
		=\bigl(\xi^{(3)}(q)\,q+2\,\xi''(q)\bigr)\sqrt{\frac{q}{N}},
		\]
		and $\Cov(A,\partial_r H(\bfm))=0$ for $r\geq 2$.
		Hence only $(H(\bfm),g_\parallel)$ correlates with $A$.
		
		The joint covariance of $(H(\bfm),g_\parallel)$ is $\Sigma_X$ in \eqref{eq:SigmaX}, and the cross-covariance is $\Sigma_{A,X}$ in \eqref{eq:SigmaAX}.
		Gaussian regression \eqref{eq:gaussian regression} gives \eqref{eq:A-mean}--\eqref{eq:A-var}.
		
		To compute $\Var(A)$, use \eqref{eq:HH-cov} with $i=j=k=l=1$:
		\[
		\Var(A)
		=\frac{\xi^{(4)}(q)\,\sigma_1^4}{N^3}+\frac{4\,\xi^{(3)}(q)\,\sigma_1^2}{N^2}+\frac{2\,\xi''(q)}{N}
		=\frac{\xi^{(4)}(q)\,q^2}{N}+\frac{4\,\xi^{(3)}(q)\,q}{N}+\frac{2\,\xi''(q)}{N},
		\]
		which is \eqref{eq:VarA}.
		Finally, $\Cov(A,C_{kl})=0$ for $k,l\geq 2$ by \eqref{eq:HH-cov}, so $A$ is independent of $C$.
		
		\medskip
		\textit{Proof of~\ref{item:joint}.}
		The vector $\bigl(A,B,C,H(\bfm),\nabla H(\bfm)\bigr)$ is jointly Gaussian, hence so is the conditional law of $(A,B,C)$ given $(H(\bfm),\nabla H(\bfm))$.
		Parts~\ref{item:tangent}--\ref{item:longitudinal} show that $C$ is independent of $(A,B,H(\bfm),\nabla H(\bfm))$.
		
		Moreover, by \eqref{eq:HH-cov}, $\Cov(A,B_r)=0$ for all $r\ge2$,
		and by \eqref{eq:Hg-cov},
		$\Cov(A,(\nabla H(\bfm))_{\bfm^\perp})=0$ while
		$\Cov(B,(H(\bfm), g_\parallel))=0$.
		Since $(H(\bfm),g_\parallel)$ is independent of $(\nabla H(\bfm))_{\bfm^\perp}$, it follows that,
		conditionally on $(H(\bfm),\nabla H(\bfm))$, the variables $A$ and $B$ are independent as well.
		This proves~\ref{item:joint}.
	\end{proof}
	\subsection{Free convolution with a semicircle law}\label{section:free convolution}
	
	In this subsection, we collect standard results on the free convolution of a probability measure with a semicircle law.
	
	Let $\mu$ be a probability measure on $\dR$ and let $t>0$. Denote by $\sigma_t$ the semicircle law of variance $t$. For every $u\in\dR$, define
	\begin{equation}\label{eq:defvu}
		v_t(u) = \inf\Bigl\{v\in\dR^+ : \int \frac{\dd\mu(\lambda)}{|\lambda-u|^2+v^2} \leq \frac{1}{t}\Bigr\},
	\end{equation}
	the domain
	\begin{equation}\label{def:Omegamut}
		\Omega_{\mu,t} = \bigl\{u+iv\in \mathbb{C}^+ : v > v_t(u)\bigr\},
	\end{equation}
	and the mapping
	\begin{equation}\label{eq:defHt}
		H_t \colon \overline{\Omega_{\mu,t}} \longrightarrow \mathbb{C}^+\cup\dR, \qquad z \mapsto z + tG_\mu(z).
	\end{equation}
	The following lemma, due to Biane \cite{biane}, describes the analytic properties of $H_t$ and the subordination phenomenon for free convolution.
	
	\begin{lemma}[Biane \cite{biane}]\label{lemma:free co}
		Let $\mu$ be a probability measure on $\dR$ and $t>0$. The mapping $H_t$ is a homeomorphism from $\overline{\Omega_{\mu,t}}$ onto $\mathbb{C}^+\cup\dR$, which restricts to a conformal bijection from $\Omega_{\mu,t}$ onto $\mathbb{C}^+$. Its inverse
		\begin{equation*}
			\omega_{\mu,t} \colon \mathbb{C}^+\cup\dR \longrightarrow \overline{\Omega_{\mu,t}}
		\end{equation*}
		is called the \emph{subordination function}. Moreover, for every $z\in \Omega_{\mu,t}$,
		\begin{equation}\label{eq:method of car}
			G_{\mu\boxplus \sigma_t}\bigl(z+tG_\mu(z)\bigr) = G_\mu(z).
		\end{equation}
		If the support of $\mu$ is bounded from below, the left edge of $\mu\boxplus\sigma_t$ is given by $\ell(\mu\boxplus\sigma_t) = H_t(\omega)$, where
		\begin{equation}\label{eq:shock}
			\omega = \inf\Bigl\{\omega\in\dR : \int \frac{\dd\mu(\lambda)}{(\lambda-\omega)^2} \geq \frac{1}{t}\Bigr\}.
		\end{equation}
	\end{lemma}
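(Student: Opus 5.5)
The plan is the classical argument of Biane: analyse $H_t(z)=z+tG_\mu(z)$ directly on $\overline{\Omega_{\mu,t}}$, show that it is a bijection onto $\mathbb{C}^+\cup\dR$, and then get the subordination identity by analytic continuation from a neighbourhood of infinity.

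\emph{Step 1: the domain and its boundary.} For fixed $u$, the map $v\mapsto\int\frac{\dd\mu(\lambda)}{|\lambda-u|^2+v^2}$ is strictly decreasing and tends to $0$ as $v\to\infty$, so $v_t(u)$ in \eqref{eq:defvu} is well defined. By dominated convergence it depends continuously on $u$; I would verify this carefully, since it is needed for the boundary to be a curve. Writing $z=u+iv$, one has
\[
\Im H_t(z)=v\Bigl(1-t\int\frac{\dd\mu(\lambda)}{|\lambda-z|^2}\Bigr).
\]
This is positive on $\Omega_{\mu,t}$. On the graph $\{u+iv_t(u)\}$ it vanishes, either because the bracket is $0$ when $v_t(u)>0$, or because $v=0$. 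Hence $H_t(\Omega_{\mu,t})\subset\mathbb{C}^+$ and $H_t(\partial\Omega_{\mu,t})\subset\dR$.

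\emph{Step 2: injectivity.} For $z_1\neq z_2$ in $\overline{\Omega_{\mu,t}}$,
\[
H_t(z_1)-H_t(z_2)=(z_1-z_2)\Bigl(1-t\int\frac{\dd\mu(\lambda)}{(\lambda-z_1)(\lambda-z_2)}\Bigr).
\]
By Cauchy--Schwarz, the modulus of the integral is at most $\bigl(\int|\lambda-z_1|^{-2}\dd\mu\int|\lambda-z_2|^{-2}\dd\mu\bigr)^{1/2}\le 1/t$. The inequality is strict as soon as one point lies in the open domain, which gives injectivity there. On the boundary, equality in Cauchy--Schwarz would force $\lambda\mapsto(\lambda-z_1)^{-1}$ and $\lambda\mapsto(\lambda-z_2)^{-1}$ to be proportional $\mu$-a.e., and I would rule this out unless $z_1=z_2$ (or $\mu$ is a point mass, a case that can be checked by hand).

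\emph{Step 3: surjectivity and the homeomorphism (main obstacle).} The boundary curve $u\mapsto u+iv_t(u)$ is mapped injectively into $\dR$. Since $H_t(z)-z=tG_\mu(z)\to0$ as $|z|\to\infty$ in $\overline{\Omega_{\mu,t}}$, its image has limits $\pm\infty$ at $u\to\pm\infty$, so the boundary is mapped homeomorphically onto $\dR$. An argument-principle (degree) computation on large truncated regions of $\Omega_{\mu,t}$ then shows that every $w\in\mathbb{C}^+$ has exactly one preimage. I expect this step to require the most care: it needs continuity of $v_t$, properness of $H_t$, and a treatment of the regions where $v_t\equiv0$.

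\emph{Step 4: subordination identity.} Let $R_{\sigma_t}(w)=tw$ be the $R$-transform of $\sigma_t$. By additivity of $R$-transforms under $\boxplus$, for $w$ near $0$ in $\mathbb{C}^-$ we have $G^{-1}_{\mu\boxplus\sigma_t}(w)=G_\mu^{-1}(w)+tw$. Setting $w=G_\mu(z)$ with $\Im z$ large gives \eqref{eq:method of car} in a neighbourhood of infinity. Both sides are analytic on the connected set $\Omega_{\mu,t}$, so the identity extends to all of $\Omega_{\mu,t}$.

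\emph{Step 5: the left edge.} On the real axis to the left of $\supp\mu$, $H_t$ is real with $H_t'(x)=1-t\int(\lambda-x)^{-2}\dd\mu$. This is positive for $x<\omega$, where $\omega$ is the point in \eqref{eq:shock}, and vanishes at $\omega$. So $(-\infty,\omega]$ lies in $\partial\Omega_{\mu,t}$ and $\omega_{\mu,t}$ maps $(-\infty,H_t(\omega)]$ to real points. Consequently $G_{\mu\boxplus\sigma_t}$ is real there, and Stieltjes inversion shows that the density vanishes on $(-\infty,H_t(\omega))$. For $x$ slightly larger than $H_t(\omega)$, the preimage $\omega_{\mu,t}(x)$ lies on the part of the boundary curve with $v_t>0$. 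There $\Im G_\mu(\omega_{\mu,t}(x))<0$, so the density of $\mu\boxplus\sigma_t$ is positive. Together these give $\ell(\mu\boxplus\sigma_t)=H_t(\omega)$.
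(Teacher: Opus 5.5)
The paper gives no argument for this lemma; it is quoted from Biane. Your outline is a correct, self-contained proof, with the steps in a different order from the usual one.

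\textbf{Your route versus the usual order.} You prove bijectivity of $H_t$ by hand: Cauchy--Schwarz for injectivity, then degree/properness for surjectivity. Only afterwards do you derive the subordination identity, by $R$-transform additivity and analytic continuation on $\Omega_{\mu,t}$. A common alternative reverses this order.
\begin{itemize}
\item First, $G_{\mu\boxplus\sigma_t}(z)=G_\mu\bigl(z-tG_{\mu\boxplus\sigma_t}(z)\bigr)$ holds on all of $\mathbb{C}^+$. Both sides are analytic there, since $\Im\bigl(z-tG_{\mu\boxplus\sigma_t}(z)\bigr)\ge\Im z$. They agree on a Stolz angle at infinity by the same $R$-transform computation.
\item Then $F_t(z):=z-tG_{\mu\boxplus\sigma_t}(z)$ satisfies $H_t\circ F_t=\mathrm{id}_{\mathbb{C}^+}$. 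Your formula for $\Im H_t$ shows $F_t(\mathbb{C}^+)\subset\Omega_{\mu,t}$.
\item This gives surjectivity with an explicit inverse and removes the need for a degree computation. Injectivity still comes from your Cauchy--Schwarz bound.
\end{itemize}
Your ordering has the merit that the homeomorphism statement becomes purely function-theoretic.

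Within your route, Step 3 is easier than you expect. The Cauchy--Schwarz bound from Step 2 gives, on $\overline{\Omega_{\mu,t}}$,
\[
|G_\mu(z)-G_\mu(z')|\le |z-z'|/t
\qquad\text{and}\qquad
|tG_\mu(z)|\le\sqrt t .
\]
So $H_t$ is continuous up to the real boundary points and proper. Hence $H_t(\Omega_{\mu,t})$ is open (open mapping theorem) and closed in $\mathbb{C}^+$ (limits cannot land on the boundary, which maps into $\mathbb{R}$). It therefore equals $\mathbb{C}^+$, with no argument principle needed.

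\textbf{Small points to fix.}
\begin{itemize}
\item \emph{Step 2.} Equality in $|\int fg\,\dd\mu|\le\|f\|_2\|g\|_2$ forces $f\propto\bar g$, i.e.\ $(\lambda-z_1)^{-1}\propto(\lambda-\bar z_2)^{-1}$, not $f\propto g$. For non-Dirac $\mu$ this gives $z_1=\bar z_2$, hence $z_1=z_2\in\mathbb{R}$, so your conclusion stands.
\item \emph{Step 4.} For general, non-compactly supported $\mu$, the $R$-transform identity is only available on a truncated Stolz angle, not a full neighbourhood of infinity. That is still enough for the identity theorem.
\item \emph{Step 5, vanishing derivative.} The claim that $H_t'$ vanishes at $\omega$ fails when $\omega=\ell(\mu)$ and $\int(\lambda-\ell(\mu))^{-2}\dd\mu<1/t$.
\item \emph{Step 5, positive density.} $v_t$ can vanish at points accumulating at $\omega$ from the right, so not every $x$ just above $H_t(\omega)$ carries positive density. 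What you need, and what is true, is a sequence $u_n\downarrow\omega$ with $v_t(u_n)>0$:
\begin{itemize}
\item If $\omega<\ell(\mu)$, this follows from strict monotonicity of $x\mapsto\int(\lambda-x)^{-2}\dd\mu$ on $(-\infty,\ell(\mu))$.
\item If $\omega=\ell(\mu)$, then $\mu([\omega,\omega+\epsilon))>0$, and Fubini gives $\int_\omega^{\omega+\epsilon}\int(\lambda-x)^{-2}\dd\mu\,\dd x=\infty$. So some $x\in(\omega,\omega+\epsilon)$ has $\int(\lambda-x)^{-2}\dd\mu>1/t$.
\end{itemize}
The points $H_t(u_n+iv_t(u_n))\downarrow H_t(\omega)$ then carry density $v_t(u_n)/(\pi t)>0$, which gives the left edge.
\end{itemize}
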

	
	Lemma~\ref{lemma:free co} admits a natural reformulation in the language of PDEs. Writing $\mu_t := \mu\boxplus \sigma_t$, the Stieltjes transform $G_{\mu_t}$ satisfies the complex Burgers equation: for every $z$ outside the support of $\mu_t$,
	\begin{equation}\label{eq:Burgers}
		\partial_t G_{\mu_t}(z) + G_{\mu_t}(z)\,\partial_z G_{\mu_t}(z) = 0.
	\end{equation}
	This equation can be solved by the method of characteristics. The characteristic emanating from $z\in \mathbb{C}^+$ is the straight line $t\mapsto z+tG_\mu(z)$, and \eqref{eq:method of car} expresses the fact that the solution is constant along characteristics (for suitable $z$). For $\omega\in (-\infty,\ell(\mu)]$, the map $z\mapsto z+tG_\mu(z)$ is injective on $(-\infty,\omega)$ provided
	\begin{equation*}
		\int \frac{\dd\mu(\lambda)}{(\lambda-\omega)^2} < \frac{1}{t}.
	\end{equation*}
	In view of \eqref{eq:shock}, the left edge of $\mu_t$ can therefore be interpreted as the first point at which the solution develops a shock at time $t$.

	The PDE interpretation makes it possible to compute the logarithmic potential of $\mu\boxplus\sigma_t$ via the Hopf--Lax formula.
	
	\begin{lemma}[Hopf--Lax formula for the logarithmic potential]\label{lemma:Hopf Lax}
		Let $\mu$ be a probability measure on $\dR$ with support bounded from below and let $t>0$.
		\begin{enumerate}
			\item Let $x\in\dR$ and set $\omega := \omega_{\mu,t}(x)\in\mathbb{C}^+\cup\dR$. Define the characteristic path $(z_s)_{s\in [0,t]}$ by
			\begin{equation}\label{eq:zs}
				z_s = \omega + s\, G_\mu(\omega).
			\end{equation}
			If $x\leq \ell(\mu\boxplus\sigma_t)$, then for every $s\in [0,t]$,
			\begin{equation}\label{eq:omegamu}
				x \leq z_s \leq \ell(\mu\boxplus\sigma_s).
			\end{equation}
			If instead $x$ lies in the interior of $\supp(\mu\boxplus\sigma_t)$, then for every $s\in [0,t)$,
			\begin{equation*}
				\Im\bigl(\omega + s\,G_\mu(\omega)\bigr) > 0.
			\end{equation*}
			
			\item For every $x\leq \ell(\mu\boxplus\sigma_t)$,
			\begin{multline}\label{eq:ch}
				\int \log|\lambda-x|\,\dd (\mu\boxplus \sigma_t)(\lambda) = \int \log |\lambda-\omega_{\mu,t}(x)|\,\dd \mu(\lambda) + \frac{\Re\bigl(\omega_{\mu,t}(x)-x\bigr)^2 - \Im\bigl(\omega_{\mu,t}(x)\bigr)^2}{2t}\\
				= \inf_{u\leq \ell(\mu)}\;\sup_{\substack{v\geq 0 \\ u+iv\in \overline{\Omega_{\mu,t}}}} \Bigl(\int \log |\lambda-(u+iv)|\,\dd \mu(\lambda) + \frac{(x-u)^2-v^2}{2t}\Bigr).
			\end{multline}
			Moreover, the infimum in \eqref{eq:ch} is uniquely attained at $(u,v)=\bigl(\Re(\omega_{\mu,t}(x)),\,\Im(\omega_{\mu,t}(x))\bigr)$.
		\end{enumerate}
	\end{lemma}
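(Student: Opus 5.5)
The plan is to prove the Hopf--Lax formula by the method of characteristics for the complex Burgers equation \eqref{eq:Burgers}, working with the complex logarithmic potential. Set $L(s,z):=\int\log(z-\lambda)\,\dd(\mu\boxplus\sigma_s)(\lambda)$ for $z$ off the support, so that $\partial_z L=G_{\mu_s}$. Integrating \eqref{eq:Burgers} in $z$, I expect
\[
\partial_s L+\tfrac12 (\partial_z L)^2=0 ,
\]
a Hamilton--Jacobi equation whose characteristics are exactly the straight lines \eqref{eq:zs}. The integration constant can be fixed from the behaviour as $z\to\infty$: $L\sim\log z$, and the second moment grows linearly in $s$.

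For part (1), I take $x\le\ell(\mu\boxplus\sigma_t)$. Lemma~\ref{lemma:free co} gives $\omega=\omega_{\mu,t}(x)\in\overline{\Omega_{\mu,t}}$, and on the real axis this means $\omega\le\ell(\mu)$ with $\int(\lambda-\omega)^{-2}\dd\mu\le 1/t$. Then $z_s=\omega+sG_\mu(\omega)$ is affine in $s$, with $z_0=\omega$ and $z_t=x$. Since $G_\mu(\omega)<0$, it is decreasing. This gives $x\le z_s$. Because $\int(\lambda-\omega)^{-2}\dd\mu\le 1/t<1/s$, the point $\omega$ also lies in $\overline{\Omega_{\mu,s}}$, so $z_s=H_s(\omega)$. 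It then remains to check $H_s(\omega)\le\ell(\mu\boxplus\sigma_s)=H_s(\omega^{(s)})$, where $\omega^{(s)}$ is the threshold in \eqref{eq:shock}. This follows from monotonicity of $H_s$ on $(-\infty,\omega^{(s)}]$, because there $H_s'=1-s\int(\lambda-\cdot)^{-2}\dd\mu\ge0$. In the bulk case $\omega\in\Omega_{\mu,t}$ has positive imaginary part, and $\Im(\omega+sG_\mu(\omega))=\Im\omega\,(1-s\int|\lambda-\omega|^{-2}\dd\mu)$. The last factor is positive for $s<t$, since by the definition of $\Omega_{\mu,t}$ the integral is at most $1/t$.

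For the first equality in \eqref{eq:ch}, I integrate the Hamilton--Jacobi equation along the characteristic. Writing $g:=G_\mu(\omega)$, so that $\partial_zL(s,z_s)=g$ for all $s$ by \eqref{eq:method of car},
\[
\frac{\dd}{\dd s}L(s,z_s)=\partial_sL+g\,\partial_zL=-\tfrac12g^2+g^2=\tfrac12 g^2 .
\]
Hence $L(t,x)=L(0,\omega)+\tfrac t2 g^2$, and with $tg=x-\omega$ this gives $L(t,x)=\int\log(\omega-\lambda)\dd\mu+\frac{(x-\omega)^2}{2t}$. Taking real parts, and using $\Re\log=\log|\cdot|$, yields \eqref{eq:ch}: indeed $\Re(x-\omega)^2=\Re(\omega-x)^2-\Im(\omega)^2$ because $x$ is real. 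When $x$ equals the edge, or $\omega$ is real but the path touches the support, I will argue by approximation from $x-i\eta$ or $x-\eta$ and use continuity of the subordination map given by Lemma~\ref{lemma:free co}.

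The main obstacle is the inf--sup characterization and the uniqueness of the saddle. Define $\Psi(u,v):=\int\log|\lambda-u-iv|\dd\mu+\frac{(x-u)^2-v^2}{2t}$. Its gradient is $(\Re(-G_\mu(w))-(x-u)/t,\ \Im(-G_\mu(w))\cdot\ldots)$; more precisely, $\partial_u\Psi-i\partial_v\Psi=-G_\mu(w)-(x-w)/t$ with $w=u+iv$. So the critical points are exactly the solutions of $H_t(w)=x$, and by Lemma~\ref{lemma:free co} the unique one in $\overline{\Omega_{\mu,t}}$ is $\omega$. Since $\Psi$ is harmonic in $w$ plus a quadratic term, I will try to show two things. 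First, for fixed $u$, the map $v\mapsto\Psi$ is maximized at the boundary value $v_t(u)$ or at the interior critical point: on $\{v\ge v_t(u)\}$ one has $\partial_v\Psi=v(\int|\lambda-w|^{-2}\dd\mu-1/t)\le0$, so the supremum sits at $v=v_t(u)$. Second, the resulting function of $u$ is minimized at $\Re\omega$. This reduces to a one-dimensional convexity argument along the curve $v=v_t(u)$, and it is the step I expect to require the most care.
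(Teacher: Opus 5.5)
Your overall route is the natural one: derive $\partial_sL+\tfrac12(\partial_zL)^2=0$ for the logarithmic potential, integrate it along the straight characteristics $z_s=\omega+sG_\mu(\omega)$, then give a variational characterization. The paper itself only defers to an external reference, after motivating the lemma by exactly this Burgers/characteristics picture. Your treatment of part~(1) and of the first equality in \eqref{eq:ch} is essentially sound.

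The genuine gap is the step you flag yourself. You correctly show that for fixed $u$ the supremum in $v$ sits at $v=v_t(u)$. You then still have to prove that $F(u):=\Psi(u,v_t(u))$ is minimized over $u\le\ell(\mu)$ exactly and uniquely at $\Re\omega$, and you give no argument for this. Convexity cannot be read off from $\Psi$ itself, since $\Psi$ is harmonic in $(u,v)$.

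The missing ingredient is an envelope identity. Write $\Psi=\Re\Lambda$ with $\Lambda(w)=\int\log(w-\lambda)\,\dd\mu(\lambda)+\frac{(x-w)^2}{2t}$. Then
\[
\partial_u\Psi-i\,\partial_v\Psi=\Lambda'(w)=G_\mu(w)-\frac{x-w}{t}=\frac{1}{t}\bigl(H_t(w)-x\bigr),
\]
with $+G_\mu$, not $-G_\mu$ as you wrote.
\begin{itemize}
\item Wherever $v_t(u)>0$, the relation $\int|\lambda-w|^{-2}\,\dd\mu=1/t$ holds with equality on the curve. Hence simultaneously $\partial_v\Psi=0$ and $\Im H_t(w)=0$ there. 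Where $v_t(u)=0$, the $v$-term is absent.
\item Therefore $F'(u)=\frac1t\bigl(H_t(u+iv_t(u))-x\bigr)$, away from the junction point $\omega^*$ of \eqref{eq:shock}, where $F$ is merely continuous.
\item The map $u\mapsto H_t(u+iv_t(u))$ is the restriction to $\partial\Omega_{\mu,t}$ of the homeomorphism $H_t$ of Lemma~\ref{lemma:free co}. So it is strictly increasing onto $\dR$, $F'$ is strictly increasing, and $F'$ vanishes only at $u=\omega$.
\item Concretely, on $(-\infty,\omega^*]$ one has $F''=\frac1t\bigl(1-t\int(\lambda-u)^{-2}\dd\mu\bigr)>0$ off the endpoint, and $F'(\omega)=0$.
\item On $[\omega^*,\ell(\mu)]$ one has $F'\ge\frac1t\bigl(\ell(\mu\boxplus\sigma_t)-x\bigr)\ge0$, strictly for $u>\omega^*$.
\end{itemize}
This gives both the value of the inf--sup and the uniqueness claim.

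Some smaller points also need repair.
\begin{itemize}
\item In the bulk case, $\omega$ lies on $\partial\Omega_{\mu,t}$, not in $\Omega_{\mu,t}$, which $H_t$ maps onto $\mathbb{C}^+$. The condition $\Im\omega=v_t(\Re\omega)>0$ amounts to positivity of the density of $\mu\boxplus\sigma_t$ at $x$, which equals $\Im\omega/(\pi t)$. When it holds, the integral equals $1/t$ exactly, and the rest of your computation is right.
\item For $x\le\ell(\mu\boxplus\sigma_t)$, you should derive that $\omega$ is real with $\omega\le\omega^*\le\ell(\mu)$. Use the strict increase of $H_t$ on $(-\infty,\omega^*]$ and the injectivity of $H_t$ on $\overline{\Omega_{\mu,t}}$. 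Membership in $\overline{\Omega_{\mu,t}}$ alone is not enough, because real points of $\overline{\Omega_{\mu,t}}$ can lie in gaps of $\supp\mu$ or to its right.
\item Fixing the Hamilton--Jacobi constant through the second moment requires finite moments. This holds in the paper's application, but the lemma only assumes support bounded below. In that generality, derive $\partial_sL=-\frac12G_{\mu_s}^2$ directly from the free heat flow.
\item The principal branch of $\log(z-\lambda)$ jumps across the real axis to the left of the support. Either use $\int\log(\lambda-z)\,\dd\mu_s(\lambda)$, which is analytic on $\mathbb{C}\setminus[\ell(\mu_s),\infty)$, or take real parts and approximate from $x+i\eta$. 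Approximate from the upper half-plane, where $\omega_{\mu,t}$ is defined, not from $x-i\eta$.
\end{itemize}
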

	
	\begin{proof}
		See, e.g., \cite[Lemma A.3]{boursier2024large}.
	\end{proof}
	
	\subsection{Determinant asymptotics}
	
	In this subsection, we compute the conditional expectation of $|\det(\nabla^2 F_{\TAP,\zeta}(\bfm))|$ given the gradient and value of the TAP free energy.
	
	\begin{lemma}\label{lemma:det app}
		Fix $q\in(0,1)$, $\delta\in(0,1)$, and let $\zeta\in\mc P([0,1])$.
		For every $s\in[0,1]$, set
		\[
		\pi_s\zeta:=\zeta|_{(s,1]}+\zeta([0,s])\delta_s,
		\qquad
		T_{\zeta,s}(m):=\partial_{mm}h_{\pi_s\zeta}(s,m)
		+\xi''(s)\int_s^1 \pi_s\zeta([0,t])\,\dd t,
		\qquad m\in[-1,1].
		\]
		Assume that there exist $\ve_0>0$ and $u_0>0$ such that
		\[
		[q-\ve_0,q+\ve_0]\subset (0,1)
		\qquad\text{and}\qquad
		\zeta([0,s])\ge u_0
		\quad\text{for every } s\in[q-\ve_0,q+\ve_0].
		\]
		Let $0<\ve<\ve_0$, let $\bfm\in[-1+\delta,1-\delta]^N$ satisfy
		\[
		q_\bfm\in(q-\ve,q+\ve),
		\qquad
		\dist\!\bigl(\zeta_\bfm,\pi_q\zeta\bigr)\le \ve,
		\]
		and let $f\in\dR$. Writing $T_\zeta:=T_{\zeta,q}$, one has
		\begin{multline}\label{eq:limit free app}
			\log \dE\Bigl[
			\bigl|\det(\nabla^2F_{\TAP,\zeta}(\bfm))\bigr|
			\;\Bigm|\;
			\nabla F_{\TAP,\zeta}(\bfm)=0,\,
			F_{\TAP,\zeta}(\bfm)=Nf
			\Bigr]
			\\=
			N\int \log|x|\,\dd\Bigl((T_\zeta\#\mu_N^{(\bfm)})\boxplus \sigma_{\xi''(q)}\Bigr)(x)
			+O\!\left(\left(\ve+\frac{\ve}{\delta}\right)N\right)+o(N).
		\end{multline}
		The implicit constant in the $O(\cdot)$ term depends only on $\xi$, $f$, $u_0$, and $\ve_0$; the $o(N)$ term is the error from the deformed GOE asymptotics and is uniform over $\bfm$ satisfying the stated constraints.
	\end{lemma}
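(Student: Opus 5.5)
The plan is to compute the conditional law of the Hessian $\nabla^2 F_{\TAP,\zeta}(\bfm)$ explicitly, recognise it (up to finite-rank pieces) as a deformed GOE, and then invoke standard asymptotics for $\dE|\det|$ of deformed Wigner matrices.

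\textbf{Step 1: the deterministic part of the Hessian.} Write $\nabla^2 F_{\TAP,\zeta}(\bfm)=\nabla^2 H(\bfm)-\nabla^2\mc S_\zeta(\bfm)$, with $\mc S_\zeta(\bfm)=\sum_i h_\zeta(q_\bfm,m_i)+N\mc U_\zeta(q_\bfm)$. Differentiating the gradient formula \eqref{eq:gradTap-defect} of Lemma~\ref{lemma:gradient TAP} once more gives
\[
\nabla^2\mc S_\zeta(\bfm)=\diag\bigl(\partial_{mm}h_\zeta(q_\bfm,m_i)\bigr)+\mathcal{R}_\bfm ,
\]
where $\mathcal{R}_\bfm$ collects terms coming from $\partial_i q_\bfm=2m_i/N$. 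These are of the form $\frac1N(a\otimes b)$ with $\|a\|,\|b\|=O(\sqrt N)$ uniformly for $m_i\in[-1+\delta,1-\delta]$, so they have rank $O(1)$ and operator norm $O(1)$. The defect term $m_i\xi''(q_\bfm)\Delta^{\mu_N^{(\bfm)}}_\zeta$ contributes a diagonal shift $\xi''(q_\bfm)\int_{q_\bfm}^1\zeta([0,t])\dd t$ plus an error of size $O(\ve/\delta)$ by Lemma~\ref{lemma:Delta-bound}. Here the atom bound $\zeta(\{q_\bfm\})\ge u_0$ is supplied by the assumption on $\pi_{q_\bfm}\zeta$. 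Replacing $q_\bfm$ by $q$ and $\pi_{q_\bfm}\zeta$ by $\pi_q\zeta$ costs $O(\ve/\delta)$ per diagonal entry, using the smoothness of $h$ on $[-1+\delta,1-\delta]$. The result is that the deterministic part is $-\diag(T_\zeta(m_i))$ plus a low-rank term plus an entrywise $O(\ve+\ve/\delta)$ diagonal error, after taking into account the sign convention $-\nabla^2 H$ versus $+$.

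\textbf{Step 2: the random part.} By Lemma~\ref{lemma:conditional hessian}, conditionally on $(\nabla F,F)=(0,Nf)$, which fixes $(\nabla H,H)$ to deterministic values of size $O(\sqrt N)$ and $O(N)$, the Hessian $\nabla^2 H(\bfm)$ decomposes in the basis $(e_1,\ldots)$ as follows:
\begin{itemize}
\item an independent block $C=\sqrt{\xi''(q_\bfm)}\,\GOE_{N-1}/\sqrt N$;
\item a row $B$ with mean $O(1/\sqrt N)\,x_\perp$ and variance $O(1/N)$;
\item a scalar $A$ with bounded conditional mean and variance.
\end{itemize}
So the conditioned matrix equals $\sqrt{\xi''(q)}\,\GOE_N/\sqrt N-\diag(T_\zeta(m_i))$ plus a perturbation of rank at most $O(1)$ whose operator norm is $O(1)$ with overwhelming probability. (The sign of the GOE is irrelevant by symmetry.)

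\textbf{Step 3: determinant asymptotics.} The spectral measure of $D_N+\GOE$ concentrates on $T_\zeta\#\mu_N^{(\bfm)}\boxplus\sigma_{\xi''(q)}$. I would use the known result (e.g.\ Ben Arous--Bourgade--McKenna, or the approach of \cite{boursier2024large}) that
\[
\frac1N\log\dE|\det(D_N+\GOE)|=\int\log|x|\,\dd(\mu_{D_N}\boxplus\sigma_t)+o(1),
\]
uniformly over diagonal matrices with bounded entries. Low-rank perturbations of norm $O(1)$ then change $\log|\det|$ by $O(\log N)$ plus contributions from near-zero eigenvalues. I would control these via interlacing together with a Wegner-type anti-concentration estimate, keeping the large deviations of the extreme eigenvalues in check through Gaussian concentration. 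The diagonal errors from Step 1 change the measure by $O(\ve+\ve/\delta)$ in Wasserstein distance; $\log|x|$ is integrable against the limit and Lipschitz away from $0$, which gives the stated error term. Alternatively, one can compare determinants via $\log|\det(M+E)|-\log|\det M|\le N\|E\|\,\|M^{-1}\|$ on a good event.

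\textbf{Main obstacle.} The main obstacle is the uniformity and the singular behaviour of $\log|x|$ near $0$. Both the low-rank perturbations and the $O(\ve/\delta)$ diagonal errors could create eigenvalues near zero, which matters especially when $0$ lies inside the bulk. I would first try to handle this by splitting $\log|x|$ into $\log\max(|x|,\eta)$ plus a remainder. The truncated part converges by weak convergence. The remainder I would bound using the fact that the expected number of eigenvalues in $[-\eta,\eta]$ is $O(N\eta)$, combined with a lower bound on the smallest singular value of the Gaussian-perturbed matrix (via Sankar--Spielman--Teng type bounds). Finally, let $\eta\to0$ after $N\to\infty$.
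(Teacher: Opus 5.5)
Your proposal follows essentially the same route as the paper. You write the deterministic Hessian as $-\diag(\widehat T(m_i))$ plus $O(1)$-norm low-rank terms, via the defect formula and Lemma~\ref{lemma:Delta-bound} applied to $\pi_{q_\bfm}\zeta$; you reduce the random part to a deformed GOE plus a rank-two piece via Lemma~\ref{lemma:conditional hessian} and orthogonal invariance; you invoke deformed-GOE determinant asymptotics with a log-truncation/least-singular-value argument; and you finish by comparing free-convolution log potentials. The paper differs only in keeping the exact diagonal (with $q_\bfm$ and the defect) in the random-matrix step, doing the $O(\ve+\ve/\delta)$ comparison only at the level of $\int\log|x|\,\dd(\nu\boxplus\sigma_t)$, where the uniform density bound $1/(\pi t)$ on $\nu\boxplus\sigma_t$ is what actually justifies your ``Lipschitz away from $0$'' step — note that your alternative bound $N\|E\|\,\|M^{-1}\|$ only works when $0$ lies outside the bulk.
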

	
	\begin{proof}
		Set $\bar\zeta:=\pi_{q_\bfm}\zeta$.
		Since only the values of the cumulative distribution function on $[q_\bfm,1]$ enter the
		definition of $F_{\TAP,\zeta}(\bfm)$, we have
		$F_{\TAP,\zeta}(\bfm)=F_{\TAP,\bar\zeta}(\bfm)$.
		Moreover, a direct inspection of the distribution functions shows that
		$\dist(\bar\zeta,\pi_q\zeta)\le C|q_\bfm-q|\le C\ve$,
		hence
		\begin{equation}\label{eq:detapp-dist}
			\dist(\bar\zeta,\zeta_\bfm)\le C\ve.
		\end{equation}
		By assumption,
		$\bar\zeta(\{q_\bfm\})=\zeta([0,q_\bfm])\ge u_0$.
		Since $\mu_N^{(\bfm)}$ is supported in $[-1+\delta,1-\delta]$, Lemma~\ref{lemma:Delta-bound}
		and \eqref{eq:detapp-dist} imply
		\begin{equation}\label{eq:detapp-defect}
			\bigl|\Delta_{\bar\zeta}^{\mu_N^{(\bfm)}}\bigr|
			\le C\,\frac{\ve}{\delta}.
		\end{equation}
		
		Write $G(\sigma):=N\TAP(\mu_N^{(\sigma)},\bar\zeta)$.
		Since $F_{\TAP,\zeta}(\sigma)$ depends on $\zeta$ only through $\zeta([0,t])$ for $t\in[q_\sigma,1]$, and $\bar\zeta$ agrees with $\zeta$ on $[q_\bfm,1]$, we have
		$F_{\TAP,\zeta}(\bfm)=H(\bfm)+G(\bfm)$.
		Since $G$ is deterministic, the event
		$\{\nabla F_{\TAP,\zeta}(\bfm)=0,\;F_{\TAP,\zeta}(\bfm)=Nf\}$
		determines $\nabla H(\bfm)$ and $H(\bfm)$ as deterministic functions of~$\bfm$.
		
		\medskip
		\noindent{\bf Step 1: deterministic Hessian.}
		Writing $q_\sigma=\|\sigma\|^2/N$, the chain rule gives
		\begin{equation*}
			\partial_i G(\sigma)
			=
			-\partial_m h_{\bar\zeta}(q_\sigma,\sigma_i)
			-\frac{2\sigma_i}{N}
			\left(
			\sum_{k=1}^N \partial_q h_{\bar\zeta}(q_\sigma,\sigma_k)
			+
			N\partial_q\mc U_{\bar\zeta}(q_\sigma)
			\right).
		\end{equation*}
		Since $\bar\zeta$ agrees with $\zeta$ on $(q_\bfm,1]$ and the Parisi PDE on $[q_\sigma,1]$ depends only on $\zeta|_{[q_\sigma,1]}$, the functions $h_{\bar\zeta}(q_\sigma,\cdot)$ and $h_\zeta(q_\sigma,\cdot)$ coincide for $q_\sigma\geq q_\bfm$; their $m$-derivatives and right $q$-derivatives at $q_\sigma=q_\bfm$ also agree. In~\eqref{eq:detapp-hessG-raw} below, all $q$-derivatives of $h_{\bar\zeta}$ at $q=q_\bfm$ are right derivatives (cf.\ the convention stated in the proof of Lemma~\ref{lemma:gradient TAP}); when $\bar\zeta$ has an atom at $q_\bfm$, the one-sided derivatives $\partial_q$, $\partial_{qm}$, and $\partial_{qq}$ remain bounded and the corresponding terms are absorbed into the remainder $R_N$ in~\eqref{eq:detapp-hessG}. Differentiating once more at $\sigma=\bfm$ yields
		\begin{multline}\label{eq:detapp-hessG-raw}
			\partial_{ij}G(\bfm)
			=
			-\delta_{ij}\,\partial_{mm}h_{\bar\zeta}(q_\bfm,m_i)
			-\frac{\delta_{ij}}{N}\sum_{k=1}^N 2\partial_q h_{\bar\zeta}(q_\bfm,m_k)
			-2\delta_{ij}\,\partial_q\mc U_{\bar\zeta}(q_\bfm)
			\\
			-\frac{2m_j}{N}\partial_{qm}h_{\bar\zeta}(q_\bfm,m_i)
			-\frac{2m_i}{N}\partial_{qm}h_{\bar\zeta}(q_\bfm,m_j)
			\\
			-\frac{4m_im_j}{N}
			\left(
			\frac1N\sum_{k=1}^N \partial_{qq}h_{\bar\zeta}(q_\bfm,m_k)
			+\partial_{qq}\mc U_{\bar\zeta}(q_\bfm)
			\right).
		\end{multline}
		By the exact defect formula \eqref{eq:gradTap-defect}, the diagonal terms in the
		first line combine as
		$-\widehat T_{\bfm,\zeta}(m_i)$, where
		\[
		\widehat T_{\bfm,\zeta}(m)
		:=
		\partial_{mm}h_{\bar\zeta}(q_\bfm,m)
		+
		\xi''(q_\bfm)
		\left(
		\int_{q_\bfm}^1 \bar\zeta([0,t])\,\dd t
		-
		\Delta_{\bar\zeta}^{\mu_N^{(\bfm)}}
		\right).
		\]
		The remaining cross terms form a deterministic, symmetric matrix $R_N$ with
		entries $O(1/N)$ uniformly in $\bfm$, so that
		\begin{equation}\label{eq:detapp-hessG}
			\nabla^2 G(\bfm)
			=
			-\diag\Bigl(\widehat T_{\bfm,\zeta}(m_1),\ldots,\widehat T_{\bfm,\zeta}(m_N)\Bigr)
			+
			R_N,
		\end{equation}
		\begin{equation}\label{eq:detapp-RN}
			\|R_N\|_{\mathrm{HS}}=O(1).
		\end{equation}
		(The uniform $O(1/N)$ bound follows from \eqref{eq:detapp-hessG-raw} and the
		uniform boundedness of the mixed derivatives of $h_{\bar\zeta}$ on
		$[q-\ve_0,q+\ve_0]\times[-1+\delta,1-\delta]$.)
		
		\medskip
		\noindent{\bf Step 2: conditional law of the random Hessian.}
		Choose an orthonormal basis
		\[
		e_1=\frac{\bfm}{\sqrt{Nq_\bfm}},
		\qquad
		e_2,\dots,e_N\in \bfm^\perp,
		\]
		and let $U=(e_1|\cdots|e_N)$.
		By Lemma~\ref{lemma:conditional hessian}, conditionally on
		$\{\nabla F_{\TAP,\zeta}(\bfm)=0,\;F_{\TAP,\zeta}(\bfm)=Nf\}$,
		the Hessian of $H$ in this basis has the form
		\[
		U^\top \nabla^2H(\bfm)\,U=
		\begin{pmatrix}
			A & B^\top\\
			B & C
		\end{pmatrix},
		\]
		where $C\stackrel d=\sqrt{\xi''(q_\bfm)}\,\GOE_{N-1}/\sqrt N$, and $C$ is independent of $(A,B)$.
		Let
		\[
		\widehat W_N:=
		\begin{pmatrix}
			\hat a & \hat b^\top\\
			\hat b & C
		\end{pmatrix},
		\]
		where $\hat a\sim \mc N(0,2\xi''(q_\bfm)/N)$ and
		$\hat b\sim \mc N(0,\xi''(q_\bfm)I_{N-1}/N)$ are independent of $C$.
		Then
		$\widehat W_N\stackrel d=\sqrt{\xi''(q_\bfm)}\,\GOE_N/\sqrt N$.
		Define
		\[
		K_N:=
		\begin{pmatrix}
			A-\hat a & (B-\hat b)^\top\\
			B-\hat b & 0
		\end{pmatrix}.
		\]
		By construction,
		\begin{equation}\label{eq:detapp-KN}
			U^\top \nabla^2H(\bfm)\,U\stackrel d=\widehat W_N+K_N.
		\end{equation}
		Moreover, $\mathrm{rank}(K_N)\le 2$, and since $f$ is fixed and $(A,B)$ are conditionally
		Gaussian with bounded means and variances, $K_N$ has uniformly bounded
		sub-exponential moments.
		
		Set $D_N:=\diag\bigl(\widehat T_{\bfm,\zeta}(m_1),\ldots,\widehat T_{\bfm,\zeta}(m_N)\bigr)$.
		Combining \eqref{eq:detapp-hessG}, \eqref{eq:detapp-RN}, and \eqref{eq:detapp-KN}, we obtain
		that conditionally on
		$\{\nabla F_{\TAP,\zeta}(\bfm)=0,\;F_{\TAP,\zeta}(\bfm)=Nf\}$,
		\[
		U^\top \nabla^2F_{\TAP,\zeta}(\bfm)\,U
		\stackrel d=
		\widehat W_N-U^\top D_NU+K_N+U^\top R_NU,
		\]
		where $K_N+U^\top R_NU$ is the sum of a rank-$2$ random matrix and a deterministic
		matrix with Hilbert--Schmidt norm $O(1)$.
		By orthogonal invariance of the GOE ($U\widehat W_N U^\top\stackrel d=\widehat W_N$),
		\[
		\det(\widehat W_N-U^\top D_NU)
		\stackrel d=
		\det\!\left(\sqrt{\xi''(q_\bfm)}\,\frac{\GOE_N}{\sqrt N}-D_N\right).
		\]
		A standard truncation of the logarithm near $0$, together with finite-rank
		interlacing, stability under bounded Hilbert--Schmidt perturbations, and the usual
		least-singular-value bound for deformed GOE matrices, therefore gives
		\begin{multline}\label{eq:detapp-reduce}
			\log \dE\Bigl[
			\bigl|\det(\nabla^2F_{\TAP,\zeta}(\bfm))\bigr|
			\;\Bigm|\;
			\nabla F_{\TAP,\zeta}(\bfm)=0,\;F_{\TAP,\zeta}(\bfm)=Nf
			\Bigr]
			\\=
			\log \dE\left|
			\det\!\left(
			\sqrt{\xi''(q_\bfm)}\,\frac{\GOE_N}{\sqrt N}-D_N
			\right)
			\right|
			+o(N).
		\end{multline}
		Since $\GOE_N\stackrel d=-\GOE_N$ and the determinant is taken in absolute value,
		the right-hand side is also
		\begin{equation}\label{eq:detapp-sym}
			\log \dE\left|
			\det\!\left(
			\sqrt{\xi''(q_\bfm)}\,\frac{\GOE_N}{\sqrt N}+D_N
			\right)
			\right|
			+o(N).
		\end{equation}
		
		\medskip
		\noindent{\bf Step 3: deformed GOE asymptotics.}
		The entries of $D_N$ stay in a deterministic compact interval depending only on
		$\xi$, $u_0$, $\ve_0$, and $\delta$.
		For such deformed GOE ensembles, the empirical spectral measure converges to
		$(\widehat T_{\bfm,\zeta}\#\mu_N^{(\bfm)})\boxplus \sigma_{\xi''(q_\bfm)}$,
		and the logarithmic linear statistic
		$\sum_{i=1}^N\log|\lambda_i|$ is stable on the $o(N)$ scale after the same
		truncation/least-singular-value argument.
		The $o(N)$ error is uniform over diagonal matrices $D_N$ in any fixed compact set bounded away from $0$; since the entries of $D_N$ lie in such a compact set (depending only on $\xi$, $u_0$, $\ve_0$, $\delta$), this error is $o(N)$ with an implicit rate independent of $\bfm$. Thus
		\begin{equation}\label{eq:detapp-free}
			\log \dE\left|
			\det\!\left(
			\sqrt{\xi''(q_\bfm)}\,\frac{\GOE_N}{\sqrt N}+D_N
			\right)
			\right|
			=
			N\int \log|x|\,\dd\bigl((\widehat T_{\bfm,\zeta}\#\mu_N^{(\bfm)})\boxplus \sigma_{\xi''(q_\bfm)}\bigr)(x)+o(N).
		\end{equation}
		
		\medskip
		\noindent{\bf Step 4: comparison with the target measure.}
		By \eqref{eq:detapp-defect}, the continuity of $s\mapsto \pi_s\zeta$ in the metric
		$\dist$, the continuity of $(s,\eta,m)\mapsto \partial_{mm}h_\eta(s,m)$ on compact
		subsets of $(0,1)\times\mc P([0,1])\times(-1,1)$, and the fact that $|q_\bfm-q|\le \ve$,
		we have
		\[
		\sup_{m\in[-1+\delta,1-\delta]}
		\bigl|
		\widehat T_{\bfm,\zeta}(m)-T_{\zeta,q}(m)
		\bigr|
		\le
		C\left(\ve+\frac{\ve}{\delta}\right).
		\]
		Hence
		\begin{equation}\label{eq:detapp-pushforward}
			\dist\!\bigl(\widehat T_{\bfm,\zeta}\#\mu_N^{(\bfm)},\,T_\zeta\#\mu_N^{(\bfm)}\bigr)
			\le
			C\left(\ve+\frac{\ve}{\delta}\right),
			\qquad
			\bigl|\xi''(q_\bfm)-\xi''(q)\bigr|\le C\ve.
		\end{equation}
		All measures involved are supported in a common compact interval, and free
		convolution with a semicircle law yields a bounded density on this compact family
		(the free convolution $\nu\boxplus\sigma_t$ has a density bounded by $1/(\pi t)$, so $\log|x|$ is integrable uniformly over the family; together with weak continuity of $\nu\mapsto\nu\boxplus\sigma_t$, this yields the claimed uniform continuity by dominated convergence).
		Therefore the map
		\[
		(\nu,t)\mapsto \int \log|x|\,\dd(\nu\boxplus \sigma_t)(x)
		\]
		is uniformly continuous on this family. Using \eqref{eq:detapp-pushforward}, we get
		\begin{equation*}
			\int \log|x|\,\dd\bigl((\widehat T_{\bfm,\zeta}\#\mu_N^{(\bfm)})\boxplus \sigma_{\xi''(q_\bfm)}\bigr)(x)
			=
			\int \log|x|\,\dd\Bigl((T_\zeta\#\mu_N^{(\bfm)})\boxplus \sigma_{\xi''(q)}\Bigr)(x)
			+
			O\!\left(\ve+\frac{\ve}{\delta}\right).
		\end{equation*}
		Combining this with \eqref{eq:detapp-reduce}, \eqref{eq:detapp-sym}, and
		\eqref{eq:detapp-free} proves \eqref{eq:limit free app}.
	\end{proof}

	\bibliographystyle{alpha}
	\bibliography{bib.bib}

\end{document}